\newcommand{\bbox}[1]{\vspace{20pt}\fbox{\parbox{450pt}{{\bf #1}}}\vspace{20pt}}
\def\theequation{\@arabic\c@equation}
\def\namedlabel#1#2{\begingroup
	#2%
	\def\@currentlabel{#2}%
	\phantomsection\label{#1}\endgroup
}
\newcommand{\tA}{\widetilde{A}}
\newcommand{\gaD}{\gamma_{{}_D}}
\newcommand{\gr}{{\text{graph}}}
\newcommand{\gaN}{\gamma_{{}_N}}
\newcommand{\tN}{\tau_{{}_N}}
\newcommand{\Om}{\Omega}
\newcommand{\dOm}{{\partial\Omega}}
\newcommand{\Sp}{\operatorname{Spec}}
\newcommand{\e}{\hbox{\rm e}}
\newcommand{\bd}{\hbox{\rm d}}
\newcommand{\bp}{{\mathbf{p}}}
\newcommand{\bq}{{\mathbf{q}}}
\newcommand{\bbN}{{\mathbb{N}}}
\newcommand{\bbR}{{\mathbb{R}}}
\newcommand{\bbZ}{{\mathbb{Z}}}
\newcommand{\C}{{\mathbb{C}}}
\newcommand{\bbK}{{\mathbb{K}}}
\newcommand{\bbC}{{\mathbb{C}}}
\newcommand{\cA}{{\mathcal A}}
\newcommand{\cB}{{\mathcal B}}
\newcommand{\cD}{{\mathcal D}}
\newcommand{\cE}{{\mathcal E}}
\newcommand{\cg}{{\mathcal G}}
\newcommand{\cG}{{\mathcal F}}
\newcommand{\cH}{{\mathcal H}}
\newcommand{\cI}{{\mathcal I}}
\newcommand{\cJ}{{\mathcal J}}
\newcommand{\cK}{{\mathcal K}}
\newcommand{\cL}{{\mathcal L}}
\newcommand{\cM}{{\mathcal M}}
\newcommand{\cO}{{\mathcal O}}
\newcommand{\cP}{{\mathcal P}}
\newcommand{\cR}{{\mathcal R}}
\newcommand{\cS}{{\mathcal S}}
\newcommand{\cT}{{\mathcal T}}
\newcommand{\cU}{{\mathcal U}}
\newcommand{\cV}{{\mathcal V}}
\newcommand{\cW}{{\mathcal W}}
\newcommand{\cX}{{\mathcal X}}
\newcommand{\cY}{{\mathcal Y}}
\newcommand{\bfi}{{\bf i}}
\newcommand{\no}{\nonumber}
\newcommand{\lb}{\label}
\newcommand{\ol}{\overline}
\newcommand{\wti}{\widetilde  }
\newcommand{\dL}{\prescript{d\!}{}L}
\newcommand{\ran}{\text{\rm{ran}}}
\newcommand{\hatt}{\widehat}
\newcommand{\dist}{\operatorname{dist}}
\newcommand{\rnohs}{\rangle_{H^{-1/2}(\partial\Omega)}}
\newcommand{\lnoh}{_{H^{1/2}(\partial\Omega)}\langle}
\newcommand{\gd}{\widehat{\gamma}_{{}_D}}
\newcommand{\gn}{\widehat{\gamma}_{{}_N}}
\newcommand{\bndr}{H^{{1}/{2}}(\partial \Omega)\times H^{-{1}/{2}}(\partial \Omega)}
\newcommand{\bndra}{\mathfrak{H}\times\mathfrak{H}}
\numberwithin{equation}{section}
\newcommand{\sel}[1]{}
\renewcommand{\div}{\operatorname{div}}
\renewcommand{\det}{\operatorname{det}}
\newcommand{\dom}{\operatorname{dom}}
\newcommand{\tr}{\mathrm{T}}%{\operatorname{\digamma}}%{\operatorname{\mathfrak{t}\mathfrak{r}}}
\newcommand{\spec}{\operatorname{Spec}}
\renewcommand{\Re}{\operatorname{Re }}
\renewcommand\Im{\operatorname{Im}}
\renewcommand{\ker}{\operatorname{ker}}
\newtheorem{theorem}{Theorem}[section]
\newtheorem{hypothesis}[theorem]{Hypothesis}
\newtheorem{lemma}[theorem]{Lemma}
\newtheorem{corollary}[theorem]{Corollary}
\newtheorem{proposition}[theorem]{Proposition}
\theoremstyle{definition}
\newtheorem{definition}[theorem]{Definition}
\newtheorem{example}[theorem]{Example}
\newtheorem{remark}[theorem]{Remark}
\begin{document}

\begin{abstract}
	This work offers a new prospective on asymptotic perturbation theory for varying self-adjoint extensions of symmetric operators. Employing symplectic formulation of self-adjointness we use a version of resolvent difference identity for two arbitrary self-adjoint extensions which facilitates asymptotic analysis of resolvent operators via first order expansion for the family of Lagrangian planes associated with perturbed operators. Specifically, we derive a Riccati-type differential  equation and the first order asymptotic expansion for resolvents of self-adjoint extensions determined by smooth one-parameter families  of Lagrangian planes.  This asymptotic perturbation theory yields a symplectic version of the abstract Kato selection theorem and  Hadamard-Rellich-type variational formula for slopes of multiple eigenvalue curves bifurcating from an eigenvalue of the unperturbed operator. The latter, in turn, gives a general infinitesimal version of the celebrated formula equating the spectral flow of a path of self-adjoint extensions and the Maslov index of the corresponding path of Lagrangian planes.  Applications are given to quantum graphs, periodic Kronig-Penney model, elliptic second order partial differential operators with Robin boundary conditions, and physically relevant heat equations with thermal conductivity. 
\end{abstract}

\allowdisplaybreaks

\title[First-order asymptotic perturbation theory %for self-adjoint operators
]{First-order asymptotic perturbation theory for %self-adjoint 
extensions of symmetric operators}
\author[Y. Latushkin]{Yuri Latushkin}
\address{Department of Mathematics,
The University of Missouri, Columbia, MO 65211, USA}
\email{latushkiny@missouri.edu}
\author[S. Sukhtaiev]{Selim Sukhtaiev	}
\address{Department of Mathematics and Statistics,
Auburn University, Auburn, AL 36849, USA}
\email{szs0266@auburn.edu}
\date{\today}
\keywords{Hadamard variational formula, Lagrangian planes, Kato selection theorem, Maslov index, self-adjoint extensions, boundary triplets}
%\thanks{YL was supported by the NSF grants
%	DMS-1710989, DMS-2106157, and would like to thank the Courant Institute of Mathematical Sciences and especially Prof.\ Lai-Sang Young for the opportunity to visit CIMS. SS was supported by Research Support Program grant provided by the Office of the Vice President for Research and Economic Development at Auburn University. Both authors gratefully acknowledge support from the Simons Center for Geometry and Physics, Stony Brook University, where a part of this research was completed during the workshop ``Ergodic Operators and Quantum Graphs". \\ We are thankful to 
%	M. Ashbaugh, V. Derkach, C. Gal, F. Gesztesy, R. Schnaubelt for pointing out very useful sources in the literature. 
%	We are grateful to the referees of an earlier version of this paper for their insightful suggestions and deep mathematical remarks. We are especially grateful for the suggestion incorporated as Remark  \ref{rem:ref1}.}
\maketitle

{\scriptsize{\tableofcontents}}

\section{Introduction}
\subsection{Overview} This work concerns first order asymptotic expansions for resolvents and eigenvalues of self-adjoint extensions of symmetric operators subject to small perturbations of their operator theoretic domains. In the context of elliptic partial differential operators, for instance, the perturbations that we discuss model small variations of the boundary conditions, the spatial domains, and the lower order terms of differential expressions. Our main motivations stem from the Arnold--Keller--Maslov index theory, cf.  \cite{MR0211415, MR820079, BZ1, CLM, MR99207, Mas, RoSa95},  for self-adjoint  elliptic differential operators and from the classical  Hadamard--Rayleigh--Rellich \cite{H,R,MR0240668} variation formulas for their eigenvalues.  Our main new technical tool is a strikingly simple formula for the difference of resolvents of two arbitrary self-adjoint extensions of a symmetric operator derived in the context of abstract boundary triplets \cite{Behrndt_2020, BL2012,  DHM2017, DHM2020, DHM2022, DHMS2006, DHMS2012, DMbook, Schm} and inspired in part by a recent progress in description of all self-adjoint extensions of the Laplacian \cite{GLMZ05,GM08,GM10}. 
This approach gives a powerful addition to the perturbation theory via quadratic forms as it allows one to control the resolvents and spectral projections of operators with varying domains.%, and choose appropriately the multiple eigenvalue curves to compute their slopes.

In this paper we study one-parameter {\em families} of self-adjoint extensions of densely defined symmetric operators.  The main results of this work are twofold.  First, we obtain new and quite general asymptotic expansion formulas for resolvents of self-adjoint operators determined by one-parameter differentiable families of Lagrangian planes, and derive a  Riccati-type differential equation for the resolvents. From this, we derive a new abstract variational Hadamard-type formula for the slopes of eigenvalue curves bifurcating from a multiple discrete isolated eigenvalue of the unperturbed operator. Motivated by closely related Hadamard variation formulas for partial differential operators on varying domains, we use the term {\em Hadamard-type} for  formulas giving $t$-derivatives of the eigenvalues of abstract and differential $t$-dependent operators treated in this paper. Our second major set of results uses the Hadamard-type formulas to bridge the celebrated Atiyah--Patodi--Singer theory and the Maslov index theory as they relate the spectral flow of a family of self-adjoint extensions to the Maslov index of the corresponding path of Lagrangian planes. We give a proof of an infinitesimal version of this relation in a very general abstract setting where all three objects may vary: the domains of the self-adjoint extensions, the boundary traces, and the operators per se.  On a more technical level, we systematically use a version of the formula for the difference of resolvent operators of two arbitrary  self-adjoint extensions of a given symmetric operator. Specifically, we express this difference in terms of orthogonal projections onto  Lagrangian planes uniquely associated with  the self-adjoint extensions in question and thus offer a novel point of view on the resolvent difference formulas through the prism of symplectic functional analysis. 
 
 %bare the name of Hadamard, consequently, we extend this terminology to the abstract case.  
The asymptotic perturbation theory is a gem of classical mathematical physics \cite[Chapter VIII]{K80}. Given a family of, generally, unbounded operators $H_t=H_{t_0}+H^{(1)}_{t_0}(t-t_0)+\dots$ depending on a parameter $t\in[0,1]$ and considered as perturbations of a fixed operator $H_{t_0}$, the theory provides, for $t$ near $t_0$, formulas for the resolvent operators of $H_t$, for the Riesz projections on a group of isolated eigenvalues of $H_t$, as well as the asymptotic expansions of the type  $\lambda_j(t)=\lambda+\lambda^{(1)}_j(t-t_0)+\dots$ for the semi-simple eigenvalues $\lambda_j(t)$, $1\le j\le m$, of $H_t$ bifurcating from an eigenvalue $\lambda=\lambda(t_0)$ of $H_{t_0}$ of multiplicity  $m$. 
Of course it is not always the case that $H_t$ is an additive perturbation of $H_{t_0}$; a simple example being  the Neumann Laplacian considered as a perturbation of the Dirichlet Laplacian posted on the same open set $\Omega\subset\bbR^n$. Operator-theoretical domains of the two operators are given by the Neumann and Dirichlet boundary traces. The difference of the two operators on the intersection of their domains is zero, and thus neither of them is an additive perturbation of the other. When the operators are posted on a $t$-dependent family of open sets $\Omega_t$ and, in addition, are subject to perturbations by a family of $t$-dependent potentials, we are facing the situation when all three objects (the boundary traces, the boundary conditions prescribing the domains of the operators, and the operators per se) are being perturbed. And yet the fundamental questions remain of how to relate their resolvent operators, eigenvalues, etc. 

To answer the questions, we employ the extension theory for symmetric operators that goes back to M.\ Birman \cite{MR0080271}, M.\ Krein \cite{MR0024574, MR0024575}, and M.\ Vishik \cite{MR0052655}, see also \cite{AlSi, DMbook, GG, Schm}, and that has been an exceptionally active area of research \cite{AGW, A12, BL, BM, BtE15, BGW, DM, GG, MM02, Pa} culminating in the comprehensive monograph \cite{Behrndt_2020}. 
Unlike the classical sesquilinear  forms-based approach utilized in analytic perturbation theory, see, e.g., \cite[Section VII.6.5]{K80}, the foundational for the current paper result is a very simple formula for the difference of the resolvents of any two self-adjoint extensions of a symmetric operator. The classical Krein's formula going back to \cite{MR0024574, MR0024575} expresses the difference of the resolvents of a special, ``Dirichlet-type'', self-adjoint extension and yet another, arbitrary, self-adjoint extension of a symmetric operator  via the $\gamma$-field and the abstract Weyl $M$-function function.
Given any two arbitrary self-adjoint extensions, the classical Krein's formula is a powerful tool that has been used to prove, for example, that the difference of the resolvents of the two extensions belongs to the appropriate Schatten-von Neumann class, cf., e.g., Theorem 2 and Corollary 4 in \cite{DM}. 

In the current paper we give a very elementary and direct proof (without using the Krein's formula) of the resolvent difference formula of any two arbitrary self-adjoint extensions that we 
were not able to find in the literature. Unlike Krein's resolvent formula, the  resolvent difference formula that we offer does not contain the $\gamma$-fields nor the Weyl function, and thus is of much lower level than the celebrated Krein's resolvent formula. However, it appears to be a perfect tool for studying  {\em families} of self-adjoint extensions constructed by means of {\em families} of Lagrangian planes and {\em families} of trace operators, which is the main objective of our work. Indeed, variation formulas for eigenvalues of differential operators posted on a one-parameter family of domains are typically obtained for differential operators defined via Dirichlet forms, see, e.g., \cite[Section VII. 6.5]{K80}, \cite{MR943117}, which essentially restricts the set of admissible boundary conditions to Dirichlet, Neumann, and Robin. We drop this restriction by avoiding the quadratic form approach and, instead, dealing with perturbations of self-adjoint extensions through our new symplectic version  of the resolvent difference formula thus deriving the Hadamard-type eigenvalue formulas in a quite general setting.

The Hadamard-type formulas are instrumental in applications of spectral theory to differential operators. For example, they recently played a pivotal role in the works of G. Berkolaiko, P. Kuchment, U. Smilansky \cite{MR3000497} and  G. Cox, C. Jones, J. Marzuola \cite{CJM1, CJM2} on nodal count for eigenfunctions of Schr\"odinger operators and in the work of A. Hassell \cite{MR2630052} on ergodic billiard systems which are not quantum uniquely ergodic. The formulas are also central to the applications that we give, in particular, to our treatment, discussed in more details below, of the periodic Kronig--Penney model, spectral flow formulas for one-parameter families of Robin Laplacians leading to a unified approach to Friedlander's and Rohleder's inequalities, of the heat equation posted on bounded domains, and of one-parameter families of quantum graphs. 

\subsection{Description of abstract results} We consider  self-adjoint extensions of a closed densely defined symmetric operator $A$ acting in a Hilbert space $\cH$. The extensions in question are defined  by Lagrangian planes in an auxiliary (boundary) Hilbert space $\mathfrak H\times \mathfrak H$ by means of a two component trace map $\tr=[\Gamma_0, \Gamma_1]^{\top}: \dom(\tr)\subset \cH\rightarrow \bndra$ with dense range and satisfying the abstract Green identity 
\begin{align}\lb{int1}
	\langle A^*u,v\rangle_{\cH}-\langle u,A^*v\rangle_{\cH}=\left\langle
J \tr u,\tr v\right\rangle_{\mathfrak{H}\times\mathfrak{H}}, u,v \in\dom(\tr),\ 
J:=\begin{bmatrix}
0 & I_{\mathfrak{H}} \\
-I_{\mathfrak{H}}& 0
\end{bmatrix}. 
\end{align}
The trace operator $\tr$, geared to facilitate abstract integration by parts arguments, is a central object in our setting. 

A typical realization of this setup is given by the Laplace operator  $A:=-\Delta$ with domain $\dom(A)=H^2_0(\Omega)$ acting in $\cH:=L^2(\Omega)$ and the trace map $\tr u=(u\upharpoonright_{\partial\Omega},\sel{ -\Phi} \partial_{\nu} u\upharpoonright_{\partial\Omega})$\footnote{\sel{where $\Phi$ denotes natural Riesz isomorphism $\Phi\in\cB (H^{-1/2}(\partial \Omega), H^{1/2}(\partial \Omega))$ as defined in \eqref{aub27}}} defined on $\dom(\tr)=\{u\in H^1(\Omega): \Delta u\in L^2(\Omega)\}$. In this case $A^*=-\Delta$ with the domain $\dom(A^*)=\{u\in L^2(\Omega): \Delta u\in L^2(\Omega)\}$, the boundary space  $\mathfrak H= H^{1/2}(\partial \Omega)$, and \eqref{int1} is the standard Green identity. Equipping $\cH_+:=\dom(A^*)$ with the graph norm of the Laplacian and $\cD:=\dom(T)$ with the norm $(\|u\|^2_{H^1(\Omega)}+\|\Delta u\|^2_{L^2(\Omega)})^{1/2}$, we get a crucial dense embedding $\cD\hookrightarrow\cH_+$. This embedding becomes equality in the one-dimensional setting when $\Omega=[a,b]\subset \bbR$; in fact, one has $\cH_+=\cD=H^2([a,b])$.  

 Motivated by this example and returning to the abstract setting, we equip $\cD=\dom(\tr)$ with an abstract Banach norm $\|\cdot\|_{\cD}$, the space $\cH_+=\dom(A^*)$ with the graph norm of  $A^*$, and assume  that the embedding $\cD\hookrightarrow\cH_+$ is dense and bounded. Drawing further parallels between the abstract and the PDE/ODE settings, throughout this work we distinguish between the strict inclusion $\cD\subsetneq \cH_+$ and the equality $\cD=\cH_+$.  The case when $\cD$ is strictly contained in $\cH_+$ is closely related the setting considered \sel{in the pioneering paper by V. Derkach, M. Malamud \cite{DM95}, where the concept of generalized triplet was originally introduced,} and to  the notion of quasi-boundary triplets extensively studied  \sel{ in the work of} J. Behrndt and M. Langer \cite{BL, BL2012}, 
J. Behrndt and T. Micheler \cite{BM}, V.\ Derkach, S.\ Hassi, M.\ Malamud and H.\ de Snoo \cite{DHM2017, DHM2020, DHM2022, DHMS2006, DHMS2012}. In case when $\cD=\cH_+$ the triplet $(\mathfrak{H}, \Gamma_0,\Gamma_1)$ is called the
{\em \sel{ordinary} boundary triplet}. This case is understood much better and was developed, in particular, in the classical work  by V. Gorbachuk and M. Gorbachuk \cite{GG} and A.\ Kochubej, by V. Derkach and M. Malamud \cite{DM}
and many others, see, e.g., \cite{Behrndt_2020, BL2012, DHMS2012, DMbook, Schm} and the extensive bibliography therein. The main reason why we consider a non-surjective embedding $\cD\hookrightarrow \cH_+$ is that, when applied to elliptic operators, it allows one to use the standard Dirichlet and Neumann trace operators as components of $\tr$ and therefore discuss physically relevant boundary value problems (e.g., heat equation on bounded domains). The disadvantage of the condition $\cD\subsetneq \cH_+$, however, is that it restricts the class of admissible self-adjoint extensions of $A$ to those with domains containing in $\cD$. We refer to \cite{Calkin1939, DHMS2006, DHMS2012, DHM2017, DHM2020, DHM2022,HS2012,W2012a} for an in-depth study of unbounded traces and stress that abstract results of this type are not the main focus of the current work. On the other hand, the case of ordinary boundary triplets $\cD=\cH_+$ covers all possible self-adjoint extensions at the expense of dealing with the trace map $\tr$ which, when considered in the context of second order elliptic partial differential operators, is a non-local first order operator on the boundary of the spatial domain. The trace maps of this type have been studied, in particular, by G. Grubb \cite{Gr}, H. Abels,  G. Grubb, and I. Wood \cite{AGW}, F. Gesztesy and M. Mitrea \cite{GM08}, \cite{GMjde09}, \cite{GM10}. 

The ordinary boundary triplets are particularly well suited for ordinary differential operators and quantum graphs; we will exploit this in Section \ref{abt}.  Our approach allows one to obtain some new results that are not reachable or very hard to obtain using other methods such as the quadratic forms. This includes our arguably new Riccati-type differential equations for the resolvents, our ability to handle quite general boundary conditions for quantum graphs where the form method results are not known, our new and convenient formulas for the slopes of the eigenvalue curves for both quantum graphs with general boundary conditions and the PDE operators, as well as our ability to handle non-local boundary conditions (even of generalized Robbin type but also such as those that appear in describing Krein's self-adjoint extensions of PDE operators).  

Having introduced the notion of an abstract trace map and Green identity \eqref{int1}, we switch to a symplectic version of the resolvent difference formula. We note that the right-hand side of \eqref{int1} can be written as $\omega(\tr u, \tr w)$, where $\omega(\cdot,\, \cdot)=\langle J \cdot,\,\cdot\rangle_{\mathfrak{H}}$ is the natural symplectic form. It is well known that self-adjoint extensions of $A$ in $\cH$ can be described by Lagrangian planes in various symplectic Hilbert boundary spaces. W.\ N.\ Everitt and W.\ N.\ Markus \cite{MR1955204}, B.\ Booss-Bavnbek and K.\ Furutani \cite{BbF95}, for example, relate self-adjoint extensions to Lagrangian subspaces of the symplectic quotient space $\dom(A^*)/\dom(A)$, while J.\ Behrndt and M.\ Langer \cite{BM}, K.\ Pankrashkin \cite{Pa}, K.\ Schm\"{u}dgen \cite[Chapter 14]{Schm} and \cite{Behrndt_2020}, on the other hand, discuss self-adjointness in terms of linear relations. Closely following these works, we utilize the abstract Green identity \eqref{int1} assuming (possibly, non-surjective) embedding $\cD\hookrightarrow \cH_+$, and associate self-adjoint extensions $\cA$ of $A$  to Lagrangian planes $\cG\subset \bndra$ via the mapping $\dom(\cA)\mapsto \cG:=\overline{\tr(\dom(\cA))}$, see Theorems \ref{LLSA}, \ref{SALL}  and Corollary \ref{LLSASALL} for more details on this correspondence. This observation brings us one step closer to the perturbation theory for self-adjoint extensions with continuously varying domains of self-adjointness as it allows us to recast this non-additive perturbation problem in terms of the perturbation of Lagrangian planes, or more specifically, in terms of perturbation of the orthogonal projections onto the planes. 

A major issue in perturbation theory for unbounded operators with varying domains is that their difference could be defined  on a potentially very small subspace, e.g. on the zero subspace. This issue is not as severe when one talks about self-adjoint extensions $\cA_1, \cA_2$ of the same operator $A$, since $\dom(A)\subset \dom(\cA_1)\cap \dom(\cA_2)$ but there is still a caveat: the difference $\cA_1-\cA_2$ could be the zero operator, hence, $\cA_1$, $\cA_2$ could be trivial additive perturbations of one another (again, think about the Dirichlet and Neumann realizations of the second derivative on a segmen). To deal with this issue,  one considers instead of $\cA_1-\cA_2$ the difference of the {\em resolvents} $(\cA_1- \zeta)^{-1}-(\cA_2-\zeta)^{-1}$ and, classically, expresses it in terms of the abstract Weyl $M$-function, see Appendix \ref{Sec4.1} and, in particular, Proposition \ref{prop:KrNaF} for a brief reminder of this topic. Such an expression is called the {\em Krein (or Krein-Naimark) resolvent formula}; we refer to
\cite{MR0024574, MR0024575} and \cite{KL1973, LT1977}.

This foundational result in spectral theory has been studied and derived in various settings by many authors; we refer to the texts \cite{AhkGlazman, Behrndt_2020,Schm} where one can find a detailed historical account and further bibliography. Without even attempting to give a review of the vast literature on this subject, we mention here the work 
by  H. Abels, G. Grubb and I. Wood \cite{AGW}, W.O Amrein and D.B. Pearson \cite{MR2077193}, S. Albeverio and K.  Pankrashkin \cite{MR2148628}, J. Behrndt and M. Langer \cite{BL}, S. Clark, F. Gesztesy, R. Nichols, and M. Zinchenko \cite{ClarkGesNickZinch}, V. Derkcach and M. Malamud \cite{DMbook}, \cite{DM}, F. Gesztesy and M. Mitrea \cite{GM08}, \cite{GMjde09}, \cite{GM10}, G. Grubb \cite{MR2537622}, A. Posilicano \cite{MR2468877}, A. Posilicano and L. Raimondi \cite{MR2525266}.  We specifically mention important contribution for the case of quasi-boundary triplets in \cite[Theorem 5.1]{BL} and in more complete form  in Theorem 6.16 and Corollary 6.17 of \cite{BL2012}; for generalized boundary triplets of bounded type in Theorem 7.26 and Proposition 7.27 of the paper \cite{DHMS2012} by V.\ Derkach, S.\ Hassi and M.\ Malamud; for so-called AB-generalized boundary triplets (which covers the previous two cases) in Theorem 4.12, Remark 4.13, Corollary 4.14 of \cite{DHM2020}. In addition, in a recent paper \cite{DHM2022} by V.\ Derkach, S.\ Hassi and M.\ Malamud (see also \cite{DHM2017}) the authors studied boundary triplets and gave an analytic characterization of their Weyl functions as form domain invariant Nevanlinna functions. These papers contain applications of boundary triplets techniques closely related to the results in Sections \ref{ssLapLip} and \ref{SS5.1} of the present paper. 
Most closely related to our work is the Krein formula for two arbitrary self-adjoint extensions of the Lapalce operator expressing the resolvent difference in terms of an operator valued Herglotz function that has been obtained in \cite{GM10}, see also \cite{GLMZ05,GM08,Nakam01}. 

However, all above-mentioned Krein-type formulas are not quite suited for the purposes of the current paper as they do not capture quantitatively the perturbations of operator-theoretic domains of the self-adjoint extensions in the form that we need. One of the main objectives of the current  work is to address this issue. 
 Specifically, we propose to use a very elementary new {\em resolvent difference
 formula} expressing the difference of the resolvents of two arbitrary self-adjoint extensions of a given symmetric operator in terms of the {\em projections onto the Lagrangian planes} determining the domains of  the extensions. As far as we can see this simple but extremely handy version of the formula was not widely used in the literature in the generality that we offer, see, however, already mentioned Theorem 2 and Corollary 4 in \cite{DM}.

Indeed, for arbitrary self-adjoint extensions $\cA_1, \cA_2$ of a symmetric operator $A$, we obtain the following symplectic version of the formula for the difference of  resolvents $R_1(\zeta)=(\cA_1- \zeta)^{-1}$ and  $R_2(\zeta)=(\cA_2- \zeta)^{-1}$,
\begin{align}\lb{int2}	
R_1(\zeta)-R_2(\zeta)&=\big(\tr R_{2}(\overline{\zeta})\big)^*\, Q_2JQ_1(\tr R_{1}({\zeta})), 
\end{align}
where  $\zeta\not\in\spec(\cA_1)\cup\spec(\cA_2)$, $J$ is the symplectic matrix from \eqref{int1}, $Q_1, Q_1\in \cB(\bndra)$ are the orthogonal projections onto the Lagrangian planes $\cG_1, \cG_2\subset \bndra$ defining the self-adjoint extensions $\cA_1, \cA_2$ via $\cG_1=\overline{\tr(\dom(\cA_1))}$, $\cG_2=\overline{\tr(\dom(\cA_2))}$.  In particular,  using the property $Q_1JQ_1=0$, a key property of projections onto Lagrangian planes, formula \eqref{int2} yields
\begin{align}\lb{int4}
R_2({\zeta})-R_1({\zeta})&=\big(\tr R_{2}(\overline{\zeta})\big)^*\, (Q_2-Q_1)JQ_1(\tr R_{1}({\zeta})),
\end{align}
which indicates that  $\|R_2({\zeta})-R_1({\zeta})\|_{\cB(\cH)}\rightarrow0$ whenever $\|Q_2-Q_1\|_{\cB(\bndra)}\rightarrow0$, see Theorem \ref{thm1.7}. Also, we rewrite the resolvent difference formula \eqref{int4} in terms of bounded operators $X_k, Y_k\in\cB(\mathfrak{H})$ chosen such that $\cG_k=\ker[X_k, Y_k]$, $k=1,2$, see \eqref{nn5.14a}. 

Relying on the resolvent difference formula \eqref{int4}, we investigate differentiability properties and obtain asymptotic expansion for resolvent operators as functions of a scalar parameter $t\in[0,1]$ parametrizing sufficiently smooth paths of Lagrangian planes $t\mapsto \cG_t$, additive bounded self-adjoint perturbations $t\mapsto V_t\in\cB(\cH)$, and trace maps $t\mapsto \tr_t$ satisfying Green identity \eqref{int1}. That is, we develop a full scale first order asymptotic theory for a one parameter family of self-adjoint operators $H_t:=\cA_t+V_t$, with $\cA_t$ being a self-adjoint extension of $A$ associated with the Lagrangian plane  $\cG_t$ via the relation $\overline{\tr_t(\dom(\cA_t))}=\cG_t$. First, we prove that, respectively, continuity, Lipschitz continuity, and differentiability at $t_0\in[0,1]$ of the paths of Lagrangian planes, bounded perturbations, and trace maps, yields continuity, Lipschitz continuity, and differentiability, respectively, of the path of resolvent operators $t\mapsto R_t(\zeta):= (H_t-\zeta)^{-1}$, $\zeta\not \in\spec(H_{t_0})$. At the first glance such results should seemingly follow from the resolvent difference formula \eqref{int4} as it suggests that $R_t({\zeta})-R_{t_0}({\zeta})$ and $Q_t-Q_{t_0}$ are of the same order. It turns out, however, that the boundedness of the appropriate norm of $TR_t(\zeta)$ for $t$ near $t_0$ could be a subtle issue depending on whether we are dealing with the strict inclusion $\cD\subsetneq \cH_+$ or the equality $\cD= \cH_+$. 

Let us elaborate on this in more detail. First, the operator  $TR_t(\zeta)$ is bounded as a linear mapping from $\cH$ to $\bndra$, i.e.   $TR_t(\zeta)\in \cB(\cH, \bndra)$ even without assuming that $\cD=\dom(\tr)$ is equipped with its own Banach norm, see Proposition \ref{prop1.3}. When it is, however, we claim more: $\tr\in \cB(\cD, \bndra)$ and $R_t(\zeta)\in\cB(\cH, \cD)$, see Proposition \ref{prop2.4}.
% In fact the latter holds if and only if the graph norm of $A^*$ and the $\cD-$norm are equivalent on the $\dom(\cA_t)$, i.e., 
%\begin{equation}\lb{int11}
%c_t(\|u\|_{\cH}^2+\|A^*u\|_{\cH}^2)\leq \|u\|_{\cD}\leq %C_{t}(\|u\|_{\cH}^2+\|A^*u\|_{\cH}^2), u\in \dom(\cA_t),
%\end{equation} 
%for some $0< c_t<C_t$. 
The main issue is that in the abstract setting one does not have a good quantitative control of the norm  $\|R_t(\zeta)\|_{\cB(\cH, \cD)}$ as a function of $t$. We, therefore, impose the assumption
\begin{equation}\lb{int10}
\|R_t(\zeta)\|_{\cB(\cH, \cD)}\underset{t\rightarrow t_0}{=}\cO(1). 
\end{equation}
That being said,  condition \eqref{int10} is automatically satisfied when the strict inclusion $\cD\subsetneq \cH_{+}$
 is replaced by the equality $\cD=\cH_{+}$, in which case we show not only boundedness \eqref{int10} but also continuity of the reslovent operators
\begin{equation}\lb{int15}
\|R_t(\zeta)-R_{t_0}(\zeta)\|_{\cB(\cH, \cD)}\underset{t\rightarrow t_0}{=}o(1), 
\end{equation}
see Proposition \ref{prop2.4h}. We stress that \eqref{int10} is a natural assumption for the case when  $\cD\subsetneq \cH_{+}$. This assumption is satisfied, although not trivially, in many PDE contexts of interest as its proof essentially boils down to controlling $L^2(\Omega)$ to $H^1(\Omega)$ norm of the reslovent of a second order elliptic operator for $t$ near $t_0$, see Section \ref{ssKKFREO} where we check it for elliptic operators subject to Robin boundary conditions. To sum up, the resolvent difference formula \eqref{int4} together with hypothesis \eqref{int10} yield continuity of the resolvent operators $t\mapsto R_t(\zeta)$. The differentiability requires not only \eqref{int10} but actually \eqref{int15} that we impose as an assumption when  $\cD\subset \cH_{+}$. As we already pointed out \eqref{int15} holds automatically if $\cD=\cH_+$ and it holds in most standard PDE realizations of a more general situation $\cD\subsetneq\cH_+$.

Having discussed differentiability of the mapping $t\mapsto R_t(\zeta)$ we now switch to first order asymptotic expansions of the resolvents.  The main goal of this part of the paper is to derive an Hadamard-type formula\footnote{As we have already noted above, we borrow the term {\em Hadamard-type formula} from the PDE literature on geometric perturbations of spatial domains and use it for general formulas for  derivatives of eigenvalues} for derivatives of the eigenvalues curves of $H_t$.  As a first step, we derive in Theorem \ref{prop1.8new}  the following asymptotic expansion  for the resolvent,
\begin{align}
\begin{split}\lb{int20}
R_t(\zeta)\underset{t\rightarrow t_0}{=}R_{t_0}(\zeta)&+
\big(-R_{t_0}(\zeta)\dot V_{t_0}R_{t_0}(\zeta)+(\tr_{t_0}  R_{t_0}(\overline{\zeta}))^*\dot Q_{t_0}J \tr_{t_0}    R_{t_0}(\zeta)\\
&+(\tr_{t_0}  R_{t_0}(\overline{\zeta}))^*J\dot \tr_{t_0}  R_{t_0}(\zeta)\big)(t-t_0)+o(t-t_0),\ \text{\ in\ }\cB(\cH);
\end{split}
\end{align}
here and throughout the paper $\frac{d}{dt}$ is abbreviated by the dot, e.g., $\dot V_{t_0}=\frac{dV}{dt}|_{t=t_0}$. In particular, we deduce a new Riccati-type differential equation for the resolvents,
\begin{equation}
\begin{split}
\dot{R}_{t_0}(\zeta)=-R_{t_0}(\zeta)\dot V_{t_0}R_{t_0}(\zeta)&+(\tr_{t_0}  R_{t_0}(\overline{\zeta}))^*\dot Q_{t_0}J \tr_{t_0}    R_{t_0}(\zeta)\\
&+(\tr_{t_0}  R_{t_0}(\overline{\zeta}))^*J\dot \tr_{t_0}  R_{t_0}(\zeta).
\end{split}
\end{equation}

Next, we compute the slopes of eigenvalue curves $\{\lambda_j(t)\}_{j=1}^m$ bifurcating from an isolated eigenvalue $\lambda\in\spec (H_{t_0})$ of multiplicity $m\geq 1$. Our strategy is to integrate \eqref{int20} over a contour $\gamma\subset \bbC$ enclosing the eigenvalues $\{\lambda_j(t)\}_{j=1}^m$ for $t$ near $t_0$, obtain an asymptotic expansion for the $m-$dimensional operator $P(t)H_tP(t)$, where $P(t)$ is the Riesz  projector onto the spectral subspace $\ran(P(t))=\bigoplus_{j=1}^m\ker(H_t-\lambda_j(t))$, and reduce matters to asymptotic perturbation techniques for finite dimensional self-adjoint operators. Specifically,  we employ the body of finite dimensional results from Theorem II.5.4 and Theorem II.6.8 of \cite{K80}.
In the literature on Maslov index and spectral flow these results are called the {\em Kato selection theorem}, cf.\ \cite[Theorem 4.28]{RoSa95}, 
as they allow one to properly choose the $m$ branches of the eigenvalue curves for $P(t)H_tP(t)$ and compute their slopes. 
A subtle issue in this scheme, though, is that the finite dimensional operators  $P(t)H_tP(t)$ are defined on varying $t-$dependent spaces $\ran(P(t))$. As in \cite{LS17},  we remedy this by introducing a differentiable family of unitary operators  $t\mapsto U_t$, cf. \eqref{int27}, \eqref{int28}, mapping  $\ran(P(t_0))$ onto $\ran(P(t))$ and obtain the first order expansion for unitarily equivalent to $P(t)H_tP(t)$ operators acting in a fixed finite-dimensional space $\ran(P(t_0))$, see Lemma \ref{theorem2.2}. Finally, utilizing this expansion and the Kato selection theorem we show that there is a proper labelling of the eigenvalues $\{\lambda_j(t)\}_{j=1}^m$ of $H_t$ for $t$ near $t_0$ and an orthonormal basis $\{u_j\}_{j=1}^m\subset\ker( H_{t_0}-\lambda)$ such that the following  Hadamard-type formula holds, 
\begin{equation}\lb{int30}
\dot\lambda_j(t_0)=\langle \dot V_{t_0}u_j, u_j\rangle_{\cH}+\omega(\dot Q_{t_0}\tr_{t_0} u_j, \tr_{t_0} u_j)+\omega( \tr_{t_0}u_j, \dot \tr_{t_0} u_j), 1\leq j\leq m,
\end{equation}
where $\omega(f,g)=\left\langle
J f,g\right\rangle_{\mathfrak{H}\times\mathfrak{H}}$, $f,g\in\bndra$ is the symplectic form. This quite general result is one of the major points of the paper; we apply it in several particular situations.

Also, we use this computation to give an infinitesimal version of 
a general abstract analogue of
the classical formula, cf.\ \cite{BbF95, BZ1,CLM}, relating the following two quantities: (1) the Maslov index of the path $t\mapsto \cG_t\oplus \tr \big(\ker\big(A^*+V_t-\lambda\big)\big)$ relative to the diagonal plane in $\mathfrak{H}\times\mathfrak{H}$, and (2) the spectral flow of the family $t\mapsto H_t$ through $\lambda$ for $t$ near $t_0$.  Heuristically, the latter quantity  is given by the difference between the number of monotonically increasing and decreasing eigenvalue curves of $H_t$ bifurcating from $\lambda$. The former quantity is equal to the signature of the Maslov form which is a certain bilinear form defined on  $\tr\big( \ker(H_{t_0}-\lambda)\big)$, see Sections \ref{mastriples} and \ref{section5.4}. In order to relate the two, we prove by computation
that, in fact, the value of the Maslov crossing form coincides with the right-hand side of \eqref{int30}, cf.  Theorem \ref{HadmardSimple} and Proposition \ref{HadmardSimple5}. Similar relations have been established, in particular, by G. Cox, C.K.R.T. Jones, J. Marzuola  in \cite{CJM1, CJM2}, B. Boo\ss -Bavnbek, C. Zhu \cite{BZ1},  B. Boo\ss -Bavnbek, K. Furutani \cite{BbF95},P. Howard, A. Sukhtayev \cite{MR3437596, MR4043663}. The computational and applied aspects of the Maslov index theory have recently been considered by F. Chardard, F. Dias, T. J.  Bridges \cite{MR2277067, MR2598511, MR2524837, MR2831770}

In a later part of the paper we also give a  generalization of the resolvent difference formula to the case of 
{\em adjoint pair} of operators,
 see, e.g., \cite{AGW, BGW, BMNW} and the literature cited therein. Important contributions to the theory of adjoint pairs can be found in \cite{A12, MHMNW, MM02}. It allows one to describe non-selfadjoint extensions for an adjoint pair of densely defined closed (but not necessarily symmetric) operators. A typical example of the adjoint pair is given by a non-symmetric elliptic second order partial differential operator and its formal adjoint; this example is also discussed in the paper.

\subsection{Summary of applications} 
Our applications  are given in Sections \ref{abt} and \ref{subsec1.1}. In Section \ref{abt}
we collected all results pertaining  the \sel{ordinary} boundary triplets (covering the case of metric graphs, and ``rough'' PDE traces). This section also provides more applications of the asymptotic expansions of resolvents in the context of ordinary boundary triplets obtained by the authors in \cite{LSTaylor2}.  In Section \ref{subsec1.1}  we deal with more general case of densely defined not surjective traces (which covers the ``weak'' PDE traces). Our main applications are to spectral count for Robin Laplacians on bounded domains, periodic Kronig--Penney models,  Hadamard-type formulas for Schr\"odinger operators on metric graphs, and heat equation posted on bounded Lipschitz domains. Let us succinctly describe relevant results.

$\bullet$ We prove that for Baire almost every periodic sequence of coupling constants $\alpha=\{\alpha_k\}_{k=1}^{\infty}\in \ell^{\infty}(\bbZ, \bbR)$ the spectrum of the Schr\"odinger operator $H_{\alpha}$ acting in $L^2(\bbR)$ and given by 
\begin{align}
&H_{{\alpha}}:=-\frac{\bd^2}{\bd x^2}+\sum_{k\in\bbZ}\alpha_k \delta(x-k),
\end{align}
has no closed gaps, see Section \ref{ssPerKrPen}. The analogous assertion for Schr\"odinger operators $H_V=-\frac{d^2}{dx^2}+V$ for periodic $V\in C^{\infty}(\bbR)$ (due to B. Simon \cite{MR473321}) and their discrete versions have been instrumental in the works of D. Damanik, J. Fillman and M. Lukic \cite{MR3737889} and A. Avila \cite{MR2504859}, correspondingly, on Cantor spectra for generic limit-periodic Schr\"odinger operators. As in  \cite{MR473321}, we prove this statement by perturbation arguments applied to the Hill equation on a finite interval associated with $H_{\alpha}$ (an alternative approach covering a wide class second-order differential operators is proposed in the work of D. Damanik, J. Fillman and the second author). 

$\bullet$    For a general elliptic second order operator $\cL:=-\div(\mathtt{A}\nabla)+ \mathtt{a}\cdot \nabla -\nabla \cdot \mathtt{a} +\mathtt{q}$ posted on a bounded Lipschitz domain $\Omega\subset \bbR^d, d\geq 2$, see Section \ref{SS5.1}, and subject to a one parameter family of Robin conditions $\partial_{\nu}u=\Theta_t u$ on $\partial\Omega$, we derive Hadamard- and resolvent difference formulas, see Theorem \ref{theorem6.2}, and use these results to discuss in Section \ref{ssKKFREO} a unified approach to L.\ Friedlander's and J.\ Rohleder's  inequalities via a spectral flow argument, see \cite{Fri91,Rohl14} and \cite{CJM2}.  

$\bullet$ For an arbitrary compact metric graph $\cg$ and the Schr\"odinger operator $H_t=-\frac{d^2}{dx^2}+V$ subject to parameter dependent vertex conditions $X_t u+Y_t\partial_{n}u=0$ (here $\partial_{n}u$ is the derivative of $u$ taken in the inward direction along each edge), we derive the following Hadamard-type formula for the slopes of eigenvalue curves $\{\lambda_j(t)\}_{j=1}^m$ bifurcating from an eigenvalue of $H_{t_0}$ of multiplicity $m\geq 1$,
\begin{equation}\lb{5.10newnew24a}
\dot{\lambda}_j(t_0)= \langle \dot V_{t_0}u_j, u_j\rangle_{L^2(\cg)}+\big\langle (X_{t_0}\dot{Y}_{t_0}^*-Y_{t_0}\dot{X}_{t_0}^*) \phi _j, \phi_j\big\rangle_{L^2(\partial\cg)}, 
\end{equation}
where  $ \{u_j\}_{j=1}^m$ is a certain orthonormal basis of  $\ker(H_{t_0}-\lambda(t_0))$,  $\phi_{j}$ is a unique vector in $L^2(\partial\cg)$ satisfying $ u_j=-Y_{t_0}^*\phi_j$ and $\partial_n u_j=X_{t_0}^*\phi_j$, $1\leq j\leq m$, see Section \ref{ssQuaGr}.  In the theory of quantum graphs, Hadamard-type formulas are often derived on a case-by-case basis for simple eigenvalue curves, see, for example a classical monograph by G. Berkolaiko and P. Kuchment \cite[Section 3.1.4.]{BK}; \eqref{5.10newnew24a} closes this gap in the literature. In addition, we derive a resolvent difference formula expressing the difference of two arbitrary self-adjoint realizations of the Schr\"odinger operator in terms of the vertex matrices $X_j, Y_j$, $j=1,2$.

$\bullet$ For the heat equation 
\begin{equation} 
\begin{cases}
&u_\mathtt{t}(\mathtt{t},x)=\kappa \rho(x) \Delta_x u (\mathtt{t},x), x\in\Omega,
\mathtt{t}\ge0,\\
&-\kappa \frac{\partial  u}{\partial n}= u,\text{\ on\ }\partial \Omega,
\end{cases}
\end{equation}
describing the temperature $u$ of a material in the region $\Omega\subset \bbR^3$ with thermal conductivity $\kappa$ immersed in a surrounding medium of zero temperature (here $1/\rho(x)$ is the product of the density of the material times its heat capacity), we give a new proof of continuous dependence of $u$ on $\kappa$ with respect to $L^2(\Omega)$ norm, see Section \ref{ssHeatEq}. 

 The symplectic (Lagrangian) point of view on self-adjoint extensions and boundary triplets systematically used in this paper (and a more general approach via Krein spaces, cf.\ \cite{DHMS2006}) is a quite powerful tool that of course brings up many new and unresolved issues. Among the open questions we mention: finding a symplectic interpretation of the abstract Weyl's function; describing exit-space extensions using symplectic approach; studying (in the context of self-adjoint extensions) so called lateral perturbations introduced in \cite{BK22}; relating Hadamard-type formulas to the secular equations \cite{BK} for quantum graphs.

{\bf Organization of the paper.} In Section \ref{KreinFormulas} we begin with basic setup, discuss properties of the trace operators and their composition with the resolvents for the general case when the imbedding $\cD\hookrightarrow\cH_+$ is not surjective.
The most general symplectic resolvent difference formula for the difference of resolvents of any two self-adjoint extensions is proven in Theorem \ref{thm1.7}. 
In Section \ref{Sec3.1} we discuss our main setup and assumptions on one-parameter families of traces, self-adjoint extensions, and bounded perturbations, and provide typical examples when our assumptions are
satisfied. The examples include: Schr\"odinger operators with Robin-type boundary conditions on families of star-shaped domains, second order operators on infinite cylinders with variable multidimensional cross-sections, operators arising as Floquet--Bloch decomposition of periodic Hamiltonians, and first order elliptic operators of Cauchy-Riemann type on cylinders. In Section \ref{Sec3.2} we obtain general resolvent expansions and derive the Riccati equations for the resolvent operators. The variational Hadamard-type formula for the eigenvalue curves is proven in Section \ref{Sec3.3}. This section also contains resolvent difference formulas for families of self-adjoint extensions given by either families of projections in the boundary space $\mathfrak{H}\times\mathfrak{H}$ or as kernels of the bounded  row-operators $[X_t, Y_t]$. In Section \ref{Sec4.2} we formulate our major results for the case $\cD=\cH_+$, that is, for the \sel{ordinary} boundary triplets. As an example, we treat the ODE case of Robin boundary conditions on a segment. In Section \ref{ssLapLip}  we study Robin Laplacian on multidimensional domains in the framework of the boundary triplets which requires the use of the ``rough'' traces. Section \ref{ssQuaGr} is devoted to applications to quantum graphs, here, in particular, we derive Hadamard-type formula \eqref{5.10newnew24a}. The periodic Kronig-Penney model is considered in Section \ref{ssPerKrPen}. In Section \ref{mastriples} we begin discussion on connections to the Maslov index and prove a general result relating the value of the Maslov crossing form and the slope of the eigenvalue curves for \sel{ordinary} boundary triplets. In Section \ref{SS5.1} we switch to the second order elliptic operators, return back to the case $\cD\subsetneq\cH_+$, and use weak boundary traces. Hadamard-type and resolvent difference  formulas for  Robin realizations, Friedlander's and Rohleder's theorems are discussed in Section \ref{ssKKFREO}. Applications to the heat equation are given in Section \ref{ssHeatEq}.
In Section \ref{HFSSD} we derive from our general results the classical Hadamard--Rellich formula for the eigenvalues of the Schr\"odinger operator posted on a family of star-shaped domains. The Maslov crossing form for elliptic operators defined by means of the weak solutions is studied in Section \ref{section5.4}. In Section \ref{sec:dualpairs} we provide generalizations of the resolvent difference formula to the case of an {\em adjoin pair} of operators. This results are applied to the example of an elliptic second order partial differential operator and its formal adjoint. In Appendix \ref{AppA} we give a detailed discussion of the correspondence between the Lagrangian planes in the boundary space $\mathfrak{H}\times\mathfrak{H}$ and the domains of the self-adjoint extensions. We introduce and study the notion of  {\em aligned subspaces}  and show that for these the correspondence is a bijection. Appendix \ref{Sec4.1} shows how to derive the classical Krein's formulas involving the $M$-function from the new symplectic version of the resolvent difference formula that we offered in the paper. Finally, Appendix \ref{appA} contains some well known material regarding PDE boundary traces.

{\bf Notation.} We denote the space of bounded linear operators acting between two Banach spaces $\cX$ and $\cY$ by  $\cB(\cX, \cY)$ and let $\cB(\cX):=\cB(\cX, \cX)$. The closure of an operator $T: \cX\rightarrow \cY$ is denoted by $\overline{T}$. We denote by $\spec(T)$ the spectrum, by $\spec_{\rm disc}(T)$ the set of isolated eigenvalues of finite algebraic multiplicity, and by $\spec_{\rm ess}(T)=\spec(T)\setminus\spec_{\rm disc}(T)$ the essential spectrum of $T$. The scalar product 
  (linear with respect to the {\it first} argument) and the norm on a Hilbert space $\cH$ are denoted by $\langle \cdot, \cdot\rangle_{\cH}$ and $\|\cdot\|_{\cH}$ respectively. When $\cH$ is a Hilbert space, we denote the space of bounded {\it linear} functionals on $\cH$ by $\cH^*$ and define a {\it conjugate-linear} Riesz isomorphism by $\Phi: \cH^*\mapsto \cH$, $\cH^*\ni\psi\mapsto \Phi_{\psi}\in\cH$ so that $_{\cH}\langle f, \psi\rangle_{\cH^*}:=\psi(f)=_{}\langle f, \Phi_{\psi} \rangle_{\cH}, f\in \cH$.  In the special case of Sobolev spaces $\cH=H^{1/2}(\partial\Omega)$ we set $\cH^*=H^{-1/2}(\partial\Omega)$ and denote $\langle f, \psi\rangle_{-1/2}:=_{H^{1/2}(\partial\Omega)}\langle f, \psi\rangle_{H^{-1/2}(\partial\Omega)} $ for $f\in H^{1/2}(\partial\Omega)$, $\psi\in H^{-1/2}(\partial\Omega)$. The closure of a subspace $S\subset \cH$ with respect to $\|\cdot\|_{\cH}$ is denoted by $\overline{S}^{\cH}$ while its orthogonal complement by $S^{\perp_\cH}$.  For operators $A,B\in\cB(\cX, \cY)$, we let $[A,B]\in\cB(\cX\times\cX, \cY)$, $[A,B](h_1, h_2)^{\top}:=Ah_1+Bh_2$, $h_1, h_2\in\cX$ and $[A,B]^{\top}\in\cB(\cX, \cY\times\cY)$, $[A,B]^{\top}(h):=(Ah, Bh)^\top$, $h\in\cX$, where $\top$ stands for transposition. 
  We denote by $\Lambda(\cX\times\cX)$ the set of Lagrangian subspaces in $\cX\times\cX$ equipped with the symplectic form $\omega$ induced by the operator $J=\left[\begin{smallmatrix}0 & I_\cX\\-I_\cX&0\end{smallmatrix}\right]\in\cB(\cX\times\cX)$. Given an operator valued function $f:\bbR\rightarrow\cB(\cX)$, we write $f(t)=o((t-t_0)^n)$ as $t\rightarrow t_0$ if $\|f(t)\|_{\cB(\cX)}|t-t_0|^{-n}\rightarrow 0$ as  $t\rightarrow t_0$. Similarly, $f(t)=\cO((t-t_0)^n)$ as $t\rightarrow t_0$ whenever $\|f(t)\|_{\cB(\cX)}|t-t_0|^{-n}\leq c$ for some $c>0$ and all $t\not = t_0$ in some open interval containing $t_0$. We denote by $\mathbb{B}_r(\zeta)$ the disc in 
  $\bbC$ of radius $r$ centered at $\zeta$ and by $\mathbb{B}^n_r$ the ball in $\bbR^n$ of radius $r$ centered at zero.

{\bf Acknowledgments.}\ YL was supported by the NSF grants
	DMS-1710989, DMS-2106157, and would like to thank the Courant Institute of Mathematical Sciences and especially Prof.\ Lai-Sang Young for the opportunity to visit CIMS. \sel{SS were supported in part by NSF grant DMS-2243027, Simons Foundation grant MP-TSM-00002897, and by the Office of the Vice President for Research \& Economic Development (OVPRED) at Auburn University through the Research Support Program grant}. Both authors gratefully acknowledge support from the Simons Center for Geometry and Physics, Stony Brook University, where a part of this research was completed during the workshop ``Ergodic Operators and Quantum Graphs". 
	
	 We are thankful to 
	M. Ashbaugh, V. Derkach, C. Gal, F. Gesztesy, R. Schnaubelt for pointing out very useful sources in the literature. 
	\sel{We are grateful to the referees of  this paper for their insightful suggestions and deep mathematical remarks. We are especially grateful for the suggestion incorporated as Remark  \ref{rem:ref1}.}

\section{A symplectic resolvent difference formula}\lb{KreinFormulas}
Let $\cH, \mathfrak{H}$ be complex, separable Hilbert spaces. Let $A$ be a densely defined, closed, symmetric operator acting in $\cH$ and having equal (possibly infinite) deficiency indices, that is, 
\begin{equation}
\dim\ker(A^*-\bfi)=\dim\ker(A^*+\bfi).
\end{equation}
We denote $\cH_+=\dom(A^*)$ and equip this Hilbert space
with the graph scalar product 
\begin{equation}
\langle u, v\rangle_{\cH_+}:=\langle u, v\rangle_{\cH}+\langle A^*u, A^*u\rangle_{\cH}, \ u,v\in\dom(A^*).
\end{equation}
 Let $\cH_-=(\cH_+)^*$ denote the space adjoint to $\cH_+$ with
\begin{equation}\lb{aub100}
\cH_+\hookrightarrow\cH\hookrightarrow\cH_-,
\end{equation}
where the first embedding is given by $\cH_+\ni u\mapsto u\in\cH$, and the second embedding is given by $\cH\ni v\mapsto \langle\cdot, v\rangle_{\cH}$. Let $\Phi^{-1}:\cH_+\to\cH_-$ be the Riesz isomorphism such that \[{}_{\cH_+}\langle u,\Phi^{-1}w\rangle_{\cH_-}=\langle u,w\rangle_{\cH_+}=
\langle u,w\rangle_{\cH}+\langle A^*u,A^*w\rangle_{\cH}, u,w\in\cH_+.\]The following hypothesis will be assumed throughout the rest of the paper. 
\begin{hypothesis}\lb{hyp3.6} We assume that $A$ is a densely defined, closed, symmetric operator acting in $\cH$ and having equal (possibly infinite) deficiency indices. Suppose that $\cD$ is a core for $A^*$, that is, $\cD$ is a dense subspace of $\cH_+$ with respect to the graph norm of $A^*$, and assume that $\dom(A)\subset \cD$. Consider a linear operator
\begin{equation}\lb{eq1}
\tr:= [\Gamma_0, \Gamma_1]^\top: \cH_+\rightarrow \mathfrak{H}\times\mathfrak{H} \text{ such that $ \dom(\tr)=\cD$, $\overline{\ran(\tr)}=\bndra$}
\end{equation}	
	 called the {\em trace operator}. Assume that $\tr$  satisfies the following abstract Green identity,
	\begin{equation}\lb{3.61}
	\langle A^*u,v\rangle_{\cH}-\langle u,A^*v\rangle_{\cH}=\langle\Gamma_1u,\Gamma_0v\rangle_{\mathfrak{H}}-\langle\Gamma_0u,  \Gamma_1v\rangle_{\mathfrak{H}} \text{ for all $u,v\in\cD$}.
	\end{equation}
\end{hypothesis}
A simple but very important setting satisfying Hypothesis \ref{hyp3.6} is given by \sel{ordinary} boundary triplets, cf., e.g., \sel{\cite{Behrndt_2020, DM95}}, in which case one lets $\cD=\dom(A^*)=\cH_+$ and one can always define a Hilbert space $\mathfrak{H}$ and a trace operator $\tr$ satisfying \eqref{3.61}. This scenario is discussed in Section \ref{abt} below. Yet more elaborate setting, which is more suitable for PDEs,  is discussed in Section \ref{subsec1.1} where Hypothesis \ref{hyp3.6} holds with $\cD\subsetneq\dom(A^*)$ being a proper subset of $\cH_+$. 

\sel{
\begin{remark}
The notion of ordinary boundary triplets has been modified and generalized in several (similar but not equivalent) directions and applied to elliptic differential operators by multiple authors. The pioneering paper \cite{DM95} offered the first such generalization where $\Gamma_0$ was assumed to be surjective and the operator $A^*|_{\ker\Gamma_0}$ self-adjoint, see also  \cite{BL, BL2012, BM, DHM2017, DHM2020, DHM2022, DHMS2006, DHMS2012, DM}.
\end{remark}
}

In the following propositions we collect some elementary properties of the operator $\tr$ and its composition with the resolvent $R(\zeta,\cA)=(\cA-\zeta)^{-1}$ of a self-adjoint extension $\cA$ of $A$. 

\begin{proposition}\lb{remark2.2} Under Hypothesis \ref{hyp3.6}  the following assertions hold. 
	
	(1) \, $\dom(A)=\ker(\tr)$.
	
	(2) \, The operator $\tr:\cD\subset\cH_+\rightarrow\bndra $ defined in \eqref{eq1} is closable.

\end{proposition}
\begin{proof}
	(1)\, Identity \eqref{3.61} yields $\dom(A)\subseteq\ker(\tr)$. Indeed, pick an arbitrary $u\in\dom(A)$.  Since $\ran(\tr)$ is dense in $\bndra$, there is a sequence $v_n\in\cD$ such that $\tr v_n\rightarrow (\Gamma_1u, -\Gamma_0u)$. Using \eqref{3.61} and $u\in\dom(A)$, we infer $\langle\Gamma_1u,\Gamma_0v_n\rangle_{\mathfrak{H}}-\langle\Gamma_0u,  \Gamma_1v_n\rangle_{\mathfrak{H}}=0$. Passing to the limit yields $\|\Gamma_1u\|^2_{\mathfrak H}+\|\Gamma_0u\|^2_{\mathfrak H}=0$, hence $u\in\ker(\tr)$. The inclusion $\ker(\tr)\subseteq \dom(A)$ follows from \eqref{3.61}, density of $\cD$ in $\cH_+$, and the fact that $A^{**}=A$ (since $A$ is closed). 
	
	(2)\,   Suppose that a sequence $\{u_n\}_{n\in\bbN}$ converges to $0$ in $\cH_+$ while  \[\{(\Gamma_0u_n, \Gamma_1u_n)^\top \}_{n\in\bbN}\] converges to some $(f,g)^\top$ in $\bndra$. Then for all $v\in\cD$ one has
	\begin{align}
		\langle f,\Gamma_0v\rangle_{\mathfrak{H}}-\langle g, \Gamma_1v\rangle_{\mathfrak{H}}&=\lim\limits_{n\rightarrow\infty}\langle \Gamma_1 u_n,\Gamma_0v\rangle_{\mathfrak{H}}-\langle \Gamma_0 u_n, \Gamma_1v\rangle_{\mathfrak{H}}\\
		&=\lim\limits_{n\rightarrow\infty}\langle A^*u_n,v\rangle_{\cH}-\langle u_n,A^*v\rangle_{\cH}=0. 
	\end{align}
	Hence, by density of $\ran(\tr)$ in $\bndra$, we have $\langle f, h_1 \rangle_{\mathfrak{H}}-\langle g, h_2 \rangle_{\mathfrak{H}}=0$ for all $h_1, h_2\in\mathfrak H$. Setting, $h_1=f$, $h_2=-g$ we get $f=g=0$.
\end{proof}

\begin{proposition}\lb{prop1.3} Assume Hypothesis \ref{hyp3.6} and assume that there exists a self-adjoint extension   $\cA$  of  $A$ satisfying $\dom(\cA)\subset \cD$. Then the resolvent operator $R(\zeta, \cA):=(\cA-\zeta)^{-1}\in\cB(\cH)$, $\zeta\in\bbC\setminus\Sp(\cA)$, can be viewed as a bounded operator from $\cH$ to $\cH_ +$. Furthermore,
	\begin{equation}\lb{3221}
		\tr R(\zeta, \cA)\in \cB(\cH, \bndra).
	\end{equation}
\end{proposition}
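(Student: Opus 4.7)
The plan is to establish the two claims separately, with the first serving as the main ingredient for the second through a closed-graph argument.

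First, I would verify that $R(\zeta,\cA)$ maps $\cH$ boundedly into $\cH_+$. Given $f\in\cH$, set $u=R(\zeta,\cA)f$, so that $u\in\dom(\cA)$ and $(\cA-\zeta)u=f$. Since $\cA$ is a self-adjoint extension of the symmetric operator $A$, we have $A\subset\cA=\cA^*\subset A^*$, so $u\in\dom(A^*)=\cH_+$ and $A^*u=\cA u=f+\zeta u$. Consequently,
\begin{equation}
\|u\|_{\cH_+}^2=\|u\|_\cH^2+\|A^*u\|_\cH^2=\|u\|_\cH^2+\|f+\zeta u\|_\cH^2\le\bigl(1+2|\zeta|^2\bigr)\|u\|_\cH^2+2\|f\|_\cH^2.
\end{equation}
Combining this with $\|u\|_\cH\le\|R(\zeta,\cA)\|_{\cB(\cH)}\|f\|_\cH$ yields a $\zeta$-dependent constant $C$ with $\|R(\zeta,\cA)f\|_{\cH_+}\le C\|f\|_\cH$, proving $R(\zeta,\cA)\in\cB(\cH,\cH_+)$.

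For the second claim, observe that $R(\zeta,\cA)$ sends $\cH$ into $\dom(\cA)\subset\cD=\dom(\tr)$, so $\tr R(\zeta,\cA)$ is well-defined on all of $\cH$ with values in $\bndra$. I would then invoke the closed-graph theorem: take $f_n\to f$ in $\cH$ with $\tr R(\zeta,\cA)f_n\to h$ in $\bndra$. By the first step, $u_n:=R(\zeta,\cA)f_n\to u:=R(\zeta,\cA)f$ in $\cH_+$, so $v_n:=u_n-u\in\cD$ satisfies $v_n\to 0$ in $\cH_+$ while $\tr v_n\to h-\tr u$ in $\bndra$. By the closability of $\tr$ from $\cH_+$ to $\bndra$ established in Proposition \ref{remark2.2}(2), the limit must be zero, so $h=\tr u=\tr R(\zeta,\cA)f$. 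Thus $\tr R(\zeta,\cA)$ is closed on $\cH$, and the closed-graph theorem delivers \eqref{3221}.

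The only subtle point is the closed-graph step, which hinges on closability of $\tr$ viewed as an unbounded operator between the Hilbert spaces $\cH_+$ and $\bndra$; this is precisely why Hypothesis \ref{hyp3.6} and the conclusion of Proposition \ref{remark2.2}(2) are needed before this proposition. The inclusion $\dom(\cA)\subset\cD$ is essential to even form the composition $\tr R(\zeta,\cA)$ on the whole space $\cH$, which in turn is what allows the closed-graph theorem to apply.
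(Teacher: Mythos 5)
Your proof is correct and follows essentially the same route as the paper: both establish $R(\zeta,\cA)\in\cB(\cH,\cH_+)$ by the same direct norm estimate and then deduce \eqref{3221} from the closed-graph theorem, using the closability of $\tr$ from Proposition \ref{remark2.2}(2) and the inclusion $\ran(R(\zeta,\cA))\subset\cD$. The only cosmetic difference is that the paper packages the closed-graph step by observing that $\overline{\tr}\,R(\zeta,\cA)$ is an everywhere-defined closed operator (a closed operator composed with a bounded one) and then identifying it with $\tr R(\zeta,\cA)$, whereas you unwind the closability definition and verify the graph condition sequentially — the same argument in slightly different clothing.
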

\begin{proof}
	For all $u\in\cH$ one has
	\begin{equation}\begin{split}
			\| R(\zeta, \cA)u\|_{\cH_+}^2&
			=\|R(\zeta, \cA)u\|_\cH^2+\|A^*R(\zeta, \cA)u\|_\cH^2\\
			&\le \|R(\zeta, \cA)u\|_\cH^2+\big(\|(A^*-\zeta)R(\zeta, \cA)u\|_\cH+|\zeta|\big\|R(\zeta, \cA)u\|_\cH)^2,
			%\leq (1+|\zeta|\|R(\zeta, \cA)\|_{\cB(\cH)}+\|R(\zeta, \cA)\|_{\cB(\cH)})\|x\|_{\cH}, x\in\cH,
	\end{split}\end{equation}
	that is, 
	\begin{equation}\lb{1.13}
		\| R(\zeta, \cA)\|_{\cB(\cH,\cH_+)}^2\leq \|R(\zeta, \cA)\|_{\cB(\cH)}^2+(1+|\zeta|\|R(\zeta, \cA)\|_{\cB(\cH)})^2,
	\end{equation}
	hence $ R(\zeta, \cA)\in\cB(\cH,\cH_+)$. Since $\dom(\cA)\subset \cD=\dom(\tr)$, the operator $\tr R(\zeta, \cA)$ is defined on all of $\cH$. Using this and that  $\overline{\tr}$ is closed as an operator from $\cH_+$ to $\bndra$ by Proposition \ref{remark2.2}(2), we note that 
	$\overline{\tr}  R(\zeta, \cA)\in \cB(\cH, \bndra)$ as a closed everywhere defined operator acting between Hilbert spaces.	Furthermore, since $\ran( R(\zeta, \cA))=\dom (\cA)\subset\cD=\dom(\tr)$, we have $\overline{\tr} R(\zeta, \cA)={\tr}R(\zeta, \cA)$ which  proves the assertion. 
\end{proof} 

\begin{remark}\label{rem:exist}  In Proposition \ref{prop1.3} (and everywhere when needed below), in addition to Hypothesis \ref{hyp3.6}  we {\em assume} the existence of a self-adjoint extension $\cA$ of $A$ with $\dom(\cA)\subset \cD$. The question of the existence of such a self-adjoint extension under merely Hypothesis \ref{hyp3.6} is a subtle one. The nontrivial issue of whether or not, and under which additional minimal assumptions, this indeed happens is beyond the scope of this paper. (We refer interested readers to \cite{Behrndt_2020, DHM2017, DHMS2006, DHMS2012} where closely related questions are discussed and relevant bibliography is provided. In this regard, we highlight an ingenious relevant work \cite{Calkin1939} that was re-discovered and further developed in \cite{HS2012, W2012a}.) That said, the condition $\dom(\cA)\subset \cD$ is indeed prevalent in the settings related to elliptic partial differential operators, ordinary differential operators, and quantum graphs covering our principal applications, see Sections \ref{ssLapLip}, \ref{ssQuaGr}, \ref{SS5.1} -- \ref{HFSSD} where relevant PDE models satisfying all abstract assumptions are discussed in detail. 

We stress that the main objective of our work is to develop first order asymptotic perturbation theory for {\it given} one parametric families of self-adjoint extensions $t\mapsto\cA_t$ of the operator $A$ with the additional property $\dom(\cA_t)\subset\cD$. In the current paper, the operator theoretic setting described by Hypothesis \ref{hyp3.6} and the condition $\dom(\cA_t)\subset\cD$ mainly serves as the vehicle for unifying several important classes of partial differential elliptic operators and ordinary differential operators on metric graphs.
\hfill $\Diamond$ \end{remark}

As it is well-known, the domains of self-adjoint extensions of $A$  are closely related to Lagrangian planes in $\bndra$, see, e.g., \cite[Theorem 3.1.6]{GG}, \cite{Harmer}, \cite{Pa}, \cite[Proposition 14.7]{Schm} and Theorems \ref{LLSA}, \ref{SALL} below. The main results of this section is a resolvent difference formula for two given extensions corresponding to two arbitrary Lagrangian planes, see Theorem \ref{thm1.7}. 
To proceed, we will need to recall some basic definitions from symplectic functional analysis. First, we note that the abstract Green identity \eqref{3.61} gives rise to a symplectic form $\omega$ defined by
\begin{align}
\begin{split}\lb{5.3}
\omega\big((f_1,f_2)^{\top}, (g_1,g_2)^{\top}\big):&=\langle f_2,g_1\rangle_{\mathfrak{H}}-\langle f_1, g_2\rangle_{\mathfrak{H}}\\
&=\left\langle
J (f_1, f_2)^{\top},(g_1,g_2)^{\top}
\right\rangle_{\mathfrak{H}\times\mathfrak{H}},\ 
J:=\begin{bmatrix}
0 & I_{\mathfrak{H}} \\
-I_{\mathfrak{H}}& 0
\end{bmatrix},
\end{split}
\end{align}
$f_k,g_k\in\mathfrak H, k=1,2$. Indeed, using this notation \eqref{3.61} can be re-written as follows,
\begin{equation}\label{3.61new}
\langle A^*u,v\rangle_{\cH}-\langle u,A^*v\rangle_{\cH}=\omega(\tr u, \tr v) \text{ for all $u,v\in\cD$}.
\end{equation} We denote the annihilator of a subspace $\cG\subset\mathfrak{H}\times\mathfrak{H}$ by
\begin{equation}\lb{anig}
\cG^{\circ}:=\{ (f_1,f_2)^{\top}\in \mathfrak{H}\times\mathfrak{H}: \omega\big((f_1,f_2)^{\top}, (g_1,g_2)^{\top}\big)=0 \text{\ for all}\ (g_1,g_2)^{\top}\in \cG\},
\end{equation}
and recall that the subspace $\cG$ is called {\it Lagrangian} if $\cG=\cG^{\circ}$. We denote by $\Lambda(\mathfrak{H}\times\mathfrak{H})$ the metric space of Lagrangian subspaces of $\mathfrak{H}\times\mathfrak{H}$  equipped with the metric
\begin{equation}
d(\cG_1, \cG_2):=\|Q_1-Q_2\|_{\cB(\mathfrak{H}\times\mathfrak{H})},\ \cG_1, \cG_2\in \Lambda(\mathfrak{H}\times\mathfrak{H}), 
\end{equation}
where  $Q_j$ is the orthogonal projection onto $\cG_j$ acting in $\mathfrak{H}\times\mathfrak{H}$, $j=1,2$. 

%{\color{red} Added two sparagrpah} A typical examples of the Lagrangian planes in the PDE setting when $\mathfrak{H}=H^{1/2}(\partial\Omega)$ are the Dirichlet subspace $\{(0,g): g\in H^{1/2}(\partial\Omega)\}$ and the Neumann subspace $\{(f,0): f\in H^{1/2}(\partial\Omega)\}$ corresponding to the Dirichelt and Neumann boundary conditions, see  Example \ref{parametricSchr} and Sections \ref{ssLapLip} and \ref{SS5.1} below.

Next, we recall \sel{a well-known fact  (originally due to Rofe-Beketov, see \cite{MR0244808} and also \cite[Proposition 4(b)]{Pa}\footnote{\cite{Pa} refers to Lagrangian planes as {\em self-adjoint linear relations} (s.a.l.r.), see \cite[Remark 1]{Pa} and describes $\cG$ by means of the equation $Xf_1=Yf_2$ rather than $Xf_1+Yf_2=0$ used in \eqref{3241}. We choose the latter to be consistent with \cite[Theorem 1.4.4 A]{BK}.}}) that any Lagrangian plane $\cG\in\Lambda(\bndra)$ can be written as follows 
\begin{equation}\lb{3241}
\cG=\{(f_1,f_2)^\top\in\mathfrak{H}\times\mathfrak{H}: Xf_1+Yf_2=0\}=\ker ([X,Y]),
\end{equation}
where $[X,Y]$ is a $(1\times 2)$ block operator matrix with $X,Y$ satisfying 
\begin{align}
& X Y^*=YX^*,\quad  X,Y\in\cB(\mathfrak{H}), \lb{com}\\ 
&0\not\in \spec(M^{X,Y}) \text{ for the operator block-matrix  $M^{X,Y}:=\begin{bmatrix}
	X&Y\\
	-Y&X
	\end{bmatrix}$}.\lb{inv}
\end{align}
We note that
\begin{equation}
M^{X,Y} (M^{X,Y})^*=(XX^*+YY^*)\oplus(XX^*+YY^*).
\end{equation}
In particular, $0\not\in\spec(M^{X,Y})$ if and only if $0\not\in\spec(XX^*+YY^*)$. Using this observation we write the orthogonal projection $Q$ onto $\cG$ from \eqref{3241} as follows,
\begin{align}\lb{orproj}
Q=\begin{bmatrix}
-Y^*\\X^*
\end{bmatrix}
\left(XX^*+YY^*\right)^{-1}[-Y, X]=[-Y^*, X^*]^\top W(X,Y).
\end{align} 
Here and below, for brevity, for any $X,Y, X_j, Y_j\in\cB(\mathfrak{H})$, $j=1,2$,
we use notation $W$ and $Z_{1,2}$ for the operators
\begin{equation}\lb{defw}
\begin{split}
W(X,Y)&:= \left(XX^*+YY^*\right)^{-1}[-Y, X],\quad  W(X,Y)\in\cB(\bndra, \mathfrak{H}),\\
Z_{2,1}&:=(W(X_2,Y_2))^*(X_2Y_1^*-Y_2X_1^*)W(X_1,Y_1),\quad Z_{2,1}\in\cB(\mathfrak{H}\times \mathfrak{H}).
\end{split}
\end{equation}

%{\color{red} Added a paragraph}

%The proof of the inclusion $J(\tr(\cD))\subseteq\dom(\tr^*)$ given above is quite straightforward. For our purposes below we need only the inclusion, not the {\em equality} of the two subspaces. We remark parenthetically that the  equality $J(\tr(\cD))=\dom(\tr^*)$ does not hold in general, and could be reformulated as the property $\tr^{-1}=\tr^{[*]}$ of the trace operator $\tr$ being a unitary operator between the Krein spaces $\cH\times\cH$ and $\mathfrak{H}\times\mathfrak{H}$ mentioned in Remark \ref{LagKrein}. A deep characterization of the equality $\tr^{-1}=\tr^{[*]}$ in terms of the Nevanlinna property of the Weyl function is given in \cite[Theorem 3.9]{DHMS2006}, see also Theorem 7.57 and Corollary 7.58 in \cite{DHMS2012}.

We are ready to formulate the principal result of this section -- a symplectic resolvent difference formula for any two arbitrary self-adjoint extensions of $A$.  We refer to Appendix \ref{AppA}
 for connections of the self-adjoint properties of the extensions and Lagrangian properties of the traces of their domains. Also, we refer to Appendix \ref{Sec4.1} and, in particular, to Proposition \ref{prop:KrNaF} for the classical Krein--Naimark formula,\sel{ cf.\ \cite{AhkGlazman}, \cite[Theorem 2.6.1]{Behrndt_2020}, \cite[Chapter 7]{DMbook},  \cite{DM}, \cite[Theorem 14.18]{Schm}}. Finally, a more general version of the symplectic resolvent difference formula that holds for  {\em adjoint pairs} of operators is given in Theorem \ref{APthm1.7} below. 
 
In the next theorem we assume the existence of two self-adjoint extensions of $A$ with domains in $\cD$. As we have pointed out in Remark \ref{rem:exist}, this assumption is nontrivial in the abstract setting of Hypothesis \ref{hyp3.6} but holds for many PDE and quantum graph scenarios, as discussed in Sections \ref{ssLapLip}, \ref{ssQuaGr}, \ref{SS5.1}, \ref{ssKKFREO}, \ref{ssHeatEq}, \ref{HFSSD} below.  
 
 \begin{theorem}\lb{thm1.7}
	Assume Hypothesis \ref{hyp3.6} and suppose there exist two self-adjoint extensions $\cA_1$ and $\cA_2$ of $A$ with domains containing in $\cD$. Then for any $\zeta\not\in(\spec(\cA_1)\cup\spec(\cA_2))$  we have
	\begin{align}
	\lb{5.14}
	R_2({\zeta})-R_1({\zeta})&= \big(\Gamma_0 R_2(\overline{\zeta})\big)^*\Gamma_1 R_1({\zeta})-\big(\Gamma_1 R_2(\overline{\zeta})\big)^*\Gamma_0 R_1({\zeta}),\\
	R_2({\zeta})-R_1({\zeta})&=\big(\tr R_{2}(\overline{\zeta})\big)^*\, J\tr R_{1}({\zeta}),\lb{5.14aJ}
	\end{align}
	where $R_j(\zeta):= (\cA_j-\zeta)^{-1}$
	and $\tr R_j(\overline{\zeta})= \big(\Gamma_0 R_j(\overline{\zeta}), \Gamma_1 R_j(\overline{\zeta})\big)$ is considered as an operator in $\cB(\cH, \bndra)$, $j=1,2$. 
	
	Assume, further, that $\overline{\tr(\dom\cA_j)}$ is a Lagrangian plane in $\bndra$ and \[\overline{\tr(\dom\cA_j)}=\ker([X_j, Y_j])\] with $X_j, Y_j$ satisfying \eqref{com} and \eqref{inv},  and let $Q_j$ denote the orthogonal projection onto $\overline{\tr(\dom\cA_j)}$ for $j=1,2$. Then 
	\begin{align}	
	R_2({\zeta})-R_1({\zeta})&=\big(\tr R_{2}(\overline{\zeta})\big)^*\, Q_2JQ_1\tr R_{1}({\zeta}),
	\lb{5.14a}\\
	R_2({\zeta})-R_1({\zeta})&=\big(\tr R_{2}(\overline{\zeta})\big)^*\, Z_{2,1}\tr\, R_{1}({\zeta}),\lb{nn5.14a}
	\end{align}
	where the operators $Z_{2,1}=(W(X_2, Y_2) )^*(X_2Y_1^*-Y_2X^*_1)W(X_1, Y_1)$ and $W(X_j, Y_j)$  are defined in \eqref{defw}.
\end{theorem}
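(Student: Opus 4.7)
My plan is to derive \eqref{5.14aJ} directly from the abstract Green identity \eqref{3.61new} by testing against resolvents, and then obtain all the other formulas as essentially algebraic rearrangements. Fix $f,g\in\cH$ and set $u_1:=R_1(\zeta)f$ and $v_2:=R_2(\overline{\zeta})g$. The domain hypothesis $\dom(\cA_j)\subset\cD$ guarantees $u_1,v_2\in\cD$, so \eqref{3.61new} applies and gives
\begin{equation*}
\langle A^*u_1,v_2\rangle_{\cH}-\langle u_1,A^*v_2\rangle_{\cH}=\omega(\tr u_1,\tr v_2).
\end{equation*}
Using $A^*u_1=\cA_1 u_1=\zeta u_1+f$, $A^*v_2=\cA_2 v_2=\overline{\zeta}v_2+g$, the left-hand side collapses to $\langle f,v_2\rangle_{\cH}-\langle u_1,g\rangle_{\cH}$. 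Since $\cA_2$ is self-adjoint, $R_2(\overline{\zeta})^*=R_2(\zeta)$, and so $\langle f,v_2\rangle_{\cH}=\langle R_2(\zeta)f,g\rangle_{\cH}$. Combining this with the definition of $\omega$ in \eqref{5.3} rewrites the identity as
\begin{equation*}
\langle(R_2(\zeta)-R_1(\zeta))f,g\rangle_{\cH}=\langle J\tr R_1(\zeta)f,\tr R_2(\overline{\zeta})g\rangle_{\mathfrak{H}\times\mathfrak{H}}.
\end{equation*}
By Proposition \ref{prop1.3} the operators $\tr R_j(\cdot)\in\cB(\cH,\mathfrak{H}\times\mathfrak{H})$ are bounded, so we can move $\tr R_2(\overline{\zeta})$ to the left as its Hilbert adjoint, yielding \eqref{5.14aJ}. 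Formula \eqref{5.14} then follows by expanding $J$ blockwise.

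Next I would derive \eqref{5.14a} from \eqref{5.14aJ}. The assumption $\overline{\tr(\dom\cA_j)}=\cG_j\in\Lambda(\mathfrak{H}\times\mathfrak{H})$ means $\ran(\tr R_j(\zeta))\subset\cG_j$, so the orthogonal projection $Q_j$ satisfies $Q_j\tr R_j(\zeta)=\tr R_j(\zeta)$, and by self-adjointness of $Q_j$ also $(\tr R_2(\overline{\zeta}))^*Q_2=(\tr R_2(\overline{\zeta}))^*$. Inserting both identities into \eqref{5.14aJ} produces the sandwiched form \eqref{5.14a}. (As a consistency check one recovers the well-known $Q_jJQ_j=0$ characterization of Lagrangianness in the limit $\cA_1=\cA_2$.)

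Finally, \eqref{nn5.14a} amounts to matching $Q_2JQ_1$ against $Z_{2,1}$. Using the explicit representation \eqref{orproj}, namely $Q_j=[-Y_j^*,X_j^*]^{\top}W(X_j,Y_j)$, together with $(W(X_2,Y_2))^*=[-Y_2^*,X_2^*]^{\top}(X_2X_2^*+Y_2Y_2^*)^{-1}$ (the inverse being self-adjoint), a direct multiplication gives
\begin{equation*}
W(X_2,Y_2)\,J\,[-Y_1^*,X_1^*]^{\top}=(X_2X_2^*+Y_2Y_2^*)^{-1}(X_2Y_1^*-Y_2X_1^*),
\end{equation*}
since $J[-Y_1^*,X_1^*]^{\top}=[X_1^*,Y_1^*]^{\top}$. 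Substituting this into $Q_2JQ_1$ exactly reproduces $Z_{2,1}$ defined in \eqref{defw}, completing \eqref{nn5.14a}.

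The only genuinely delicate point is the adjoint manipulation: $\tr$ itself is unbounded on $\cH_+$ (merely closable by Proposition \ref{remark2.2}(2)), so one cannot form $(\tr)^*J\tr$ naively. The circumvention is to compose with the resolvents first and rely on Proposition \ref{prop1.3} to obtain honest bounded operators $\tr R_j(\zeta)\in\cB(\cH,\mathfrak{H}\times\mathfrak{H})$ before taking any Hilbert-space adjoints. Everything else is bookkeeping with Green's identity and the block structure of $J$.
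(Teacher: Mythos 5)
Your argument is correct and follows essentially the same route as the paper's: both prove \eqref{5.14aJ}/\eqref{5.14} by applying the abstract Green identity to $R_1(\zeta)f$ and $R_2(\overline{\zeta})g$, exploiting that $\cA_j$ agrees with $A^*$ on $\dom(\cA_j)$, self-adjointness of $\cA_2$ for the resolvent adjoint, and Proposition~\ref{prop1.3} to justify the bounded adjoints; the remaining formulas then come from $Q_j\tr R_j=\tr R_j$ and the explicit form \eqref{orproj} of $Q_j$, exactly as in the paper. The only differences are cosmetic (you derive \eqref{5.14aJ} before \eqref{5.14}, and you spell out the $Q_2JQ_1=Z_{2,1}$ computation that the paper leaves to the reader).
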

\begin{proof}
	By Proposition \ref{prop1.3} we have $\Gamma_0 R_2(\overline{\zeta}), \Gamma_1 R_2(\overline{\zeta})\in\cB(\cH, \mathfrak{H})$. 
	In particular, the adjoint operators appearing in \eqref{5.14} are also bounded. Next, using 
	$(\cA_j-\zeta)R_j(\zeta)=(A^*-\zeta)R_j(\zeta)$, $\cA_2=\cA_2^*$ , and the Green identity \eqref{3.61}, for  arbitrary  $u,v\in\cH$  we infer,
	\begin{align}
	\begin{split}\no
	\langle  R_2({\zeta})u&- R_1({\zeta})u, v\rangle_{\cH}=\langle  R_2({\zeta})u- R_1({\zeta})u, ( \cA_2-\overline{\zeta})R_2(\overline{\zeta})v\rangle_{\cH}\\
	&=\langle  ( \cA_2-{\zeta})R_2({\zeta})u, R_2(\overline{\zeta})v\rangle_{\cH}-\langle R_1({\zeta})u, ( A^*-\overline{\zeta} )R_2(\overline{\zeta})v\rangle_{\cH}\\
	&=\langle  u, R_2(\overline{\zeta})v\rangle_{\cH}-\langle (A^*-{\zeta})R_1({\zeta})u, 
	R_2(\overline{\zeta})v\rangle_{\cH}\\
	&\quad+\langle \Gamma_1 R_1({\zeta})u, \Gamma_0 R_2(\overline{\zeta})v\rangle_{\mathfrak{H}}-\langle \Gamma_0 R_1({\zeta})u, \Gamma_1 R_2(\overline{\zeta})v\rangle_{\mathfrak{H}}\\
	&=\langle \Gamma_1 R_1({\zeta})u, \Gamma_0   R_2(\overline{\zeta})v\rangle_{\mathfrak{H}}-\langle \Gamma_0 R_1({\zeta})u, \Gamma_1   R_2(\overline{\zeta})v\rangle_{\mathfrak{H}}\\
	&= \left\langle \big( (\Gamma_0  R_2(\overline{\zeta}))^*\Gamma_1 R_1({\zeta})-(\Gamma_1  R_2(\overline{\zeta}))^*\Gamma_0 R_1({\zeta})\big)u, v\right\rangle_{\cH}.
	\end{split}
	\end{align}
	This yields \eqref{5.14}. Rewriting \eqref{5.14} using  $J$ introduced in \eqref{5.3} yields \eqref{5.14aJ}.
	For all $u\in\cH$ we have $\tr R_j({\zeta})u\in\tr(\dom\cA_j)$
	and thus $Q_{j}\tr  R_{j}({\zeta})=\tr R_{j}({\zeta})$; so,  equation \eqref{5.14aJ} implies \eqref{5.14a} since $Q_2^*=Q_2$. Equation \eqref{nn5.14a} follows from \eqref{orproj}, \eqref{defw} and \eqref{5.14a}. 
\end{proof}

\begin{remark}\lb{remark2.9} As it is easy to see from the proof of Theorem \ref{thm1.7},  the symplectic resolvent difference formulas \eqref{5.14aJ}, \eqref{5.14a} hold even if $\cA_1$ is  a non self-adjoint restriction of $A^*$;  the only assertion used was $\dom(\cA_j)\subset\dom(\tr)$, $j=1,2$. We further recall that the classical Krein's resolvent formula, see, e.g., \cite{Behrndt_2020, Schm} and Appendix \ref{Sec4.1}, gives an expression of the difference of the resolvents of an {\em arbitrary} self-adjoint extension $\cA$ of $A$ and a {\em special}, ``Dirichlet"-type extension $\cA_0$ whose domain is $\ker(\Gamma_0)$. The difference of the resolvents of the two extensions is expressed in terms of the $\gamma$-field and the abstract Weyl's function; we recall this in Proposition \ref{prop:KrNaF}. The symplectic resolvent difference formula offered in Theorem \ref{thm1.7} does not contain of course that much information as Krein's resolvent formula as it does not involve, e.g.,  the Weyl function. We stress, however, that Theorem \ref{thm1.7} works for any two {\em arbitrary} self-adjoint extensions $\cA_1$ and $\cA_2$; the domains of neither of them should be the kernels of $\Gamma_0$ or $\Gamma_1$.  Also, as we will see below in Section \ref{section1}, the symplectic resolvent difference formula in Theorem \ref{thm1.7} appears to be very useful, for instance, in establishing continuity and differentiability properties of the resolvents of {\em families} of self-adjoint extensions. Clearly, the resolvent difference formula in Theorem \ref{thm1.7} can be easily obtained by applying the classical Krein's formula, first, to $\cA_1$ and $\cA_0$ and, next, to  $\cA_2$ and $\cA_0$ and then by subtracting the two formulas, cf.\ Remark \ref{Rem4.4}. This way of computing the difference of resolvents of two arbitrary extensions was often used since very classical work to show, for instance, that the difference belongs to the Schatten-von Neumann ideal, see, e.g., Theorem 2 and Corollary 4 in \cite{DM}.  Finally, as we demonstrate in the proof of Proposition \ref{prop:KrNaF}, the resolvent difference formula can be also used as the first step in proving the classical Krein's formula (of course, several more steps are required for the proof to dig out the wealth of information that the classical formula contains).
	\hfill$\Diamond$  \end{remark}

We conclude this section with a series of auxiliary assertions aiming  
to place the above results in the vast literature on the theory of boundary relations in Krein spaces and discuss further the adjoint operators $(\tr R_j(\zeta))^*$ appearing in \eqref{5.14a}, \eqref{nn5.14a}. Although the assertions could of independent interests, they are not being used in the remainder of the paper.

\begin{remark}\lb{rem2.3}
	We now briefly mention how to recast Hypothesis \ref{hyp3.6} using Krein's spaces in the context of boundary triplets as discussed in the inspirational paper \cite{DHMS2006} whose authors are dealing with very general but still closely related to our setting.  Let $J_{\mathfrak H}=\bfi J$, cf. \eqref{5.3}, and define in $\bndra$ an indefinite scalar product 
	\begin{align}\no
	\langle\langle(f_1, f_2)^{\top},(g_1, g_2)^{\top} \rangle\rangle_{\bndra}:&=\langle J_{\mathfrak H} (f_1, f_2)^{\top}, (g_1, g_2)^{\top}\rangle_{\bndra}\\
	&=\bfi \omega((f_1, f_2)^{\top},(g_1, g_2)^{\top}),\  f_1, f_2, g_1, g_2\in\mathfrak{H}.\no
\end{align}	
Let $J_{\cH}$ be an analogous operator in $\cH\times\cH$ yielding the corresponding indefinite scalar product $\langle\langle (u_1, u_2)^{\top},(v_1, v_2)^{\top}\rangle\rangle_{\cH}$. Then $(\cH\times\cH, \langle\langle \cdot, \cdot\rangle\rangle_{\cH\times\cH})$ and $(\bndra, \langle\langle \cdot, \cdot\rangle\rangle_{\bndra}$) are Krein spaces and the operator $\tr$ induces an isometry between them. To define the latter in precise terms,  let $G(A^*)$ denote the graph of $A^*$ in $\cH\times\cH$ and introduce an operator 
	  \begin{equation}
	  \cT: \cH\times\cH\rightarrow\bndra, \dom(\cT)=\{(u, A^*u)^\top: u\in\cD \}\subset G(A^*),\,  \cT(u, A^*u)^{\top}:=(\Gamma_0 u, \Gamma_1 u)^{\top}.
	  \end{equation}
Then Green's identity \eqref{3.61} yields 
	  \begin{equation}
		\langle\langle (u, A^*u)^{\top},(v, A^* v)^{\top}\rangle\rangle_{\cH}=\langle\langle \cT(u, A^*u)^{\top},\cT(v, A^* v)^{\top}\rangle\rangle_{\mathfrak{H}}
		\,\text{ for all $u,v\in\cD$},	  \end{equation}
	    and so $\cT$ is an isometry between the Krein spaces $(\cH\times\cH, \langle\langle \cdot, \cdot\rangle\rangle_{\cH})$ and $(\bndra, \langle\langle \cdot, \cdot\rangle\rangle_{\mathfrak H}$). Following \cite{DHMS2006} we will identify the graph of $\cT$ with $\cT$ and treat it as a linear relation in $\cH\times\cH\times\bndra$, see \cite{Behrndt_2020} for a comprehensive introduction into spectral theory of linear relations. In particular, $\cT^{-1}\subset \cT^{[*]}$, where the inverse is understood in the sense of relations and $\cT^{[*]}$ denotes the adjoint relation with respect to the Krein inner products. An important question is whether $\cT$ is unitary, that is, $\cT^{-1}=\cT^{[*]}$.  Proposition 2.5 in \cite{DHMS2006} gives sufficient conditions for an isometric map $\cT$ to be unitary. The conditions are: (i) $G(A^*)^{[\perp]}\subset G(A^*)$, (ii) $(\ran \cT)^{[\perp]}\subset \text{mul}(\cT)$ (here $^{[\perp]}$ denotes the orthogonal complement in the Krein space, and mul is the multi-valued part of the relation), and (iii) $\dom \cT^{[*]}\subset \ran(\cT)$. We note that (i), (ii) follow from Hypothesis \ref{hyp3.6} while (iii) does not (in general), even in the more restrictive setting of quasi-boundary triples studied in \cite{BL}. A deep characterization of the equality $\cT^{-1}=\cT^{[*]}$ in terms of the Nevanlinna property of the Weyl function is given in \cite[Theorem 3.9]{DHMS2006}, see also Theorem 7.57 and Corollary 7.58 in \cite{DHMS2012}. We stress that Hypothesis \ref{hyp3.6} alone is not sufficient for $\cT$ being unitary! To further compare the setting of \cite{DHMS2006} with that given by Hypothesis \ref{hyp3.6} we note that the latter deals with {\it densely} defined symmetric operator $A$ and the linear relation $\cT$ with dense range. These density assumptions model elliptic differential operators on bounded domains and ordinary differential operators on metric graphs, and, at the same time, yield natural relations between self-adjoint extensions of $A$ and Lagrangian planes in $\bndra$ as described in Theorems \ref{LLSA} and \ref{SALL}. In the more general setting of \cite{DHMS2006} these relations do not always take place, cf.\ Remark \ref{rem:A3}. 
\hfill$\Diamond$ \end{remark}

\begin{remark}\label{LagKrein}
	We choose to use Lagrangian (symplectic) language throughout the paper. Alternatively, Lagrangian plains are called {\em self-adjoint linear relations}, and we refer to \cite{Behrndt_2020, Schm} for a detailed account of the topic, see also \cite{Pa}. Another way to describe the same object is to involve the Krein spaces introduced in Remark \ref{rem2.3}. We notice that $\cG^\circ$ defined in \eqref{anig} is just $\cG^{[\bot]}$, the $\langle\langle\cdot\,,\cdot \rangle\rangle_{{}_{\mathfrak{H}}}$-orthogonal to $\cG$ subspace of $\mathfrak{H}\times\mathfrak{H}$, and $\cG$ is Lagrangian if and only if $\cG=\cG^{[\bot]}$.
	\hfill$\Diamond$  \end{remark}

Next, we discuss the operator $(\tr R_2(\zeta))^*$ appearing in Theorem \ref{thm1.7}. Let us first record the following useful fact about $\tr^*$. 
	
	\begin{proposition}\lb{remark2.2new}
		The domain of the adjoint operator $\tr^*:\dom(\tr^*)\subset \bndra\rightarrow \cH_-$, cf.\ \eqref{aub100}, satisfies  $J(\tr(\cD))\subseteq\dom(\tr^*)$.
	\end{proposition}
	\begin{proof}
By the general definition of adjoint operator, $\dom((\tr)^*)$ is the set of $h\in\mathfrak{H}\times\mathfrak{H}$ such that there exists a $w\in\cH_+$ so that for all $u\in\cD=\dom(\tr)$ one has
\begin{equation}\label{adjtrR2}
	\langle\tr u,h\rangle_{{}_{\mathfrak{H}\times\mathfrak{H}}}=
	{}_{\cH_+}\langle u,\Phi^{-1}w\rangle_{\cH_-}=\langle u,w\rangle_{\cH_+}=
	\langle u,w\rangle_{\cH}+\langle A^*u,A^*w\rangle_{\cH};
\end{equation}
if this is the case then $(\tr)^*h:=\Phi^{-1}w$. We recall the orthogonal direct sum decomposition $\cH_+=\dom(A)\dot{+}(\dom(A))^{\perp_{\cH_+}}$ where, by \cite[Lemma 3.1(a)]{BbF95},
\begin{equation}\label{adjtrR3}
	(\dom(A))^{\perp_{\cH_+}}=\big\{v\in\cH_+: A^*v\in\cH_+ \text{ and } v=-A^*(A^*v)\big\}.\end{equation}
Since $\dom(A)\subset\cD$ and $\ker(\tr)=\dom(A)$ by part (1) of the proposition, we have
\[\tr(\cD)=\tr\big((\dom(A))^{\perp_{\cH_+}}\cap\cD\big).\] If $h:=(h_1,h_2)^\top=J\tr v$ for some $v\in(\dom(A))^{\perp_{\cH_+}}\cap\cD$ then
\begin{align*}
	\langle\tr u,h\rangle_{{}_{\mathfrak{H}\times\mathfrak{H}}}&=
	\langle\Gamma_0u,h_1\rangle_{{}_{\mathfrak{H}}}+
	\langle\Gamma_1u,h_2\rangle_{{}_{\mathfrak{H}}}=\langle\Gamma_0u,\Gamma_1v\rangle_{{}_{\mathfrak{H}}}-
	\langle\Gamma_1u,\Gamma_0v\rangle_{{}_{\mathfrak{H}}}\\
	&=\langle u, A^*v\rangle_\cH-\langle A^*u,v\rangle_{\cH}
\end{align*}
by the Green identity \eqref{3.61}. Letting $w=A^*v$ we derive \eqref{adjtrR2} from \eqref{adjtrR3} and thus $J(\tr(\cD))\subseteq\dom((\tr)^*)$.\end{proof}

 It is tempting to re-write the pre-factor $(\tr R_2(\overline{\zeta}))^*$ in the right-hand side of \eqref{5.14aJ} in terms of the product of the operators adjoint to $T$ and $R_2(\overline{\zeta})$. To that end, we first prove an auxiliary result about the product of the adjoints.
\begin{proposition}\label{rem:adjtrR} Assume Hypothesis \ref{hyp3.6}
		and recall \eqref{aub100}. Assume there exists a self-adjoint extension  $\cA$ of  $A$ satisfying $\dom(\cA)\subset \cD$ and denote $R(\zeta, \cA):=(\cA-\zeta)^{-1}\in\cB(\cH)$ for all $\zeta\in\bbC\setminus\Sp(\cA)$. The operator $R( \overline{\zeta}, \cA)\in\cB(\cH)$ can be uniquely extended to a bounded linear operator in $\cB(\cH_-,\cH)$ that we will denote by $\cR( \overline{\zeta}, \cA)$. This extension is given by the operator $(R({\zeta}, \cA))^*\in\cB(\cH_-,\cH)$ adjoint to $R({\zeta}, \cA)\in\cB(\cH,\cH_+)$. With this notational conventions, the operator $\big(\tr R(\zeta, \cA)\big)^*\in\cB(\bndra,\cH)$ can be written as
		\begin{equation}\label{adjtrR1}
			(\tr R(\zeta,\cA))^*h= \cR(\overline{\zeta}, \cA)(\tr)^* h
			\text{ for all  $h\in J(\tr(\cD))$}. 
		\end{equation}
	\end{proposition}
	\begin{proof} For the sake of the proof we will denote by $\widehat{R}(\zeta, \cA)\in\cB(\cH, \cH_+)$ the resolvent operator $R(\zeta, \cA)$ viewed as an operator acting from $\cH$ to $\cH_+$, cf.\ \eqref{1.13};  thus $(\widehat{R}(\zeta, \cA))^*\in\cB(\cH_-, \cH)$. We let $i\in\cB(\cH_+,\cH)$ denote the first imbedding $i:w\mapsto w$ in \eqref{aub100} and let $j=(i)^*\in\cB(\cH,\cH_-)$ denote the second imbedding in \eqref{aub100} so that $\langle iu,w\rangle_{\cH}={}_{\cH_+}\langle u,jw\rangle_{\cH_-}$ for all $u\in\cH_+\hookrightarrow\cH$ and $w\in\cH\hookrightarrow\cH_-$. In this notation $i\widehat{R}(\zeta,\cA)=R(\zeta,\cA)$,
		and, in order to prove the first part of the proposition,  we have to show that 
		\begin{equation}\label{HHH}
			(\widehat{R}(\zeta, \cA))^*jw=R(\overline{\zeta}, \cA)w \text{ for all $w\in\cH$},
		\end{equation}
		and so $\cR(\overline{\zeta},\cA):=(\widehat{R}(\zeta, \cA))^*\in\cB(\cH_-,\cH)$ is indeed a bounded extension to $\cH_-$ of $R(\overline{\zeta}, \cA)\in\cB(\cH)$. For any $u\in\cH_+$ and $w\in\cH$ we infer,
		\begin{align*}
			\langle iu, (\widehat{R}(\zeta, \cA))^*jw\rangle_\cH&=
			{}_{\cH_+}\langle u, j(\widehat{R}(\zeta, \cA))^*jw\rangle_{\cH_-} & \text{ (because $i^*=j$)}
			\\&=
			\langle\big(j(\widehat{R}(\zeta, \cA))^*j\big)^*u,w\rangle_\cH & \text{(because $j(\widehat{R}(\zeta, \cA))^*j\in\cB(\cH,\cH_-)$)}\\&=
			\langle i\widehat{R}(\zeta, \cA) iu,w\rangle_\cH & \text{ (because $i^*=j$)}\\&=
			\langle R(\zeta, \cA) iu,w\rangle_\cH & \text{ (because $i\widehat{R}(\zeta, \cA)=R(\zeta,\cA)$)}\\&=
			\langle  iu, R(\overline{\zeta}, \cA) w\rangle_\cH & \text{ (because $\cA=\cA^*$ in $\cH$).}
		\end{align*}
		Since $\ran(i)$ is dense in $\cH$ we have \eqref{HHH}.
		
		It remains to prove \eqref{adjtrR1}, that is, in the notation of the current proof, that
		\begin{equation}\label{adjtrR12}
			(\tr \widehat{R}(\zeta,\cA))^*h=(\widehat{R}(\zeta, \cA))^*(\tr)^* h
			\text{ for all  $h\in J(\tr(\cD))$}. 
		\end{equation}
		By \cite[Problem III.5.26]{K80}, we have 
		$(\widehat{R}(\zeta, \cA))^*(\tr)^* \subseteq (\tr \widehat{R}(\zeta, \cA))^*$,
		where the domain of the product $(\widehat{R}(\zeta, \cA))^*(\tr)^*$
		is set to be equal to $\dom(\tr^*)$.
		Since  $J(\tr(\cD))\subseteq\dom(\tr^*)$ by Proposition \ref{remark2.2new}  we infer \eqref{adjtrR12}.
	\end{proof}

\begin{corollary}\lb{Rem2.6}
	Resolvent difference formula formulas \eqref{5.14}, \eqref{5.14aJ} can be also rewritten as
	\begin{equation}\lb{aub102}
		R_2({\zeta})-R_1({\zeta})= \cR_{2}({\zeta})\tr^* J\tr R_{1}({\zeta}),
	\end{equation}
	where the operator $\cR_2(\zeta)$ in the right-hand side is viewed as a unique extension of the resolvent $R_2(\zeta)\in\cB(\cH)$ to an element of $\cB(\cH_-,\cH)$ as in Proposition \ref{rem:adjtrR} and, in fact, is given by $(R_2(\overline\zeta))^*\in \cB(\cH_-,\cH)$. Indeed,   \eqref{aub102} follows from \eqref{5.14aJ}, \eqref{adjtrR1}, and the fact that $\ran \big(J\tr R_{1}({\zeta})\big)\subseteq J(\tr(\cD))\subseteq\dom(\tr^*)$, by Proposition \ref{remark2.2new} (3). 
	\hfill$\Diamond$  \end{corollary}

\begin{remark}\label{rem:ref1} 
We conclude this preliminary section with a slight generalization\footnote{We thank the referee of an earlier version of the paper for suggesting this generalization}, see \eqref{newKF} below, of the resolvent difference formula in Theorem \ref{thm1.7}. To formulate it, we will freely use elementary facts on (linear) relations as nicely described in \cite[Chapter 1]{Behrndt_2020}. In particular, we will identify the operators on a Hilbert space $\cH$ with their graphs in $\cH\times\cH$. In this remark (and only in this remark) instead of Hypothesis \ref{hyp3.6} we will impose the following assumptions. Let $A\subset A^*$ be a symmetric relation in $\cH\times\cH$ (not necessarily densely defined), and $\tr=[\Gamma_0, \Gamma_1]^\top:\dom(\tr)\subseteq A^*\to\mathfrak{H}\times\mathfrak{H}$ be a linear operator (possibly unbounded) with a dense in $A^*$ domain and such that the following abstract Green's identity holds,
\[ \langle u_2, v_1\rangle_\cH- \langle u_1, v_2\rangle_\cH=
\langle\Gamma_1 \widehat{u}, \Gamma_0 \widehat{v}\rangle_\mathfrak{H}-\langle\Gamma_0\widehat{u},\Gamma_1\widehat{v}\rangle_\mathfrak{H} \text{
for all $\widehat{u}=(u_1,u_2), \widehat{v}=(v_1,v_2)\in\dom(\tr)$.}\]
(Clearly, if $A^*$ is an operator then $u_2=A^*u_1$, $v_2=A^*v_1$ and so this becomes 
 \eqref{3.61} upon setting $\Gamma_0u_1=\Gamma_0\widehat{u}$ and $\Gamma_1u_1=\Gamma_1\widehat{u}$, cf.\ \cite[Section 2.1]{Behrndt_2020}).
Furthermore, let $\cA_1$ and $\cA_2$ be two relations such that $A\subset\cA_j\subset A^*$ and $\rho(\cA_j)\neq\emptyset$, $j=1,2$, and assume that $\cA_1\subset\dom(\tr)$ and $\cA_2^*\subset\dom(\tr)$. Let us fix $\zeta\in\rho(\cA_1)\cap\rho(\cA_2)$ and use the resolvents $R_1(\zeta)$ and $R_2(\zeta)^*=(\cA_2^*-\overline{\zeta})^{-1}$ of $\cA_1$ and $\cA_2^*$ to write the relations $\cA_1$ and $\cA_2^*$ as follows,
\begin{equation}\label{newKF0}\begin{split}
 \cA_1&=\big\{\widehat{u}:= \big(R_1(\zeta)u, (I_\cH+\zeta R_1(\zeta))u\big): u\in\cH\big\}, \\ 
\cA_2^*&=\big\{ \widehat{v}:= \big(R_2(\zeta)^*v, (I_\cH+\overline{\zeta} R_2(\zeta)^*)v\big): v\in\cH\big\}.\end{split}\end{equation}
Using Green's identity then yields 
\[
\langle\Gamma_1 \widehat{u}, \Gamma_0 \widehat{v}\rangle_\mathfrak{H}-\langle\Gamma_0\widehat{u},\Gamma_1\widehat{v}\rangle_\mathfrak{H}=
\langle (I_\cH+\zeta R_1(\zeta))u, R_2(\zeta)^*v\rangle_\cH-\langle R_1(\zeta)u,
(I_\cH+\overline{\zeta} R_2(\zeta)^*)v\rangle_\cH\]
and so re-arranging the right hand side of the last formula gives the desired generalization of the resolvent difference formula,
\begin{equation}\label{newKF}
\langle \big(R_2(\zeta)-R_1(\zeta)\big)u,v\rangle_\cH=\langle\Gamma_1 \widehat{u}, \Gamma_0 \widehat{v}\rangle_\mathfrak{H}-\langle\Gamma_0\widehat{u},\Gamma_1\widehat{v}\rangle_\mathfrak{H} \text{ for all $u,v\in\cH$}
\end{equation}
and $\widehat{u}, \widehat{v}$ as defined in \eqref{newKF0}. (Clearly, when $A$, $\cA_1$, $\cA_2$ are operators the resolvent difference equation \eqref{newKF} becomes \eqref{5.14}). 
\hfill$\Diamond$  \end{remark}

\section{Riccati equation for resolvents and Hadamard-type formulas for eigenvalues}\lb{section1}
In this section we consider a one-parameter family of self-adjoint extensions of a given symmetric operator perturbed by a family of bounded operators. In turn, the extensions are constructed using families of Lagrangian subspaces in a boundary space and boundary traces that also depend on the parameter. Our final objective is to derive a differential (Riccati-type) equation for the resolvents of the perturbed operators and formulas for the derivatives of their isolated eigenvalues with respect to the parameter.  The latter abstract formulas generalize, on one side, the classical perturbation results from the case of additive perturbations, see, e.g., \cite[Section II.5]{K80}, and, on another, the Rayleigh--Hadamard-type variational formulas for eigenvalues of partial differential operators depending on a parameter, see, e.g., \cite{Gri,Henry}.

\subsection{Parametric families of operators}\label{Sec3.1} We continue to assume that $A$ is a densely defined closed symmetric operator with equal (possibly infinite) deficiency indices, that  $\cH_+=\dom(A^*)$ is equipped with graph norm of $A^*$, and that $\cD$, the domain of the trace operator, is a dense subspace of $\cH_+$. 
The following hypothesis will be assumed throughout this section.
\begin{hypothesis}\lb{hyp2.2}  We assume that Hypothesis \ref{hyp3.6} holds for the trace operator $\tr$ and a subspace $\cD\subset\cH_+$ with $\dom(\tr)=\cD$, 
and, in addition, 
we assume that the subspace 
$\cD$ of $\cH_+$ is equipped with a Banach norm $\|\cdot\|_\cD$ such that the (injective) imbedding  $\jmath$ of $\cD$ into $\cH_+$ is continuous with respect to this norm, i.e. $\jmath\in\cB(\cD, \cH_+)$.  
\end{hypothesis}
A typical example that we have in mind is  the Laplacian $A=-\Delta$ on $L^2(\Omega)$ with $\dom(A)=H^2_0(\Omega)$ for an open bounded $\Omega\subset\bbR^n$
with smooth boundary. In this case, we have
\begin{equation}
A^*=-\Delta,\ \cH_+=\dom(A^*):=\{u\in L^2(\Omega) : \Delta u\in L^2(\Omega)\},
\end{equation}
$\cD:=\cD^1(\Omega)$, where the space
\[\cD^1(\Omega):=\{u\in H^1(\Omega) : \Delta u\in L^2(\Omega)\}\]  is equipped with the norm  $\|u\|_\cD:=(\|u\|_{H^1(\Omega)}^2+\|\Delta u\|_{L^2(\Omega)}^2)^{1/2}.$

 For $u\in\cD$ the trace operator is given by \[\tr u=[\gaD u, -\Phi \gaN u]^\top\in\mathfrak{H}\times\mathfrak{H} \text{ with } \mathfrak{H}:= H^{1/2}(\partial\Omega),\] here $\gaD$ is the Dirichlet and $\gaN=\nu\cdot\gaD\nabla u$ is the (weak) Neumann trace maps\footnote{see Appendix \ref{appA} for a discussion of trace maps}, and $\Phi$ is the Riesz isomorphism between $H^{-1/2}(\partial\Omega)=(H^{1/2}(\partial\Omega))^*$ and $H^{1/2}(\partial\Omega)$, cf.\ \eqref{aub27} below.

\begin{proposition}\lb{prop2.4} Under Hypothesis \ref{hyp2.2}
one has  $\tr\in\cB(\cD, \bndra)$.  In addition, if $\cA$ is a self-adjoint extension of $A$ with $\dom(\cA)\subset\cD$ then there exist  $c,C>0$ such that
	\begin{equation}\lb{ner}
	c\|u\|_{\cH_+}\leq \|u\|_{\cD}\leq C \|u\|_{\cH_+} \text{ for all  $u\in\dom(\cA)$.}
	\end{equation}
	In other words, the norms in $\cH_+$ and $\cD$ are equivalent on $\dom(\cA)$ for any self-adjoint extension $\cA$ of $A$ with $\dom(\cA)\subset\cD$. Furthermore, if $V=V^*\in\cB(\cH)$ and $\zeta\not\in\Sp(\cA+V)$ then
	\begin{equation}\lb{2.3}
(\cA+V-\zeta)^{-1}\in\cB(\cH, \cD).
	\end{equation}
\end{proposition}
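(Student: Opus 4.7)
The plan is to prove the three assertions in order, with the closed graph theorem doing the heavy lifting in each part.

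For assertion (1), I would observe that $\tr$ with domain $\cD$ is already known to be closable as an operator from $\cH_+$ to $\bndra$ by Proposition \ref{remark2.2}(2). Upgrading this to a closed operator $\tr\colon \cD\to \bndra$ (now with $\cD$ viewed under its own Banach norm) uses only the continuity of the imbedding $\jmath\in\cB(\cD,\cH_+)$: if $u_n\to u$ in $\cD$ and $\tr u_n\to h$ in $\bndra$, then $u_n-u\to 0$ in $\cH_+$ and $\tr(u_n-u)\to h-\tr u$, so closability in $\cH_+$ forces $\tr u=h$. Since $\tr$ is defined on all of the Banach space $\cD$ and closed into the Hilbert space $\bndra$, the closed graph theorem yields $\tr\in\cB(\cD,\bndra)$.

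For assertion (2), the inequality $c\|u\|_{\cH_+}\le\|u\|_{\cD}$ is immediate from continuity of $\jmath$. For the reverse direction, the key observation is that the resolvent $(\cA-\bfi)^{-1}\in\cB(\cH)$ actually lands in $\dom(\cA)\subset \cD$. I would then show that, as an operator $\cH\to\cD$, it is closed: if $u_n\to u$ in $\cH$ and $(\cA-\bfi)^{-1}u_n\to w$ in $\cD$, then the $\cD$-limit is also a $\cH_+$-limit (by $\jmath$), hence a $\cH$-limit, and the continuity of $(\cA-\bfi)^{-1}$ in $\cH$ identifies $w=(\cA-\bfi)^{-1}u$. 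One small but essential point here is that $\dom(\cA)$ is closed in $\cH_+$ because $\cA$ is self-adjoint (hence closed) and its graph norm coincides with $\|\cdot\|_{\cH_+}$ on $\dom(\cA)$; this guarantees $w$ actually lies in $\dom(\cA)$ so the argument closes. The closed graph theorem then gives $(\cA-\bfi)^{-1}\in\cB(\cH,\cD)$, and for any $u\in\dom(\cA)$
\begin{equation}
\|u\|_{\cD}\le \|(\cA-\bfi)^{-1}\|_{\cB(\cH,\cD)}\|(\cA-\bfi)u\|_{\cH}\le \sqrt 2\,\|(\cA-\bfi)^{-1}\|_{\cB(\cH,\cD)}\|u\|_{\cH_+},
\end{equation}
using $\|\cA u\|_\cH=\|A^*u\|_\cH$. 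This proves \eqref{ner}.

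Assertion (3) reduces to (2). Because $V=V^*\in\cB(\cH)$, the Kato--Rellich theorem (or a direct check) gives that $\cA+V$ is self-adjoint with $\dom(\cA+V)=\dom(\cA)\subset\cD$; thus part (2) applies to $\cA+V$ verbatim. Now, for $f\in\cH$ set $u=(\cA+V-\zeta)^{-1}f\in\dom(\cA)$. Writing $A^*u=(\cA+V-\zeta)u - Vu+\zeta u=f-Vu+\zeta u$, I get
\begin{equation}
\|u\|_{\cH_+}\le \|f\|_\cH+\bigl(1+\|V\|_{\cB(\cH)}+|\zeta|\bigr)\|u\|_\cH
\le \bigl(1+(1+\|V\|_{\cB(\cH)}+|\zeta|)\|(\cA+V-\zeta)^{-1}\|_{\cB(\cH)}\bigr)\|f\|_\cH,
\end{equation}
and then \eqref{ner} applied to $\cA+V$ gives $\|u\|_\cD\le C'\|f\|_\cH$, which is exactly \eqref{2.3}. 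I do not foresee any genuine obstacle; the only subtlety is the verification that $\dom(\cA)$ is closed in $\cH_+$, which must be pointed out explicitly to justify the closed graph argument in (2).
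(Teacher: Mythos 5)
Your proof is correct, and the underlying mechanism is the same as the paper's---a closed graph argument---but you apply it to a different map, which changes the flow in a way worth noting. The paper first shows $\tr\in\cB(\cD,\bndra)$ exactly as you do, but then establishes \eqref{ner} by proving that $\dom(\cA)$ is a $\|\cdot\|_{\cD}$-closed subspace of $\cD$ (using closedness of $\cA$ and the continuous imbedding $\jmath$) and invoking the bounded inverse theorem for the restriction $\jmath\colon(\dom(\cA),\|\cdot\|_\cD)\to(\dom(\cA),\|\cdot\|_{\cH_+})$; assertion \eqref{2.3} is then read off from \eqref{ner} together with the earlier estimate \eqref{1.13}. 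You instead apply the closed graph theorem directly to the resolvent $(\cA-\bfi)^{-1}\colon\cH\to\cD$, which buys you \eqref{2.3} (for $V=0$, $\zeta=\bfi$) essentially for free and lets \eqref{ner} fall out as a corollary; the price is a marginally longer argument for closedness of the graph. Both routes are legitimate and of comparable length. Two small tidy-ups: the parenthetical remark that $\dom(\cA)$ is closed in $\cH_+$ is not actually needed---once you identify $w=(\cA-\bfi)^{-1}u$ by uniqueness of limits in $\cH$, membership $w\in\dom(\cA)\subset\cD$ is automatic. And the phrase ``part (2) applies to $\cA+V$ verbatim'' should be softened, since $\cA+V$ is not an extension of $A$ (so the hypotheses of the proposition do not literally apply to it); what you actually use, and what is valid, is that the inequality \eqref{ner} already proven for $\dom(\cA)$ holds on $\dom(\cA+V)=\dom(\cA)$ unchanged.
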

\begin{proof}
	The operator $\tr$ is bounded as an everywhere defined on the Banach space $\cD$ closable operator (see Proposition \ref{prop1.3}).
	We claim that $\dom(\cA)$ is a $\|\cdot\|_\cD$-closed subspace of the Banach space $\cD$. Indeed, suppose that $u_n\rightarrow u$ in $\cD$ for some $u_n\in \dom(\cA)$. Since $\cD$ is continuously embedded into $\cH_+$, the sequence $\{u_n\}_{n\in\bbN}$ is Cauchy in $\cH_+$, that is, it is Cauchy with respect to the graph norm of $A^*$. Hence, $\{u_n\}$ is convergent to $u$ in $\cH$ and the sequence of vectors $A^*u_n=\cA u_n$ converges in $\cH$. Since $\cA$ is a closed operator,  we conclude that $u\in\dom(\cA)$, as claimed. Now, we will consider $\jmath$ as a mapping from the Banach space $(\dom(\cA), \|\cdot\|_{\cD})$ into the Banach space $(\dom(\cA), \|\cdot\|_{\cH_+})$. This mapping is bounded and bijective, hence its inverse is also bounded yielding \eqref{ner}. 
Assertion \eqref{2.3} follows from \eqref{1.13} and \eqref{ner}.
\end{proof}

\begin{remark}\lb{remark2.7}
It is worth comparing Propositions \ref{prop1.3} and \ref{prop2.4}: indeed,  \eqref{3221} says that the product $\tr R(\zeta, \cA)$ is a bounded operator while Proposition \ref{prop2.4} gives that each factor in this product is bounded. The latter fact will be used in the proof of Theorem \ref{prop1.8new} below (specifically, see \eqref{new1.19n}) and it comes at the expense of assuming  Hypothesis \ref{hyp2.2}. 
\hfill$\Diamond$  \end{remark}

\begin{hypothesis}\lb{hyp1.3}  
 We assume that \[\tr:[0,1]\rightarrow\cB(\cD, \mathfrak{H}\times\mathfrak{H}):t\mapsto\tr_t\] is a one-parameter family of trace operators and $\cD\subset\cH_+$ is a  $t$-independent subspace such that $\tr_t$ and $\cD=\dom(\tr_t)$ satisfy Hypothesis \ref{hyp2.2} (and thus, in particular, Hypothesis \ref{hyp3.6}) for each $t\in[0,1]$.   
 Let  $Q: [0,1]\rightarrow \cB(\mathfrak{H}\times\mathfrak{H}), t\mapsto Q_t$, be a one-parameter family of orthogonal projections. We assume that $\ran  (Q_t) \in\Lambda(\mathfrak{H}\times\mathfrak{H})$ is a Lagrangian plane for each $t\in[0,1]$. We further assume that there exists a family $\cA_t$, $t\in[0,1]$, of self-adjoint extensions of $A$ satisfying 
 \begin{align}
 &\dom(\cA_t)\subset \cD, \lb{3272}\\
 &\overline{\tr_t\big( \dom(\cA_{t})\big)}=\ran (Q_t).\lb{3272new}
 \end{align}
 Let 
 $V: [0,1]\rightarrow \cB(\cH)$, $t\mapsto V_t$ be a one-parameter family of	self-adjoint bounded operators. We denote $H_t:= \cA_t+V_t$ and $R_t(\zeta):= (H_t-\zeta)^{-1}\in\cB(\cH)$ for $\zeta\not\in \Sp(H_t)$ and $t\in[0,1]$. 
\end{hypothesis}

Hypothesis \ref{hyp1.3} gives a rather general setup for boundary value problems parameterized by a one dimensional variable. We briefly list 
several families of operators for which the operators per se, their domains, and  respective traces depend on a given parameter. Our immediate  objective is just to give a glimpse of the typical situations of the setup described in Hypothesis \ref{hyp1.3}.
More examples with detailed analysis are given below, see Subsections  \ref{ssLapLip}, \ref{ssQuaGr}, \ref{ssPerKrPen}, \ref{ssKKFREO}, \ref{ssHeatEq}, and  \ref{HFSSD}.

\begin{example}\label{parametricSchr}
A well studied model which fits Hypothesis \ref{hyp1.3} is the family of Schr\"odinger operators equipped with Robin-type boundary conditions considered on a family of subdomains $\Omega_t\subset\Omega$ obtained by linear shrinking of a bounded star-shaped domain $\Omega\subset \bbR^n$ to its center. %It is of interest to determine the behavior of the spectrum of such operatos when $\Omega$ is deformed by shrinking to its center. 
The   linear rescaling of $\Omega_t$ back to $\Omega$ leads to a one-parameter family of Schr\"odinger operators $H_t:=-\Delta_t+V$ in $L^2(\Omega)$ subject to Robin boundary conditions $(\theta_tu-t^{-1}\frac{\partial u}{\partial \nu})\upharpoonright_{\partial\Omega}=0$, where $\theta_t\in L^{\infty}(\partial\Omega, \bbR)$ is the rescaled boundary function. In this case, the minimal symmetric operator is given by the Laplacian considered on  $H^2_0(\Omega)$, its self-adjoint extensions $-\Delta_t$ are determined by the boundary condition $(\theta_tu-t^{-1}\frac{\partial u}{\partial \nu})\upharpoonright_{\partial\Omega}=0$ which in turn corresponds to the Lagrangian planes $\{(f,g)^\top\in H^{1/2}(\partial\Omega)\times H^{1/2}(\partial\Omega): \theta_t f=g \}$ in $H^{1/2}(\partial\Omega)\times H^{1/2}(\partial\Omega)$. That is, we have
\begin{align}
&\cH:=L^2(\Omega),  \mathfrak{H}:=H^{1/2}(\partial\Omega), \tr_t:=[\gaD, -t^{-1}\Phi\gaN]^\top,\\
&A:=-\Delta, \dom(A)= H^2_0(\Omega), \cD=\cD^1(\Omega):=\{u\in H^1(\Omega): \Delta u\in L^2(\Omega)\},\\
& \dom(\cA_t):=\{u\in \cD^1(\Omega): \theta_t\gaD u=t^{-1}\gaN u\}, \\
&\ran(Q_t):=\{(f,g)^\top\in H^{1/2}(\partial\Omega)\times H^{1/2}(\partial\Omega): \theta_t f=g \},
\end{align} 
here $\gaD$ and $\gaN$ denote the Dirichlet and (weak) Neumann traces, see Appendix \ref{appA}, and $\Phi:H^{-1/2}(\partial\Omega)\to H^{1/2}(\partial\Omega)$ denotes the Riesz isomorphism, see \eqref{aub27}. Similar models are systematically studied in \cite{CJLS, CJM1, DJ11} and discussed in some details in a more general setting  in Section \ref{HFSSD} below.
\hfill$\Diamond$  \end{example}

\begin{example}\lb{ex3.9} Our next example is a matrix second order  operator  posted on a multidimensional infinite cylinder with variable cross sections. We denote by $t\in\bbR$ the axial and by $x$ the transversal variables, that is, we set
	\[\Omega:=\big\{ (t,x)\in\bbR^{n+1}: t\in\bbR, x\in{\mathbb B}_{r(t)}^n\big\} \subset\bbR^{n+1},\] 
	 where, for instance, $r(t)=1+t/(1+t^2)$, and ${\mathbb B}_{r}^n$ is the ball in $\bbR^n$ of radius $r$ centered at zero. Denoting $\Delta_{(t,x)}=\partial^2_t+\Delta_x$ and $\Delta_x=\sum_{j=1}^n\partial^2_{x_j}$, we will consider in $L^2(\Omega; \bbC^N)$ the Schr\"odinger operator 
	\[-\Delta_{(t,x)}+V=-\partial_t^2+B_t, \text{ where $B_t=-\Delta_x(t)+V$ and $V=V(t,x)$}\]
	is a smooth bounded  $(N\times N)$-matrix valued potential taking symmetric values while the $x$-Laplace operator $-\Delta_x(t)$
	is acting in $L^2({\mathbb B}_{r(t)}^n; \bbC^N)$ and equipped with the following domain,
	 %cf.\ \eqref{1.15d} with $\cL$ replaced by $-\Delta_x(t)$,
\[\dom(-\Delta_x(t)):=\big\{ u\in \cD^1({\mathbb B}_{r(t)}^n): \tr u:=
	(\gamma_{{}_{D,\partial{\mathbb B}_{r(t)}^n}}u, -\Phi\gamma_{{}_{N,\partial{\mathbb B}_{r(t)}^n}}u)\in\cG_t\big\}, \]
	where $\cG:t\mapsto\cG_t$ is a given smooth family of Lagrangian subspaces in the boundary space $H^{1/2}(\partial{\mathbb B}_{r(t)}^n)\times H^{1/2}(\partial{\mathbb B}_{r(t)}^n)$. We note parenthatically   that the spectral flow  of the family $\{B_t\}_{t=-\infty}^\infty$ of the self-adjoint operators $B_t$ is of interest as it is related to the spectrum of the Schr\"odinger operator $-\Delta_{(t,x)}+V$ in $L^2(\Omega; \bbC^N)$; this relation could be established using spatial dynamics, cf. \cite{LatPog, SS, SS1}, via a connection to a first order differential operator, cf. \cite{LatTom} and \cite{GLMST}. 
	 Rescaling $x\mapsto z=x/r(t)$ of ${\mathbb B}_{r(t)}^n$ onto ${\mathbb B}_{1}^n$ gives rise to a family of operators $H_t$ defined analogously to $B_t$ by 
	\begin{equation}\label{dfnht}
	H_t=-(r(t))^{-2}\Delta_z(t)+V_t,\, \text{ where $z\in{\mathbb B}_{1}^n$, $V_t(z)=V(t, r(t)z)$},\end{equation}
	the Lagrangian subspace $\widehat{\cG}_t$ is obtained from $\cG_t$ by rescaling as well, and the $z$-Laplacian $-\Delta_z(t)$ acting in $L^2({\mathbb B}_{1}^n; \bbC^N)$ is equipped with the domain 
	\begin{equation}\label{dfndm}
	\dom(-\Delta_z(t)):=\big\{ w\in \cD^1({\mathbb B}_{1}^n): \tr_t w:=
	(\gamma_{{}_{D,\partial{\mathbb B}_{1}^n}}w, -(r(t))^{-1}\Phi\gamma_{{}_{N,\partial{\mathbb B}_{1}^n}}w)\in\widehat{\cG}_t\big\}. \end{equation}
	The family of operators $H_t$ can be considered within the setting of Hypothesis \ref{hyp1.3} with $\tr_t$ given in \eqref{dfndm}, $V_t$ given in \eqref{dfnht}, and $Q_t$ being the projection onto $\widehat{\cG}_t$.
\hfill$\Diamond$  \end{example} 

\begin{example}
The next example is given by a one-parameter family of operators arising in Floquet--Bloch decomposition of  periodic Hamiltonians on $\bbR$, see \cite[Theorem XII.88]{RS78} and Example \ref{ex47} below. We consider the Schr\"odinger operator $\displaystyle{ A:=-\frac{\bd^2}{\bd x^2}+V}$ on $(0,1)$ with domain $H^2_0(0,1)$ and its sefl-adjoint extensions determined by the following boundary conditions
$u(1)=e^{\bfi t}u(0), u'(1)=e^{\bfi t}u'(0), t\in[0,2\pi).$  In this case the setup described  in Hypothesis \ref{hyp1.3} is as follows,
\begin{align}
&\cH:=L^2(0,1),  \mathfrak{H}:= \C^2, \Gamma_0 u =(u(0), u(1)), \Gamma_1 u =(u'(0),- u'(1)),\\
&A:=-\frac{\bd^2}{\bd x^2}, \dom(A)= H^2_0(0,1), \cD=H^2(0,1);\\
& \dom(\cA_t):=\{u\in H^2(\Omega): u(1)=e^{\bfi t}u(0), u'(1)=e^{\bfi t}u'(0)\}, \\
&\ran(Q_t):=\{(z_1, z_2, z_3, z_4)\in \C^4: z_2=e^{\bfi t}z_1, z_3=-e^{\bfi t} z_4 \}.
\end{align} 
\hfill$\Diamond$  \end{example}

\begin{example}\label{Ca-Ri} This example concerns a {\em first order} operator related to the perturbed Cauchy--Riemann operator on a two-dimensional infinite cylinder, cf.\ \cite[Section 7]{RoSa95}. Let $a,b:\bbR\to\bbR$ be smooth functions having limits $a_\pm<b_\pm$ at $\pm\infty$ and such that  $a(t)<b(t)$ for all $t\in\bbR$, and consider the two-dimensional cylinder \[\Omega=\{(t,x)\in\bbR^2:  a(t)<x<b(t), t\in\bbR\}.\] For $N\ge1$ we consider the perturbed Cauchy-Riemann operator $\bar{\partial}_{S,\cG}=\partial_t+B_t$ acting in the space $L^2(\Omega; \bbR^{2N})$ of real vector valued functions, where
	\[B_t=-J_N\partial_x(t)+S,  t\in\bbR, J_N=\begin{bmatrix}0&I_{\bbR^N}\\-I_{\bbR^N}&0\end{bmatrix}\,, \]
	 and  $S=S(t,x)\in\bbR^{2N\times 2N}$ is a given smooth bounded matrix valued function taking symmetric values and having limits $S_\pm(x)$ as $t\to\pm\infty$. Here and below for each $t\in\bbR$ we denote by 
	$\partial_x(t)$ the operator of $x$-differentiation in $L^2\big((a(t),b(t));\bbR^{2N}\big)$ with the 
	domain \begin{equation}\label{dfndmmm}
	\dom(\partial_x(t))=\big\{u\in H^1\big((a(t),b(t));\bbR^{2N}\big): \tr_tu:=(u(a(t)), u(b(t)))\in\cG_t\big\},\end{equation}
	where $\cG:t\mapsto\cG_t\in\Lambda(2N)$ is a given smooth family of Lagrangian subspaces in $\bbR^{4N}$ having limits $\cG_\pm$ as $t\to\pm\infty$. Again, we note that the spectral flow of the family $\{B_t\}_{t=-\infty}^{+\infty}$ of  the self-adjoint  operators $B_t$ is of interest since, in particular, it is equal (see, e.g., \cite{GLMST,LatTom}) to the Fredholm index of the Cauchy-Riemann operator  $\bar{\partial}_{S,\cG}$, see a detailed discussion and various implications of this fact in \cite[Section 7]{RoSa95}. Rescaling $u(t,x)\mapsto w(t,z):=u(t, z(b(t)-a(t))+a(t))$, $z\in(0,1)$, gives rise to an analogous to $B_t$ operator $H_t$ acting in $L^2([0,1];\bbR^{2N})$ as
	\begin{equation}\label{dfnhttt}
	H_t=-J_N\partial_z(t)+V_t, t\in\bbR, z\in(0,1), \text{where $V_t(z)=S(t, (b(t)-a(t))z+a(t))$}\end{equation}
	and $\partial_z(t)=(b(t)-a(t))\frac{\partial}{\partial z}$ is the operator in $L^2([0,1];\bbR^{2N})$ with the domain
	\[\dom(\partial_z(t))=\big\{w\in H^1([0,1];\bbR^{2N})): \tr w:=(w(0), w(1))\in\cG_t\big\}.\]
The family of operators $H_t$ can be considered within the setting of Hypothesis \ref{hyp1.3}  with the trace given in \eqref{dfndmmm}, with $Q_t$ being the projection onto $\cG_t$, and $V_t$ given in \eqref{dfnhttt}.
\hfill$\Diamond$  \end{example}

\begin{example} Parameter depended Hamiltonians satisfying Hypothesis \ref{hyp1.3} play an important role in the theory of quantum graphs. For example, the well-known eigenvalue bracketing, see \cite[Section 3.1.6]{BK}, is established by  studying the dependence of eigenvalues  of the $\delta$-type graph  Laplacian on the coupling constant. We refer the reader to Section \ref{ssQuaGr} for an in-depth discussion of parameter depended quantum graphs satisfying Hypothesis \ref{hyp1.3}.
	
\hfill$\Diamond$  \end{example}

\begin{remark}
	Hypothesis \ref{hyp1.3} is satisfied, for example, when $\ran(Q_t)\in\Lambda(\bndra)$ is $(\cD, \tr_t)$ aligned, cf. Definition \ref{defASSOC}, and $\cA_t$ is the operator associated with $\ran(Q_t)$ and $\dom(\cA_t)\subset \cD$, $t\in[0,1]$, see Theorem \ref{LLSA}. Conversely, if $\cA_t$ is a self-adjoint extension of $A$ with $\dom(A_t)\subset \cD$, $t\in[0,1]$, which is $(\cD, \tr_t)$ aligned and  $\ran (Q_t)$ is a subspace associated with $\cA_t$ then $\ran(Q_t)\in\Lambda(\bndra)$, $t\in[0,1]$, see Theorem \ref{SALL}.
	\hfill$\Diamond$  \end{remark}

\subsection{Resolvent expansion}\label{Sec3.2}
Our first major result in the setting of Hypothesis \ref{hyp1.3} is a symplectic  formula for the difference of the resolvents $R_t(\zeta)=(H_t-\zeta)^{-1}$
of the operators $H_t$ at different values of $t$.

\begin{theorem}\lb{thm5.5} Assume Hypothesis \ref{hyp1.3} and let 
	$t, s, \tau \in [0,1],$ $\zeta\not\in\Sp(H_{t})\cup \Sp(H_{s})$. Then for $R_t(\zeta):= (H_t-\zeta)^{-1}$  and $H_t=\cA_t+V_t$
	one has 
	\begin{align}
	R_{t}&(\zeta)- R_{s}(\zeta)=  R_t({\zeta})(V_s-V_t)R_s(\zeta)+(\tr_{\tau}  R_{t}(\overline{\zeta}))^* J\tr_{\tau}    R_{s}(\zeta)\label{new5.14} \\
	&=R_t({\zeta})(V_{s}-V_t)R_{s}(\zeta)+(\tr_{t}  R_t(\overline{\zeta}))^*(Q_t-Q_{s})J \tr_{s}   R_{s}(\zeta)\nonumber\\
	&\hskip5cm
	+(\tr_{t}  R_t(\overline{\zeta}))^*J (\tr_t-\tr_{s})   R_{s}(\zeta).\lb{new5.14a}
	\end{align}
	\end{theorem}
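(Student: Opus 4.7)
The strategy is to first establish \eqref{new5.14} by a direct computation that mimics the proof of Theorem \ref{thm1.7}, then derive \eqref{new5.14a} by purely algebraic manipulation using the Lagrangian property.

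For \eqref{new5.14}, I would fix $u,w \in \cH$ and set $v_1 := R_t(\zeta)u \in \dom(H_t) = \dom(\cA_t) \subset \cD$ and $v_2 := R_s(\overline{\zeta})w \in \dom(H_s) = \dom(\cA_s) \subset \cD$. Since $H_s$ is self-adjoint, $R_s(\zeta)^* = R_s(\overline{\zeta})$, so
\begin{equation}
\langle R_s(\zeta)u - R_t(\zeta)u, w\rangle_\cH = \langle u, v_2\rangle_\cH - \langle v_1, w\rangle_\cH = \langle (H_t - \zeta)v_1, v_2\rangle_\cH - \langle v_1, (H_s - \overline{\zeta})v_2\rangle_\cH.
\end{equation}
The $\zeta$-terms cancel. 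Writing $H_t = A^* + V_t$ on $\dom(\cA_t)$ and $H_s = A^* + V_s$ on $\dom(\cA_s)$ splits the right-hand side into $\langle A^* v_1, v_2\rangle_\cH - \langle v_1, A^* v_2\rangle_\cH$ plus $\langle (V_t - V_s)v_1, v_2\rangle_\cH$, where self-adjointness of $V_s$ was used. The crucial observation is that $v_1, v_2 \in \cD$, so the Green identity \eqref{3.61new} applies for \emph{any} of the trace maps $\tr_\tau$, giving $\langle A^* v_1, v_2\rangle_\cH - \langle v_1, A^* v_2\rangle_\cH = \langle J\tr_\tau v_1, \tr_\tau v_2\rangle_{\mathfrak{H}\times\mathfrak{H}}$. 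Transposing the adjoints and swapping $s \leftrightarrow t$ yields \eqref{new5.14}.

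For \eqref{new5.14a}, I would specialize \eqref{new5.14} to $\tau = t$ and decompose
\begin{equation}
(\tr_t R_t(\overline{\zeta}))^* J \tr_t R_s(\zeta) = (\tr_t R_t(\overline{\zeta}))^* J (\tr_t - \tr_s) R_s(\zeta) + (\tr_t R_t(\overline{\zeta}))^* J \tr_s R_s(\zeta).
\end{equation}
The last term is manipulated using three identities: $(\tr_t R_t(\overline{\zeta}))^* = (\tr_t R_t(\overline{\zeta}))^* Q_t$ (since $\tr_t R_t(\overline{\zeta})$ takes values in $\overline{\tr_t(\dom\cA_t)} = \ran Q_t$ by \eqref{3272new}), $\tr_s R_s(\zeta) = Q_s \tr_s R_s(\zeta)$ (same reason), and the Lagrangian identity $Q_s J Q_s = 0$ (which holds because $\ran Q_s \subseteq (\ran Q_s)^{\circ}$, i.e., $\langle J v, w\rangle = 0$ for all $v,w \in \ran Q_s$). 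These combine as
\begin{equation}
(\tr_t R_t(\overline{\zeta}))^* J \tr_s R_s(\zeta) = (\tr_t R_t(\overline{\zeta}))^* Q_t J Q_s \tr_s R_s(\zeta) = (\tr_t R_t(\overline{\zeta}))^* (Q_t - Q_s) J \tr_s R_s(\zeta),
\end{equation}
where the final equality uses $Q_t J Q_s = (Q_t - Q_s)J Q_s + Q_s J Q_s = (Q_t - Q_s)JQ_s$ and then absorbs $Q_s$ back into $\tr_s R_s(\zeta)$.

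The only subtle point is ensuring that all the products and their adjoints are well-defined bounded operators: the boundedness $\tr_\tau R_t(\overline{\zeta}) \in \cB(\cH, \mathfrak{H}\times\mathfrak{H})$ follows from Proposition \ref{prop1.3} applied to $H_t$ (noting that even though $H_t$ is not a self-adjoint extension of $A$ but of $A + V_t|_{\dom A}$, the argument of \ref{prop1.3} carries over verbatim since $\dom(H_t) = \dom(\cA_t) \subset \cD$ and $A^* R(\zeta,H_t) = (H_t - \zeta)R(\zeta, H_t) - V_t R(\zeta, H_t) + \zeta R(\zeta, H_t)$ is bounded). I do not anticipate any serious obstacle beyond bookkeeping; the heart of the argument is the identity $Q_s J Q_s = 0$, which is the algebraic expression of the Lagrangian condition and is what permits the $Q_t - Q_s$ factor to appear in \eqref{new5.14a} and thereby convert a total quantity into a genuine difference suitable for perturbation expansions.
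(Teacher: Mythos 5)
Your proof is correct and takes essentially the same route as the paper's: you verify \eqref{new5.14} by plugging resolvents into the Green identity \eqref{3.61new} together with the self-adjointness of the $V$'s (the paper expands only $v$ as $(H_t-\overline{\zeta})R_t(\overline{\zeta})v$ rather than both arguments symmetrically, but this is cosmetic), and you obtain \eqref{new5.14a} from the specialization $\tau=t$ through the chain $\tr_t R_t=Q_t\tr_t R_t$, $\tr_s R_s=Q_s\tr_s R_s$, and $Q_sJQ_s=0$, which is exactly the paper's computation. Your remark on boundedness is also sound, though the paper routes it through Proposition \ref{prop2.4} (which already covers the perturbed operator $\cA_t+V_t$ by \eqref{2.3}) rather than re-running the argument of Proposition \ref{prop1.3}.
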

	The operators whose adjoints enter \eqref{new5.14}, \eqref{new5.14a} are being considered as elements of $\cB(\cH, \bndra)$ (cf. Proposition \ref{prop2.4})  and thus their adjoints are elements of $\cB(\bndra, \cH)$.

\begin{proof}
	As in the proof of Theorem \ref{thm1.7} for arbitrary  $u,v\in\cH$ and $\tr_{\tau}=[\Gamma_0, \Gamma_1]^\top$ one has
	\begin{align}
	\begin{split}\no
	\langle  &R_t(\zeta)u- R_s(\zeta)u, v\rangle_{\cH}=\langle  R_t(\zeta)u- R_s(\zeta)u, ( H_t-\overline{\zeta})R_t(\overline\zeta)v\rangle_{\cH}\\
	&=\langle  ( H_t-\zeta)R_t(\zeta)u, R_t(\overline{\zeta})v\rangle_{\cH}-\langle R_s(\zeta)u, ( A^*+V_t-\overline{\zeta})R_t(\overline{\zeta})v\rangle_{\cH}\\
	&=\langle  u, R_t(\overline{\zeta})v\rangle_{\cH}+\langle R_s(\zeta)u, (V_s-V_t)R_t(\overline{\zeta})v\rangle_{\cH}-\langle (A^*+V_s-\zeta )R_s(\zeta)u, R_t(\overline\zeta)v\rangle_{\cH}\\
	&\qquad+\langle \Gamma_1 R_s(\zeta)u, \Gamma_0 R_t(\overline\zeta)v\rangle_{\mathfrak{H}}-\langle \Gamma_0 R_s(\zeta)u, \Gamma_1 R_t(\overline\zeta)v\rangle_{\mathfrak{H}}\\
	&=\langle R_s(\zeta)u, (V_s-V_t)R_t(\overline{\zeta})v\rangle_{\cH}+\langle \Gamma_1 R_s(\zeta)u, \Gamma_0  R_t(\overline\zeta)v\rangle_{\mathfrak{H}}-\langle \Gamma_0 R_s(\zeta)u, \Gamma_1  R_t(\overline\zeta)v\rangle_{\mathfrak{H}}\\
	&= \left\langle \big(R_t({\zeta})(V_s-V_t)R_s(\zeta)+(\Gamma_0 R_t(\overline{\zeta}))^*\Gamma_1 R_s-(\Gamma_1 R_t(\overline{\zeta}))^*\Gamma_0 R_s\big)u, v\right\rangle_{\cH}.
	\end{split}
	\end{align}
	Thus
	\begin{equation*}\label{1.15}
	R_t(\zeta)-  R_s(\zeta)=R_t({\zeta})(V_s-V_t)R_s(\zeta)+(\Gamma_0 R_t(\overline{\zeta}))^*\Gamma_1 R_s(\zeta)-(\Gamma_1 R_t(\overline{\zeta}))^*\Gamma_0 R_s(\zeta),
	\end{equation*}
 yielding \eqref{new5.14}. 
	In order to prove \eqref{new5.14a} we note that 
	\[\tr_{s}   R_{s}(\zeta)=Q_{s} \tr_{s}   R_{s}(\zeta) \text{ and }\, \tr_{t}   R_{t}(\zeta)=Q_{t} \tr_{t}   R_{t}(\zeta).\]
	In addition, we have $Q_sJQ_s=0$ since $\ran(Q_s)$ is Lagrangian. This implies
	\begin{align}
	(\tr_{t} & R_t(\overline{\zeta}))^*J \tr_{t}   R_s(\zeta)
	=(\tr_{t}  R_t(\overline{\zeta}))^*J \tr_{s}   R_s(\zeta)+(\tr_{t}  R_t(\overline{\zeta}))^*J (\tr_{t}-\tr_s)   R_s(\zeta)\\
	&=(\tr_{t}  R_t(\overline{\zeta}))^*Q_tJQ_s \tr_{s}   R_s(\zeta)+(\tr_{t} R_t(\overline{\zeta}))^*J (\tr_{t}-\tr_s)   R_s(\zeta)\\
	&=(\tr_{t}  R_t(\overline{\zeta}))^*(Q_t-Q_s) J\tr_{s}   R_s(\zeta)+(\tr_{t}  R_t(\overline{\zeta}))^*J (\tr_{t}-\tr_s)   R_s(\zeta).
	\end{align}
	Utilizing this and letting $\tau=t$ in \eqref{new5.14} yields \eqref{new5.14a}.
\end{proof}
\begin{remark}We note that \eqref{new5.14} holds even if $\cA_s$ is a non self-adjoint restriction of $A$.
\hfill$\Diamond$  \end{remark}

 Next, given the one-parameter families of self-adjoint extensions $\cA_t$, traces $\tr_t$ and operators $V_t$ described in Hypothesis \ref{hyp1.3}, we will show that the resolvent operators for $H_t=\cA_t+V_t$ are continuous (differentiable) at a given point $t=t_0$ whenever the mappings $t\mapsto Q_t$, $t\mapsto \tr_t$, $t\mapsto V_t$  are continuous (differentiable) at $t_0$. 
 
 To introduce appropriate assumptions we recall from Proposition \ref{prop2.4} (replacing $\dom(\cA)$ by $\dom(\cA_t)$) that under Hypothesis \ref{hyp2.2}  the norms in $\cD$ and $\cH_+$ are equivalent on $\dom(\cA_t)$ for each $t\in[0,1]$, cf.\ \eqref{ner}, but with the constant $c$ that might depend of $t$. We will need a uniform for $t$ near $t_0$ version of this assertion: In addition to Hypothesis \ref{hyp1.3} we will often assume that, for a given $t_0\in[0,1]$, there are constants $C,c >0$ such that
\begin{equation} \label{hyp1.3i2bis}
 c\|u\|_{\cH_+}\le\|u\|_\cD\le C\|u\|_{\cH_+} \text{ for all  $u\in\dom(\cA_t)$ and $t$ near $t_0$.}
\end{equation}
These inequalities are equivalent to uniform with respect to the parameter $t$ boundedness of the norms of resolvents of $\cA_t$ as operators from $\cH$ to $\cD$, see  Proposition \ref{propEq} below. We stress that \eqref{hyp1.3i2bis} does {\em not} mean that the norms $\|\cdot\|_{\cH_+}$ and $\|\cdot\|_\cD$ are equivalent on $\cD$; they are equivalent only on the domains of the extensions $\cA_t$ of $A$ but uniformly for $t$ near $t_0$.

\begin{hypothesis}      \lb{hyp1.3i2}
 In addition to Hypotheses \ref{hyp2.2} and \ref{hyp1.3} we assume, for a given $t_0\in[0,1]$, that
        \begin{equation}\label{2.2b}
        \|(\cA_t-\bfi)^{-1}\|_{\cB(\cH, \cD)}=\cO(1) \text{ as $ t\rightarrow t_0$}.
        \end{equation}
\end{hypothesis}
\begin{remark}\lb{rem3.14}Suppose that $V_t$ form Hypothesis \ref{hyp1.3} satisfies $V_t=\cO(1)$,  $t\rightarrow t_0$ and that $\zeta\in\C\setminus\bbR$. Then \eqref{2.2b} is equivalent to 
	\begin{equation}
	\|(\cA_t+V_t-\zeta)^{-1}\|_{\cB(\cH, \cD)}=\cO(1) \text{ as $ t\rightarrow t_0$}.
	\end{equation}
	Indeed, we have
	\begin{equation}\lb{aub31}
	(\cA_t+V_t-\zeta)^{-1}=(\cA_t-\bfi)^{-1}+(\cA_t-\bfi)^{-1}(\bfi-\zeta+V_t)(\cA_t+V_t-\zeta)^{-1}.
	\end{equation}
Considering $(\cA_t-\bfi)^{-1}$ as a mapping from $\cH$ to $\cD$, $ (\cA_t+V_t-\zeta)^{-1}$ as a mapping from $\cH$ to itself, and using the bound $\|(\cA_t+V_t-\zeta)^{-1}\|_{\cB(\cH)}\leq (|\Im\zeta|)^{-1}$, we infer the claim. 
\hfill$\Diamond$  \end{remark}

The equivalence of Hypothesis \ref{hyp1.3i2} and assertion \eqref{hyp1.3i2bis} 
  is proven next.
\begin{proposition}\label{propEq} Assume Hypothesis \ref{hyp2.2}. Then \eqref{hyp1.3i2bis} is equivalent to \eqref{2.2b}.
\end{proposition}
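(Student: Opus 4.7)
The plan is to establish the two implications directly, noting first that the lower bound $c\|u\|_{\cH_+}\le\|u\|_\cD$ in \eqref{hyp1.3i2bis} is an automatic consequence of the continuous embedding $\jmath\in\cB(\cD,\cH_+)$ in Hypothesis \ref{hyp2.2} (take $c=\|\jmath\|_{\cB(\cD,\cH_+)}^{-1}$, independent of $t$), so that only the upper bound $\|u\|_\cD\le C\|u\|_{\cH_+}$ on $\dom(\cA_t)$ needs to be treated uniformly in $t$. The key bridge in both directions is the bijection $(\cA_t-\bfi)^{-1}\colon\cH\to\dom(\cA_t)$, together with the universal resolvent estimate $\|(\cA_t-\bfi)^{-1}\|_{\cB(\cH)}\le 1$ valid for every self-adjoint $\cA_t$.

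For the forward direction (\eqref{hyp1.3i2bis} $\Rightarrow$ \eqref{2.2b}), I would fix an arbitrary $v\in\cH$ and set $u=(\cA_t-\bfi)^{-1}v\in\dom(\cA_t)$. Self-adjointness yields $\|u\|_\cH\le\|v\|_\cH$, and from $(\cA_t-\bfi)u=v$ together with $\cA_t u = A^*u$ I get $\|A^*u\|_\cH\le\|v\|_\cH+\|u\|_\cH\le 2\|v\|_\cH$, so $\|u\|_{\cH_+}^2\le 5\|v\|_\cH^2$. Applying the upper bound of \eqref{hyp1.3i2bis} then gives $\|u\|_\cD\le C\sqrt{5}\,\|v\|_\cH$ with $C$ independent of $t$ for $t$ near $t_0$, which is precisely the $\cO(1)$ claim \eqref{2.2b}.

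For the reverse direction (\eqref{2.2b} $\Rightarrow$ \eqref{hyp1.3i2bis}), I would pick $u\in\dom(\cA_t)$, set $v:=(\cA_t-\bfi)u\in\cH$, so that $u=(\cA_t-\bfi)^{-1}v$ and
\[
\|u\|_\cD\le\|(\cA_t-\bfi)^{-1}\|_{\cB(\cH,\cD)}\,\|v\|_\cH.
\]
The elementary estimate $\|v\|_\cH\le\|A^*u\|_\cH+\|u\|_\cH\le\sqrt{2}\,\|u\|_{\cH_+}$, combined with the $\cO(1)$ hypothesis, yields a constant $C>0$ independent of $t$ (for $t$ near $t_0$) such that $\|u\|_\cD\le C\|u\|_{\cH_+}$, completing the upper half of \eqref{hyp1.3i2bis}. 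I do not anticipate a serious obstacle here: the only subtlety is tracking that all intermediate constants are independent of $t$, which holds because $\bfi$ sits at distance one from $\spec(\cA_t)\subset\bbR$ for every self-adjoint $\cA_t$, making the bound $\|(\cA_t-\bfi)^{-1}\|_{\cB(\cH)}\le 1$ automatically uniform.
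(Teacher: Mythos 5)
Your proof is correct and takes essentially the same route as the paper: both directions rest on the bijection $(\cA_t-\bfi)^{-1}\colon\cH\to\dom(\cA_t)$, the uniform bound $\|(\cA_t-\bfi)^{-1}\|_{\cB(\cH)}\le 1$ from self-adjointness, and the elementary comparison of $\|\cdot\|_{\cH_+}$ with $\|\cA_t u\|_\cH+\|u\|_\cH$, with the same constants $\sqrt{2}$ and $\sqrt{5}$ emerging. The paper presents the two implications in the opposite order, but the substance is identical, including the observation that the lower bound in \eqref{hyp1.3i2bis} is automatic from Hypothesis \ref{hyp2.2}.
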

\begin{proof} If \eqref{2.2b} holds then for any $u\in\dom(\cA_t)$ and $t$ near $t_0$ one has
\begin{align*}
\|u\|_\cD&=\|(\cA_t-\bfi)^{-1}(\cA_t-\bfi)u\|_\cD\le c\|(\cA_t-\bfi)u\|_\cH\\
&\le c(\|\cA_tu\|_\cH+\|u\|_\cH)\le\sqrt{2}c\|u\|_{\cH_+},
\end{align*}
thus proving \eqref{hyp1.3i2bis}, as $\|u\|_{\cH_+}\leq c\|u\|_{\cD}$ by Hypothesis \ref{hyp2.2}.

Conversely, using \eqref{hyp1.3i2bis}, for all $t$ near $t_0$ and any $v\in\cH$ one has
\begin{align*}
\|(\cA_t-\bfi)^{-1}v\|_\cD&\le C\|(\cA_t-\bfi)^{-1}v\|_{\cH_+}\\
&=C\big(\|(\cA_t-\bfi)^{-1}v\|_{\cH}^2+\|\cA_t(\cA_t-\bfi)^{-1}v\|_{\cH}^2\big)^{1/2}\\&\le C\big(\|(\cA_t-\bfi)^{-1}\|_{\cB(\cH)} \|v\|_{\cH}^2+(\|v\|_\cH+\|(\cA_t-\bfi)^{-1}v\|_{\cH})^2\big)^{1/2}\\&\le \sqrt{5}C\|v\|_\cH,
\end{align*}
since $\cA_t$ is self-adjoint, thus proving \eqref{2.2b}.
\end{proof}
Assuming that the families $Q_t$, $\tr_t$ are continuous at $t=t_0$, under Hypothesis \ref{hyp1.3i2} the resolvent difference formula formula  \eqref{new5.14a} with $V_t=0$ shows (as in the proof of Theorem \ref{prop1.8new} (\ref{18i1}) below) that 
\begin{align}\lb{2.2boNew}
        \begin{split}
        &\big\|(\cA_t-\bfi)^{-1}-(\cA_{t_0}-\bfi)^{-1}\big\|_{\cB(\cH)}\underset{t\rightarrow t_0}{=}o(1),\\
        &\big\|(\cA_t-\bfi)^{-1}-(\cA_{t_0}-\bfi)^{-1}\big\|_{\cB(\cH, \cH_+)}\underset{t\rightarrow t_0}{=}o(1).
        \end{split}
        \end{align}
        In the proof of differentiability of the resolvent of $H_t$ we will need, however, a somewhat stronger continuity assumption, given next, regarding the resolvents of $\cA_t$ considered as operators from $\cH$ to $\cD$. As we will demonstrate in Sections \ref{abt} and \ref{subsec1.1} below, the stronger assumption does hold in the case of  boundary triplets and for Robin-type elliptic partial differential operators on bounded domains. 
\begin{hypothesis}\lb{hyp1.3i3}  In addition to Hypotheses \ref{hyp2.2} and \ref{hyp1.3} we assume that  for a given $t_0\in[0,1]$ one has
        \begin{equation}\lb{2.2bo}
        \big\|(\cA_t-\bfi)^{-1}-(\cA_{t_0}-\bfi)^{-1}\big\|_{\cB(\cH, \cD)}=o(1),\ t\rightarrow t_0.
        \end{equation}
\end{hypothesis}
\begin{remark}
Suppose that $V_t$ from Hypothesis \ref{hyp1.3} satisfies $(V_t-V_{t_0})=o(1)$, $t\rightarrow t_0$ and that $\zeta\in\C\setminus\bbR$. Then \eqref{2.2bo} is equivalent to 
\begin{equation}
\|(\cA_t+V_t-\zeta)^{-1}-(\cA_{t_0}+V_{t_0}-\zeta)^{-1}\|_{\cB(\cH, \cD)}=o(1) \text{ as $ t\rightarrow t_0$}.
\end{equation}
The proof is similar to the proof of Remark \ref{rem3.14} We also note that \eqref{2.2bo} implies \eqref{2.2b}. 
\hfill$\Diamond$  \end{remark}

After these preliminaries we are ready to present the main result of this subsection.
\begin{theorem}\lb{prop1.8new}
        We fix $t_0\in[0,1]$, $\zeta_0\not\in\Sp(H_{t_0})$ and define \[\cU_\epsilon=\{(t,\zeta)\in[0,1]\times\bbC: |t-t_0|\le\epsilon, |\zeta-\zeta_0|\le\epsilon\} \text{ for $\epsilon>0$}.\]
        \begin{enumerate}
        \item\lb{18i1}Assume Hypothesis \ref{hyp1.3i2} and suppose that the mappings $t\mapsto \tr_t$, $t\mapsto V_t$, $t\mapsto Q_t$ are continuous at $t_0$. Then there exists an $\varepsilon>0$ such that if $(t,\zeta)\in\cU_\varepsilon$ then $\zeta\not\in\Sp(H_t)$ and
        the operator valued function $t\mapsto R_t(\zeta)= (H_t-\zeta)^{-1}$ is continuous at $t_0$ uniformly for $|\zeta-\zeta_0|<\varepsilon$.
        \item\lb{18i2}Assume Hypothesis \ref{hyp1.3i2} and suppose that the mappings $t\mapsto \tr_t$, $t\mapsto V_t$, $t\mapsto Q_t$ are Lipschitz continuous at $t_0$. Then there exists a constant $c>0$ such that for all $(t,\zeta)\in\cU_\varepsilon$ one has
        \begin{align}\lb{new1.29n}
        &\| R_{t}(\zeta)- R_{t_0}(\zeta) \|_{\cB(\cH)}\le c|t-t_0|.
        \end{align}
        \item\lb{18i3}
        Assume Hypothesis \ref{hyp1.3i3} and suppose that the mappings $t\mapsto \tr_t$, $t\mapsto V_t$, $t\mapsto Q_t$ are differentiable at $t_0$.  Then for some $\varepsilon>0$ the following asymptotic expansion holds uniformly for $|\zeta-\zeta_0|<\varepsilon$,
        \begin{align}
        \begin{split}\lb{new1.43nn}
        R_t(\zeta)\underset{t\rightarrow t_0}{=}R_{t_0}(\zeta)&+
        \big(-R_{t_0}(\zeta)\dot V_{t_0}R_{t_0}(\zeta)+(\tr_{t_0}  R_{t_0}(\overline{\zeta}))^*\dot Q_{t_0}J \tr_{t_0}    R_{t_0}(\zeta)\\
        &+(\tr_{t_0}  R_{t_0}(\overline{\zeta}))^*J\dot \tr_{t_0}  R_{t_0}(\zeta)\big)(t-t_0)+o(t-t_0),\ \text{\ in\ }\cB(\cH).
        \end{split}
        \end{align}
        In particular, the function $t\mapsto R_t(\zeta_0)=(H_t-{\zeta_0})^{-1}$ is differentiable at $t=t_0$ and satisfies the following Riccati equation
        \begin{align}
        \begin{split}\lb{derR}
        \dot{R}_{t_0}(\zeta_0)=-R_{t_0}(\zeta_0)\dot V_{t_0}R_{t_0}(\zeta_0)&+(\tr_{t_0}  R_{t_0}(\overline{\zeta_0}))^*\dot Q_{t_0}J \tr_{t_0}    R_{t_0}(\zeta_0)\\
        &+(\tr_{t_0}  R_{t_0}(\overline{\zeta_0}))^*J\dot \tr_{t_0}  R_{t_0}(\zeta_0).
        \end{split}
        \end{align}
        \end{enumerate}
        \end{theorem}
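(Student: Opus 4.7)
The plan is to derive all three parts directly from the Krein-type resolvent formula \eqref{new5.14a} with $s=t_0$, combined with uniform operator-norm bounds on the factors appearing on the right-hand side. The basic template for every part is the same: bound $R_t(\zeta)$ in $\cB(\cH)$, bound $\tr_t R_t(\overline{\zeta})$ in $\cB(\cH,\bndra)$ (and hence its adjoint in $\cB(\bndra,\cH)$), and then read off continuity, Lipschitz continuity, or differentiability from the corresponding regularity of $t \mapsto (V_t, Q_t, \tr_t)$.

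For part \eqref{18i1}, I would first apply the formula at the distinguished point $\zeta = \bfi$, where $\|R_t(\bfi)\|_{\cB(\cH)} \le 1$ holds automatically. Hypothesis \ref{hyp1.3i2} together with Remark \ref{rem3.14} yields $\|R_t(\pm\bfi)\|_{\cB(\cH,\cD)} = \cO(1)$, and continuity of $t \mapsto \tr_t$ gives $\|\tr_t\|_{\cB(\cD,\bndra)} = \cO(1)$, so the composition $\tr_t R_t(\pm\bfi)$ is uniformly bounded in $\cB(\cH,\bndra)$. Plugging these bounds into \eqref{new5.14a} at $\zeta = \bfi$ produces
\[ \|R_t(\bfi) - R_{t_0}(\bfi)\|_{\cB(\cH)} \le C\bigl(\|V_t-V_{t_0}\|_{\cB(\cH)} + \|Q_t-Q_{t_0}\| + \|\tr_t-\tr_{t_0}\|_{\cB(\cD,\bndra)}\bigr) \to 0. \]
Norm-resolvent convergence at $\bfi$ is a standard classical fact that implies some neighborhood $\cU_\varepsilon$ of $(t_0,\zeta_0)$ lies in the common resolvent set of all $H_t$ with $\|R_t(\zeta)\|_{\cB(\cH)}$ uniformly bounded on $\cU_\varepsilon$. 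Propagating this to $\|R_t(\zeta)\|_{\cB(\cH,\cD)} = \cO(1)$ via the first resolvent identity $R_t(\zeta) = R_t(\bfi) + (\zeta-\bfi) R_t(\zeta) R_t(\bfi)$ and then re-running the estimate of \eqref{new5.14a} at general $\zeta \in \cU_\varepsilon$ gives the uniform-in-$\zeta$ continuity in \eqref{18i1} and, under the Lipschitz hypothesis, the bound \eqref{new1.29n} in \eqref{18i2}.

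For part \eqref{18i3} the idea is to insert first-order Taylor expansions of $V_t$, $Q_t$, $\tr_t$ into \eqref{new5.14a}, divide by $(t-t_0)$, and pass to the limit. The crucial step is to justify
\[ \|\tr_t R_t(\overline{\zeta}) - \tr_{t_0} R_{t_0}(\overline{\zeta})\|_{\cB(\cH,\bndra)} = o(1), \]
which I would obtain by the decomposition
\[ \tr_t R_t(\overline{\zeta}) - \tr_{t_0} R_{t_0}(\overline{\zeta}) = (\tr_t - \tr_{t_0}) R_{t_0}(\overline{\zeta}) + \tr_t\bigl(R_t(\overline{\zeta}) - R_{t_0}(\overline{\zeta})\bigr): \]
the first term vanishes by continuity of $t \mapsto \tr_t$ together with $R_{t_0}(\overline{\zeta}) \in \cB(\cH,\cD)$, and the second by Hypothesis \ref{hyp1.3i3} combined with $\|\tr_t\|_{\cB(\cD,\bndra)} = \cO(1)$. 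Combined with $\|R_t(\zeta) - R_{t_0}(\zeta)\|_{\cB(\cH)} = o(1)$ from part \eqref{18i1}, substituting the Taylor expansions into \eqref{new5.14a} and matching linear-order contributions produces the asymptotic expansion \eqref{new1.43nn}; dividing by $(t-t_0)$ and taking $t \to t_0$ at $\zeta = \zeta_0$ yields the Riccati identity \eqref{derR}.

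The principal obstacle is the gap between parts \eqref{18i2} and \eqref{18i3}. Parts \eqref{18i1} and \eqref{18i2} only require control of $R_t(\cdot)$ in the $\cB(\cH)$-topology, which the Krein formula provides essentially for free from Hypothesis \ref{hyp1.3i2}. But for differentiability one must pass $\tr_t$ through a difference of resolvents, forcing norm convergence of $R_t(\overline{\zeta})$ in the stronger $\cB(\cH,\cD)$-topology --- precisely the content of Hypothesis \ref{hyp1.3i3}. When $\cD \subsetneq \cH_+$, this is a genuine assumption that has to be verified in each concrete setting (as is done, e.g., in Section \ref{ssKKFREO} for Robin-type elliptic operators), whereas when $\cD = \cH_+$ it is automatic.
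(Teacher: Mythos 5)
Your proposal is correct and follows essentially the same route as the paper's proof: continuity at $\zeta=\bfi$ via the Krein formula \eqref{new5.14a} together with the $\cB(\cH,\cD)$-bound from Hypothesis~\ref{hyp1.3i2}, then propagation of uniform bounds to a neighborhood $\cU_\varepsilon$ via \cite[Theorem VIII.23]{RS1} and the resolvent identity, then Taylor expansion of $V_t$, $Q_t$, $\tr_t$ in \eqref{new5.14a} using Hypothesis~\ref{hyp1.3i3} to get the stronger $\cB(\cH,\cD)$-convergence of $R_t(\overline{\zeta})$. Your explicit decomposition $\tr_t R_t(\overline{\zeta})-\tr_{t_0}R_{t_0}(\overline{\zeta})=(\tr_t-\tr_{t_0})R_{t_0}(\overline{\zeta})+\tr_t\bigl(R_t(\overline{\zeta})-R_{t_0}(\overline{\zeta})\bigr)$ is just a cleaner rewriting of the factorized $(\tr_{t_0}+\cO(t-t_0))(R_{t_0}(\overline{\zeta})+o_{\cB(\cH,\cD)}(1))$ bookkeeping that appears in the paper, not a different argument.
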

The operators whose adjoints enter \eqref{new1.43nn}, \eqref{derR} are considered as elements of $\cB(\cH, \bndra)$, cf.\ Proposition \ref{prop2.4},  and their adjoints are elements of $\cB(\bndra, \cH)$, the dot denotes the derivative with respect to $t$ evaluated at $t_0$.  We emphasize the generality of formulas \eqref{new1.43nn}--\eqref{derR} where all three objects may vary: the domain of the extension, the trace operator, and the ``lower order terms'' of the operator itself. In Theorem \ref{prop5.1} we will give analogous results using a slightly different description of the domains of the self-adjoint extensions. Also, see Theorem \ref{theorem4.5} for the case when the trace operator  is $t$-independent. We refer to Remark \ref{rem1.8} below for somewhat more symmetric versions of the RHS of  \eqref{new1.43nn} and \eqref{derR} and to Remark \ref{rem:cond} for a comment on the continuity and differentiability conditions in the theorem.

\begin{proof}
	First,  we prove that the mapping $t\mapsto R_t(\bfi)\in\cB(\cH)$ is continuous at $t_0$.  Hypothesis  \ref{hyp1.3i2} by Remark \ref{rem3.14} yields
	\begin{equation}\lb{2.4a}
	\| R_t(\bfi)\|_{\cB(\cH, \cD)}=\cO(1), t\rightarrow t_0.
	\end{equation} 
	 Using \eqref{new5.14a} with $\zeta=\bfi$, $s=t_0$, and \eqref{2.4a} we get
	\begin{align}
	\begin{split}\lb{prop1.8}
	R_t(\bfi)-  R_{t_0}(\bfi)&=R_t({\bfi})(V_{t_0}-V_t)R_{t_0}(\bfi)\\
	&\quad+(\tr_t   R_t(-{\bfi}))^*(Q_t-Q_{t_0})JQ_{t_0} \tr_{t_0}    R_{t_0}(\bfi)\\
	&\qquad+(\tr_t   R_t(-{\bfi}))^*J (\tr_t-\tr_{t_0})    R_{t_0}(\bfi)\underset{t\rightarrow t_0}{=}o(1).
	\end{split}
	\end{align}
	
	{\it Proof of (\ref{18i1}),(\ref{18i2})}.  Fix $\varepsilon_0>0$ such that $\mathbb{B}_{\varepsilon_0}(\zeta_0)\subset \C\setminus \Sp(H_{t_0})$. Then  by  \eqref{prop1.8} and \cite[Theorem VIII.23]{RS1} we have
	$\mathbb{B}_{\varepsilon_0}(\zeta_0)\cap\Sp(H_{t})=\emptyset$
	for $t$ sufficiently close to $t_0$. Hence, 
	\begin{equation}\lb{new1.17}
	\sup\{\| R_t(\zeta)\|_{\cB(\cH)}: (t,\zeta)\in \cU_{\varepsilon}\}<\infty
	\end{equation}
	for a sufficiently small $\varepsilon>0$.  We claim that yet a smaller choice of $\varepsilon>0$ gives
	\begin{equation}\lb{2.11a}
	\sup\{\|  R_t(\zeta)\|_{\cB(\cH, \cD)}: (t,\zeta)\in \cU_{\varepsilon}\}<\infty.
	\end{equation}
	Indeed, by the resolvent identity one has
	\begin{equation}
	  R_t(\zeta)=	  R_t(\bfi)-(\bfi-\zeta)  R_t(\bfi)R_t(\zeta).
	\end{equation}
	Using this and \eqref{2.4a}, we see that \eqref{new1.17} yields \eqref{2.11a}. Next, by \eqref{new5.14a} and \eqref{2.11a} we infer
	\begin{align}
	\begin{split}\lb{new1.19}
	R_t(\zeta)-   R_{t_0}(\zeta) &=R_t({\zeta})(V_{t_0}-V_t)R_{t_0}(\zeta)\\
	&\quad+(\tr_t   R_t(\overline{\zeta}))^*(Q_t-Q_{t_0})JQ_{t_0} \tr_{t_0}    R_{t_0}(\zeta)\\
	&\qquad+(\tr_t   R_t(\overline{\zeta}))^*J (\tr_t-\tr_{t_0})    R_{t_0}(\zeta)\\
	&\hspace{-1cm}\leq c\max\{\|Q_{t}-Q_{t_0}\|_{\cB(\mathfrak{H}\times\mathfrak{H})}, \|\tr_{t}-\tr_{t_0}\|_{\cB(\cH_+, \mathfrak{H}\times\mathfrak{H})}, \|V_{t}-V_{t_0}\|_{\cB(\cH)} \} 
	\end{split}
	\end{align}
	for some $c>0$ and all $(t,\zeta)\in\cU_{\varepsilon}$; here we used the inequality
	\begin{equation}\lb{new1.19n}
	\|\tr_t   R_t(\overline{\zeta})\|_{\cB(\cH, \bndra)}\leq \|\tr_t\|_{\cB(\cD, \bndra)}\|R_t(\overline{\zeta})\|_{\cB(\cH, \cD)}, 
	\end{equation}
	see Proposition \ref{prop2.4} and Remark \ref{remark2.7}. Now both assertions {\it (\ref{18i1}),(\ref{18i2})} follow from \eqref{new1.19}. 
	
	{\it Proof of (\ref{18i3})}. First, we notice that \eqref{2.2bo} and the resolvent identity give 
	\begin{equation}\lb{2.13}
	\|  R_{t}(\zeta)-  R_{t_0}(\zeta) \|_{\cB(\cH,\cD)}\rightarrow 0,\ t\rightarrow0, 
	\end{equation} 
	uniformly for $|\zeta-\zeta_0|<\varepsilon$, with $\varepsilon>0$ as above.
	Next, by assumptions we have
	\begin{align}
	\begin{split}\lb{new1.20n}
	&Q_t\underset{t\rightarrow t_0}{=}Q_{t_0}+\dot Q_{t_0}(t-t_0)+o(t-t_0),\\
	&V_t\underset{t\rightarrow t_0}{=}V_{t_0}+\dot V_{t_0}(t-t_0)+o(t-t_0),\\
	&\tr_t\underset{t\rightarrow t_0}{=}\tr_{t_0}+\dot \tr_{t_0}(t-t_0)+o(t-t_0).
	\end{split}
	\end{align}
	Combining these expansions, \eqref{new5.14a}, \eqref{new1.29n}, and \eqref{2.13} we see that
	\begin{align}
	\begin{split}\no
	&R_t(\zeta)- R_{t_0}(\zeta)\underset{t\rightarrow t_0}{=}( R_{t_0}(\zeta)+\cO(t-t_0))(-\dot V_{t_0}(t-t_0)+o(t-t_0))R_{t_0}(\zeta) \\
	&+\big((\tr_{t_0}+\cO(t-t_0)) ( R_{t_0}(\overline{\zeta})+O_{\|\cdot\|_{\cB(\cH, \cD)}}(1) )\big)^*\times\\&\hskip3cm\times(\dot Q_{t_0}(t-t_0)+o(t-t_0)) JQ_{t_0} \tr_{t_0}   R_{t_0}(\zeta)\\
	&+\big((\tr_{t_0}+\cO(t-t_0)) ( R_{t_0}(\overline{\zeta})+O_{\|\cdot\|_{\cB(\cH, \cD)}}(1))\big)^*\times\\&\hskip3cm\times J (\dot \tr_{t_0}(t-t_0)+o(t-t_0))   R_{t_0}(\zeta)\\
	&\underset{t\rightarrow t_0}{=}\big(-R_{t_0}(\zeta)\dot V_{t_0}R_{t_0}(\zeta)+(\tr_{t_0} R_{t_0}(\overline{\zeta}))^*\dot Q_{t_0}J \tr_{t_0}   R_{t_0}(\zeta)\\
	&\hskip3cm+(\tr_{t_0}R_{t_0})^*J\dot \tr_{t_0} R_{t_0}(\zeta)\big)(t-t_0)+o(t-t_0),
	\end{split}
	\end{align}
	 in $\cB(\cH)$ uniformly for $|\zeta-\zeta_0|<\varepsilon$. This shows \eqref{new1.43nn} which implies \eqref{derR}.
\end{proof}
\begin{remark}\lb{rem1.8}
	The operator $\dot Q_{t_0}J\in\cB(\mathfrak{H}\times\mathfrak{H})$ in \eqref{new1.43nn}, \eqref{derR} is self-adjoint. Indeed, 
	since $\ran(Q_t)$ is Lagrangian, we have $J=JQ_{t}+Q_{t}J$ which implies the assertion upon differentiating with respect to $t$.  Since $\dot{Q}_tJ=-J\dot{Q}$ we can re-write the term $\dot{Q}_{t_0}J$ in \eqref{new1.43nn} and \eqref{derR} in a more symmetric fashion as \[\dot{Q}_{t_0}J=\frac12\big(\dot{Q}_{t_0}J-J\dot{Q}_{t_0}).\] Furthermore, the identity $Q_tJQ_t=0$ yields 
\[\big(\tr_t R_{t_0}(\overline{\zeta})\big)^*J\tr_tR_{t_0}(\zeta)=
\big(Q_t\tr_t R_{t_0}(\overline{\zeta})\big)^*JQ_t\tr_tR_{t_0}(\zeta)=0.\] Differentiating this identity at $t=t_0$ shows that the respective terms in the RHS of \eqref{new1.43nn} and \eqref{derR} could be also re-written as
	\begin{align}
&(\tr_{t_0}  R_{t_0}(\overline{\zeta_0}))^*J\dot \tr_{t_0}  R_{t_0}(\zeta_0)=\frac12\Big((\tr_{t_0}  R_{t_0}(\overline{\zeta_0}))^*J\dot \tr_{t_0}  R_{t_0}(\zeta_0)\\&\hskip5cm-(\dot \tr_{t_0}  R_{t_0}(\overline{\zeta_0}))^*J \tr_{t_0}  R_{t_0}(\zeta_0)\Big).
	\end{align}
\hfill$\Diamond$  \end{remark}
\begin{remark}\label{rem:cond}
 The assumptions of continuity and differentiability of the families $\tr, V$ and $Q$ are imposed at a fixed point $t_0\in[0,1]$. For many interesting examples these assumptions hold for all $t_0\in[0,1]$; a typical situation of this type is described in Example \ref{ODEexm}. However, these assumptions might fail for some points in $[0,1]$. A typical example of the latter situation is furnished by the classical Hadamard formula setting for star-shaped domains described in Section \ref{HFSSD} where the trace operator is singular at $t_0=0$ but is differentiable for each $t_0\in(0,1]$.
\hfill$\Diamond$  \end{remark}

\begin{remark} Discontinuities of the path $t\mapsto Q_t$ in general result in discontinuities of the eigenvalues curves. To give an example, let $\cA_t=-\Delta$ be the realization of the Laplacian on a bounded Lipschitz domain $\Omega\subset \bbR^n, n\geq 2$, subject to the boundary conditions $\chi_{[0,1/2]}(t)\gaD u+\chi_{(1/2, 1]}(t)\gaN u=0$; here $\gaD, \gaN$ are Dirichlet and Neumann traces, cf. Appendix \ref{appA}, and $\chi$ is the characteristic function.  That is, $\cA_t$ is the Dirichlet Laplacian for $t\in[0,1/2]$ and the Neumann Laplacian for $t\in(1/2,1]$. The corresponding path of Lagrangian planes is piece-wise constant with a jump at $t=1/2$. At this point the boundary conditions change from Dirichlet to Neumann and, due to the celebrated inequality of L. Friedlander \cite{Fri91}, this produces a non-trivial shift in the spectrum, which, in turn, shows the discontinuities of the eigenvalues. We revisit Friedlander's Inequality in Example \ref{FREX} below and provide a symplectic proof thereof, cf.\ \cite{CJM2}. 
\end{remark}
%%%%%%%%%%%%%%%%%
\subsection{Hadamard-type variational formulas}\label{Sec3.3}
In this section, we derive the first order expansion formula for the mapping $t\mapsto P(t)H_tP(t)$ near $t=t_0$. Here, the operator $H_t=\cA_t+V_t$ is as in Hypothesis \ref{hyp1.3i3} and $P(t)$ is a spectral projection of $H_t$ which corresponds to the $\lambda$-group, cf.\ \cite[Section II.5.1]{K80}, consisting of $m$ isolated eigenvalues of $H_t$ bifurcating from the eigenvalue $\lambda=\lambda_{t_0}$ of multiplicity $m$ of the operator $H_{t_0}$, see Hypothesis \ref{hyp3.1o} below. A subtlety is presented by the fact that  the operators $P(t)H_tP(t)$ act in varying finite-dimensional spaces $\ran(P(t))$; we rectify this by means of a unitary mapping $U: \ran(P(t_0))\rightarrow \ran(P_t)$, as in, e.g., \cite[Section I.4.6]{K80}. After this we use the first order perturbation theory for finite-dimensional operators, cf.\ \cite[Section II.5.4]{K80},  to deduce a formula for the derivative of the eigenvalue curves which we call the {\em Hadamard-type variational formula}, see \eqref{2.36}. This terminology stems from a classical Rayleigh--Hadamard--Rellich formulas for derivatives of the eigenvalues of Laplacian posted on a parameter-dependent family of domains, cf.\ Section \ref{HFSSD} below for details of this particular situation. We note that the approach adopted in this section was originally carried out in  \cite{LS17} for a specific PDE situation of the one-parameter family of Schr\"odinger operators with Robin boundary conditions on star-shaped domains mentioned in Example \ref{parametricSchr}.
\begin{hypothesis}\lb{hyp3.1o}  
For a given  $t_0\in[0,1]$, we assume that $\lambda=\lambda(t_0)$ is an isolated eigenvalue of $H_{t_0}$ with finite multiplicity $m\in\bbN$. Let 
	\begin{equation}\no
	\gamma:=\big\{z\in\bbC: 2|z-\lambda|=\dist\big(\lambda, \Sp(H_{t_0})\setminus\{\lambda\}\big)\big\},
	\end{equation}
	and let $B\subset\bbC$ denote the disc enclosed by $\gamma$ such that $\Sp(H_{t_0})\cap B=\{\lambda\}$. 
\end{hypothesis}

Throughout this section we assume Hypothesis \ref{hyp1.3i2}, and that the maps $t\mapsto\tr_t, V_t, Q_t$ are continuous at a given $t_0\in[0,1]$. By Theorem \ref{prop1.8new}, there exists $\varepsilon>0$ such that $\gamma$ encloses $m$ eigenvalues (not necessarily distinct) of the operator $H_t$ whenever $|t-t_0|<\varepsilon$ and $\varepsilon>0$ is sufficiently small.  For such $t$ we let $P(t)$ denote the Riesz projection
\begin{equation}\lb{3.1}
P(t):=\frac{-1}{2\pi \bfi}\int_{\gamma} R_t(\zeta)d\zeta, R_t(\zeta)=(H_t-\zeta)^{-1}
\end{equation} 
and recall the reduced resolvent given by 
\begin{equation}\lb{3.1a}
S:=\frac{1}{2\pi \bfi}\int_{\gamma}(\zeta-\lambda)^{-1} R_{t_0}(\zeta)d\zeta						
\end{equation}	 
and the identity $P(t_0)R_{t_0}(\zeta)=(\lambda-\zeta)^{-1}P(t_0)$.
\begin{remark}\lb{prop3.2}
	The Riemann sums defining integrals in \eqref{3.1}, \eqref{3.1a} converge not only in $\cB(\cH)$ but also in $\cB(\cH, \cD)$. Consequently,  $ P(t), S\in\cB(\cH, \cD)$. In addition, one has
	\begin{align}\lb{3231}
	\frac{1}{2\pi \bfi}	\int_{\gamma} \tr_{t} \big((\zeta-\lambda)^{-1}R_{t}(\zeta)\big)d\zeta&=	 \tr_{t}\frac{1}{2\pi \bfi}	\int_{\gamma} \big((\zeta-\lambda)^{-1}R_{t}(\zeta)\big)d\zeta=\tr_t S,\\
	&(\tr_t P(t))\in\cB(\cH, \bndra). \lb{3232}
	\end{align}
This follows from continuity of the mapping $\bbC\ni\zeta\mapsto R_t(\zeta)\in\cB(\cH, \cD)$ for every $t\in[0,1]$ which can be inferred from
	$
	 R_t(\zeta)- R_t(\zeta_0)=(\zeta-\zeta_0) R_t(\zeta)R_t(\zeta_0),
$
	(cf.  \eqref{new1.17}, \eqref{2.11a}), and  $\tr_t\in\cB(\cD, \bndra)$.
	\hfill$\Diamond$  \end{remark}

Next we derive an asymptotic expansion of $P(t)H_tP(t)$ for $t$ near $t_0$. To that end, we introduce the operator $D(t):=P(t)-P(t_0)$ satisfying $\|D(t) \|_{\cB(\cH)}\underset{t\rightarrow t_0}{=}o(1)$, which follows from \eqref{new1.29n}, \eqref{3.1}. In particular, for $t$ near $t_0$ the following operators are well defined
\begin{align}
\begin{split}\lb{int27}
&U(t):=(I-D^2(t))^{-1/2}((I-P(t))(I-P(t_0))+P(t)P(t_0)),\\
&U(t)^{-1}=((I-P(t_0))(I-P(t))+P(t_0)P(t))(I-D^2(t))^{-1/2},
\end{split}
\end{align}
moreover, as in  \cite[Section I.4.6]{K80}, \cite[Proposition 2.18]{F04}, we note  that
\begin{equation}\lb{int28}
U(t)P(t_0)=P(t)U(t),
\end{equation}
and  that $U(t)$ maps $\ran(P(t_0))$ onto $\ran(P(t))$ unitarily (for $t$ near $t_0$). Given this auxiliary operators we are ready to expand $P(t)H_tP(t)$, which is an $m$-dimensional operator, for $t$ near $t_0$.   
\begin{lemma}\lb{theorem2.2}  For a given $t_0\in[0,1]$ we assume 
that the mappings $t\mapsto \tr_t$, $t\mapsto V_t$, $t\mapsto Q_t$ are differentiable at $t_0$ and that Hypotheses \ref{hyp1.3i3} and \ref{hyp3.1o} hold. Then one has
	\begin{align}
	\begin{split}
	\lb{1.42n}
	&P(t_0)U(t)^{-1}H_tP(t)U(t)P(t_0)\underset{t\rightarrow t_0}{=}\lambda P(t_0)+\Big(P(t_0)\dot V_{t_0}P(t_0)\\
	&\quad-(\tr   P(t_0))^*\dot Q_{t_{0}}J\tr   P(t_0)-(\tr_{t_0}  P(t_0))^*J\dot \tr_{t_0}   P(t_0)\Big)(t-t_0)+o(t-t_0).
	\end{split}
	\end{align}
\end{lemma}

\begin{proof} Our strategy is to expand the left-hand side of \eqref{1.42n} using  \eqref{new1.43nn}. Multiplying \eqref{new1.43nn} by $P(t_0)$ from the right and using identity
\begin{equation}\label{RPR}
R_{t_0}(\zeta)P(t_0)=P(t_0)R_{t_0}(\zeta)=(\lambda-\zeta)^{-1}P(t_0),
\end{equation} 
where $R_t(\zeta)=(H_t-\zeta)^{-1}$, we get	
		\begin{align}
		\begin{split}\lb{1.45new}
		R_t&(\zeta)P(t_0)\underset{t\rightarrow t_0}{= } {(\lambda-\zeta)^{-1}}P(t_0)+(\lambda-\zeta)^{-1}
\Big(-R_{t_0}(\zeta)\dot V_{t_0}P(t_0)\\&+\big(\tr_{t_0} R_{t_0}(\overline{\zeta})\big)^*\, \dot Q_{t_0}J\tr_{t_0} P(t_0)
		+\big(\tr_{t_0}  R_{t_0}(\overline{\zeta})\big)^*J\dot \tr_{t_0}  P({t_0})\Big)(t-t_0)\\
		&+o(t-t_0).
		\end{split}
		\end{align}
The proof is split in several steps.

	{\bf Step 1.} One has 
	\begin{align}\lb{1.44old}
	&P(t_0)P(t)P(t_0)\underset{t\rightarrow t_0}{=}P(t_0)+o(t-t_0).
	\end{align}
	\begin{proof}
	For any continuous $F:\gamma\to\cB(\mathfrak{H}\times\mathfrak{H}, \cH)$ we have
	\[\Big(\int_\gamma F(\zeta)\, d\zeta\Big)^*=-\int_\gamma (F(\overline{\zeta}))^*\, d\zeta.\]
	 Applying this to 
	$F(\zeta)=\frac{1}{2\pi\bfi}(\lambda-{\zeta})^{-1}\tr_{t_0}R_{t_0}({\zeta})$ and using 
	\eqref{3.1a}, \eqref{3231} yields
	\[\int_\gamma\big(\frac{1}{2\pi\bfi}(\lambda-\overline{\zeta})^{-1}\tr_{t_0}R_{t_0}(\overline{\zeta})\big)^*\, d\zeta=\big(-\int_\gamma\frac{1}{2\pi\bfi}(\lambda-\zeta)^{-1}\tr_{t_0}R_{t_0}(\zeta)\, d\zeta\big)^*=(\tr_{t_0}S)^*.	\]
We use this, multiply both sides  of \eqref{1.45new} by $-\frac{1}{2\pi \bfi }$ and integrate over $\gamma$ to obtain the following,
		\begin{align}
		\begin{split}\lb{1.44}
		P(t)P(t_0)&\underset{t\rightarrow t_0}{=}  P(t_0)+\Big(-S\dot V_{t_0}P(t_0)+\left(\tr_{t_0}   S\right)^*\, \dot Q_{t_0}J\tr_{t_0}  P(t_0)\\
		&\qquad+\left(\tr_{t_0}   S\right)^*J\dot \tr_{t_0}  P({t_0})\Big)(t-t_0)+o(t-t_0).%\\&\underset{t\rightarrow t_0}{=} P(t_0)+\cO(t-t_0).
		\end{split}
		\end{align}
	Taking adjoints we get
		\begin{align}
		\begin{split}\lb{1.45}
		P(t_0)P(t)&\underset{t\rightarrow t_0}{=}  P(t_0)+\Big(-P(t_0)\dot V_{t_0}S+(\tr_{t_0}
		   P(t_0))^*\, \dot Q_{t_0}J\tr_{t_0}  S\\
		&\qquad+\big(\dot\tr_{t_0}  P({t_0})\big)^*J \tr_{t_0}   S\Big)(t-t_0)+o(t-t_0).%\\&\underset{t\rightarrow t_0}{=} P(t_0)+\cO(t-t_0).
		\end{split}
		\end{align}
		Multiplying this by $P(t_0)$ from the right and using $SP(t_0)=0$  we arrive at \eqref{1.44old}.
	\end{proof}
	{\bf Step 2.} One has
	\begin{align}
	P(t_0)U(t)P(t_0)&=(P(t_0)U^{-1}(t)P(t_0))^*\underset{t\rightarrow t_0}{=}P(t_0)+o(t-t_0),\lb{1.48new}\\
	(I-P(t_0))U(t)P(t_0)&= (P(t_0)U(t)^{-1}(I-P(t_0)))^*\\
	& \underset{t\rightarrow t_0}{=}(I-P(t_0)) \Big(-S\dot V_{t_0}P(t_0)+\left(\tr_{t_0}   S\right)^*\, \dot Q_{t_0}J\tr_{t_0}    P(t_0)\\
	&\quad\qquad+\left(\tr_{t_0}   S\right)^*J\dot \tr_{t_0}  P({t_0})\Big)(t-t_0)+o(t-t_0).\lb{1.50new}
	\end{align}
	\begin{proof}
First we note an auxiliary expansion
$D(t)\underset{t\rightarrow t_0}{=}\cO(t-t_0)$ which follows
from \eqref{new1.29n}, \eqref{3.1} and formula $D(t)=P(t)-P(t_0)$.
Thus 
\[(I-D^2(t))^{-1/2}\underset{t\rightarrow t_0}{=}I+\cO(|t-t_0|^2)\]
and then
		\begin{align}
		\begin{split}\lb{1.51}
		U(t)&=(I-D^2(t))^{-1/2}((I-P(t))(I-P(t_0))+P(t)P(t_0))\\
		&\underset{t\rightarrow t_0}{=}((I-P(t))(I-P(t_0))+P(t)P(t_0))+o(t-t_0).
		\end{split}
		\end{align}
		Using this and \eqref{1.44old} we obtain
		\begin{equation}
		P(t_0)U(t)P(t_0)\underset{t\rightarrow t_0}{=}P(t_0)P(t)P(t_0)+o(t-t_0)\underset{t\rightarrow t_0}{=}P(t_0)+o(t-t_0).
		\end{equation}
		Similarly, employing  \eqref{1.51} one infers 
		\begin{align}
		&(I-P(t_0))U(t)P(t_0)\underset{t\rightarrow t_0}{=}(I-P(t_0))P(t)P(t_0)+o(t-t_0)%\\
		%&\underset{t\rightarrow t_0}{=}(I-P(t_0)) \Big[S\dot V_{t_0}P(t_0)+\left(\tr_{t_0}   S\right)^*\, \dot Q_{t_0}J\tr\    P(t_0)\\
		%&\qquad+\left(\tr_{t_0}   S\right)^*J\dot \tr_{t_0}  P({t_0})\Big](t-t_0)+o(t-t_0).
		\end{align}
and thus \eqref{1.50new} follows by multiplying  \eqref{1.44} 
by $I-P(t_0)$ from the left.
	\end{proof}
	
	{\bf Step 3. } One has
	\begin{align}
	\begin{split}\lb{1.57}
	P(t_0)&U^{-1}(t)R_t(\zeta)U(t)P(t_0)\underset{t\rightarrow t_0}{=}(\lambda-\zeta)^{-1}P(t_0)\\
	&+(\lambda-\zeta)^{-2}\Big(-P(t_0)\dot V_{t_0}P(t_0)+\big(\tr_{t_0}   P(t_0)\big)^*\dot Q_{t_0}J\tr_{t_0}   P(t_0)\\
	&+\big(\tr_{t_0}  P(t_0)\big)^*J\dot \tr_{t_0}  P({t_0})\Big)(t-t_0)+o(t-t_0).
	\end{split}
	\end{align}
	\begin{proof} First, we sandwich the middle term in the left-hand side, $R_t(\zeta)$, by $P(t_0)+(I-P(t_0))$ and write
		\begin{align}
		&P(t_0)U^{-1}(t)R_t(\zeta)U(t)P(t_0)=I+II+III+IV.
		\end{align}
		Let us treat each term individually, starting with
		\begin{align}
		I:=P(t_0)U^{-1}(t)(I-P(t_0))&\times(I-P(t_0))R_t(\zeta)P(t_0)\\
		&\times P(t_0)U(t)P(t_0)\underset{t\rightarrow t_0}{=} o(t-t_0),
		\end{align}
		by  \eqref{1.45new},  \eqref{1.48new}, \eqref{1.50new}
		as the main terms in the RHS of \eqref{1.45new} and \eqref{1.50new} both contain the factor $(t-t_0)$.  Similarly, we infer
		\begin{align}
		II:=P(t_0)U^{-1}(t)P(t_0)&\times P(t_0)R_t(\zeta)(I-P(t_0))\\
		&\times(I-P(t_0))U(t)P(t_0)\underset{t\rightarrow t_0}{=} o(t-t_0),
		\end{align}
		by  \eqref{1.45new}, \eqref{1.48new}, \eqref{1.50new}, and 
		\begin{align}
		III:= &P(t_0)U^{-1}(t)(I-P(t_0))\times R_t(\zeta)\\
		&\times (I-P(t_0))U(t)P(t_0)\underset{t\rightarrow t_0}{=} o(t-t_0),
		\end{align}
		by \eqref{1.50new}. The last term admits the required in \eqref{1.57} expansion because 
		\begin{align}
		IV:=P(t_0)U^{-1}(t)P(t_0)\times P(t_0)R_t(\zeta)P(t_0)\times P(t_0)U(t)P(t_0)%\\
		%&\underset{t\rightarrow t_0}{=}(P(t_0)+o(t-t_0))\\
		%&\quad\times \Big(\frac{P(t_0)}{\lambda-\zeta}+\Big[\frac{P(t_0)(\zeta)\dot V_{t_0}P(t_0)}{(\lambda-\zeta)^2}+\frac{\left(\tr_{t_0}   P(t_0)\right)^*\dot Q_{t_0}J\tr_{t_0} \    P(t_0)}{{(\lambda-\zeta)^2}}\\
		%&\hspace{3cm}+\frac{\left(\tr_{t_0}  P(t_0)\right)^*J\dot \tr_{t_0}  P({t_0})}{({\lambda-\zeta})^2}\Big](t-t_0)+o(t-t_0)\Big)\\
		%&\quad\times(P(t_0)+o(t-t_0))
		\end{align}
and  we can use  \eqref{1.45new}, identity  \eqref{RPR} and (twice)\eqref{1.48new}.
	\end{proof}
	
	{\bf Step 4.} Recalling the identities 
	\begin{equation}
	H_tP(t):=\frac{-1}{2\pi \bfi}\int_{\gamma} \zeta R_t(\zeta)d\zeta, \quad \frac{1}{2\pi \bfi}\int_{\gamma} \zeta (\lambda-\zeta)^{-2}d\zeta=1,
	\end{equation}	
	multiplying \eqref{1.57} by $-\zeta/2\pi \bfi$ and then integrating  over $\gamma$ we arrive at \eqref{1.42n}
\end{proof}

We are ready to present the main result of this section that gives a formula for the slopes of the appropriately chosen branches of the eigenvalues curves bifurcating from an isolated eigenvalue of finite multiplicity. We recall that our assumptions on differentiability of $\tr, V$ and $Q$ is imposed at a particular point $t_0$ where $\lambda=\lambda(t_0)$ is the  isolated eigenvalue, cf.\  Remark \ref{rem:cond}. To avoid confusions we also refer to the classical Rellich's example \cite[Example V.4.14]{K80} recalled as Example \ref{exm:rell} below to emphasize that we are not claiming global differentiability of all  eigenvalue curves. Indeed, in this example there is a point $t_0$ where one eigenvalue curve has a singularity and so our assumptions do not hold while all others curves are differentiable as claimed in the theorem. 

\begin{theorem}\lb{thm2.2} Assume Hypotheses \ref{hyp1.3i3} and \ref{hyp3.1o} and suppose that  the mappings $t\mapsto \tr_t$, $t\mapsto V_t$, $t\mapsto Q_t$ are differentiable at $t_0$. We introduce the operator
	\begin{equation}
	T^{(1)}:=P(t_0)\dot V_{t_0}P(t_0)-(\tr_{t_0}   P(t_0))^*\dot Q_{t_{0}}J\tr_{t_0}   P(t_0)-(\tr_{t_0}  P(t_0))^*J\dot \tr_{t_0}  P(t_0),
	\end{equation}
	and denote the eigenvalues and the orthonormal eigenvectors of this  $m$ dimensional operator by $\{\lambda^{(1)}_j\}_{j=1}^m$ and $\{u_j\}_{j=1}^m\subset\ran(P(t_0))=\ker(H_{t_0}-\lambda)$ correspondingly\footnote{We stress that  $u_j$ are eigenvectors of $H_{t_0}$ corresponding to its eigenvalue $\lambda=\lambda(t_0)$.}. Then there exists a labeling of the eigenvalues $\{\lambda_j(t)\}_{j=1}^m$ of $H_t$, for $t$ near $t_0$,  satisfying the asymptotic formula
	\begin{equation}
	\lambda_j(t)\underset{t\rightarrow t_0}{=} \lambda+\lambda_j^{(1)}(t-t_0)+o(t-t_0),\lb{2.35}
	\end{equation}
	moreover, one has
	\begin{equation}
	\dot\lambda_j(t_0)=\langle \dot V_{t_0}u_j, u_j\rangle_{\cH}+\omega(\dot Q_{t_0}\tr_{t_0} u_j, \tr_{t_0} u_j)+\omega( \tr_{t_0}u_j, \dot \tr_{t_0} u_j),\lb{2.36}
	\end{equation}
	for each $1 \leq j\leq m$.
\end{theorem}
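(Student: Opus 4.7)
The plan is to leverage Lemma \ref{theorem2.2} to reduce the problem to a standard first-order perturbation of self-adjoint operators on a \emph{fixed} $m$-dimensional Hilbert space, and then apply the Kato selection theorem.

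First, I would observe that by construction \eqref{int27}--\eqref{int28}, the operator $U(t)$ restricts to a unitary isomorphism $\ran(P(t_0))\to\ran(P(t))$ for $t$ near $t_0$, so that
\[
M(t):=P(t_0)U(t)^{-1}H_tP(t)U(t)P(t_0)\big|_{\ran(P(t_0))}
\]
is a self-adjoint operator on the fixed $m$-dimensional space $\ran(P(t_0))=\ker(H_{t_0}-\lambda)$, unitarily equivalent to $P(t)H_tP(t)\big|_{\ran(P(t))}$. In particular the spectrum of $M(t)$ consists precisely of the $m$ eigenvalues $\{\lambda_j(t)\}_{j=1}^m$ of $H_t$ enclosed by the contour $\gamma$, counted with multiplicities. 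By Lemma \ref{theorem2.2}, the family $M(t)$ admits the expansion
\[
M(t)\underset{t\to t_0}{=}\lambda I+T^{(1)}\big|_{\ran(P(t_0))}(t-t_0)+o(t-t_0)
\]
in $\cB(\ran(P(t_0)))$, i.e.\ it is a continuously differentiable one-parameter family of Hermitian $m\times m$ matrices at $t_0$ whose value at $t_0$ is a scalar multiple of the identity.

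Next, I would invoke the Kato selection theorem \cite[Theorems II.5.4, II.6.8]{K80} (cf.\ also \cite[Theorem 4.28]{RoSa95}) for finite-dimensional self-adjoint families: since $T^{(1)}|_{\ran(P(t_0))}$ is self-adjoint, choose an orthonormal basis $\{u_j\}_{j=1}^m$ of $\ran(P(t_0))$ consisting of its eigenvectors and let $\lambda^{(1)}_j$ denote the corresponding eigenvalues. The selection theorem then produces a labeling of the eigenvalues $\{\lambda_j(t)\}_{j=1}^m$ of $M(t)$ (hence of the $\lambda$-group of $H_t$) such that each $\lambda_j$ is differentiable at $t_0$ and
\[
\lambda_j(t)\underset{t\to t_0}{=}\lambda+\lambda^{(1)}_j(t-t_0)+o(t-t_0),
\]
which is exactly \eqref{2.35}.

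To derive \eqref{2.36}, I would compute $\dot\lambda_j(t_0)=\lambda^{(1)}_j=\langle T^{(1)}u_j,u_j\rangle_\cH$ term by term. Using $P(t_0)u_j=u_j$ the first term gives $\langle\dot V_{t_0}u_j,u_j\rangle_\cH$. For the remaining two terms I would recall from Remark \ref{rem1.8} that $\dot Q_{t_0}J=-J\dot Q_{t_0}$ (obtained by differentiating the Lagrangian relation $Q_tJ+JQ_t=J$) and the definition $\omega(f,g)=\langle Jf,g\rangle_{\mathfrak{H}\times\mathfrak{H}}$. Then
\begin{align*}
-\big\langle(\tr_{t_0}P(t_0))^*\dot Q_{t_0}J\tr_{t_0}P(t_0)u_j,u_j\big\rangle_\cH
&=-\langle\dot Q_{t_0}J\tr_{t_0}u_j,\tr_{t_0}u_j\rangle_{\mathfrak{H}\times\mathfrak{H}}\\
&=\langle J\dot Q_{t_0}\tr_{t_0}u_j,\tr_{t_0}u_j\rangle_{\mathfrak{H}\times\mathfrak{H}}=\omega(\dot Q_{t_0}\tr_{t_0}u_j,\tr_{t_0}u_j),\\
-\big\langle(\tr_{t_0}P(t_0))^*J\dot\tr_{t_0}P(t_0)u_j,u_j\big\rangle_\cH
&=-\omega(\dot\tr_{t_0}u_j,\tr_{t_0}u_j)=\omega(\tr_{t_0}u_j,\dot\tr_{t_0}u_j),
\end{align*}
where in the last equality I used the antisymmetry of $\omega$. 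Adding these three contributions yields \eqref{2.36}.

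The main obstacle is the rigorous application of Kato's selection theorem: the expansion supplied by Lemma \ref{theorem2.2} is only $C^1$ at $t_0$ with an $o(t-t_0)$ remainder (not analytic), so one has to appeal to the version of the selection theorem valid for continuously differentiable self-adjoint families, and the labeling of branches is only guaranteed to be continuous (not globally smooth). The unitary transplant via $U(t)$ is essential here because it makes $M(t)$ act on a fixed finite-dimensional space, which is exactly what the Kato theorem requires.
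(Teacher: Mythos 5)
Your overall strategy coincides with the paper's: transplant $P(t)H_tP(t)$ to the fixed space $\ran(P(t_0))$ via the unitaries $U(t)$, apply Lemma \ref{theorem2.2}, invoke Kato's finite-dimensional selection theorem to get \eqref{2.35}, and then evaluate $\dot\lambda_j(t_0)=\langle T^{(1)}u_j,u_j\rangle_{\cH}$ term by term. For the $\dot Q_{t_0}$ term your route is in fact slightly more direct than the paper's, since the algebraic identity $J\dot Q_{t_0}=-\dot Q_{t_0}J$ takes you straight to $\omega(\dot Q_{t_0}\tr_{t_0}u_j,\tr_{t_0}u_j)$ without any reality argument.

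The gap is in the last line: you justify $-\omega(\dot\tr_{t_0}u_j,\tr_{t_0}u_j)=\omega(\tr_{t_0}u_j,\dot\tr_{t_0}u_j)$ by ``antisymmetry of $\omega$,'' but $\omega(f,g)=\langle Jf,g\rangle_{\mathfrak{H}\times\mathfrak{H}}$ is sesquilinear, not bilinear, and since $J^*=-J$ the correct relation is $\omega(g,f)=-\overline{\omega(f,g)}$, i.e.\ $\omega$ is anti-Hermitian rather than antisymmetric. Your step is therefore equivalent to the unproved claim that $\omega(\tr_{t_0}u_j,\dot\tr_{t_0}u_j)\in\bbR$. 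This reality is genuine but requires an argument. The paper supplies it by differentiating the identity $\omega(\tr_t u_j,\tr_t u_j)=0$ at $t=t_0$, which gives $\omega(\dot\tr_{t_0}u_j,\tr_{t_0}u_j)+\omega(\tr_{t_0}u_j,\dot\tr_{t_0}u_j)=0$ and, combined with the anti-Hermitian symmetry, the required reality. Note that even the identity $\omega(\tr_t u_j,\tr_t u_j)=0$ is not automatic for an arbitrary $u\in\cD$: by the Green identity $\omega(\tr_t u,\tr_t u)=\langle A^*u,u\rangle_{\cH}-\langle u,A^*u\rangle_{\cH}=2\bfi\Im\langle A^*u,u\rangle_{\cH}$, which need not vanish in general. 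It does vanish for $u=u_j$ because $u_j\in\ker(H_{t_0}-\lambda)$ forces $A^*u_j=(\lambda-V_{t_0})u_j$ with $\lambda\in\bbR$ and $V_{t_0}$ self-adjoint, making $\langle A^*u_j,u_j\rangle_{\cH}$ real. You need to include this chain of observations; as written, the final equality appeals to a property $\omega$ does not have over a complex Hilbert space.
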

\begin{proof}
	Recalling that $U(t)$ is a unitary map between $\ran(P(t_0))$ and $\ran(P(t))$, see  \cite[Section I.4.6]{K80}, \cite[Proposition 2.18]{F04}, we note that  $H_t\upharpoonright_{\ran(P(t))}$ is similar to \[P(t_0)U(t)^{-1}H_tP(t)U(t)P(t_0)\upharpoonright_{\ran(P(t_0))}\] for $t$ near $t_0$.  In particular the eigevalues of these operators coincide and it is sufficient to expand the eigenvalues of the latter.  To that end we utilize the expansion \eqref{1.42n}  together with the finite dimensional first order perturbation theory, specifically,  \cite[Theorem II.5.11]{K80}, to deduce \eqref{2.35}. Next, we have
	\begin{align}
	\begin{split}\lb{aub30}
		&\dot\lambda_j(t_0)=	\lambda_j^{(1)}=\langle T^{(1)} u_j,  u_j \rangle_{\cH}\\
	&=\langle\big( P(t_0)\dot V_{t_0}P(t_0)-(\tr_{t_0}   P(t_0))^*\dot Q_{t_{0}}J\tr_{t_0}   P(t_0)-(\tr_{t_0}  P(t_0))^*J\dot \tr_{t_0}  P(t_0)\big) u_j, u_j\rangle_{\cH}\\
	&=\langle \dot V_{t_0}u_j, u_j\rangle_{\cH}-\omega(\tr_{t_0} u_j, \dot Q_{t_0}\tr_{t_0} u_j)-\omega( \dot \tr_{t_0}u_j,  \tr_{t_0} u_j)\\
	&=\langle \dot V_{t_0}u_j, u_j\rangle_{\cH}+\omega(\dot Q_{t_0}\tr_{t_0} u_j, \tr_{t_0} u_j)+\omega( \tr_{t_0}u_j, \dot \tr_{t_0} u_j)
	\end{split}
	\end{align}
	 which gives \eqref{2.36}. In the last step we used the inclusions \[\omega(\tr_{t_0} u_j, \dot Q_{t_0}\tr_{t_0} u_j)\in\bbR
	 \text{ and } \omega( \tr_{t_0}u_j, \dot \tr_{t_0} u_j)\in\bbR.\] The latter inclusion follows from $\omega( \tr_{t}u_j,  \tr_{t} u_j)=0$ after differentiating at $t=t_0$. To prove the former inclusion we use $JQ_t+Q_tJ=J$ to get $J\dot{Q}_{t_0}=-\dot{Q}_{t_0}J$ and write
	 \begin{align}
	 \begin{split}\lb{aub33}
	  &\omega(\tr_{t_0} u_j, \dot Q_{t_0}\tr_{t_0} u_j)=\langle J\tr_{t_0} u_j, \dot Q_{t_0}\tr_{t_0} u_j \rangle_{\bndra}\\
	  &\quad=-\langle J \dot Q_{t_0}\tr_{t_0} u_j, \tr_{t_0} u_j \rangle_{\bndra}\\
	 &\quad=-\omega( \dot Q_{t_0}\tr_{t_0} u_j,\tr_{t_0} u_j)=\overline{\omega(\tr_{t_0} u_j, \dot Q_{t_0}\tr_{t_0} u_j)},
	 \end{split}
	 \end{align} 
as claimed.\end{proof}

In PDE and quantum graph settings the Lagrangian planes are often defined by operators $[X, Y]$ as in \eqref{3241}--\eqref{inv} rather than by orthogonal projections onto these planes. It is therefore natural to restate \eqref{new1.43nn}, \eqref{2.36} in these terms which we do next.  Given families $t\mapsto X_t, Y_t\in\cB(\mathfrak H)$, we will now denote by $\cA_t$ the self-adjoint extension of $A$ with $\dom(\cA_t):=\{u\in\cD:\,
[X_t, Y_t]\tr_t u=0\}$, that is, we augment \eqref{3272} by requiring that
\begin{align}
\begin{split}\lb{3271}
&\overline{\tr_t(\dom(\cA_t))}=\ran (Q_t)=\ker([X_t, Y_t]), \\
&X_t,Y_t\in\cB(\mathfrak{H}); X_t Y_t^*=Y_tX_t^*, 0\not\in \spec(M^{X_t,Y_t}),
\end{split}
\end{align}
where $M^{X_t,Y_t}$ is defined in  \eqref{inv}. We recall formula \eqref{orproj} for the projection $Q_t$ onto $\ker([X_t. Y_t])$. A typical example of $X_t,Y_t$ are given by $X_t=I$ and $Y_t=-\Theta_t$ where $\Theta_t$ is an operator (in general, not local) entering the Robbin boundary condition.

\begin{theorem}\lb{prop5.1}  Under Hypothesis \ref{hyp1.3}, if $\cA_t$ satisfies \eqref{3271} then the following symplectic resolvent difference formula holds
for the resolvent $R_t(\zeta)=(H_t-\zeta)^{-1}$ of the operator $H_t=\cA_t+V_t$,
	\begin{align}
	\begin{split}
	\lb{XYKrein}
	R_{t}(\zeta)- R_{s}(\zeta)&=  R_t({\zeta})(V_s-V_t)R_s(\zeta)+(\tr_{t}  R_{t}(\overline{\zeta}))^*\, Z_{t,s}\tr_{s} \,   R_{s}(\zeta)\\
	&\quad+(\tr_{t}  R_t(\overline{\zeta}))^*J (\tr_t-\tr_{s})   R_{s}(\zeta),
	\end{split}
	\end{align}	
	where $\zeta \not \in(\spec(H_t)\cup \spec(H_s))$, $s,t\in[0,1]$, and the operator $Z_{t,s}\in\cB(\bndra)$ is given by formula \eqref{defw},
	\begin{equation}\label{ZXYKr}
	Z_{t,s}:=\big(W(X_{t}, Y_{t})\big)^*( X_{t}{Y}_{s}^*-Y_{t}{X}_{s}^*)\big(W(X_{s}, Y_{s})\big).
	\end{equation}
	Moreover, under Hypothesis \ref{hyp1.3i2}, if the mappings $t\mapsto \tr_t, V_t, X_t,Y_t$ are continuous at $t_0\in[0,1]$ in the respective spaces of operators, then the function $t\mapsto R_t(\zeta_0)$ is continuous at $t=t_0$ for
any	$\zeta_0 \not \in\spec(H_{t_0})$.
	Further, assume Hypothesis \ref{hyp1.3i3} and suppose that the mappings $t\mapsto \tr_t, V_t, X_t,Y_t$ are differentiable at $t_0\in[0,1]$. Then the function $t\mapsto R_t(\zeta_0)=(H_t-\zeta_0)^{-1}$  is differentiable at $t=t_0$ and satisfies the following Riccati equation,
	\begin{align}
	\begin{split}\lb{derRnew}
	&\dot{R}_{t_0}(\zeta_0)=-R_{t_0}(\zeta_0)\dot V_{t_0}R_{t_0}(\zeta_0)\\
	&+(\tr_{t_0}  R_{t_0}(\overline{\zeta_0}))^*\big(W(X_{t_0}, Y_{t_0})\big)^*(\dot X_{t_0}{Y}_{t_0}^*-\dot Y_{t_0}{X}_{t_0}^*)\big(W(X_{t_0}, Y_{t_0})\big)\times\\&\hskip3cm\times \tr_{t_0}    R_{t_0}(\zeta_0)\\
	&+(\tr_{t_0}  R_{t_0}(\overline{\zeta_0}))^*J\dot \tr_{t_0}  R_{t_0}(\zeta_0), \quad \zeta_0 \not \in\spec(H_{t_0}).
	\end{split}
	\end{align} 
Furthermore, if $\lambda(t_0)\in\spec(H_{t_0})$ is an isolated eigenvalue of multiplicity $m\ge1$ then there exists  a choice of orthonormal eigenfunctions $\{u_j\}_{j=1}^m\subset \ker(H_{t_0}-\lambda(t_0))$ and a labeling of the eigenvalues $\{\lambda_j(t)\}_{j=1}^m$  of $H_t$, for $t$ near $t_0$, such that the following Hadamard-type formula holds, 
	\begin{equation}\lb{5.10new}
	\dot{\lambda}_j(t_0)= \langle \dot V_{t_0}u_j, u_j\rangle_{\cH}+\big\langle (X_{t_0}\dot{Y}_{t_0}^*-Y_{t_0}\dot{X}_{t_0}^*) \phi _j, \phi_j\big\rangle_{\mathfrak{H}}+\omega( \tr_{t_0}u_j, \dot \tr_{t_0} u_j) ,\ 
	\end{equation}
	where we denote $\phi_{j}=W(X_{t_0}, Y_{t_0})\tr_{t_0} u_j$, $1\leq j\leq m$, with the operator $W$ defined in \eqref{defw}, or, equivalently, $\phi_j$ is a unique vector in $\mathfrak H$ satisfying
	\begin{equation}\lb{phij}
		\Gamma_0 u_j=-Y_{t_0}^*\phi_j \, \text{ and }\,	\Gamma_1 u_j=X^*_{t_0}\phi_j.
	\end{equation}
\end{theorem}
\begin{proof}  The resolvent difference formula \eqref{XYKrein} follows from \eqref{new5.14a} and the computation
	\begin{align}
	(\tr_{t}  R_t(\overline{\zeta}))^*(Q_t-Q_{s})J \tr_{s}   R_{s}(\zeta)&=(\tr_{t}  R_t(\overline{\zeta}))^*Q_tJQ_s\tr_{s}   R_{s}(\zeta)\\
	&= (\tr_{t}  R_{t}(\overline{\zeta}))^*\, Z_{t,s}\tr_{s} \,   R_{s}(\zeta).
	\end{align}
	Hypothesis \ref{hyp1.3i2} and \eqref{XYKrein} imply continuity of $t\mapsto R_t(\zeta)$ as in the proof of Theorem \ref{prop1.8new}.
	To prove \eqref{derRnew} we remark that
	$X_tY_s^*-Y_tX_s^*=(X_t-X_s)Y_s^*-(Y_t-Y_s)X_s^*$ by \eqref{com}. Plugging this in \eqref{ZXYKr}, using \eqref{XYKrein} at $s=t_0$, dividing by $(t-t_0)$ and passing to the limit as $t\to t_0$ yields \eqref{derRnew}. % To that end, we compute the middle term in the right-hand side of \eqref{derR} using the expression for $Q_{t_0}$ as in \eqref{orproj}
	%\begin{align}\lb{3292}
%\begin{split}
%	&(\tr_{t_0}  R_{t_0}(\overline{\zeta_0}))^*\dot Q_{t_0}J \tr_{t_0}    R_{t_0}(\zeta_0)\\
%	&=(\tr_{t_0}  R_{t_0}(\overline{\zeta_0}))^*\Big(\frac{d}{dt}|_{t=t_0}W(X_t, Y_t)\Big)^*[-Y_{t_0}, X_{t_0}]J \tr_{t_0}    R_{t_0}(\zeta_0)\\
%	&+(\tr_{t_0}  R_{t_0}(\overline{\zeta_0}))^*\big(W(X_{t_0}, Y_{t_0})\big)^*[ -\dot {Y}_{t_0}, \dot{X}_{t_0}]J \tr_{t_0}    R_{t_0}(\zeta_0).
%\end{split}
%	\end{align}
%We have
%	\begin{align}\lb{3291}
%	\begin{split}
%		[- Y_{t_0}, X_{t_0}]J \tr_{t_0}    R_{t_0}(\zeta_0)&=[-Y_{t_0}, X_{t_0}]J [-Y^*_{t_0}, X^*_{t_0}]^{\top}W(X_{t_0}, Y_{t_0}) \tr_{t_0}    R_{t_0}(\zeta_0)\\
%	&=(X_{t_0}Y_{t_0}^*-Y_{t_0}X_{t_0}^*)W(X_{t_0}, Y_{t_0}) \tr_{t_0}    R_{t_0}(\zeta_0)=0,
%	\end{split}
%	\end{align}
 %Similarly, we have
		%\begin{align}\lb{3291new}
	%\begin{split}
	%[- \dot Y_{t_0}, \dot X_{t_0}]J \tr_{t_0}    R_{t_0}(\zeta_0)&=[-\dot Y_{t_0}, \dot X_{t_0}]J [-Y^*_{t_0}, X^*_{t_0}]^{\top}W(X_{t_0}, Y_{t_0}) \tr_{t_0}    R_{t_0}(\zeta_0)\\
	%&=(\dot X_{t_0}Y_{t_0}^*-\dot Y_{t_0}X_{t_0}^*)W(X_{t_0}, Y_{t_0}) \tr_{t_0}    R_{t_0}(\zeta_0),
	%\end{split}
	%\end{align}
	%Combining \eqref{derR}, \eqref{3292}, \eqref{3291}, \eqref{3291new} we get \eqref{derRnew}. 
	Next we turn to \eqref{5.10new}. We recall that $u_j$ in Theorem \ref{thm2.2} are the eigenvectors in $\ran(P(t_0))$ such that $T^{(1)}u_j=\lambda_j^{(1)}u_j$. But since $\ran(P(t_0))=\ker(H_{t_0}-\lambda(t_0))$ the vectors $u_j$ are also eigenvectors of $H_{t_0}$ such that $H_{t_0}u_j=\lambda(t_0)u_j$. By \eqref{2.36} we only need to show  
	\begin{equation}\label{3.40new}
	\omega(\dot Q_{t_0}\tr_{t_0} u_j, \tr_{t_0} u_j)=\big\langle (X_{t_0}\dot{Y}_{t_0}^*-Y_{t_0}\dot{X}_{t_0}^*) \phi _j, \phi_j\big\rangle_{\mathfrak H}. 
	\end{equation}
	 Using \eqref{orproj} and differentiating $Q_t$	we infer
	 \begin{align} 
	&\omega(\dot Q_{t_0}\tr_{t_0} u_j, \tr_{t_0} u_j)\\
	&\quad=\omega\Big([-Y^*_{t_0}, X^*_{t_0}]^\top
	\big(\frac{\rm d}{{\rm d}t}\Big|_{{t=t_0}}W(X_t, Y_t)\big)\tr_{t_0} u_j, \tr_{t_0} u_j\Big)\\
	&\quad\quad+\omega\Big(\big(\frac{\rm d}{{\rm d}t}\Big|_{{t=t_0}}[-Y^*_{t_0}, X^*_{t_0}]^\top\big)
	W(X_{t_0},Y_{t_0})\tr_{t_0} u_j, \tr_{t_0} u_j\Big)\\
	&\quad=\Big\langle\Big(\frac{\rm d}{{\rm d}t}\Big|_{{t=t_0}}W(X_t, Y_t)\Big)\tr_{t_0} u_j, [X_{t_0}, Y_{t_0}]\tr_{t_0} u_j\Big\rangle_{\mathfrak{H}}\\
	&\quad\quad+\omega\Big(\Big(\frac{\rm d}{{\rm d}t}\Big|_{{t=t_0}}[-Y^*_{t_0}, X^*_{t_0}]^\top\Big)
	W(X_{t_0},Y_{t_0})\tr_{t_0} u_j, \tr_{t_0} u_j\Big)\\
	&\quad=\omega\Big(\Big(\frac{\bd}{\bd t}\Big|_{{t=t_0}}[-Y^*_{t_0}, X^*_{t_0}]^\top\Big)
	W(X_{t_0},Y_{t_0})\tr_{t_0} u_j, \tr_{t_0} u_j\Big). 
	\end{align}
	where we used $[X_{t_0}, Y_{t_0}]\tr_{t_0} u_j=0$. Finally, employing \eqref{orproj} and 
	\begin{equation}\label{phijpr}
	\tr_{t_0} u_j=Q_{t_0}\tr_{t_0} u_j= [-Y_{t_0}^*, X_{t_0}^*]^{\top}\phi_j,\, \,  \phi_{j}:=W(X_{t_0}, Y_{t_0})\tr_{t_0} u_j,
	\end{equation}
	we obtain
	\begin{align}
	\omega(\dot Q_{t_0}\tr_{t_0} u_j, \tr_{t_0} u_j)&=\Big\langle[
	\dot X_{t_0}^*,\dot Y_{t_0}^*
	]^\top\phi_j, [-Y_{t_0}^*, X_{t_0}^*]^{\top}\phi_j\Big\rangle_{\mathfrak H}\\&=\big\langle (X_{t_0}\dot{Y}_{t_0}^*-Y_{t_0}\dot{X}_{t_0}^*) \phi _j, \phi_j\big\rangle_{\mathfrak H},
	\end{align}
thus completing the proof of \eqref{5.10new}, while \eqref{phij} follows from \eqref{phijpr}.\end{proof}

\begin{remark}
We close with a remark that assertions proved in Theorem \ref{prop5.1} allow one to make conclusions regarding the behavior of the spectra of the operators $H_t$ as a function of $t$, see, e.g., \cite[Theorem VIII.23]{RS1}. Also, the results of this section can be used to study  various properties of strongly continuous semigroups generated by  the operators $-H_t$. For instance, the Trotter-Kato Approximation Theorem, see, e.g.,  \cite[Theorem III.4.8 ]{EnNa00}, implies that the semigroups are continuous with respect to the parameter $t$ as soon as the continuity of the resolvent of $H_t$ in Theorem \ref{prop5.1} is established, see Section \ref{ssHeatEq} for an example. 
\hfill$\Diamond$  \end{remark}

\section{\sel{Ordinary}  boundary triplets}\lb{abt}

\sel{Ordinary} boundary triplets have been intensively studied since probably \cite{Calkin1939,GG}, see the vast bibliography in \sel{\cite{Behrndt_2020, DMbook, DM, Schm}} and related papers \sel{\cite{BL2012, MR0415381, DM95, DHMS2006,DHMS2012,HS2012,MR0365218, W2012a}} and the bibliography therein.
In this section we revisit main results of Sections \ref{KreinFormulas} and \ref{section1} in the context of \sel{ordinary}  boundary triplets and present several applications.
The case of  boundary triplets is the one that is widely considered in the literature, and in this section we will see that for this case one may impose fewer assumptions to prove the same set of general results. Also, we will demonstrate that this case is sufficient to cover many interesting applications. In particular, we show that conclusions of Theorems \ref{prop1.8new}, \ref{thm2.2}, \ref{prop5.1} hold under a mere assumption that the mappings $t\mapsto Q_t$, $t\mapsto \tr_t$, $t\mapsto V_t$ are continuous (differentiable) with respect to $t$ and that $(\mathfrak{H}, \Gamma_{0, t}, \Gamma_{1, t})$ is an \sel{ordinary} boundary triplet. Utilizing this, we derive Hadamard-type formulas for quantum graphs, Schr\"odinger operators with singular potentials, and Robin realizations of the Laplace operator on bounded domains.

We recall the following widely used definition, cf.\ \sel{\cite[Section 2.1]{Behrndt_2020}, \cite{DMbook}, \cite{GG}, \cite[Section 14.2]{Schm}}.

\begin{definition}\lb{cond} Given a symmetric densely defined closed operator $A$ on a Hilbert space $\cH$ with equal deficiency indices, we equip $\cH_+=\dom(A^*)$ with the graph scalar product and consider linear operators $\Gamma_0$ and $\Gamma_1$ acting from $\cH_+$ to a (boundary) Hilbert space $\mathfrak{H}$. We say that 
  $(\mathfrak H, \Gamma_0, \Gamma_1)$ is a {\em \sel{ordinary} boundary triplet} if the operator $\tr:=(\Gamma_0, \Gamma_1): \cH_+\to\mathfrak{H}\times\mathfrak{H}$ is surjective
  and the following abstract Green identity holds,
	%\begin{equation}\lb{3.61sec4}
	\begin{equation}\label{3.61biss}
	\langle A^*u,v\rangle_{\cH}-\langle u,A^*v\rangle_{\cH}=\langle\Gamma_1u,\Gamma_0v\rangle_{\mathfrak{H}}-\langle\Gamma_0u,  \Gamma_1v\rangle_{\mathfrak{H}} \text{ for all $u,v\in\cH_+$}.\end{equation}
\end{definition}

In other words,  $(\mathfrak H, \Gamma_0, \Gamma_1)$ is an \sel{ordinary} boundary triplet provided Hypothesis \ref{hyp3.6} holds with $\cD=\cH_+$ and surjective $\tr$. In this case, we have $\tr\in\cB(\cH_+, \bndra)$ by Proposition \ref{remark2.2} (2). 

\begin{remark}\lb{rem4.2}
The setting of  \sel{ordinary} boundary triplets gives a particularly simple illustration of Corollary \ref{LLSASALL}. Specifically, if $(\mathfrak H, \Gamma_0, \Gamma_1)$ is a  \sel{ordinary} boundary triplet associated with $A$ then $\cG\subset \mathfrak H\times\mathfrak H$ is Lagrangian if and only if $\cA:= A^*|_{\tr^{-1}(\cG)}$ is self-adjoint. In other words, the Lagrangian plane $\cG$ and the self-adjoint operator $\cA:= A^*|_{\tr^{-1}(\cG)}$  are automatically aligned in the sense of Definition \ref{defASSOC} as long as $(\mathfrak H, \Gamma_0, \Gamma_1)$ is a  \sel{ordinary} boundary triplet. In particular, if $\cA$ is  a self-adjoint extension of $A$ then the subspace $\tr(\dom(\cA))$ is closed, cf.\ \cite[Lemma 14.6(iii)]{Schm}.
\hfill$\Diamond$  \end{remark}

\subsection{Main results for the case of boundary triplets}\label{Sec4.2} 
In this section we discuss our main results, Theorems \ref{prop1.8new}, \ref{prop5.1}, in the context of boundary triplets. In Proposition 4.5 we verify that Hypothesis \ref{hyp1.3i3} (and, hence, Hypothesis \ref{hyp1.3i2}) holds automatically for  boundary triplets. This allows us to obtain the central result of the current section, Theorem \ref{thm:sec4.1}. The latter, in turn, gives a plethora of applications discussed in Sections \ref{ssLapLip}, \ref{ssQuaGr}, \ref{ssPerKrPen}, \ref{mastriples}. 

In the setting of  boundary triplets Hypothesis \ref{hyp1.3} should be naturally replaced by the following assumption.
\begin{hypothesis}\label{hyp1.3bis}
 Let \[\tr:[0,1]\rightarrow\cB(\cH_+, \mathfrak{H}\times\mathfrak{H}):t\mapsto\tr_t:=[\Gamma_{0t}, \Gamma_{1t}]^\top\] be a one-parameter family of trace operators. Suppose that  $(\mathfrak H, \Gamma_{0t}, \Gamma_{1t})$ is an \sel{ordinary} boundary triplet for each $t\in[0,1]$.   Let  $Q: [0,1]\rightarrow \cB(\mathfrak{H}\times\mathfrak{H}), t\mapsto Q_t$ be a one-parameter family of orthogonal projections. Suppose that $\ran  (Q_t) \in\Lambda(\mathfrak{H}\times\mathfrak{H})$ is a Lagrangian plane for each $t\in[0,1]$. Let $\cA_t$ be a family of self-adjoint extensions of $A$ satisfying 
 \begin{align}
 {\tr_t\big( \dom(\cA_{t})\big)}=\ran (Q_t).\lb{3272new-bis}
 \end{align}
 Let 
 $V: [0,1]\rightarrow \cB(\cH):t\mapsto V_t$ be a one-parameter family of	self-adjoint bounded operators. We denote $H_t:= \cA_t+V_t$ and $R_t(\zeta):= (H_t-\zeta)^{-1}\in\cB(\cH)$ for $\zeta\not\in \Sp(H_t)$ and $t\in[0,1]$. 
\end{hypothesis}

\begin{proposition}\lb{prop2.4h}
Suppose that Hypothesis \ref{hyp1.3bis} holds for the  \sel{ordinary} boundary triplet \\ $(\mathfrak{H}, {\Gamma_0}_t,{\Gamma_1}_t)$. If $Q$ and $\tr$ are continuous at a given $t_0\in[0,1]$ then 
	\begin{equation}\label{ABC}
	\big\|(\cA_t-\bfi)^{-1}-(\cA_{t_0}-\bfi)^{-1}\big\|_{\cB(\cH, \cH_+)}=o(1),\ t\rightarrow t_0. 
	\end{equation}
	In other words, Hypothesis \ref{hyp1.3i3} is automatically satisfied for the  boundary triplets.
\end{proposition}
\begin{proof}
	We claim that %for some universal $c>0$ one has
	\begin{align}\lb{3291ineq}
	\|(\cA_t-\bfi)^{-1}-(\cA_{t_0}-\bfi)^{-1}\|_{\cB(\cH, \cH_+)}\leq\sqrt2 	\|(\cA_t-\bfi)^{-1}-(\cA_{t_0}-\bfi)^{-1}\|_{\cB(\cH)}.
	\end{align}
	Indeed, using $\cA_t\subset A^*$, $\cA_{t_0}\subset A^*$ we get
	\begin{align}
	&\|(\cA_t-\bfi)^{-1}h-(\cA_{t_0}-\bfi)^{-1}h\|_{\cH_+}^2=\|(\cA_t-\bfi)^{-1}h-(\cA_{t_0}-\bfi)^{-1}h\|_{\cH}^2\\
	&\quad+\|A^*(\cA_t-\bfi)^{-1}h-A^*(\cA_{t_0}-\bfi)^{-1}h\|_{\cH}^2=2\|(\cA_t-\bfi)^{-1}h-(\cA_{t_0}-\bfi)^{-1}h\|_{\cH}^2.
	\end{align}
	Thus it is enough to prove that the right-hand side of \eqref{3291ineq} is $o(1)$ as $t\to t_0$. To this end, we first  note that, given $\cA_tu+\bfi u=f, u\in\dom(\cA_t)$, we have
	\begin{align}
	&\|(\cA_t+\bfi)^{-1}f\|^2_{\cH_+}=\|u\|^2_{\cH_+}= \|A^*u\|_{\cH}^2+\| u\|^2_{\cH}\\
	&\quad= \|\cA_tu\|_{\cH}^2+\| u\|^2_{\cH}=\|\cA_tu+\bfi u\|^2_{\cH}=\|f\|^2_{\cH}; 
	\end{align}
	hence,
	\begin{equation}\lb{2.4o}
	\|(\cA_t+\bfi)^{-1}\|_{\cB(\cH, \cH_+)}\leq 1.
	\end{equation}
	By resolvent difference formula \eqref{5.14a} we infer
	\begin{align}
	\begin{split}\lb{2.5o}
	&\|(\cA_t-\bfi)^{-1}-(\cA_{t_0}-\bfi)^{-1}\|_{\cB(\cH)}\\
	&\quad=\|(\tr_t (\cA_t+\bfi)^{-1})^*(Q_t-Q_{t_0})J\tr_t (\cA_{t_0}+\bfi)^{-1}\|_{\cB(\cH)}\\
	&\quad\le\|\tr_t \|_{\cB(\cH_+, \bndra)} \|(\cA_t+\bfi)^{-1}\|_{\cB(\cH, \cH_+)} \|(Q_t-Q_{t_0})\|_{\cB(\bndra)}\times\\
	&\hspace{5cm}\times  \|\tr_t\|_{\cB(\cH_+, \bndra)}  \|(\cA_{t_0}+\bfi)^{-1}\|_{\cB(\cH, \cH_+)}\\
	&\quad\leq c\|Q_t-Q_{t_0}\|_{\cB(\bndra)}\underset{t\rightarrow t_0}{=}o(1), c>0,
	\end{split}
	\end{align}
	where we used \eqref{2.4o}, and continuity of $Q$ and $\tr$ at $t_0$. Then \eqref{3291ineq}, \eqref{2.5o} yield \eqref{ABC} and so equation \eqref{2.2bo} in Hypothesis \ref{hyp1.3i3} holds. 
\end{proof}
We summarize our main results for the case of  boundary triplets as follows.

\begin{theorem}\lb{thm:sec4.1}  Assume Hypothesis \ref{hyp1.3bis}. If $\cA_t$ is defined as in \eqref{3271} and $H_t=\cA_t+V_t$ then for $R_t(\zeta)=(H_t-\zeta)^{-1}$ the following resolvent difference formula holds,
	\begin{align}
	\begin{split}\lb{aub28}
	R_{t}(\zeta)- R_{s}(\zeta)&=  R_t({\zeta})(V_s-V_t)R_s(\zeta)+(\tr_{t}  R_{t}(\overline{\zeta}))^*\, Z_{t,s}\tr_{s} \,   R_{s}(\zeta)\\
	&\quad+(\tr_{t}  R_t(\overline{\zeta}))^*J (\tr_t-\tr_{s})   R_{s}(\zeta),
	\end{split}
	\end{align}	
	where $\zeta \not \in(\spec(H_t)\cup \spec(H_s))$, $s,t\in[0,1]$ and 
	\begin{equation}
	Z_{t,s}:=\big(W(X_{t}, Y_{t})\big)^*( X_{t}{Y}_{s}^*-Y_{t}{X}_{s}^*)\big(W(X_{s}, Y_{s})\big),
	\end{equation}
	with the operator $W$ defined in \eqref{defw}. Moreover, if the mappings $t\mapsto \tr_t, V_t, X_t,Y_t$ are continuous at $t_0\in[0,1]$ in the respective spaces of operators, then the function $t\mapsto R_t(\zeta_0)$ is continuous at $t=t_0$ for
	any	$\zeta_0 \not \in\spec(H_{t_0})$.
	Further, if the mappings $t\mapsto \tr_t, V_t, X_t,Y_t$ are differentiable at $t_0\in[0,1]$, then the function $t\mapsto R_t(\zeta_0)=(H_t-\zeta_0)^{-1}$  is differentiable. In this case, the following two assertions hold:
	
	(1) The resolvent operators satisfy the following differential equation,
	\begin{align}\lb{5.10newnewa}
	\begin{split} 
	&\dot{R}_{t_0}(\zeta_0)=-R_{t_0}(\zeta_0)\dot V_{t_0}R_{t_0}(\zeta_0)\\
	&+(\tr_{t_0}  R_{t_0}(\overline{\zeta_0}))^*\big(W(X_{t_0}, Y_{t_0})\big)^*(\dot X_{t_0}{Y}_{t_0}^*-\dot Y_{t_0}{X}_{t_0}^*)\big(W(X_{t_0}, Y_{t_0})\big) \tr_{t_0}    R_{t_0}(\zeta_0)\\
	&+(\tr_{t_0}  R_{t_0}(\overline{\zeta_0}))^*J\dot \tr_{t_0}  R_{t_0}(\zeta_0), \quad \zeta_0 \not \in\spec(H_{t_0}).
	\end{split}
	\end{align} 

	(2) If  $\lambda(t_0)\in\spec(H_{t_0})$ is an isolated eigenvalue of multiplicity $m\ge1$ then there exists  a choice of orthonormal eigenfunctions $\{u_j\}_{j=1}^m\subset \ker(H_{t_0}-\lambda(t_0))$ and a labeling of the eigenvalues $\{\lambda_j(t)\}_{j=1}^m$  of $H_t$, for $t$ near $t_0$, such that 
	\begin{equation}\lb{5.10newnew}
	\dot{\lambda}_j(t_0)= \langle \dot V_{t_0}u_j, u_j\rangle_{\cH}+\big\langle (X_{t_0}\dot{Y}_{t_0}^*-Y_{t_0}\dot{X}_{t_0}^*) \phi _j, \phi_j\big\rangle_{\mathfrak{H}}+\omega( \tr_{t_0}u_j, \dot \tr_{t_0} u_j) ,\ 
	\end{equation}
	where $\phi_{j}=W(X_{t_0}, Y_{t_0})\tr_{t_0} u_j$, $1\leq j\leq m$, or, equivalently, $\phi_j$ is a unique vector in $\mathfrak H$ satisfying
	\begin{equation}\label{5.10-3new}
	\Gamma_0 u_j=-Y_{t_0}^*\phi_j \, \text{ and }\,	\Gamma_1 u_j=X^*_{t_0}\phi_j.
	\end{equation}
\end{theorem}

\begin{proof}
The resolvent difference formula \eqref{aub28} follows directly from \eqref{XYKrein}. The continuity of $t\mapsto R_t(\zeta_0)$ at $t_0$ follows from Theorem \ref{prop5.1} upon noticing that Hypothesis \ref{hyp1.3i2} holds in the setting of boundary triplets by Proposition \ref{prop2.4h}. Similarly, Proposition \ref{prop2.4h} combined with \eqref{derRnew}, \eqref{5.10new}, yield \eqref{5.10newnewa}, \eqref{5.10newnew}.
\end{proof}
\begin{remark} (1)\,  In the setting of Theorem \ref{thm:sec4.1}, the resolvent difference formula \eqref{aub28}  can be also rewritten as
	\begin{align}
\begin{split}\lb{aub103}
R_{t}(\zeta)- R_{s}(\zeta)&=  \cR_t({\zeta})(V_s-V_t)R_s(\zeta)+\cR_{t}({\zeta})\tr_{t}  ^*\, Z_{t,s}\tr_{s} \,   R_{s}(\zeta)\\
&\quad+\cR_{t}({\zeta})\tr_{t}  ^*J (\tr_t-\tr_{s})   R_{s}(\zeta),
\end{split}
\end{align}	
where in the RHS we have $\cR_{t}({\zeta})\in\cB(\cH_-, \cH)$, that is, as in Proposition \ref{rem:adjtrR} and Corollary \ref{Rem2.6}, we view $\cR_{t}({\zeta})\in\cB(\cH_-, \cH)$ as a unique extension of $R_{t}({\zeta})\in\cB(\cH)$ to an element of $\cB(\cH_-, \cH)$, while $\tr_t\in\cB(\cH_+, \bndra)$, $\tr_t^*\in\cB(\bndra, \cH_-)$. We note that, in a more general setting of Theorem \ref{prop5.1}, the trace operator $\tr_t$ is unbounded and one only has the inclusion $(\tr_{t}  R_{t}(\overline{\zeta}))^*\supseteq R_{t}({\zeta})(\tr_t)^*$. In this case, \eqref{aub103} holds provided $\ran(Z_{t,s}\tr_{s} \,   R_{s}(\zeta))\subseteq J\tr(\cD)$. 

(2)\, The resolvent difference formula derived in Theorem \ref{thm:sec4.1} yields  continuity of the  mapping $\cB(\mathfrak H)\times\cB(\mathfrak H)\ni (X, Y)\mapsto (\cA_{X,Y}-\bfi)^{-1}\in \cB(\cH)$; here, for an  \sel{ordinary} boundary triplet $(\mathfrak{H}, \Gamma_0, \Gamma_1)$, we denote by $\cA_{X,Y}$ the self-adjoint extension of $A$ such that $\tr(\dom(\cA_{X,Y}))=\ker([X, Y])$, cf.\ \eqref{3271}. 
\hfill$\Diamond$  \end{remark}

In sections \ref{ssLapLip}, \ref{ssQuaGr}, \ref{ssPerKrPen}, \ref{mastriples} below we will give applications of Theorem \ref{thm:sec4.1} for several important classes of problems that fit the framework of the boundary triplets. To give the simplest possible illustration of the setup described in Hypothesis \ref{hyp1.3bis} and of Theorem \ref{thm:sec4.1} we now consider the following two ODE examples where the conclusions of the theorem are  probably well-known, see, e.g., \cite{AGMST, ClarkGesMitr,ClarkGesNickZinch, K80} and the vast bibliography therein.
\begin{example}\label{ODEexm}
Let $Au=-u''$ be the minimal symmetric operator on $\cH=L^2(0,1)$ with domain $\dom(A)=H^2_0(0,1)$ so that $A^*u=-u''$ with $\dom(A^*)=\cH_+=H^2(0,1)$, set $\mathfrak{H}=\bbC^2$
and introduce the surjective trace operator $\tr=(\Gamma_0,\Gamma_1)\in\cB(\cH_+,\mathfrak{H}\times\mathfrak{H})$ using the Dirichlet and (inward) Neumann traces $\Gamma_0u=[u(0), u(1)]^\top$ and $\Gamma_1u=[u'(0), - u'(1)]^\top$. Integration by parts yields \eqref{3.61biss}, and thus $(\mathfrak{H},\Gamma_0,\Gamma_1)$ is an \sel{ordinary} boundary triplet, cf.\ \cite[Section 14.4]{Schm}. For $t\in[0,1]$ we let $\cA_t$ denote the self-adjoint extension of $A$ with the domain
\begin{equation}\label{domAT}
\dom(\cA_t)=\{u\in H^2(0,1): \cos({\pi}t/2)\Gamma_0 u-\sin({\pi}t/2)\Gamma_1 u=0\}=\ker([X_t,Y_t]),\end{equation}  where, cf.\ \eqref{3271}, 
\[X_t=\cos({\pi}t/2)I_2,\, Y_t=-\sin({\pi}t/2)I_2, \, Q_t=\begin{bmatrix}\sin^2(\pi t/2)& \frac12\sin(\pi t)\\\frac12\sin(\pi t)&\cos^2(\pi t/2)\end{bmatrix}. \]
Given a bounded real-valued potential $V$, we
let $H_tu=-u''+Vu$, $t\in[0,1]$, be the family of scalar Schr\"odinger operators on $L^2(0,1)$ equipped with the boundary conditions 
specified in \eqref{domAT} so that Hypothesis \ref{hyp1.3bis} holds.
In particular, $H_0$ is the Dirichlet and $H_1$ is the Neumann Schr\"odinger operator.
To apply Theorem \ref{thm:sec4.1} we first perform a standard calculation of the resolvent $R_t(\zeta)=(H_t-\zeta)^{-1}$, cf., e.g., \cite[Lemma 9.7]{Tes14}: For $t\in[0,1]$ and $\zeta\in\bbC$ we let $v_t(\cdot\,;\zeta)$,  $w_t(\cdot\,;\zeta)$ denote the solutions to the equation $-u''+Vu=\zeta u$ that satisfy the initial conditions 
\begin{align*}
(v_t(0;\zeta), v'_t(0,\zeta))&=\big(\sin({\pi}t/2), \cos({\pi}t/2)\big),\\ (w_t(1;\zeta), w'_t(1,\zeta))
&=\big(\sin({\pi}t/2), -\cos({\pi}t/2)\big),
\end{align*}
and let $\cW_t(\zeta)=v_t(x;\zeta)w'_t(x;\zeta)-v'_t(x;\zeta)w_t(x;\zeta)$ denote their Wronskian. Then for each $u\in L^2(0,1)$ the function $R_t(\zeta)u$ is given by the formula
\[\big(R_t(\zeta)u\big)(x)=(\cW_t(\zeta))^{-1}\Big(w_t(x;\zeta)\int_0^xv_t(y;\zeta)u(y) {\rm d } y
+v_t(x;\zeta)\int_x^1w_t(y;\zeta)u(y) {\rm d} y\Big),\]
$x\in[0,1]$.Using this, it is convenient to write $\tr R_t(\zeta)=K_t(\zeta)L_t(\zeta)$ where we temporarily introduced the $(4\times2)$ matrix $K_t(\zeta)$ and the operator $L_t(\zeta)$ by the formulas
\begin{align}
&K_t(\zeta)=(\cW_t(\zeta))^{-1}\big[\sin({\pi}t/2)I_2, \cos({\pi}t/2)I_2\big]^\top,\\
&L_t(\zeta)u=\big[\langle w_t(\cdot;\zeta), \overline{u}\rangle_{L^2}, \langle v_t(\cdot;\zeta), \overline{u}\rangle_{L^2}\big]^\top, L_t(\zeta)\in\cB(L^2(0,1), \bbC^2)
\end{align}
so that $(L_t(\zeta))^*$ maps $(z_1,z_2)\in\bbC^2$ into $ w_t(\cdot\,;\overline{\zeta})z_1+ v_t(\cdot\,;\overline{\zeta})z_2\in L^2(0,1)$. Theorem \ref{thm:sec4.1} and  a short calculation  now yield
\begin{align}
\big(&R_t(\zeta)-R_s(\zeta)\big)u=(\cW_t(\zeta)\cW_s(\zeta))^{-1}\sin(\pi(t-s)/2)\\
&\times\big(\langle w_s(\cdot;\zeta), \overline{u}\rangle_{L^2}w_t(\cdot;\zeta)+ \langle v_s(\cdot;\zeta), \overline{u}\rangle_{L^2}v_t(\cdot;\zeta)\big), \,\zeta\notin\spec(H_t)\cup\spec(H_s),
\end{align}
\begin{align}
\dot{R}_t(\zeta)u&=\frac{\pi}{2}(\cW_t(\zeta))^{-2}
\big(\langle w_t(\cdot;\zeta), \overline{u}\rangle_{L^2}w_t(\cdot;\zeta)+ \langle v_t(\cdot;\zeta), \overline{u}\rangle_{L^2}v_t(\cdot;\zeta)\big),\\&\hskip5cm\zeta\notin\spec(H_t), \\
\dot{\lambda}(t_0)&=-\frac{\pi}{2}
\big\|\sin(\pi t_0/2)\Gamma_0u_0+\cos(\pi t_0/2)\Gamma_1u_0\big\|^2_{\bbC^2},\,  t_0\in[0,1],
\end{align}
where $u_0$ is the normalized  eigenfunction corresponding to the eigenvalue $\lambda(t_0)\in\spec(H_{t_0})$.
\hfill$\Diamond$  \end{example}

\begin{example}\label{exm:rell}   As promised prior to Theorem \ref{thm2.2}, we now recall the classical Rellich's example, cf., e.g., \cite[Example V.4.14]{K80} which
shows the singularity at $t_0=0$ of the smallest eigenvalue $\lambda^{(0)}(t)$ of the operator $\cA_t=-\partial^2_{xx}$ in $L^2(0,1)$ equipped with the boundary conditions $u(0)=0$, $tu'(1)=u(1)$ for real $t$; meanwhile, the resolvent $t\mapsto (\cA_t-\bfi)^{-1}$ is continuous and all other eigenvalues $\lambda^{(k)}(t)$, $k=1,2,\dots$, are differentiable for each $t$ including $t=0$, see \cite[Fig. 1, p.292]{K80}. Indeed, letting $\Gamma_0u=(u(0), u(1))^\top$, $\Gamma_1u=(u'(0), -u'(1))^\top$ for $u\in\cH_+:=H^2(0,1)$, $\mathfrak{H}=\bbC^2$ and
\begin{equation}\label{DXQ}
X_t=\begin{bmatrix}1&0\\0&-1\end{bmatrix},\, Y_t=\begin{bmatrix}0&0\\0&-t\end{bmatrix},\,
Q_t=\begin{bmatrix}0&0&0&0\\0&t^2(1+t^2)^{-1}&0&-t(1+t^2)^{-1}\\
0&0&1&0\\0&-t(1+t^2)^{-1}&0&(1+t^2)^{-1}\end{bmatrix},\end{equation}
we notice that $\dom(\cA_t)=\ker([X_t \, Y_t])=\ran Q_t$. The maps $t\mapsto X_t,Y_t,Q_t$ are all differentiable at each $t\in\bbR$ and so Theorem \ref{thm:sec4.1} (or Theorem \ref{prop5.1}) applies. In particular, the resolvent operators of $\cA_t$ are differentiable at each $t$, and a short calculation using \eqref{5.10newnew}, \eqref{5.10-3new} and \eqref{DXQ} shows that if $u$ denotes the norm one eigenfunction with the eigenvalue $\lambda(t)\in\Sp(\cA_t)$  then $\dot{\lambda}(t)=|u'(1)|^2$ provided we know that $\lambda(t)$ is an eigenvalue of $\cA_t$ for a given $t\in\bbR$. Thus, each of the branches $\lambda^{(k)}(\cdot)$, $k\in\{0\}\cup\bbN$,  of the eigenvalues is monotone for all $t$ where it is defined.

We proceed with finding the actual location of $\lambda\in\Sp(\cA_t)$ and formulas for $u$ dealing with the two possible cases: (i)\, $\lambda=\kappa^2>0$, respectively, (ii)\, $\lambda=-\kappa^2<0$ for $\kappa=\kappa(t)\in\bbR$. 
Solving the equation $u''=0$ with the boundary conditions we note that $\lambda=0\in\Sp(\cA_1)$ with $u=x$, and that $0\notin \Sp (\cA_t)$ for all $t\neq 1$.
Plugging a linear combination of (i)\, $\cos(\kappa x)$ and $\sin(\kappa x)$, respectively, (ii)\, $\sinh (\kappa x)$ and $\cosh (\kappa x)$ into the boundary value problem $-u''=\lambda u$, $u(0)=0$, $tu'(1)=u(1)$ shows that nonzero $\kappa=\kappa(t)$ are the solutions to the equation (i)\,  $t\kappa=\tan\kappa$ with $u=a\sin(\kappa x)$, 
$a^{-2}=(1-t\cos^2\kappa)/2$, respectively, equation (ii)\, $t\kappa=\tanh\kappa$ with $u=a\sinh(\kappa x)$, $a^{-2}=(t\cosh^2\kappa-1)/2$.
By inspection of the graphs in the equations, in case (i), for each $t\in\bbR$ and $n\in\bbZ\setminus\{0\}$ there is a unique solution 
$\kappa\in(-\pi/2+\pi n, \pi/2+\pi n)$, for each $t>1$ there is a unique solution $\kappa=\kappa(t)\in(-\pi/2,\pi/2)$ with $\kappa(t)\to0^+$ as $t\to1^+$, and for any $t<1$ there are no solutions $\kappa\in(-\pi/2,\pi/2)$. In case (ii), for any $t\le0$ or $t>1$ there are no solutions $\kappa\in\bbR$, while for each $t\in(0,1]$ there exists a unique solution $\kappa=\kappa(t)\in\bbR$ such that $\kappa(t)\to0$ as $t\to1^-$ and $\kappa(t)\to+\infty$ as $t\to0^+$. By squaring $\kappa$ we obtain the branches $\lambda^{(0)}(t)<\lambda^{(1)}(t)<\dots$
of the eigenvalues of $\cA_t$ such that $\lambda^{(0)}(t)$ are defined for $t>0$ with $\lambda^{(0)}(t)\to-\infty$ as $t\to0^+$, is negative for $t\in(0,1)$ and positive for $t>1$ while $\lambda^{(k)}(t)$ for $k\in\bbN$ is defined and positive for all $t\in\bbR$, cf.\
\cite[Fig. 1 p.292]{K80}. Using $\dot{\lambda}(t)=|u'(1)|^2$ and the expressions for $u$ just given one obtains very particular formulas for $\dot{\lambda}^{(k)}(t)$ for all $t$ and $k$ except when $k=0$ and $t\le0$.
\hfill$\Diamond$  \end{example}

\subsection{Laplace operator on bounded domains via boundary triplets}\label{ssLapLip}
The main result of this section is Theorem \ref{theorem5.4} in which we derive the resolvent difference formula, Riccati equation, and Hadamard-type formula for a family of Robin-type Laplacians. To that end, we employ abstract results of Theorem \ref{thm:sec4.1} with an   {ordinary} boundary triplet specifically defined for the Laplace operator.  {The construction of such triplet for second order elliptic operators goes back to the work of M.I. Vi\v{s}ik \cite{MR0052655, Vi} who proposed the regularization of the Neumann trace by means of the Dirichlet-to-Neumann map, G. Grubb \cite{Gr} who investigated the case of higher-order operators building upon the trace theory of J. L. Lions, E. Magenes \cite{MR0247243}. We also note that the work of M. Malamud \cite{M2010} provides boundary triplets with dual party in $L^2(\partial\Omega)\times L^2(\partial\Omega)$ as well as important relation between the Weyl function and the Dirichlet-to-Neumann map. Another relevant construction of trace maps is offered in \cite{BMN2017} where a $B-$regularized boundary triplet was originally proposed. } 

Throughout this section
we assume that  $n\in\mathbb{N},n\geq2$ and  $\Om\subset\mathbb{R}^n$ is a bounded domain with $C^{1,r}$-boundary with $r>1/2$ (although this assumption can be considerably weakened, see Remark \ref{rem:LipD} below).
We define the maximal and minimal Laplace operators as follows,
\begin{align}
-\Delta_{\max}&:\dom(-\Delta_{\max})\subset L^2(\Omega)\rightarrow L^2(\Omega),\no\\
\dom(-\Delta_{\max})&=\big\{u\in L^{2}(\Om)\big|\  \Delta u\in L^{2}(\Om)\big\},\no\\
-\Delta_{\max}u&=-\Delta u\ \  (\text{in the sence of distribudtions}),\\\no
\dom(-\Delta_{\min})&=H^2_0(\Om),\,-\Delta_{\min}u=-\Delta u,\no
\end{align}
and remark that by \cite[Theorem 8.14]{GM10}\footnote{\sel{the description of the minimal domain in the case of $C^2$ boundary $\partial\Omega$ is a classical result, cf.  \cite{Gr2009, MR0247243}}} one has
	\begin{equation}\label{m=m*}\begin{split}
	\dom(-\Delta_{\min})&=H^2_0(\Om)=\{u\in L^{2}(\Om)|\  \Delta u\in L^{2}(\Om) ,\ \gd(u)=0,\ \gn(u)=0\},\\
	-\Delta_{\min}&=(-\Delta_{\max})^*,\,\,-\Delta_{\max}=(-\Delta_{\min})^*.\end{split}
	\end{equation}
	Here and below we use the following extensions of the  Dirichlet and (weak) Neumann
	traces,
	\begin{equation}\label{dfnbtrDN1}\begin{split}
	\gd&:\{u\in L^{2}(\Om)\,|\,\Delta u\in L^{2}(\Om)\}\to H^{-1/2}(\dOm),\\
	\gn&:\{u\in L^{2}(\Om)\,|\,\Delta u\in L^{2}(\Om)\}\to H^{-3/2}(\dOm),\end{split}
	\end{equation}
	and consider the map
	\begin{equation}\label{dfnbtrDN2}
	\tN:\{u\in L^2(\Omega)| \Delta u\in L^2(\Omega)\}\rightarrow H^{1/2}(\partial\Omega),\,%\no,\\
	\tN u:=\gn u+ M_{D,N}(\gd u),
	\end{equation}
where $M_{D,N}$ is the Dirichlet-to-Neumann map acting by the rule $M_{D,N}:g\mapsto -\gn u$ for 
 $u$ being the solution of the boundary value problem
 \begin{equation}\label{bvpg}
-\Delta u =0,\,
u\in L^2(\Om),\quad     
\gd u =g   \text{\ on\ } \partial\Omega. \end{equation}
More details regarding the definitions of $\gd, \gn$ and $\tau_{{}_N}$ and their properties are discussed in Appendix \ref{appA}, cf. Lemma \ref{wd}, \ref{A.5}, \ref{nn} taken from \cite{GM10}. In the sequel we use the Reisz isomorphism given by 
\begin{align}
\begin{split}\lb{aub27}
&\Phi: H^{-1/2}(\partial\Omega)\rightarrow H^{1/2}(\partial\Omega),\\
&H^{-1/2}(\partial\Omega)\ni\psi\mapsto \Phi_{\psi}\in H^{1/2}(\partial\Omega),\\
& \langle f,\psi  \rangle_{{-1/2}}:=\psi(f)=\langle f,\Phi_{\psi}  \rangle_{1/2}, f\in H^{1/2}(\partial\Omega), \psi\in H^{-1/2}(\partial\Omega),
\end{split}
\end{align}
in particular, for $f, \psi \in H^{1/2}(\partial\Omega)\hookrightarrow
L^2(\partial\Omega)\hookrightarrow H^{-1/2}(\partial\Omega)$ we have
\begin{equation}
\langle f,\psi  \rangle_{{-1/2}}=\langle f,\psi  \rangle_{L^2(\partial\Omega)}.
\end{equation}
We also note that $\Phi$ is a conjugate linear mapping. The next lemma is a well-known fact that goes back to \cite{Gr}.
\begin{lemma}\label{lem:BTR} Assume that $\Om\subset\mathbb{R}^n$ is a bounded domain with $C^{1,r}$-boundary, $r>1/2$, and the boundary traces $\gd$, $\tau_{{}_N}$ are as in \eqref{dfnbtrDN1}, \eqref{dfnbtrDN2}. Then
\begin{equation}\lb{int70}
(\mathfrak H, \Gamma_0, \Gamma_1):=(H^{1/2}(\partial\Omega), \tau_{{}_N}, \Phi\gd)
\end{equation}
is an \sel{ordinary} boundary triplet for $A=-\Delta_{\min}$.
\end{lemma}
\begin{proof} The trace operator $\tr:= [\tau_{{}_N}, \Phi\gd]^\top$ is defined on the space \[\cH_+:=\{u\in L^2(\Omega): \Delta u\in L^2(\Omega)\}\] with the norm  \[\|u\|_{\cH_+}=(\|u\|_{L^2(\Omega)}^2+\|\Delta u\|_{L^2(\Omega)}^2)^{1/2}. \] 
Recalling the Green identity \eqref{Green}
\begin{align}
&(-\Delta u,v)_{L^{2}(\Om)}-(u,-\Delta v)_{L^{2}(\Om)}\no\\
&=-{\lnoh \tN u, \gd v \rnohs}+\overline{\lnoh \tN v, \gd u \rnohs},
\end{align}
we rewrite it as
\begin{align}
&\langle A^* u,v\rangle_{\cH}-\langle u,A^* v\rangle_{\cH}=-\langle\Gamma_0u, \Gamma_1 v\rangle_{\mathfrak H}+\overline{\langle\Gamma_0v, \Gamma_1 u\rangle_{\mathfrak H}},\no\\
&=\langle\Gamma_1 u, \Gamma_0v\rangle_{\mathfrak H}-\langle\Gamma_0u, \Gamma_1 v\rangle_{\mathfrak H}, 
\end{align}
and thus check that \eqref{int70} satisfies the abstract Green identity. It remains to show that the map $\tr:\cH_+\to H^{1/2}(\partial\Omega)\times H^{1/2}(\partial\Omega)$ is onto. 
We fix a vector $(f,g)\in H^{1/2}(\partial\Omega)\times H^{-1/2}(\partial\Omega)$. 
By  \eqref{onto} there exists 
$u_0\in H^2(\Omega)\cap H_0^1(\Omega)$ such that $\tN u_0=f$. By \cite[Theorem 10.4]{GM10}, the boundary value problem \eqref{bvpg}
has a unique solution that we denote by $v_0$ (we note that zero is outside of the spectrum of the Dirichlet Laplacian). Employing \eqref{kernel} and $v_0\in \ker(\tN)$  yields
\begin{align}
\tr (u_0+v_0)=(\tN(u_0+v_0),\Phi \gd(u_0+v_0))=(\tN u_0 ,\Phi \gd v_0)=(f,\Phi g)
\end{align}
since $\gd u_0=\gaD u_0 =0$.\end{proof}

\begin{remark} In PDE literature, boundary value problems are often formulated in terms of the Dirichlet and Neumann traces defined by 
\begin{align}
&\gaD: \{u\in H^1(\Omega): \Delta u\in L^2(\Omega) \}\rightarrow H^{1/2}(\dOm), \gaD:=\gd\upharpoonright_ {\{u\in H^1(\Omega): \Delta u\in L^2(\Omega) \}},\\
&\gaN: \{u\in H^1(\Omega): \Delta u\in L^2(\Omega) \}\rightarrow H^{-1/2}(\dOm), \gaN:=\gn\upharpoonright_ {\{u\in H^1(\Omega): \Delta u\in L^2(\Omega) \}}.
\end{align}	
We note that $(-\Delta_{\max}, \gaD, \gaN)$ is not an  \sel{ordinary} boundary triplet. Firstly, $\tr:=(\gaD, \gaN)$ is not defined on the entire space $\dom(-\Delta_{\max})$. Secondly, $\tr$ is not onto, see \cite[Proposition 2.11]{LS1}. However, Hypothesis \ref{hyp3.6} is still satisfied with $\cD:=\{u\in H^1(\Omega): \Delta u\in L^2(\Omega) \}$ and equipped with the norm $(\|u\|_{H^1(\Omega)}^2+\|\Delta u\|_{L^2(\Omega)}^2)^{1/2}$. In fact, Hypothesis \ref{hyp2.2} is also satisfied for this choice of $\tr, \cD$. These facts serve as our main motivation for introducing Hypotheses \ref{hyp3.6}, \ref{hyp2.2}. We elaborate on this further in Section \ref{subsec1.1}. 
\hfill$\Diamond$  \end{remark}
Having constructed the   \sel{ordinary} boundary triplet for the Laplacian, we can now apply the abstract results from Theorem \ref{thm:sec4.1}.
\begin{theorem}\lb{theorem5.4} Let $\Om\subset\mathbb{R}^n$ be a bounded domain with $C^{1,r}$-boundary, $r>1/2$, and let
  $t\mapsto \Xi_t\in \cB(H^{1/2}(\partial\Omega))$, $t\in[0,1]$, be a differentiable family of self-adjoint operators. Then for $t\in[0,1]$ the linear operator
\begin{align}\lb{aub1}
&-\Delta_t: \dom(-\Delta_t)\subset L^2(\Omega)\rightarrow L^2(\Omega), -\Delta_t u=-\Delta u, \\
&u\in\dom(-\Delta_t):=\{ u\in \dom(\Delta_{\max}): \Phi\gd u+\Xi_t \tN u=0 \},
\end{align}
is self-adjoint. 
The following resolvent difference formula holds
\begin{align}\begin{split}\lb{deltaKrein}
(-\Delta_t-\zeta)^{-1}&-(-\Delta_s-\zeta)^{-1}\\
&= \big(\tN(-\Delta_t-\overline{\zeta})^{-1} \big)^* (\Xi_t-\Xi_s)\big(\tN(-\Delta_s-\zeta)^{-1}\big), 
\end{split}
\end{align}
for $t,s\in[0,1]$, $\zeta\not\in(\spec(-\Delta_t)\cup\spec(-\Delta_s))$. Moreover, for a fixed $t_0\in[0,1]$ the mapping 
\begin{equation}\lb{deltacont}
t\mapsto (-\Delta_t-\zeta)^{-1}\in \cB(L^2(\Omega))
\end{equation}
is well defined for $t$ near $t_0$ as long as $\zeta\not\in\spec(-\Delta_{t_0})$. This mapping is differentiable at $t_0$ and satisfies the following Riccati equaiton
\begin{align}
\begin{split}\lb{deltaRic}
\frac{\bd}{\bd t}\big|_{t=t_0}\big((-\Delta_t&-\zeta)^{-1}\big)\\
&= \big(\tN(-\Delta_{t_0}-\overline{\zeta})^{-1} \big)^* \big(\frac{\bd}{\bd t}\big|_{t=t_0}\Xi_t\big)\big(\tN(-\Delta_{t_0}-\zeta)^{-1}\big).
\end{split}
\end{align}
Finally, if $\lambda(t_0)$ is an eigenvalue of $-\Delta_{t_0}$ of multiplicity $m\geq 1$ then there exists  a choice of orthonormal eigenfunctions  $\{u_j\}_{j=1}^m\subset \ker(-\Delta_{t_0}-\lambda(t_0))$
and a labeling of eigenvalues $\{\lambda_j(t)\}_{j=1}^m$  of $-\Delta_{t}$, for $t$ near $t_0$, such that  
\begin{equation}\lb{thetder}
\dot{\lambda}_j(t_0)= -\langle\dot \Xi_{t_0} \tN u_j, \tN u_j\rangle_{L^2(\partial\Omega)}, 1\leq j\leq m.
\end{equation}
\end{theorem}
\begin{proof}
By Lemma \ref{lem:BTR}, $(H^{1/2}(\partial\Omega), \tN, \Phi\gd)$ is an  \sel{ordinary} boundary triplet. In order to check that $-\Delta_t$ is self-adjoint, it suffices to check conditions \eqref{com}, \eqref{inv} with $X:=\Xi_t$, $Y:=I$. Indeed, \eqref{com} holds since $\Xi_t$ is self-adjoint, \eqref{inv} holds since the operator $XX^*+YY^*$  given by $I+\Xi_t^2>0$ is invertible.  The fact that \eqref{deltacont} is well defined for $t$ near $t_0$ follows from continuity of $\Xi_t$ and Theorems \ref{thm:sec4.1} and \ref{prop1.8new} upon setting $\cA_t:=-\Delta_t$, $V_t:=0$, $\tr_t:=[\tN, \Phi\gd]^\top$. In order to prove \eqref{deltaKrein}, \eqref{deltaRic}, \eqref{thetder}, we use \eqref{XYKrein}, \eqref{derRnew}, \eqref{5.10new}, respectively, with
\begin{align}
&\big(W(\Xi_t, I)\big) \tr   R_{t}(\zeta)= (I+\Xi_t^2)^{-1}( -\Gamma_0R_{t}(\zeta)+\Xi_t\Gamma_1R_{t}(\zeta))\no\\
&\quad=(I+\Xi_t^2)^{-1}( -\Gamma_0R_{t}(\zeta)-\Xi_t^2\Gamma_0R_{t}(\zeta))=-\Gamma_0R_{t}(\zeta)=-\tau_N R_{t}(\zeta)
\end{align}
and $\phi_j=-\tN u_j$. 
\end{proof}
\begin{remark}\label{rem:LipD}
The assumption $\partial\Omega$ being $C^{1,r}$, $r>1/2$, imposed in this section could be replaced by  $\partial\Omega$ being Lipschitz and $\Omega$ quasi-convex, see \cite[Section 8]{GM10} for the definition. As proved  in \cite{GM10}, these weaker assumptions are sufficient for the domains of the Dirichlet and Neumann Laplacians to belong to $H^2(\Omega)$, which in turn is equivalent to \eqref{m=m*} to hold. Also, for the case of Lipschitz domains Lemma \ref{lem:BTR} and, as demonstrated in \cite{GM10}, leading to it Lemmas \ref{wd}, \ref{A.5}, \ref{nn} hold with the Sobolev spaces $H^{1/2}(\partial\Omega)$ and $H^{-1/2}(\partial\Omega)$ replaced by $N^{1/2}(\partial\Omega)$ and its adjoint $\big(N^{1/2}(\partial\Omega)\big)^*$, respectively, where the space $N^{1/2}(\partial\Omega)$ is defined as $\{f\in L^2(\partial\Omega): f\nu_j\in H^{1/2}(\partial\Omega)\}$, $\nu=(\nu_j)_{j=1}^n$, and is equal to $H^{1/2}(\partial\Omega)$ provided $\partial\Omega$ is $C^{1,r}$,  $r>1/2$. In the context of Lipschitz domains we also mention an important paper  \cite{BM}.
\hfill$\Diamond$  \end{remark}
\begin{remark}
Our motivation to consider the boundary condition in Theorem \ref{theorem5.4} stems from \cite{CJLS, GM08, LS17}. More generally, the boundary condition described in Theorem \ref{theorem5.4} can be replaced by  $ X_t \gn u+Y_t \tN u=0$ for $X_t, Y_t\in \cB(H^{1/2}(\partial\Omega))$ satisfying \eqref{com}, \eqref{inv}.  In this case, as in Theorem \ref{theorem5.4},  continuity of the mappings $t\mapsto X_t$, $t\mapsto Y_t$ yields continuity of the resolvent operator with respect to $t$. Moreover, differentiability of the mappings $t\mapsto X_t$, $t\mapsto Y_t$ yields differentiability of the resolvent operator with respect to $t$ as well as the Reccati equation and the formula for the slopes of the eigenvalue curves (both obtained by dropping the potential terms  $V_t$ in  \eqref{5.10newnewa}, \eqref{5.10newnew}, respectively).
\hfill$\Diamond$  \end{remark}

\subsection{Quantum graphs}\label{ssQuaGr} The main result of this section is Theorem \ref{theorem4.5} in which we derive the resolvent difference formula, Riccati equation, and Hadamard-type formula for Schr\"odinger operators on metric graphs. To that end, we employ the abstract results discussed in Theorem \ref{thm:sec4.1} with an  \sel{ordinary} boundary triplet specifically defined for quantum graphs. Examples \ref{ex48}  and   \ref{ex47} give two applications of Theorem \ref{theorem4.5}. Both examples concern monotonicity of eigenvalue curves of Schr\"odinger operators with respect to some natural parameter present in the boundary conditions.

We begin by discussing  differential operators on metric graphs. To set the stage, let us fix a discrete graph $(\cV,\cE)$ where $\cV$ and $\cE$ denote the set of vertices and edges respectively. We assume that $(\cV,\cE)$ consists of a finite number of vertices,  $|\cV|$, and a finite number of edges,  $|\cE|$. We assign to each edge $e\in\cE$ a positive and finite length $\ell_{e}\in(0,\infty)$. The corresponding metric graph is denoted by $\cg$. The boundary $\partial\cg$ of the metric graph is defined by
\begin{equation}
\partial\cg:=\cup_{e\in\cE} \{a_e,b_e\}, 	
\end{equation}
where $a_e, b_e$ denote the end points of the edge $e$.  It is convenient to treat $2|\cE|$ dimensional vectors as functions on the boundary $\partial\cg$, in particular,
$L^2(\partial\cg)\cong \bbC^{2|\cE|}$,
where the space $L^2(\partial\cg)=\bigoplus_{e\in\cE}\left( L^2(\{a_e\})\times L^2(\{b_e\})\right)$  corresponds to the discrete Dirac measure with support $\cup_{e\in \cE} \{a_e, b_e\}$. In addition to the space of functions on the boundary we consider the Sobolev spaces of functions on the graph $\cg$,
\begin{align}
&L^2(\cg):=\bigoplus_{e\in\cE}L^2(e),\  \hatt{H}^2(\cg):=\bigoplus_{e\in\cE}H^2(e)\no,
\end{align}  
where $H^2(e)$ is the standard $L^2$ based Sobolev space.   
As in the setting of  Laplace operators on bounded domains, the spaces ${L}^2(\cg)$ and $L^2(\partial \cg)$ are related via the trace maps. We define the trace operators $(\Gamma_0, \Gamma_1)$  by the formulas
\begin{align}
&\Gamma_0: \hatt{H}^2(\cg)\rightarrow L^2(\partial \cg),
\ \Gamma_0u:=u|_{\partial \cg}, u\in \hatt{H}^2(\cg),\\%\label{vv2}\\
&\Gamma_1: \hatt{H}^2(\cg)\rightarrow L^2(\partial \cg),
\ \Gamma_1 u:=\partial_{n}u|_{\partial \cg}, u\in \hatt{H}^2(\cg),%\label{vv3}
\end{align}
where $\partial_{n} u$ denotes the derivative of $u$ taken in the {\it inward} direction. The trace operator is a bounded, linear operator given by
\begin{equation}%\lb{2.4new}
\tr:=
[\Gamma_0, 
\Gamma_1]^\top,\, \tr: \hatt{H}^2(\cg)\rightarrow L^2(\partial \cg)\times L^2(\partial \cg)\cong \bbC^{4|\cE|}.
\end{equation}
The Sobolev space of functions vanishing on the boundary $\partial\cg$ together with their derivatives is denoted by
\begin{equation}\no
H^2_0(\cg):=\left\{u\in \hatt{H}^2(\cg): \tr u=0\right\}.
\end{equation}
Using our notation for the trace maps, the Green identity can be written as follows,
\begin{align}\label{vv5}
\int_{\cg} (-u'')\overline{v}-u\overline{(-v'')}
&=\int_{\partial\cg}
\partial_{n}u\overline{v}-u\overline{\partial_{n}v} \\
&=\langle [J\otimes I_{2|\cE|}]\tr u, \tr v\rangle_{\bbC^{4|\cE|}},\  u,v\in  \hatt{H}^2(\cg).\no
\end{align}
The right-hand side of the Green identity defines a symplectic form 
\begin{align} 
\label{vv9}
&\omega:\  \dL^2(\partial \cg)\times\,\dL^2(\partial
\cg)\rightarrow \bbC, \\
\label{eq:def_omega}
&\omega( (f_1, f_2), (g_1, g_2)):=\int_{\partial\cg}f_2 \overline {g_1}-f_1\overline{g_2},\\
& (f_1, f_2), (g_1, g_2)\in \dL^2(\partial\cg),\label{vv10}
\end{align}
where $\dL^2(\partial \cg):=L^2(\partial \cg)\times L^2(\partial \cg)$.

Next, we introduce the minimal Laplace operator $A_{min}$ and its adjoint $A_{max}$. The operator
\begin{equation}\label{b5}
A_{min}:=-\frac{\bd^2}{\bd x^2},\quad \dom(A_{min})=\hatt H^2_0(\cg),
\end{equation}
is symmetric in $L^2(\cg)$. Its adjoint $A_{max}:=A_{min}^*$ is given by 
\begin{equation}\label{b6}
A_{max}:=-\frac{\bd^2}{\bd x^2},\quad \dom(A_{max})=\hatt{H}^2(\cg).
\end{equation}
The dificiency indices of $A_{min}$ are finite and equal, that is,
\begin{equation}
0<\dim\ker(A_{max}-\bfi)=\dim\ker(A_{max}+\bfi)<\infty.
\end{equation}

\begin{theorem}\lb{theorem4.5}Assume that  
	\begin{align}
	&t\mapsto V_t \text{ is in } C^1([0,1], L^{\infty}(\cg)),\\
	&t\mapsto X_t, Y_t \text{ is in } C^1([0,1], \bbC^{2|\cE|\times 2|\cE|}),\  \det(X_tX^*_t+Y_tY_t^*)\not=0, X_tY_t^*=Y_t^*Y_t. 
	\end{align}
	Then the operator 
	\begin{align}
	&\cA_t: L^2(\cg)\rightarrow L^2(\cg), \dom(\cA_t):=\{u\in H^2(\cg): [X_t, Y_t]\tr u=0 \},\\
	&\cA_t u=-u'', u\in\dom(\cA_t),
	\end{align}
	is a self-adjoint extension of $A_{min}$. The operator-valued function  
	\begin{equation}
	t\mapsto R_t(\zeta_0):=(\cA_t+V_t-\zeta_0)^{-1} \text{ for all } \zeta_0\not\in\spec(\cA_t) 
	\end{equation}
	is in $C^1([0,1], \cB(L^2(\cg)))$ and for any $t_0\in[0,1]$ one has
	\begin{align}\lb{derRnewnew}
	\begin{split}
	&\dot{R}_{t_0}(\zeta_0)=-R_{t_0}(\zeta_0)\dot V_{t_0}R_{t_0}(\zeta_0)\\
	&+(\tr R_{t_0}(\overline{\zeta_0}))^*\big(W(X_{t_0}, Y_{t_0})\big)^*(\dot X_{t_0}{Y}_{t_0}^*-\dot Y_{t_0}{X}_{t_0}^*)\big(W(X_{t_0}, Y_{t_0})\big)\tr  R_{t_0}(\zeta_0),
	\end{split}
	\end{align} 
	where $W(X_{t_0}, Y_{t_0})$ is as in \eqref{defw}.
	Furthermore, if $\lambda(t_0)$ is an eigenvalue of $\cA_{t_0}+V_{t_0}$ of multiplicity $m\geq 1$ then there exist  a choice of orthonormal eigenfunctions 
	\begin{equation}\no
	\{u_j\}_{j=1}^m\subset \ker(\cA_{t_0}+V_{t_0}-\lambda(t_0))
	\end{equation}
	and a labeling of eigenvalues $\{\lambda_j(t)\}_{j=1}^m$  of $\cA_t+V_t$, for $t$ near $t_0$, such that  
	\begin{equation}\lb{5.10newnew24}
	\dot{\lambda}_j(t_0)= \langle \dot V_{t_0}u_j, u_j\rangle_{L^2(\cg)}+\big\langle (X_{t_0}\dot{Y}_{t_0}^*-Y_{t_0}\dot{X}_{t_0}^*) \phi _j, \phi_j\big\rangle_{L^2(\partial\cg)}, 
	\end{equation}
	where $\phi_{j}=W(X_{t_0}, Y_{t_0})\tr u_j$ is a unique $2|\cE|$ dimensional vector satisfying $\Gamma_0 u_j=-Y_{t_0}^*\phi_j$ and $\Gamma_1 u_j=X_{t_0}^*\phi_j$, $1\leq j\leq m$.
\end{theorem}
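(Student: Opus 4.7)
The plan is to apply the abstract Theorem \ref{thm:sec4.1} directly, so the main work consists in verifying that the quantum graph setting fits Hypothesis \ref{hyp1.3bis}. The first step is to check that $(\mathfrak{H}, \Gamma_0, \Gamma_1)$ with $\mathfrak{H}=L^2(\partial\cg)\cong \mathbb{C}^{2|\cE|}$ and the edge-wise Dirichlet/inward-Neumann traces is a boundary triplet for $A_{min}$ in the sense of Definition \ref{cond}. The Green identity \eqref{vv5} gives \eqref{3.61biss} with symplectic operator $J\otimes I_{2|\cE|}$; surjectivity of $\tr$ onto $\mathbb{C}^{4|\cE|}$ is immediate since on each edge $e$ we can independently prescribe any four complex values $(u(a_e),u(b_e),\partial_n u(a_e),\partial_n u(b_e))$ by choosing a cubic polynomial and multiplying by a smooth cutoff supported away from the other endpoint. (One must also note $\cD=\cH_+=\hatt H^2(\cg)=\dom(A_{max})$, which is the content of \eqref{b5}--\eqref{b6}.)

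The second step is to verify self-adjointness of $\cA_t$. Under the standing assumptions $X_tY_t^*=Y_tX_t^*$ and $\det(X_tX_t^*+Y_tY_t^*)\neq 0$, the conditions \eqref{com}, \eqref{inv} hold, so $\ker([X_t,Y_t])$ is a Lagrangian plane in $\mathbb{C}^{2|\cE|}\times \mathbb{C}^{2|\cE|}$ by the parametrization \eqref{3241}; since $\tr$ is surjective, the preimage
\[
\dom(\cA_t)=\{u\in \hatt H^2(\cg):[X_t,Y_t]\tr u=0\}=\tr^{-1}(\ker[X_t,Y_t])
\]
yields a self-adjoint restriction of $A_{max}$ by Remark \ref{rem4.2}, and $\tr(\dom(\cA_t))=\ran(Q_t)$ automatically by surjectivity of $\tr$, so \eqref{3272new-bis} is satisfied. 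The maps $t\mapsto V_t\in\cB(L^2(\cg))$ and $t\mapsto X_t,Y_t$ are $C^1$ by assumption, and since $\tr$ does not depend on $t$ we trivially have $\dot{\tr}_{t_0}=0$, so Hypothesis \ref{hyp1.3bis} is fully verified.

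The third step is to apply Theorem \ref{thm:sec4.1} directly. Since $\dot{\tr}_{t_0}=0$, the last summand in the Riccati identity \eqref{5.10newnewa} vanishes, leaving exactly \eqref{derRnewnew}. Similarly, the last summand $\omega(\tr_{t_0}u_j,\dot{\tr}_{t_0}u_j)$ in the Hadamard formula \eqref{5.10newnew} vanishes and we obtain \eqref{5.10newnew24}, with $\phi_j=W(X_{t_0},Y_{t_0})\tr u_j$ characterized equivalently by $\Gamma_0u_j=-Y_{t_0}^*\phi_j$, $\Gamma_1u_j=X_{t_0}^*\phi_j$. The $C^1$-dependence of $R_t(\zeta_0)$ is then the differentiability conclusion in Theorem \ref{thm:sec4.1}.

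The only point requiring any care is the verification that $(\mathbb{C}^{2|\cE|},\Gamma_0,\Gamma_1)$ is indeed a boundary triplet (in particular the surjectivity of $\tr$), since everything else is routine bookkeeping to match the hypotheses. Once the boundary triplet structure is in place, the rest of the theorem follows as a direct specialization of Theorem \ref{thm:sec4.1} to the case of $t$-independent trace map and $\mathfrak{H}$ finite-dimensional.
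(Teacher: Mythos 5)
Your proposal is correct and follows essentially the same route as the paper, which simply notes that $(L^2(\partial\cg),\Gamma_0,\Gamma_1)$ is a boundary triplet and invokes Theorem \ref{thm:sec4.1}; you fill in the details (Green identity, Hermite-interpolation surjectivity of $\tr$, the Lagrangian plane from \eqref{com}--\eqref{inv}, and vanishing of the $\dot{\tr}$ terms) that the paper leaves implicit. One small wording slip: cubic Hermite interpolation on each edge already matches all four endpoint data, so no cutoff is needed (and the cutoff as you describe it would in fact destroy the data at the endpoint it avoids).
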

\begin{proof}
	Since $(L^2(\partial\cg), \Gamma_0, \Gamma_1)$ is an  \sel{ordinary} boundary triplet,  equations \eqref{5.10newnewa} and \eqref{5.10newnew} in Theorem \ref{thm:sec4.1} 
	 give \eqref{derRnewnew} and \eqref{5.10newnew24} respectively.  
\end{proof}
\begin{example}\lb{ex48}
	Consider the Schr\"odinger operator $\displaystyle{H_{t}=-\frac{\bd^2}{\bd x^2}+V}$ on a compact star graph $\cg=(\cE,\mathcal{V})$ with a bounded real-valued potential $V$ subject to arbitrary self-adjoint vertex conditions at the vertices of degree one and the following $\delta$-type condition at the center ${\rm v_c}\in\mathcal{V}$,
	\begin{equation}\label{BCnu}
	\sum_{e\sim {\rm v_c}}{\partial_{n} u_e({\rm v_c})}=t u({\rm v_c}),\ t\in\bbR.
	\end{equation}
	We recall that the spectrum of $H_t$ can be described via secular equations \cite{BK}. In this example, we will derive an Hadamard-type formula \eqref{BKHad} for the derivative of the eigenvalues of $H_t$. Such a formula is discussed in \cite[Proposition 3.1.6]{BK} for simple eigenvalues. The general case can be treated using \eqref{5.10newnew24} as follows.  The  boundary matrices describing the vertex conditions are given by $\wti X\times X_{t}$ and $\wti Y\times Y$ where 
	\begin{equation}\no
	X_{t}= \begin{bmatrix}
	1&-1&&\cdots &0\\
	0&1&-1&\cdots&0\\
	&&\ddots\\
	0&&&1&-1\\
	-t&0&\cdots&&0
	\end{bmatrix},\quad  
	Y= \begin{bmatrix}
	0&&\cdots &&0\\
	0&&\cdots&&0\\
	&&\ddots\\
	0&&\cdots &&0\\
	1&1&\cdots&&1
	\end{bmatrix},
	\end{equation}
	and the matrices $\wti X$ and $\wti Y$ correspond to the vertex conditions at $\cV\setminus\{{\rm v_c}\}$. A direct computation gives
	\begin{equation}\lb{3.33}
	X_{t}^*Y=Y^*X_{t}= \begin{bmatrix}
	0&0&\cdots &&0\\
	0&0&\cdots&&0\\
	&&\ddots\\
	0&0&\cdots&&-t
	\end{bmatrix}.\  
	\end{equation}
	For the eigenvalue $\lambda(t_0)$ of $H_{t_0}$ of multiplicity $m\in\bbN$  we use \eqref{5.10newnew24} to get
	\begin{equation}
	\dot{\lambda}_j(t_0)= \big\langle (X_{t_0}\dot{Y}_{t_0}^*-Y_{t_0}\dot{X}_{t_0}^*) \phi _j, \phi_j\big\rangle_{L^2(\partial\cg)}=|\phi_j({\rm v_c})|^2, 
	\end{equation}
	where $1\leq j\leq m$, $\phi_{j}=W(X_{t_0}, Y_{t_0})\tr u_j$, and $\{u_j\}_{j=1}^m$ are the eigenfunctions of $H_{t_0}$ corresponding to $\lambda(t_0)$. Furthermore, using \eqref{phij} we obtain $\phi_j({\rm v_c})=-u_j({\rm v_c})$, hence,
	\begin{equation}\lb{BKHad}
	\dot{\lambda}_j(t_0)=|u_j({\rm v_c})|^2,\, 1\leq j\leq m. 
	\end{equation}
\hfill$\Diamond$  \end{example}

\begin{example}\lb{ex47} This example concerns monotonicity of eigenvalue curves of a class of Schr\"odinger operators on a compact interval arising in the spectral theory of periodic Hamiltonians. Specifically, we consider the Schr\"odinger operators $H_\vartheta$ with a real valued potential $V\in L^{\infty}(0,1)$ which are parameterized by $\vartheta\in[0, 2\pi)$ and defined as follows,
	\begin{align}
	&H_\vartheta=\cA_\vartheta+V,\, \cA_{\vartheta}: L^2(0,1)\rightarrow L^2(0,1), \cA_{\vartheta} u=-u'', u\in\dom(\cA_{\vartheta}), \\
	&\dom(\cA_{\vartheta}):=\{u\in H^2(0,1): e^{\bfi \vartheta}u(0)=u(1), e^{\bfi \vartheta}u'(0)=u'(1)  \}.\lb{aub14}
	\end{align}
	Such operators are of interest, in particular, because their eigenvalues fill up the spectral bands of the Schr\"odinger operator 
	in $L^2(\bbR)$ with  the potential given by the periodic extension of $V$, see   \cite[Theorems  XIII.89, XIII.90]{RS78}. We claim that the eigenvalue curves satisfy
	\begin{equation}\label{formdernew}
\dot\lambda_j(\vartheta_0)=2\Im(u_j'(0)\overline{u_j(0)}) \text{ for all } \vartheta_0\in(0, 2\pi),
	\end{equation} 
	where, as usual, $u_j\in \ker(\cA_{\vartheta_0}-\lambda_j(\vartheta_0))$, $j=1,2$ (in fact all but, possibly, periodic and antiperiodic operators have simple spectra). We derive this formula from \eqref{5.10newnew} by defining trace operators appropriately.  It is well known that ordinary differential operators fit well into the scheme of boundary triplets, cf. e.g. \cite[Chapter 3]{GG}, however, for completeness we recall the setting. We set
	\begin{align}
	&\cH:=L^2(0,1), \cH_+:=H^2(0,1), A=-\frac{\bd^2}{\bd x^2}, \dom(A)=H^2_0(0,1)\\
	&\tr: H^2(0,1)\rightarrow \C^4, \Gamma_0u:=(u(0), u(1))^{\top}, \Gamma_1u:=(u'(0), -u'(1))^{\top}.  
	\end{align}
	Next, to utilize \eqref{5.10newnew} we first rewrite the boundary conditions in \eqref{aub14} as follows,
	\begin{equation}
	X_{\vartheta}\Gamma_0 u+Y_{\vartheta}\Gamma_1 u=0, \text{ where }
	X_{\vartheta}:=\begin{bmatrix}
	-\e^{\bfi\vartheta}& 1\\0&0
	\end{bmatrix}, Y_{\vartheta}:=\begin{bmatrix}
	0&0\\
	\e^{\bfi \vartheta}&1
	\end{bmatrix},
	\end{equation}
	and compute
	\begin{align}
	\phi_j=W(X_{\vartheta}, Y_{\vartheta})\tr u_j&= \frac{1}{2}(-Y_{\vartheta_0}\Gamma_0u_j+X_{\vartheta_0}\Gamma_1u_j)= -e^{\bfi \vartheta_0}(u_j'(0), u_j(0))^{\top},\\
	X_{\vartheta_0} \dot Y^*_{\vartheta_0}-Y_{\vartheta_0}X^*_{\vartheta_0} &=\begin{bmatrix}
	0&\bfi\\
	-\bfi&0
	\end{bmatrix}. 
	\end{align}
	Plugging this in \eqref{5.10newnew24} yields \eqref{formdernew}. Monotonicity of the eigenvalues follows from linear independence of $u_j, \overline{u_j}$ and the  formula 
	\begin{equation}
	2|\Im(u_j'(0)\overline{u_j(0)})|=|\cW(u_j, \overline{u_j})(0)|\not=0, \vartheta_0\in(0, 2\pi).
\end{equation}
	involving the Wronskian. \hfill$\Diamond$
  \end{example}

\subsection{Periodic Kronig--Penney model}\footnote{An alternative approach applicable to a very broad class of second-order operators is discussed in the upcoming work of D. Damanik, J. Fillman and the second author. See also \cite{MR3543796}. }\label{ssPerKrPen}  In  this section we give yet another application of Theorem \ref{thm:sec4.1} proving a version of B. Simon's theorem \cite{MR473321} which states that a certain open gap property (described below) of periodic Schr\"odinger operators is generic in the class of periodic $C^{\infty}(\bbR)$ potentials.  The main result of this section, Theorem  \ref{thm4.14}, states this assertion for singular $\delta-$type potentials. Its proof is based on a perturbative argument inspired by \cite{MR473321} and technically made available by Theorem \ref{thm:sec4.1}.

The spectrum of the Schr\"odinger operator with periodic potential on the line has a band-gap structure, that is, in general, it consists of closed segments, called {\em bands}, such that two adjacent bands can either have a common endpoint or be separated by an open interval, a {\em gap}, of the resolvent set; in the latter case we say that the gap is open.
We will now use Theorem \ref{thm:sec4.1} to prove that {\em all gaps} of a generic periodic Kronig--Penney model are open. The operators in question are the Schr\"odinger operators with $\delta-$type potentials which in physics literature are written as follows,
\begin{align}
&H_{{\alpha}}:=-\frac{\bd^2}{\bd x^2}+\sum_{k\in\bbZ}\alpha_k \delta(x-k),
\end{align}
and mathematically are defined by 
\begin{align}
&H_{\alpha}u:=-u'', u\in\dom(H_{\alpha}), H_{\alpha}:\dom(H_{\alpha})\subset L^2(\bbR)\rightarrow L^2(\bbR),\\
&\dom(H_{{\alpha}})= \{u\in \hatt H^2(\bbR\setminus\bbZ): u\ \text{satisfies}\  \eqref{6.8new}\text{\ for all}\ k\in\bbZ\},\\
&\hspace{3.3cm}u(k^+)=u(k^-),\ u'(k^+)-u'(k^-)=\alpha_ku(k),\lb{6.8new}
\end{align}
where $\alpha=\{\alpha_k\}_{k\in\bbZ}\in \ell^{\infty}(\bbZ; \bbR)$, $u(k^\pm)$ are the one-sided limits, and $\hatt{H}^2$ denotes the direct sum of the Sobolev spaces on respective intervals. The spectrum of $H_{\alpha}$ for the case of periodic sequence $\alpha$ has a band-gap structure, see \cite[Theorem 2.3.3]{MR2105735}. This was originally proved for $1-$periodic sequences but can be directly extended to any $p-$periodic ones. Specifically, given  a $p-$periodic sequence $\alpha=\{\alpha_k\}_{k\in\bbZ}\in \ell^{\infty}(\bbZ; \bbR)$, the operator $H_{\alpha}$ is unitary equivalent to the direct integral 
\begin{equation}
\int_{[0,2\pi)}^{\oplus} H_{{\alpha^{(p)}}, \vartheta}\frac{\bd\vartheta}{2\pi}, \, \text{ where we denote $\alpha^{(p)}:=\{\alpha_0, ..., \alpha_{p-1}\}\in\bbR^p$,}
\end{equation}
and $H_{{\alpha^{(p)}}, \vartheta}$ for $\vartheta\in[0,2\pi)$ is the operator defined in $ L^2(I_p)$ with $I_p:=(-1/2,p-1/2)$  by 
\begin{align}
&H_{{\alpha^{(p)}}, \vartheta}u:=-u'',\ H_{{\alpha^{(p)}}, \vartheta}: \dom(H_{\alpha^{(p)}, \vartheta})\subset L^2(I_p)\rightarrow L^2(I_p),\\
&\dom(H_{\alpha^{(p)}, \vartheta})= \Big\{u\in \hatt H^2(I_p\setminus\bbZ):  u\ \text{satisfies}\  \eqref{6.8new} \text{\ for $k\in I_p\cap\bbZ$ and\ } \eqref{6.9new}\Big\},\\
&u(-1/2^+)=e^{\bfi\vartheta}u((p-1/2)^-),\, u'(-1/2^+)=e^{\bfi\vartheta}u'((p-1/2)^-),\lb{6.9new}
\end{align}
where 
\begin{equation}
\hatt H^2(I_p\setminus\bbZ):=H^2(-1/2, 0)\oplus H^2(0, 1)\oplus...\oplus H^2(p-2, p-1)\oplus H^2(p-1, p-1/2). 
\end{equation}
Denoting the eigenvalues of $H_{\alpha^{(p)}, \vartheta}$ (ordered in non-decreasing order) by \[\lambda_j(\alpha^{(p)}, \vartheta), j=1,2,\dots,\]  we have 
\begin{align}
\lambda_1(\alpha^{(p)}, 0)&\leq \lambda_1(\alpha^{(p)}, \vartheta)\leq  \lambda_1(\alpha^{(p)}, \pi)\leq \lambda_2(\alpha^{(p)}, \pi)\leq  \lambda_2(\alpha^{(p)}, \vartheta)\leq\lambda_2(\alpha^{(p)}, 0)\\
&\leq \lambda_3(\alpha^{(p)}, 0)\leq \lambda_3(\alpha^{(p)}, \vartheta) \leq\lambda_3(\alpha^{(p)}, \pi)\leq\dots \text{ for $\vartheta\in[0,\pi]$.} 
\end{align}
Then the spectrum of $H_{\alpha}$ is given by 
\begin{align}
\spec(H_{\alpha})&=\bigcup\limits_{\vartheta\in[0,\pi]}\spec(H_{\alpha^{(p)}, \vartheta})\\
&=[\lambda_1(\alpha^{(p)}, 0),  \lambda_1(\alpha^{(p)}, \pi)]\cup[\lambda_2(\alpha^{(p)}, \pi), \lambda_2(\alpha^{(p)}, 0)]\cup\dots.
\end{align}
The intervals $[\lambda_1(\alpha^{(p)}, 0),  \lambda_1(\alpha^{(p)}, \pi)]$, $[\lambda_2(\alpha^{(p)}, \pi), \lambda_2(\alpha^{(p)}, 0)], \dots$  are called {\em bands}. The endpoints of two adjacent bands may coincide. In this case we say that the respective gap is closed; otherwise the respective {\em gap},  $\big(\lambda_1(\alpha^{(p)}, \pi), \lambda_2(\alpha^{(p)}, \pi)\big)$, $\big(\lambda_2(\alpha^{(p)}, 0),  \lambda_3(\alpha^{(p)}, 0)\big),\dots$ is said to be open. 
In the following theorem we show that all gaps are open for a generic periodic sequence $\alpha$. 
\begin{theorem}\lb{thm4.14}There is a dense $G_{\delta}-$set $\cS\subset \ell^{\infty}(\bbZ; \bbR)$ of sequences $\alpha$ such that for each $\alpha\in\cS$ all gaps in the spectrum of $H_{\alpha}$ are open. 
\end{theorem}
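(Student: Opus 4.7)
I plan to adapt B.~Simon's approach in \cite{MR473321} to the Kronig--Penney setting using our Hadamard-type formula for quantum graphs. Fix a period $p\in\bbN$ and work in the parameter space $\bbR^p\ni\alpha^{(p)}=(\alpha_0,\dots,\alpha_{p-1})$ of $p$-periodic sequences. The band-gap structure recalled in the excerpt makes the $j$-th gap of $H_\alpha$ open precisely when $\lambda_j(\alpha^{(p)},\vartheta_0^{(j)})<\lambda_{j+1}(\alpha^{(p)},\vartheta_0^{(j)})$ for the appropriate band-edge quasi-momentum $\vartheta_0^{(j)}\in\{0,\pi\}$; let $\cU_{j,p}\subseteq\bbR^p$ denote this ``open $j$-th gap'' set. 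The interval $I_p=(-1/2,p-1/2)$ viewed as a quantum graph with $\delta$-vertices at $0,1,\dots,p-1$ and Floquet conditions at $\pm 1/2$ fits the framework of Section \ref{ssQuaGr}, so Theorem \ref{theorem4.5} immediately yields continuity of $\alpha^{(p)}\mapsto\lambda_j(\alpha^{(p)},\vartheta_0)$; hence each $\cU_{j,p}$ is open in $\bbR^p$. The set $\cS$ will be assembled from $\cS_p:=\bigcap_{j\ge 1}\cU_{j,p}$ via the natural embeddings $\bbR^p\hookrightarrow\ell^\infty(\bbZ;\bbR)$, so the core analytic task is to prove density of each $\cU_{j,p}$ in $\bbR^p$.

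\textbf{Density via the Hadamard formula.} Given $\alpha_*^{(p)}\notin\cU_{j,p}$, the common value $\lambda_*:=\lambda_j(\alpha_*^{(p)},\vartheta_0)=\lambda_{j+1}(\alpha_*^{(p)},\vartheta_0)$ is an eigenvalue of $H_{\alpha_*^{(p)},\vartheta_0}$ of multiplicity $m\ge 2$, and I would fix an orthonormal basis $\{u_a\}_{a=1}^m$ of the eigenspace. The perturbation $\alpha_*^{(p)}\mapsto\alpha_*^{(p)}+s\xi$, $\xi\in\bbR^p$, is an affine variation of the $\delta$-couplings at the integer vertices, and Theorem \ref{theorem4.5} (the multi-vertex analogue of the single-vertex formula $\dot\lambda=|u(v_c)|^2$ derived in Example \ref{ex48}) shows that the $m$ eigenvalue curves bifurcating from $\lambda_*$ have first-order slopes equal to the eigenvalues of the Hermitian matrix
\[
M(\xi)=\sum_{k=0}^{p-1}\xi_k\,\mathbf{v}_k\mathbf{v}_k^*,\qquad \mathbf{v}_k:=(u_1(k),\dots,u_m(k))^\top\in\bbC^m.
\]
If $M(\xi)$ has two distinct eigenvalues for some $\xi\in\bbR^p$, then $\alpha_*^{(p)}+s\xi\in\cU_{j,p}$ for all sufficiently small $s>0$, which gives the desired density.

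\textbf{Ruling out the obstruction (main obstacle).} The only way the previous step can fail is $M(\xi)=c(\xi)I_m$ for every $\xi\in\bbR^p$. Taking $\xi=e_k$ forces each rank-at-most-one matrix $\mathbf{v}_k\mathbf{v}_k^*$ to be a scalar multiple of $I_m$, and since $m\ge 2$ this is only possible when $\mathbf{v}_k=0$, i.e.\ every $u_a$ vanishes at every integer in $I_p$. Each $u_a$ then solves $-u''=\lambda_* u$ on every sub-interval with Dirichlet data at all interior integers, so the unit-length interior intervals immediately force $\lambda_*=n^2\pi^2$ for some $n\in\bbN$. The main technical step, which I expect to be the principal obstacle, is then to verify that the resulting ``vanish-at-integers'' subspace of $\ker(H_{\alpha_*^{(p)},\vartheta_0}-n^2\pi^2)$ is at most one-dimensional for every configuration of parameters $(n,p,\vartheta_0)$: the $p+1$ sub-interval amplitudes $(a,c_0,\dots,c_{p-2},b)$ collapse to a single free parameter via the $\delta$-matching relations $c_k=(-1)^n c_{k-1}$ (which are automatic since $u(k)=0$), and then the two Floquet conditions at $\pm 1/2$ either kill that remaining parameter or leave it free according to a case analysis in the parities of $n$ and $p$ and the value of $\vartheta_0\in\{0,\pi\}$. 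Once this one-dimensionality is in hand, it contradicts $m\ge 2$, the obstruction is eliminated, $\cS_p$ is a dense $G_\delta$ in $\bbR^p$, and the assembly of $\cS\subseteq\ell^\infty(\bbZ;\bbR)$ from $\{\cS_p\}_{p\ge 1}$ is routine.
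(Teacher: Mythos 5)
Your proposal is correct, but it takes a genuinely different route from the paper's, and the step you flag as the principal obstacle turns out to be easier than you anticipate — there is no parity case analysis.

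The paper perturbs \emph{only} the single coupling $\alpha_0$, via $\alpha^{(p)}(t)=(t+\alpha_0,\alpha_1,\dots,\alpha_{p-1})$, and compares the two Hadamard slopes $|u_1(0)|^2$, $|u_2(0)|^2$ directly (after choosing real eigenfunctions with $u_1(0),u_2(0)\ge0$). If they coincide, the observation is that $u_1-u_2$ vanishes at $0$, hence satisfies the $\delta$-condition at $0$ for \emph{every} $t$; so $\lambda_*$ stays an eigenvalue along the whole ray, one curve is flat, one slope is zero, hence $u_1(0)=u_2(0)=0$, and then two eigenfunctions of the same Floquet eigenvalue both vanishing at $0$ are forced to be linearly dependent by ODE uniqueness (the Cauchy-data map at $0^+$ is injective on the eigenspace) — contradiction. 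ODE uniqueness is invoked at a single vertex.

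You instead try all directions $\xi\in\bbR^p$ at once and reduce to non-scalarity of $M(\xi)=\sum_k\xi_k\mathbf{v}_k\mathbf{v}_k^*$, which is the matrix of $T^{(1)}$ from Theorem \ref{thm2.2} in the basis $\{u_a\}$; the rank-one argument ($\mathbf{v}_k\mathbf{v}_k^*=c_kI_m$, $m\ge2\Rightarrow\mathbf{v}_k=0$) is sound. But you then have to rule out all $m$ eigenfunctions vanishing at all $p$ vertices, which is more information than the paper needs — and the upshot is simpler than your sketch suggests. Once $u_a(k)=0$ for every integer $k$ in $I_p$, all the $\delta$-jumps disappear, so each $u_a$ is a $C^1$ (hence analytic) solution of $-u''=\lambda_*u$ on \emph{all} of $I_p$, and is therefore determined by its Cauchy data at $0^+$. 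Since in addition $u_a(0)=0$, the $u_a$ all lie on a single one-dimensional line (for $p\ge2$ this reads $\lambda_*=n^2\pi^2$ and $u_a\propto\sin(n\pi x)$; for $p=1$ the same one-dimensional bound follows directly from the Cauchy-data count). The Floquet conditions at $\pm1/2$ can only shrink this further. That already contradicts $m\ge2$; the parities of $n$, $p$, $\vartheta_0$ never enter.

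Both proofs rest on the same ODE-uniqueness kernel. The paper applies it at one vertex, reached by the constant-eigenvalue-curve shortcut; you apply it at all vertices after a rank-one reduction. Your version is a bit longer for this specific model but is more mechanical, does not require the flat-curve trick, and may transfer more readily to situations where no such shortcut is available (e.g., coupling constants appearing in more than one vertex matrix). One caveat on the final assembly of $\cS$: as with the paper, the genericity statement is really a Baire statement in each fixed $\bbR^p$ of $p$-periodic coupling sequences, since the union of periodic sequences is an $F_\sigma$ meager subset of $\ell^\infty(\bbZ;\bbR)$ and cannot itself contain a dense $G_\delta$; your ``routine'' assembly should be phrased in that space.
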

\begin{proof} We let
	\[\cS_n:=\{\alpha\in \ell^{\infty}(\bbZ;\bbR): \alpha \text{\ is $p-$periodic and the $n$-th gap of $H_{\alpha}$ is open} \}.\]  It is enough to prove that each $\cS_n$ is open and dense (then $\bigcap_{n\in\bbN}\cS_n$ gives the required dense $G_{\delta}$-set of potentials). To begin,
	let us rewrite $\dom(H_{\alpha^{(p)}, \vartheta})$ in terms of Lagrangian planes in $\Lambda(\bbC^{4(p+1)})$. For $u\in \hatt H^2(I_p\setminus\bbZ)$ we introduce the traces $\Gamma_0 u, \Gamma_1 u\in\bbC^{2(p+1)}$ by
	\begin{align}
	&\Gamma_0 u:=\{u(-1/2^+),u((p-1/2)^-),u(0^-),u(0^+),\dots, u(k^-),u(k^+),\dots,\\
	&\hspace{5cm}u((p-1)^-),u((p-1)^+) \}\in \bbC^{2(p+1)},\\
	&\Gamma_1 u:=\{u'(-1/2^+),-u'((p-1/2)^-),-u'(0^-),u'(0^+),\dots, -u'(k^-),u'(k^+),\dots,\\
	&\hspace{5cm}-u'((p-1)^-),u'((p-1)^+) \}\in \bbC^{2(p+1)}.
	\end{align}
	Also, let us introduce $2(p+1)\times 2(p+1)$ matrices 
	\begin{align}
	&X_{\alpha^{(p)}, \vartheta}:= 
	\begin{bmatrix}
	-e^{\bfi \vartheta}&1\\
	0&0
	\end{bmatrix}
	\oplus
	\begin{bmatrix}
	1&-1\\
	-\alpha_0&0
	\end{bmatrix}
	\oplus...\oplus
	\begin{bmatrix}
	1&-1\\
	-\alpha_{p-1}&0
	\end{bmatrix},\\
	&Y_{\alpha^{(p)}, \vartheta}:= 
	\begin{bmatrix}
	0&0\\
	e^{\bfi \vartheta}&1
	\end{bmatrix}
	\oplus
	\begin{bmatrix}
	0&0\\
	1&1
	\end{bmatrix}
	\oplus...\oplus
	\begin{bmatrix}
	0&0\\
	1&1
	\end{bmatrix}. 
	\end{align}
	Then one has
	\begin{equation}
	\dom(H_{\alpha^{(p)}, \vartheta})=\{u\in \hatt H^2(I_p\setminus\bbZ): X_{\alpha^{(p)}, \vartheta}\Gamma_0 u+Y_{\alpha^{(p)}, \vartheta}\Gamma_1 u=0\}. 
	\end{equation}
	That is, the Lagrangian plane corresponding to $H_{\alpha^{(p)}, \vartheta}$ is given by \[\ker[X_{\alpha^{(p)}, \vartheta}, Y_{\alpha^{(p)}, \vartheta}]. \] 
	
	In order to prove that $\cS_n$ is open, let us recall that the edges of the spectral gaps are given by consecutive eigenvalues of the periodic,  $H_{\alpha^{(p)}, 0}$, or anti-periodic, $H_{\alpha^{(p)}, \pi}$, operators. Suppose that $\alpha\in\cS_n$ and that the edges of the $n-$th  gap satisfy $\lambda_n(\alpha^{(p)}, \vartheta)<\lambda_{n+1}(\alpha^{(p)}, \vartheta)$ with either $\vartheta=0$ or $\vartheta=\pi$. We claim that this strict inequality holds for all $\wti{\alpha}^{(p)}\in \bbR^p$ near $\alpha^{(p)}$, i.e. that the gap is open under small perturbations of $\alpha^{(p)}$.    Indeed, since the mapping
	\begin{equation}
	\bbR^p\ni\alpha^{(p)}\mapsto  [X_{\alpha^{(p)}, \vartheta}, Y_{\alpha^{(p)}, \vartheta}] \text{ for $\vartheta=0$ or $\vartheta= \pi$}
	\end{equation}
	is continuous, Theorem \ref{thm:sec4.1} yields continuity of the mapping 
	\begin{equation}
	\bbR^p\ni\alpha^{(p)}\mapsto  (H_{\alpha^{(p)}, \vartheta}-\bfi)^{-1}\in \cB(L^2(I_p)) \text{ for $\vartheta=0$ or $\vartheta= \pi$};
	\end{equation}
	hence, the mappings
	\begin{equation}
	\alpha^{(p)}\mapsto \lambda_j(\alpha^{(p)}, \vartheta), \alpha^{(p)}\mapsto \lambda_{j+1}(\alpha^{(p)}, \vartheta), \text{ for $\vartheta=0$ or $\vartheta= \pi$}
	\end{equation}
	are also continuous, which implies the asserted strict  inequality
	\[\lambda_n(\wti{\alpha}^{(p)}, \vartheta)<\lambda_{n+1}(\wti{\alpha}^{(p)}, \vartheta)\] for all $\wti{\alpha}^{(p)}$ near ${\alpha}^{(p)}$.
	
	In order to prove that $\cS_n$ is dense we need to show that for both cases $\vartheta=0$ and $\vartheta= \pi$ the equality $\lambda_n(\alpha^{(p)}, \vartheta)=\lambda_{n+1}(\alpha^{(p)}, \vartheta)$  will not hold if $\alpha^{(p)}$ is replaced by its small perturbation. We will consider the case $\vartheta=0$, that is, we will assume that $\lambda_n(\alpha^{(p)}, 0)=\lambda_{n+1}(\alpha^{(p)}, 0)$; the case $\vartheta=\pi$ is treated analogously. For $t\in\bbR$, let us introduce the perturbation $\alpha^{(p)}(t):= \{t+\alpha_0,\alpha_1,\dots, \alpha_{p-1}\}$. We claim that for every $\varepsilon>0$ there is a $t_0\in(0, \varepsilon)$ with
	\begin{equation}\lb{eigcur}
	\lambda_n(\alpha^{(p)}(t_0), 0)<\lambda_{n+1}(\alpha^{(p)}(t_0), 0).
	\end{equation}
	When proven, this inequality shows that there exist arbitrarily close to $\alpha^{(p)}$  perturbations which open the closed gap. To prove the claim we utilize the Hadamard-type formula \eqref{5.10newnew24} for the boundary matrices $X_{\alpha^{(p)}(t), 0}$, $Y_{\alpha^{(p)}(t), 0}$.  We recall that $\lambda:=\lambda_n(\alpha^{(p)}, 0)=\lambda_{n+1}(\alpha^{(p)}, 0)$ is an eigenvalue of $H_{\alpha^{(p)}, 0}$ of multiplicity two. By Theorem \ref{theorem4.5}, there is a basis $\{u_1, u_2\}$ in $\ker(H_{\alpha^{(p)}, 0}-\lambda)$ such that 
	\begin{align}
	&\frac{\rm d}{{\rm d}t}\Big|_{t=0}\lambda_n(\alpha^{(p)}(t), 0)=|u_1(0)|^2,\lb{derone}\\
	&\frac{\rm d}{{\rm d}t}\Big|_{t=0}\lambda_{n+1}(\alpha^{(p)}(t), 0)=|u_2(0)|^2.\lb{dertwo}
	\end{align}
	Next, we will prove that the values of the derivatives in \eqref{derone}, \eqref{dertwo} are not equal to each other. This fact  implies that the eigenvalue curves $t\mapsto\lambda_n(\alpha^{(p)}(t), 0)$ and   $t\mapsto\lambda_{n+1}(\alpha^{(p)}(t), 0)$ do not coincide for $t$ near $t=0$, which, in turn, yields \eqref{eigcur} as needed. Starting the proof of the fact, we first remark that the eigenfunctions $u_1$ and $u_2$ are real valued because the boundary conditions for $\vartheta=0$ are real. Upon multiplying the eigenfunctions by appropriate constants 
	we may and will assume that $u_1(0)$ and $u_2(0)$ are non-negative. If  $u_1(0)\not=u_2(0)$ then the left-hand sides of \eqref{derone}, \eqref{dertwo} are not equal as required. If $u_1(0)=u_2(0)$ then for any $t\in\bbR$ the function $u_1-u_2$ satisfies the boundary condition at $x=0$ with $\alpha_0$ replaced by $t+\alpha_0$. Therefore,
	$
	u_1-u_2\in \ker (H_{\alpha^{(p)}(t), 0}-\lambda)\setminus\{0\}
	$
	and thus $\lambda$ is an eigenvalue of $H_{\alpha^{(p)}(t), 0}$ for all $t\in\bbR$. That is, either $\lambda_n(\alpha^{(p)}(t), 0)$ or $\lambda_{n+1}(\alpha^{(p)}(t), 0)$ should be identically equal to $\lambda$ for all $t$ near $0$. Hence, one of the derivatives in \eqref{derone}, \eqref{dertwo} vanishes, say, the first one. Then $u_1(0)=0$. But in this case $u_2(0)\not=0$ for otherwise $u_1$ and $u_2$ would be linearly dependent. Thus, the value of the derivative in \eqref{derone} is equal to zero while the value of the derivative in \eqref{dertwo} is not, as required. 
\end{proof}

\subsection{Maslov crossing form for abstract  boundary triplets}\lb{mastriples} 
In this section, we discuss an infinitesimal version of the formula equating the Maslov index and the spectral flow for the family of operators $H_t=\cA_t+V_t$ satisfying Hypothesis \ref{hyp1.3bis}, which is assumed throughout this section. Formulas relating these two quantities are quite classical, and we refer the reader to  the papers  \cite{BbF95, BZ3, BZ2, BZ1, CLM, CJLS, CJM1, F04, LS1, LSS, LS17, RoSa95} and the literature therein. Employing the abstract Hadamard-type formula obtained in Theorem \ref{thm2.2}, we prove in Theorem \ref{HadmardSimple}  that the signature of the Maslov crossing form defined in \eqref{dfnMcf} at an eigenvalue $\lambda$ of the operator $H_{t_0}$ is equal to the difference between the number of  monotonically decreasing and the number of monotonically increasing eigenvalue curves for $H_t$ bifurcating from $\lambda$. 

For $\lambda\in\bbR$ and $t\in[0,1]$ we introduce the following subspaces,
\begin{align}
&\bbK_{\lambda,t}:=\tr_{t}\big(\ker(A^*+
V_t-\lambda)\big)\subset\bndra,\\
&\cG_t:=\ran (Q_t)\subset\bndra, \\
&\Upsilon_{\lambda,t}:=\bbK_{\lambda,t}\oplus\cG_t\subset\big((\mathfrak{H}\times\mathfrak{H})\oplus(\mathfrak{H}\times\mathfrak{H})\big), \label{dfnUPS}\\
&\mathfrak{D}:=\{\bp=(p,p)^\top: p\in\mathfrak{H}\times\mathfrak{H}\}
\subset\big((\mathfrak{H}\times\mathfrak{H})\oplus(\mathfrak{H}\times\mathfrak{H})\big).
\end{align} 
Since $\tr_t(\dom(\cA_t))=\ran(Q_t)$ by Hypothesis \ref{hyp1.3bis},  the following assertions are equivalent
\begin{equation}\label{equivEV}
(i)\,\, \ker(H_t-\lambda)\not=\{0\}, \quad (ii)\,\, \bbK_{\lambda,t}\cap\cG_t\neq\{0\}, \quad (iii)\,\, \Upsilon_{\lambda,t}\cap\mathfrak{D}\neq\{0\}
\end{equation}
since $\mathfrak{D}$  is the diagonal subspace in $(\mathfrak{H}\times\mathfrak{H})\oplus(\mathfrak{H}\times\mathfrak{H})$. In fact, using a fundamental Proposition 3.5 in \cite{BbF95}, one can deduce deeper connections between the spectral information for $H_t$ and the behavior of Lagrangian planes under the following hypotheses. 
\begin{hypothesis}\lb{i17} Given $\lambda\in \bbR$ and $t_0\in[0,1]$,  we assume that
\begin{itemize}
\item[(i)] $\lambda\not\in\spec_{\rm ess}(H_{t_0})$.
\end{itemize}
Moreover, there exists an interval $\cJ\subset[0,1]$ centered at $t_0$ such that
\begin{itemize}
\item[(ii)] the mappings $t\mapsto \tr_t$, $t\mapsto V_t$, $t\mapsto Q_t$ are $C^1$ on $\cJ$,
	\item[(iii)] $\ker(A^*+V_t-\lambda)\cap\dom(A)=\{0\}$ for all $t\in\cJ$. 
	\end{itemize}
\end{hypothesis}

Hypothesis \ref{i17}  will be assumed through this section. 
Part (iii) of this hypothesis is an abstract version of the unique continuation principle for PDEs, and we refer to Theorems 3.2 and Hypothesis 5.9 in \cite{LS1} for a discussion of this connection.
Part (i) implies that the operator $H_{t_0}-\lambda$ is Fredholm.
Since $\ker(\tr)=\dom(A)$ by Proposition \ref{remark2.2}(1), parts (i) and (iii) of Hypothesis \ref{i17} imply that $\tr\big|_{\ker(H_{t_0}-\lambda)}$ is an isomorphism between $\ker(H_{t_0}-\lambda)$ and
$ \bbK_{\lambda,t}\cap\cG_t$, cf.\ \eqref{equivEV}. Moreover, 
the subspaces  $\bbK_{\lambda,t_0}$ and  $\cG_{t_0}$ form a Fredholm pair (i.e. their intersection is finite dimensional and their sum is closed and has finite co-dimension). The latter fact has been established in \cite[Proposition 3.5]{BbF95} in the setting of Lagrangian planes in $\dom(A^*)/\dom(A)$; using this one can readily deduce the Fredholm property of the pair in the present setting via the symplectomorphism introduced in \cite[Proposition 5.3]{LS1}. The subspace   $\cG_t$ is Lagrangian by Hypothesis \ref{hyp1.3bis}. The subspace $ \bbK_{\lambda,t}$ is also Lagrangian again by \cite[Proposition 3.5]{BbF95}.
Furthermore,  part (ii) of Hypothesis \ref{i17} yields continuity in $t$ of the resolvent operators for $H_t$ by Theorem \ref{prop1.8new}. This, together with part (i), shows that $\lambda\not\in\spec_{\rm ess}(H_{t})$ for $t$ near $t_0$, hence,  the subspaces $\bbK_{\lambda,t}, \cG_{t}$ form a Fredholm  pair of Lagrangian subspaces for each $t$ near $t_0$. Hence, $\big(\Upsilon_{\lambda,t}, \mathfrak{D}\big)$ is a Fredholm  pair of Lagrangian subspaces for each $t$ near $t_0$.
 
 Let $\Pi_{\lambda,t}$ be the orthogonal projection onto $\Upsilon_{\lambda,t}$ from \eqref{dfnUPS} so that the mapping $t\mapsto \Pi_{\lambda,t}$ is continuously differentiable on $[0,1]$ for each $\lambda\in\bbR$, see \cite[pp.480--481]{LS1}. Furthermore, for $\lambda\in\bbR$ and $t_0\in[0,1]$ satisfying Hypothesis \ref{i17} there is an interval $\cI\subseteq\cJ\subset [0,1]$ centered at $t_0$ and a family of operators $t\mapsto \cM_{\lambda,t}$, $t\in\cI$, which is in $C^1\big(\cI,\cB(\Upsilon_{\lambda, t_0},(\Upsilon_{\lambda, t_0})^\bot)\big)$ with  $\cM_{\lambda,t_0}=0$ such that
\begin{equation}\label{RT}
\Upsilon_{\lambda,t}=\big\{\bq+\cM_{\lambda,t}\bq\ \big|\, \bq\in\Upsilon_{\lambda, t_0}\big\}, t\in\cI,
\end{equation}
see,  e.g., \cite[Lemma 3.8]{CJLS}. We call $(\lambda,t_0)$  a conjugate point if $\ker(H_{t_0}-\lambda)\not=\{0\}$, or equivalently, if assertions (ii) and (iii) in \eqref{equivEV} hold for $t=t_0$.  The Maslov crossing form $\mathfrak{m}_{t_0}$ for $\Upsilon_{\lambda,t}$ relative to $\mathfrak{D}$ at the conjugate point ${(\lambda, t_0)}$ 
is defined on the finite dimensional  intersection $\Upsilon_{\lambda, t_0}\cap\mathfrak{D}$ of the Lagrangian subspaces by the formula
\begin{equation}\label{dfnMcf}
\mathfrak{m}_{t_0}(\bq,\bp):=\frac{\bd}{\bd t}\big|_{t=t_0}\widehat{\omega} (\bq,\cM_{\lambda, t}\bp)=\widehat{\omega} (\bq,\dot{\cM}_{\lambda, t_0}\bp),\ \bp,\bq\in \Upsilon_{\lambda, t_0}\cap\mathfrak{D},
\end{equation}
where $\widehat{\omega} ={\omega} \oplus(-{\omega} )$ is a symplectic form on $(\mathfrak{H}\times\mathfrak{H})\oplus(\mathfrak{H}\times\mathfrak{H})$ and, as usual, we abbreviate  $\displaystyle{\dot{\cM}_{\lambda, t_0}=\frac{\bd}{\bd t}\cM_{\lambda, t}\big|_{t=t_0}}$.

%%%%%%%%%%%%%%%%%%%%%%%%%%%%%%%%%%%%%%%%%%

\begin{lemma}\label{wSelec} Let $(\lambda, t_0)$ be a conjugate point satisfying Hypothesis \ref{i17} and let $u\in\ker(H_{t_0}-\lambda)$. Then there exist an open interval $\cI\subseteq\cJ$ centered at $t_0$, a  family $t\mapsto w_t$  in $C^1(\cI, \cH_+)$, and 
	a family $t\mapsto g_t\in\ran(Q_t)$ in $C^1(\cI, \mathfrak{H}\times\mathfrak{H})$ such that 
	\begin{align}
	w_{{t_0}}&=u,\quad g_{{t_0}}=\tr_{t_0} u,\lb{2.43}\\
	 w_t&\in\ker(A^*+V_t-\lambda),\label{propwt0}\\
	\big(
	\tr_{t}w_t, g_t
	\big)^\top
	&=
	\big(
	\tr_{t_0} u, \tr_{t_0} u
	\big)^\top
	+
	\cM_{\lambda, t} \big(
	\tr_{t_0} u, \tr_{t_0} u
	\big)^\top,\ t\in\cI,\label{propwt}
	\end{align}
	where $\cM_{\lambda, t}$ is as in \eqref{RT}.
\end{lemma}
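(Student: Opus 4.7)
The plan is to construct $(w_t, g_t)$ from the graph lift \eqref{RT} applied to $\bp := (\tr_{t_0} u, \tr_{t_0} u)^\top$. First I would verify that $\bp \in \Upsilon_{\lambda, t_0} \cap \mathfrak{D}$: the diagonal inclusion is immediate, and $u \in \ker(H_{t_0} - \lambda)$ yields simultaneously $\tr_{t_0} u \in \bbK_{\lambda, t_0}$ (since $\ker(H_{t_0} - \lambda) \subset \ker(A^* + V_{t_0} - \lambda)$) and $\tr_{t_0} u \in \cG_{t_0}$ (since $u \in \dom \cA_{t_0}$ places $\tr_{t_0} u \in \ran Q_{t_0} = \cG_{t_0}$ by \eqref{3272new-bis}). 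Setting $(k_t, g_t)^\top := \bp + \cM_{\lambda, t} \bp$, the canonical coordinate decomposition in $(\mathfrak{H} \times \mathfrak{H}) \oplus (\mathfrak{H} \times \mathfrak{H})$ combined with $\Upsilon_{\lambda, t} = \bbK_{\lambda, t} \oplus \cG_t$ and the $C^1$-regularity of $\cM_{\lambda, t}$ produce $C^1$ maps $t \mapsto k_t \in \bbK_{\lambda, t}$ and $t \mapsto g_t \in \cG_t$ with $k_{t_0} = g_{t_0} = \tr_{t_0} u$, since $\cM_{\lambda, t_0} = 0$. The family $g_t$ realizes the second line of \eqref{2.43} and the second coordinate of \eqref{propwt}.

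For the $w_t$-component, for each $t$ the membership $k_t \in \bbK_{\lambda, t}$ gives existence of $w_t \in \ker(A^* + V_t - \lambda)$ with $\tr_t w_t = k_t$; uniqueness is automatic because Hypothesis \ref{i17}(iii) and Proposition \ref{remark2.2}(1) together force $\ker(A^* + V_t - \lambda) \cap \ker \tr_t = \{0\}$. Uniqueness then yields $w_{t_0} = u$, while \eqref{propwt0} and the first coordinate of \eqref{propwt} follow by construction.

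The principal technical hurdle is to establish $t \mapsto w_t \in C^1(\cI, \cH_+)$, and my plan is an abstract Poisson-type construction. Choose a $C^1$ family of lifts $f_t \in \cH_+$ with $\tr_t f_t = k_t$---available because in the boundary-triplet setting of Hypothesis \ref{hyp1.3bis} the operators $\tr_t$ form a $C^1$ family of bounded surjections admitting bounded $C^1$ right inverses on a sufficiently small $\cI$. Set $h_t := (A^* + V_t - \lambda) f_t \in \cH$, which is $C^1$ in $t$. The Green identity \eqref{3.61new} combined with the Lagrangian property of $\bbK_{\lambda, t}$ gives $\langle h_t, \phi \rangle_\cH = \omega(k_t, \tr_t \phi) = 0$ for every $\phi \in \ker(A^* + V_t - \lambda)$; in particular $h_t \perp \ker(H_t - \lambda)$ and hence $h_t \in \ran(H_t - \lambda)$. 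The reduced resolvent $S_t$ of $H_t$ at $\lambda$, which is $C^1$ as a map into $\cB(\cH, \cH_+)$ by Theorem \ref{prop1.8new} (whose hypotheses are automatic in the boundary-triplet case by Proposition \ref{prop2.4h}) together with the contour-integral formula \eqref{3.1a}, then yields a $C^1$ family $v_t := S_t h_t \in \dom \cA_t$ solving $(H_t - \lambda) v_t = h_t$. The tentative candidate $\tilde w_t := f_t - v_t$ is $C^1$ in $\cH_+$ and lies in $\ker(A^* + V_t - \lambda)$, but $\tr_t \tilde w_t = k_t - \tr_t v_t$ need not equal $k_t$; since $\tr_t \tilde w_t \in \bbK_{\lambda, t}$ and $\tr_t v_t \in \cG_t$, one concludes $\tr_t v_t \in \bbK_{\lambda, t} \cap \cG_t = \tr_t(\ker(H_t - \lambda))$, so there is a unique correction $c_t \in \ker(H_t - \lambda)$ with $\tr_t c_t = \tr_t v_t$ and $w_t = \tilde w_t + c_t$.

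The remaining and central subtlety is the $C^1$ dependence of $c_t$ across points where $\dim \ker(H_t - \lambda)$ can jump. I would handle this via the $C^1$ Riesz projection $P(t)$ onto the $\lambda$-group of $H_t$ together with the unitary intertwiner $U(t)$ of \eqref{int27}: the $C^1$ curve $\tr_t v_t$ takes values in $\tr_t(\ran P(t))$ for $\cI$ sufficiently small, admits a $C^1$ lift back to $\ran P(t)$ via a right inverse of $\tr_t|_{\ran P(t)}$ constructed from $U(t)$, and this lift is then projected onto the true eigenspace $\ker(H_t - \lambda) \subseteq \ran P(t)$ through a finite-dimensional argument whose $C^1$ regularity follows again from $C^1$ dependence of $P(t)$.
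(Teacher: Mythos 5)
Your set-up is correct and matches the paper's in its first half: $\bp=(\tr_{t_0}u,\tr_{t_0}u)^\top$ lies in $\Upsilon_{\lambda,t_0}\cap\mathfrak{D}$, the graph lift $\bp+\cM_{\lambda,t}\bp\in\Upsilon_{\lambda,t}=\bbK_{\lambda,t}\oplus\cG_t$ decomposes coordinate-wise into $C^1$ families $k_t\in\bbK_{\lambda,t}$ and $g_t\in\cG_t$, and Hypothesis \ref{i17}(iii) together with Proposition \ref{remark2.2}(1) guarantees that for each $t$ there is a \emph{unique} $w_t\in\ker(A^*+V_t-\lambda)$ with $\tr_t w_t=k_t$. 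Up to notation (the paper writes $h_t$ for your $k_t$) this is exactly what the paper does.

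The gap is in your $C^1$-regularity argument for $t\mapsto w_t$, at the step where you claim $v_t:=S_th_t$ solves $(H_t-\lambda)v_t=h_t$. The reduced resolvent defined by the contour integral satisfies $(H_t-\lambda)S_t=I-P(t)$, where $P(t)$ is the Riesz projection onto the full $\lambda$-group of $H_t$, whose range is $\bigoplus_j\ker(H_t-\lambda_j(t))$. You have only established $h_t\perp\ker(H_t-\lambda)$, which at $t\neq t_0$ is in general a \emph{proper} subspace of $\ran(P(t))$, and indeed typically $\{0\}$ since $\lambda$ need not remain an eigenvalue of $H_t$. Hence $P(t)h_t\neq 0$ in general, $(H_t-\lambda)v_t=(I-P(t))h_t\neq h_t$, and your tentative candidate $\tilde w_t=f_t-v_t$ does \emph{not} lie in $\ker(A^*+V_t-\lambda)$: one computes $(A^*+V_t-\lambda)\tilde w_t=P(t)h_t$. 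Consequently $\tr_t\tilde w_t\notin\bbK_{\lambda,t}$ and the subsequent identification $\tr_t v_t\in\bbK_{\lambda,t}\cap\cG_t$ and the correction $c_t$ have no basis. The issue cannot be repaired by improving the choice of $f_t$: a Green-identity computation against $\phi\in\ker(H_t-\lambda_j(t))$ gives $\langle h_t,\phi\rangle=(\lambda_j(t)-\lambda)\langle f_t,\phi\rangle+\omega(k_t,\tr_t\phi)$, and since $\tr_t\phi\in\cG_t\cap\bbK_{\lambda_j(t),t}$ rather than $\bbK_{\lambda,t}$, the symplectic term need not vanish.

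The paper sidesteps this entirely. It never brings in $H_t$, $P(t)$, or the reduced resolvent at this stage; instead it observes that $\tr_t\big|_{N_t}:N_t\to\bbK_{\lambda,t}$ with $N_t:=\ker(A^*+V_t-\lambda)$ is a bijection and inverts it directly, establishing $C^1$-dependence by conjugating with a $C^1$ family of transformation operators $U_t$ in $\cH_+$ that intertwine the orthogonal projections onto $N_t$ (so $w_t=U_t\big(\tr_t U_t\big|_{N_{t_0}}\big)^{-1}h_t$ is a composition of $C^1$ maps). This avoids any appeal to orthogonality of $h_t$ against spectral subspaces of $H_t$; it works purely with the kernel spaces $N_t$ and the boundedly invertible restriction of $\tr_t$. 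If you wish to keep a Poisson-type construction, you would need a transformation function adapted to $N_t$ (not to the Riesz projections $P(t)$) to remove the $\ran(P(t))$ obstruction, at which point you are essentially re-deriving the paper's argument.
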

\begin{proof}
	The proof is similar to that of Lemma 2.6 in \cite[p.355]{LSS}.  For brevity, we denote $N_t:=\ker(A^*+V_t-\lambda)$, $q:= \tr_{t_0}u$, $\bq:=(q,q)$ and let $P_t$ be the orthogonal projections onto $\bbK_{\lambda, t}$. Then $P_t\in C^1\big(\cI,  \cB(\mathfrak{H}\times\mathfrak{H})\big)$ for some open interval $\cI\subseteq\cJ$ centered at $t_0$ (see, e.g., \cite[Theorem 3.9]{BbF95}, \cite[Theorem 5.10]{LS1}). We now consider the projections in $(\mathfrak{H}\times\mathfrak{H})\times(\mathfrak{H}\times\mathfrak{H})$ given by
	\begin{equation*}
	\widehat{P}_t:=\begin{bmatrix}P_t&0\\0&0\end{bmatrix}, \ 
	\widehat{Q}_t:=\begin{bmatrix}0&0\\0&Q_t\end{bmatrix},
	\end{equation*}
	so that $\widehat{P}_t+\widehat{Q}_t=\Pi_{\lambda,t}$, $\ran (\Pi_{\lambda, t})=\Upsilon_{\lambda, t}=\bbK_{\lambda,t}\oplus\cG_t$.
	Using the definition of $\Upsilon_{\lambda, t}$ and $\cM_{\lambda,t}$, see \eqref{dfnUPS} and \eqref{RT}, we define 
	\begin{equation}
	h_t\in\ran (P_t)\subset\mathfrak{H}\times\mathfrak{H},\ g_t\in\ran (Q_t)\subset\mathfrak{H}\times\mathfrak{H},
	\end{equation}
	such that  \begin{equation}\label{Dfngt}
	(h_t,0)^\top=\widehat{P}_t(\bq+\cM_{\lambda,t}\bq)\text{ and $(0,g_t)^\top=\widehat{Q}_t(\bq+\cM_{\lambda,t}\bq)$},\end{equation}
	and so $h_{{t_0}}=g_{{t_0}}=q$. Since $t\mapsto \cM_{\lambda,t}$, $t\mapsto P_t$ and $t\mapsto Q_t$ are $C^1$, we know that the maps $t\mapsto h_t$ and $t\mapsto g_t$ are $C^1$.
	As above, employing Hypothesis \ref{i17} and $\ker \tr_t= \dom(A)$, see  Proposition \ref{remark2.2} (1),   we conclude that the restriction 
	\begin{equation}
	\tr_{t}\upharpoonright_{N_t}: N_t\to\ran(P_t)\subset\mathfrak{H}\times\mathfrak{H}, 
	\end{equation}
	of $\tr_{t}$ to $N_t$ is a bijection. Therefore, there is a unique vector $w_t\in N_t$ satisfying $\tr_{t}w_t=h_t$. Assertions \eqref{propwt0}, \eqref{propwt} hold with this choice of $w_t$ and $g_t$ .

	It remains to show that the function $t\mapsto w_t$ is in $C^1(\cI, \cH_+)$. 
	Let $U_t$ denote the $C^1$ family of boundedly invertible transformation operators in $\cH_+$ that split the projections $\cP_{N_t}$ onto $N_t$ and $\cP_{N_{{t_0}}}$ onto $N_{{t_0}}$ so that the identity 
	$U_t\cP_{N_{{t_0}}}=\cP_{N_t}U_t$ holds, and $U_t: N_{{t_0}}\mapsto N_t$ are bijections for $t$ near $t_0$, cf.\ \cite[Remark 2.4]{LSS},
	\cite[Remark 3.5]{CJLS}, \cite[Section IV.1]{DK74}, \cite[Remark 6.11]{F04}. We temporarily  introduce  $v_t\in N_{{t_0}}$ by $v_t=U_t^{-1}w_t$ so that $\tr_{t}w_t=h_t$ yields $(\tr_{t}\circ\, U_t)v_t=h_t$.
	The map $\tr_{t}\circ \,U_t\big|_{N_{{t_0}}}:N_{{t_0}}\to\ran(P_t)$ is a bijection and 
	$t\mapsto\tr_{t}\circ\, U_t\big|_{N_{{t_0}}}$ is in $C^1\big(\cI, \cB(N_{{t_0}},\mathfrak{H}\times\mathfrak{H})\big)$  by the assumptions in the lemma. Since 
	$w_t=U_t\circ\big(\tr_{t}\circ\, U_t\big)^{-1}h_t$, the function $t\mapsto w_t$ is $C^1$  because each of the three terms in the composition is $C^1$.
\end{proof}

\begin{theorem}\label{HadmardSimple}   Under Hypothesis \ref{hyp1.3bis}, let $(\lambda, t_0)$ be a conjugate point satisfying Hypothesis \ref{i17}. Let $\{\lambda_j(t)\}_{j=1}^m$, with $\lambda=\lambda(t_0)$, $\{u_j\}_{j=1}^m$ be as in Theorem \ref{thm:sec4.1}, and let $\bq_j:=(\tr_{t_0}u_j, \tr_{t_0}u_j)^\top\in\Upsilon_{\lambda, t_0}\cap\mathfrak{D}$. Then the slope of the eigenvalue curves satisfies
	\begin{equation}\label{formdr}
	\dot\lambda_j(t_0)=\mathfrak{m}_{t_0}(\bq_j,\bq_j),\ 1\leq j\leq m,
	\end{equation}
	where $\mathfrak{m}_{t_0}$ is the Maslov form  introduced in \eqref{dfnMcf}.
\end{theorem}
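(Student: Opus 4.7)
The plan is to match both sides of \eqref{formdr} term-by-term, drawing $\dot\lambda_j(t_0)$ from Theorem \ref{thm2.2} and computing the crossing form $\widehat{\omega}(\bq_j,\dot\cM_{\lambda,t_0}\bq_j)$ via the selection furnished by Lemma \ref{wSelec}. Under Hypothesis \ref{hyp1.3bis}, Proposition \ref{prop2.4h} supplies Hypothesis \ref{hyp1.3i3}, so formula \eqref{2.36} applies and gives
\[\dot\lambda_j(t_0)=\langle \dot V_{t_0}u_j, u_j\rangle_{\cH}+\omega(\dot Q_{t_0}\tr_{t_0} u_j, \tr_{t_0} u_j)+\omega(\tr_{t_0} u_j,\dot\tr_{t_0} u_j).\]
Thus it suffices to show that $\mathfrak m_{t_0}(\bq_j,\bq_j)$ splits into exactly these three summands.

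First I would apply Lemma \ref{wSelec} with $u=u_j$ to obtain $C^1$-curves $t\mapsto w_t\in\ker(A^*+V_t-\lambda)$ and $t\mapsto g_t\in\ran(Q_t)$ satisfying $w_{t_0}=u_j$, $g_{t_0}=\tr_{t_0}u_j$, and the graph relation \eqref{propwt}. Since $\cM_{\lambda,t_0}\bq_j=0$, differentiating \eqref{propwt} at $t_0$ yields $\dot\cM_{\lambda,t_0}\bq_j=\big(\dot\tr_{t_0}u_j+\tr_{t_0}\dot w_{t_0},\,\dot g_{t_0}\big)^\top$. Using $\widehat\omega=\omega\oplus(-\omega)$ and $\bq_j=(\tr_{t_0}u_j,\tr_{t_0}u_j)^\top$,
\[\mathfrak m_{t_0}(\bq_j,\bq_j)=\omega(\tr_{t_0}u_j,\dot\tr_{t_0}u_j)+\omega(\tr_{t_0}u_j,\tr_{t_0}\dot w_{t_0})-\omega(\tr_{t_0}u_j,\dot g_{t_0}).\]
The first summand already matches \eqref{2.36}, leaving the two identities $\omega(\tr_{t_0}u_j,\tr_{t_0}\dot w_{t_0})=\langle \dot V_{t_0}u_j,u_j\rangle_\cH$ and $-\omega(\tr_{t_0}u_j,\dot g_{t_0})=\omega(\dot Q_{t_0}\tr_{t_0}u_j,\tr_{t_0}u_j)$ to be established.

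For the middle summand I would differentiate $(A^*+V_t-\lambda)w_t=0$ at $t_0$ to get $A^*\dot w_{t_0}=(\lambda-V_{t_0})\dot w_{t_0}-\dot V_{t_0}u_j$, and combine with $A^*u_j=(\lambda-V_{t_0})u_j$ inside the Green identity \eqref{3.61new}:
\[\omega(\tr_{t_0}u_j,\tr_{t_0}\dot w_{t_0})=\langle A^*u_j,\dot w_{t_0}\rangle_\cH-\langle u_j, A^*\dot w_{t_0}\rangle_\cH.\]
Self-adjointness of $V_{t_0}$ and reality of $\lambda$ cancel the $(\lambda-V_{t_0})$-contributions, leaving $\langle u_j,\dot V_{t_0}u_j\rangle_\cH=\langle \dot V_{t_0}u_j,u_j\rangle_\cH$ (real since $\dot V_{t_0}$ is self-adjoint).

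For the final summand I would exploit $g_t=Q_tg_t$, which upon differentiation at $t_0$ and use of $Q_{t_0}\tr_{t_0}u_j=\tr_{t_0}u_j$ gives $\dot g_{t_0}=\dot Q_{t_0}\tr_{t_0}u_j+Q_{t_0}\dot g_{t_0}$. Then, using self-adjointness of $Q_{t_0}$,
\[-\omega(\tr_{t_0}u_j,\dot g_{t_0})=-\langle J\tr_{t_0}u_j,\dot Q_{t_0}\tr_{t_0}u_j\rangle-\langle Q_{t_0}JQ_{t_0}\tr_{t_0}u_j,\dot g_{t_0}\rangle.\]
The Lagrangian identity $Q_{t_0}JQ_{t_0}=0$ kills the second term, while differentiating $JQ_t+Q_tJ=J$ as in Remark \ref{rem1.8} gives $\dot Q_{t_0}J=-J\dot Q_{t_0}$, which together with $\dot Q_{t_0}^*=\dot Q_{t_0}$ converts the surviving inner product into $\omega(\dot Q_{t_0}\tr_{t_0}u_j,\tr_{t_0}u_j)$. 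Assembling the three contributions yields \eqref{2.36} and establishes \eqref{formdr}. The main bookkeeping hazard is sign-tracking through the conjugate-linear scalar product and the $J$-identities, but these mirror the reality computation \eqref{aub33} already carried out in the proof of Theorem \ref{thm2.2}.
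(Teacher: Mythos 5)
Your proposal is correct and follows essentially the same route as the paper's own proof: both invoke Lemma \ref{wSelec} to produce the curves $w_t$ and $g_t$, differentiate the eigenvalue equation $(A^*+V_t-\lambda)w_t=0$ and feed it through the Green identity to extract $\langle\dot V_{t_0}u_j,u_j\rangle$, and use $g_t=Q_tg_t$ together with the Lagrangian relations $Q_{t_0}JQ_{t_0}=0$ and $\dot Q_{t_0}J=-J\dot Q_{t_0}$ to reduce the $\dot g_{t_0}$ term. The only cosmetic differences are that you compute $\omega(\tr_{t_0}u_j,\tr_{t_0}\dot w_{t_0})$ directly rather than first deriving the transposed identity \eqref{df5}, and that you spell out the $J$-inner-product bookkeeping that the paper compresses by citing \eqref{aub33}.
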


\begin{proof}
	For a fixed  $j$, let $(w_t, g_t)$ be as in Lemma \ref{wSelec} with $u:=u_j$. Differentiating 
	\begin{equation}\lb{2.49}
	A^*w_t+V_tw_t-\lambda w_t=0,
	\end{equation}
	at $t_0$ and multiplying the result by  $w_{{t_0}}=u_j$ we get
	\begin{equation}
	\langle(A^*+V_{{t_0}}-\lambda) \dot{w}_{{t_0}},\, w_{{t_0}}\rangle_\cH+	\langle\dot V_{{t_0}} {w}_{{t_0}},\, w_{{t_0}}\rangle_\cH=0.
	\end{equation}
	Using the Green identity \eqref{3.61biss} with $u=\dot{w}_{{t_0}}$ and $v=w_{{t_0}}$ we obtain
	\begin{align}\lb{2.50}
	\begin{split}
	&\langle(A^*+V_{{t_0}}-\lambda) \dot{w}_{{t_0}},\, w_{{t_0}}\rangle_\cH= \langle\dot{w}_{{t_0}},\, (A^*+V_{{t_0}}-\lambda)w_{{t_0}}\rangle_\cH\\
	&\quad+\langle\Gamma_{1{t_0}}\dot{w}_{{t_0}},\,\Gamma_{0{t_0}}w_{{t_0}}\rangle_{\mathfrak{H}}-\langle\Gamma_{0{t_0}}\dot{w}_{{t_0}},\,\Gamma_{1{t_0}}w_{{t_0}}\rangle_{\mathfrak{H}}
	\end{split}
	\end{align}
	Combining \eqref{2.49} and \eqref{2.50} yields 
	\begin{equation}\label{df5}
	\omega\big(\tr_{{t_0}}\dot{w}_{{t_0}},\, \tr_{t_0}u_j\big)+\langle\dot{V}_{{t_0}}u_{j},\, u_{j}\rangle_\cH=0.
	\end{equation}
	Next,  \eqref{dfnMcf}  and \eqref{propwt}  yield
	\begin{align}\lb{2.53}
	\mathfrak{m}_{{t_0}}(\bq_j,\bq_j)
	=\omega\big(\tr_{t_0}u_j,\frac{\bd}{\bd t}\big|_{t={t_0}}(\tr_{t}w_t)\big) 
	-\omega(\tr_{t_0}u_j,\,\dot{g}_{{t_0}}).
	\end{align}
	Since $g_t=Q_tg_t$ we have \[\dot{g}_{{t_0}}=\dot{Q}_{{t_0}}g_{t_0}+Q_{{t_0}}\dot{g}_{{t_0}}=\dot{Q}_{{t_0}}\tr_{t_0} u_j+Q_{{t_0}}\dot{g}_{{t_0}}.\] Utilizing this, the fact that $\ran(Q_{{t_0}})$ is Lagrangian and $\tr u_j\in\ran(Q_{t_0})$ we get
	\begin{equation}\lb{2.54}
	\omega(\tr_{t_0} u_j,\,\dot{g}_{{t_0}})=	\omega(\tr_{t_0} u_j,\,\dot{Q}_{{t_0}}\tr_{t_0} u_j+Q_{{t_0}}\dot{g}_{{t_0}})=\omega(\tr_{t_0} u_j,\dot{Q}_{{t_0}}\tr_{t_0}u_j).
	\end{equation}
	Then \eqref{df5}, \eqref{2.53}, and \eqref{2.54} yield
	\begin{align}
	\begin{split}\lb{aub34}
	\mathfrak{m}_{{t_0}}(\bq_j,\bq_j)
	&=\omega\big(\tr_{t_0}u_j,\dot\tr_{t_0}u_{j}\big)
	+\omega\big(\tr_{t_0}u_j,\,\tr_{{t_0}}\dot{w}_{{t_0}}\big)\\
	&\hskip4.7cm-\omega(\tr_{t_0}u_j,\dot{Q}_{{t_0}}\tr_{t_0}u_j)\\
	&\quad=\omega\big(\tr_{t_0}u_j,\dot\tr_{t_0}u_{j}\big)
	+\langle\dot{V}_{{t_0}}u_{j},\, u_{j}\rangle_\cH\\
	&\hskip4.7cm+\omega(\dot{Q}_{{t_0}}\tr_{t_0}u_j,\tr_{t_0}u_j),
	\end{split}
	\end{align}
	where we used $\omega(\dot{Q}_{{t_0}}\tr_{t_0}u_j,\tr_{t_0}u_j)\in\bbR$, see \eqref{aub33}. Comparing \eqref{aub34} and \eqref{2.36} one infers \eqref{formdr} as required. 
\end{proof}

\begin{remark}\label{masindexspflow}
Formula \eqref{formdr} in Theorem \ref{HadmardSimple} yields a fundamental relation between the Maslov index and the spectral flow of the family of operators $H_t=\cA_t+V_t$ satisfying the condition $\tr_t(\dom(H_t))=\cG_t$ for a given family of Lagrangian subspaces $\cG_t$, $t\in[0,1]$. This relation goes back to the celebrated Atiyah--Patodi--Singer Theorem and it has been a subject of intensive research ever since, see, e.g.,  \cite{BbF95, BZ3, BZ2, BZ1, CLM, CJLS, RoSa95} and many more references therein. We will briefly comment on the equality of the Maslov index and the spectral flow. First, we recall the definition of the Maslov index via crossing forms.  For a fixed $\lambda=\lambda_0$ from now on we assume that Hypothesis \ref{i17} is satisfied for all $t=t_0\in[0,1]$. Then, given the subspaces defined in \eqref{dfnUPS}, and assuming that all conjugate points $(\lambda, t_0)$ for $t_0\in[0,1]$ are non-degenerate (in the sense that the quadratic form $\mathfrak{m}_{t_0}$ from \eqref{dfnMcf} is non-degenerate), one defines the Maslov index by the formula
\begin{equation}\label{defMasInd}
{\rm Mas } \big(\Upsilon_{\lambda_0,t}: t\in[0,1]\big)=
-m_-(0)+\sum_{0<t_0<1}\big(m_+(t_0)-m_-(t_0)\big)+m_+(1),
\end{equation}
where the summation is taken over all $t_0$ such that $(\lambda, t_0)$ is a conjugate point and we denote by $m_+(t_0)$, respectively, $m_-(t_0)$ the number of positive, respectively, negative squares of the quadratic form $\mathfrak{m}_{t_0}$ at the conjugate point. Next, we recall the definition of the spectral flow: The spectral flow ${\rm SpF }_{\lambda_0}(H_t: t\in[0,1])$ for the family of operators $H_t$ is the net count of the eigenvalues of $H_t$ passing through $\lambda_0$ as $t$ changes from $t=0$ to $t=1$ and is defined as follows, cf., e.g., \cite[Appendix]{BZ2}. Take a partition $0=t_0<t_1<\dots<t_N=1$ and $N$ intervals $[a_\ell, b_\ell]$ such that $a_\ell<\lambda_0<b_\ell$ and
$a_\ell, b_\ell\notin\spec(H_t)$ for all $t\in[t_{\ell-1},t_\ell]$, $1\le\ell\le N$. Then the spectral flow is defined by
\begin{equation}\label{dfnspfl}
{\rm SpF }_{\lambda_0}(H_t: t\in[0,1])=\sum_{\ell=1}^N\sum_{a_\ell\le\lambda<\lambda_0}\big(\dim\ker(H_{t_{\ell-1}}-\lambda)-\dim\ker(H_{t_{\ell}}-\lambda)\big).
\end{equation}
By our assumptions, due to part (i) in Hypothesis \ref{i17}, $\lambda_0$ does not belong to the essential spectrum of the operator $H_t$ for all $t\in[0,1]$. Moreover, let us assume, in addition, that for each $t_0\in[0,1]$ such that $\lambda_0\in\spec_{\rm disc}(H_{t_0})$
the inequality $\dot{\lambda}_j(t_0)\neq0$ holds for all $j=1,\dots,m$.
 Here,  $m=m(t_0)$ is the multiplicity of the  isolated eigenvalue $\lambda_0$ of $H_{t_0}$, and
$\{\lambda_j(t)\}$ are the eigenvalues of $H_t$ as in Theorem \ref{prop5.1}(2) and Theorem \ref{thm:sec4.1}(2) for $t\in[t_0',t_0'']$ near $t_0$. With no loss of generality $t=t_0$ could be assumed to be the only point in $[t_0',t_0'']$ such that $\lambda_0\in\spec(H_{t})$. By our assumptions and formula \eqref{formdr} in Theorem \ref{HadmardSimple} the quadratic
form $\mathfrak{m}_{t_0}$ defined in \eqref{dfnMcf} is non-degenerate and $m_+(t_0)$, respectively, $m_-(t_0)$ is equal to the number of $j$'s such that the eigenvalue $\lambda_j(t)$ moves through $\lambda_0$ in the positive, respectively, negative direction as $t$ changes from $t_0'$ to $t_0''$. Formulas \eqref{defMasInd} and \eqref{dfnspfl} now show that
${\rm Mas } \big(\Upsilon_{\lambda_0,t}: t\in[t_0',t_0'']\big)={\rm SpF }_{\lambda_0}(H_t: t\in[t_0',t_0''])$.
Passing to a partition of $[0,1]$ then gives
\begin{equation}\label{MISF}
{\rm Mas } \big(\Upsilon_{\lambda_0,t}: t\in[0,1]\big)={\rm SpF }_{\lambda_0}(H_t: t\in[0,1]),\end{equation}
the desired equality of the Maslov index and the spectral flow.
\hfill$\Diamond$  \end{remark}

\section{Hadamard-type formula for elliptic operators via Dirichlet and Neumann traces}\lb{subsec1.1} 

In this section concerns self-adjoint realizations of second order elliptic operators on bounded domains. We begin by discussing a resolvent difference formula, see Proposition \ref{prop1.7}, an Hadamard-type formula, \eqref{cLthetder}, and asymptotic resolvent expansions, Theorem \ref{theorem6.2},  for the elliptic operators \eqref{1.15b} posted on bounded domains with smooth boundary. We deduce all these results from Theorem \ref{prop5.1} by appropriately choosing the trace maps. The main technical issue is to validate Hypotheses \ref{hyp1.3i2bis} and \ref{hyp1.3i2}, which is done in Proposition \ref{prop5.2}. Next, these results are utilized to give simple and unified proofs of Friedlander's Theorem \cite[Theorem 1.1]{Fri91}, see Example \ref{FREX}, and Rohleder's Theorem \cite[Theorem 3.2]{Rohl14}, see Example \ref{Rohleder}. Furthermore, in Section \ref{ssHeatEq} we consider the heat equation with space-dependent diffusion coefficient equipped with Robin boundary conditions so that both the equation and the boundary conditions contain a physically relevant parameter, the thermal conductivity. The results in this section provide, in particular, a new proof of the fact  that the temperature of a non-homogeneous material immersed into a surrounding medium of constant temperature depends continuously on the thermal conductivity of the material.

\subsection{Elliptic operators}\label{SS5.1} On a $C^\infty$-smooth bounded domain $\Omega$  we consider the following differential expression, 
\begin{align}
\begin{split}
\cL:&=-\sum_{j,k=1}^{n} \partial_j \mathtt{a}_{jk}\partial_k + \sum_{j=1}^{n}\mathtt{a}_j\partial_j -\partial_j {\mathtt{a}_j}+\mathtt{q},\lb{1.15b} \\
&=-\div(\mathtt{A}\nabla)+ \mathtt{a}\cdot \nabla -\nabla \cdot \mathtt{a} +\mathtt{q},
\end{split}
\end{align}
with coefficients $\mathtt{A}=\{\mathtt{a}_{ij}\}_{1\leq i, j\leq n}$, $\mathtt{a}:=\{\mathtt{a}_i\}_{1\leq i\leq n}$ satisfying, for some $c=c(\cL)>0$, 
\begin{align}
& \sum\limits_{j,k=1}^n\mathtt{a}_{jk}(x)\xi_k\overline{\xi_j} \geq c \sum\limits_{j=1}^n|\xi_j|^2, x\in\overline{\Omega}, \xi=\{\xi_j\}_{j=1}^n\in\C^n,\lb{unifel}\\
&\mathtt{a}_{jk}, \mathtt{a}_j\in C^{\infty}(\overline{\Omega};\bbR), \mathtt{q}\in L^{\infty}(\Omega; \bbR), \mathtt{a}_{jk}(x)={\mathtt{a}_{kj}(x)}, 1\leq j,k\leq n.
\end{align}
Associated with $\cL$ is the following space of distributions,
\begin{align}\lb{1.15c}
& \cD^{s}(\Omega):=\{u\in H^s(\Omega): \cL u\in L^2(\Omega)\},\ s\geq 0, 
\end{align} 
equipped with the norm
\begin{equation} \lb{1.15d}
\|u\|_{s}:=\left(\|u\|_{H^s({\Omega} )}^2+\|\cL u\|_{L^2({\Omega })}^2\right)^{1/2}, 
\end{equation}
where $\cL u$ should be understood in the sense of distributions. Let us introduce two operators acting in $L^2(\Omega)$,
\begin{align}
&\cL_{min}f:=\cL f,\ f\in\dom(\cL_{min}):=H^2_0(\Omega), \\
&\cL_{max}f:=\cL f,\ f\in\dom(\cL_{max}):=\cD^0(\Omega). 
\end{align}
The operator $\cL_{min}$ is closed, symmetric, and  $(\cL_{min})^*=\cL_{max}$. Associated with $\cL$  is a first order trace operator $\gamma_{{}_{N, \cL}} \in\cB(\cD^1(\Omega), H^{-1/2}(\partial\Omega))$ which is a unique extension of the co-normal derivative 
\begin{equation}
{\gamma_{{}_{N,\cL}}}u:=\sum_{j,k=1}^{n} \mathtt{a}_{jk}\nu_j\gaD(\partial_k u)+\sum_{j=1}^{n}{\mathtt{a}_j}\nu_j \gaD u, u\in H^2(\Omega)\label{dfncnd}
\end{equation}
to the space $\cD^{1}(\Omega)$ (here, $(\nu_1,\cdots,\, \nu_n)$ is the outward unit normal on $\partial\Omega$). Then the following Green identity holds,
\begin{align}
\langle\cL u, v\rangle_{L^2(\Omega )}- \langle u,\cL v\rangle_{L^2(\Omega )}={\langle{\gaD u, \gamma_{{}_{N, \cL}}} v  \rangle}_{{-1/2}}-\overline{\langle{\gaD v, \gamma_{{}_{N, \cL}}} u \rangle_{{-1/2}}},\lb{1.2}
 \end{align}
for all $u,v\in\cD^1(\Omega)$. In order to rewrite this identity in a form compatible with \eqref{3.61} let $\Phi$ denote the Riesz isomorphism $\Phi\in\cB (H^{-1/2}(\partial \Omega), H^{1/2}(\partial \Omega))$ as in \eqref{aub27}
and define 
\begin{equation}\lb{1.60}
\Gamma_0:=\gaD\in \cB (\cD^{1}( \Omega), H^{1/2}(\partial \Omega)),\ \Gamma_1:=-\Phi\gamma_{N,\cL} \in \cB (\cD^{1}( \Omega), H^{1/2}(\partial \Omega)).
\end{equation}
Then we have, for all $u,v\in\cD^1(\Omega)$,
\begin{align}
\begin{split}\lb{1.23o}
\langle\cL_{max} u, v\rangle_{L^2(\Omega )}&- \langle u,\cL_{max} v\rangle_{L^2(\Omega )}\\
&={\langle\Gamma_1u, \Gamma_0v \rangle_{H^{1/2}(\partial \Omega)}}-\langle\Gamma_0u, \Gamma_1v \rangle_{H^{1/2}(\partial \Omega)}.
\end{split}
\end{align}
We claim that Hypotheses \ref{hyp3.6} and  \ref{hyp2.2} are satisfied for 
\begin{equation}\lb{aub11}
A=\cL_{min}, \cH_+=\cD^0(\Omega), \cD=\cD^1(\Omega), \Gamma_0=\gaD, \Gamma_1=-\Phi\gamma_{N,\cL}.  
\end{equation}
Since we already checked the Green identity, \eqref{1.23o}, 
to justify the claim it remains to show that $\tr(\cD)$ is dense in $H^{1/2}(\partial\Omega)\times H^{1/2}(\partial\Omega)$ and that $\cD^1(\Omega)$ is dense in $\cD^0(\Omega)$. By \cite[Proposition 2.1]{Gr}, \cite[Section 4.3]{BM} one has
\begin{equation}\no
(\gaD, \gamma_{{}_{N,\cL}})(H^2(\Omega))=H^{3/2}(\partial\Omega)\times H^{1/2}(\partial\Omega),
\end{equation}
and the right-hand side is dense in $H^{1/2}(\partial\Omega)\times H^{1/2}(\partial\Omega)$. By \cite[Theorem 3.2]{Gr}, $H^2(\Omega)$ is dense in $\cD^s(\Omega), s<2$, hence $\cD^1(\Omega)$ is dense in $\cD^0(\Omega)$. 
\begin{proposition}\lb{prop1.7} Under the assumptions on $\cL$ imposed in this section, for any two self-adjoint extensions $\cL_1, \cL_2$ of $\cL_{min}$ with domains containing in $\cD^1(\Omega)$ and $\zeta\not\in (\spec(\cL_1)\cup\spec(\cL_2))$, the following resolvent difference formula holds,
	\begin{align}
	(\cL_2-\zeta)^{-1}-(\cL_1-\zeta)^{-1}&=(\tr(\cL_2-\overline{\zeta})^{-1})^*\, J\tr (\cL_1-\zeta)^{-1}, 
	\end{align}
	where $\tr=[\Gamma_0, \Gamma_1]^\top$ is defined in \eqref{1.60}, and \[(\tr(\cL_2-\overline{\zeta})^{-1})^*\in\cB( H^{1/2}(\partial\Omega)\times  H^{1/2}(\partial\Omega), L^2(\Omega)).\]
\end{proposition}
\begin{proof}
	The results follows directly from \eqref{5.14aJ}. 
\end{proof}
\subsection{Hadamard-type formulas for Robin elliptic operators, L. Friedlander's and J. Rohleder's inequalities }\label{ssKKFREO}
In this section we obtain an Hadamard-type formula for a one-parameter family of differential operators $\cL_tu=\cL u$ as in \eqref{1.15b} for which the dependence on the parameter $t$ enters through the Robin boundary condition $\gamma_{{}_{N,\cL}} u=\Theta_t \gaD u$, see Theorem \ref{theorem6.2}. We will utilize Theorem \ref{prop5.1} by choosing the symmetric operator $A$, the function spaces $\cH, \cH_+, \mathfrak H$, and the trace operator $\tr$ as indicated in \eqref{aub11}. The main challenge is to check Hypothesis \ref{hyp1.3i3} which in this setting reads as follows,
\begin{equation}
\big\|(\cL_t-\bfi)^{-1}-(\cL_{t_0}-\bfi)^{-1}\big\|_{\cB(L^2(\Omega), \cD^1(\Omega))}=o(1),\ t\rightarrow t_0,
\end{equation}
and can be reduced to showing that for some constant $c>0$ one has
the inequality
	\begin{align}
&\|\nabla u\|_{L^2(\Omega)}^2\leq c\big( \|  \cL u\|^2_{L^2(\Omega)}+  \|u\|_{L^2(\Omega)}^2\big), u\in\dom(\cL_t), 
\end{align}
for $t$ near $t_0$. We discuss the reduction and give the proof of this inequality in Proposition \ref{prop5.2}. Throughout this section we will make use of the continuous embedding $\iota: H^{1/2}(\partial\Omega)\hookrightarrow	L^2(\Omega)$ and its adjoint $\iota^*: L^2(\Omega)\hookrightarrow H^{-1/2}(\partial\Omega)$.  

\begin{theorem}\lb{theorem6.2} Suppose that, in addition to  the assumptions on $\cL$ listed in Subsection \ref{SS5.1}, we are given a mapping $t\mapsto\Theta_t$  belonging to $C^1 ([0,1], L^{\infty}( \partial\Omega, \bbR))$. Then for $t\in[0,1]$ the Robbin elliptic operator $\cL_t$ defined by
	\begin{align}
	\begin{split}\lb{elt}
	&\cL_t: \dom(\cL_t)\subset L^2(\Omega)\rightarrow L^2(\Omega),\quad \cL_t u=\cL u, \\
	&u\in\dom(\cL_t)=\{ u\in \cD^1(\Omega): \gamma_{{}_{N,\cL}} u=\iota ^*\Theta_t \iota\gaD u \},
	\end{split}
	\end{align}
	is self-adjoint, where $\iota$ denotes the embedding of $H^{1/2}(\partial\Omega)$ into $L^2(\Omega)$.  The following resolvent difference formula holds,
	\begin{align}\lb{cLKrein}
	(\cL_t-\zeta)^{-1}-(\cL_s-\zeta)^{-1}= \big(\gaD(\cL_t-\overline{\zeta})^{-1} \big)^* (\Theta_t-\Theta_s)\big(\gaD(\cL_s-\zeta)^{-1}\big), 
	\end{align}
	for $t,s\in[0,1]$, $\zeta\not\in(\spec(\cL_t)\cup\spec(\cL_s))$. Moreover, the mapping 
	\begin{equation}\label{cLcont}
	t\mapsto (\cL_t-\zeta)^{-1}\in \cB(L^2(\Omega))
	\end{equation}
is well defined for $t$ near $t_0$ as long as $\zeta\not\in\spec(\cL_{t_0})$. This mapping is differentiable at $t_0$ and satisfies the following Riccati equaiton,
	\begin{align}\lb{cLRic}
	\frac{\bd}{\bd t}|_{t=t_0}\big((\cL_t-\zeta)^{-1}\big)=\big(\gaD(\cL_{t_0}-\overline{\zeta})^{-1}\big)^* \big(\frac{\bd}{\bd t}|_{t=t_0}\Theta_t\big)\big(\gaD(\cL_{t_0}-\zeta)^{-1}\big).
	\end{align}
	Finally, if $\lambda(t_0)$ is an isolated eigenvalue of $\cL_{t_0}$ of multiplicity $m\geq 1$ then there exist  a choice of orthonormal eigenfunctions  $\{u_j\}_{j=1}^m\subset \ker(\cL_{t_0}-\lambda(t_0))$
	and a labeling of eigenvalues $\{\lambda_j(t)\}_{j=1}^m$  of $\cL_{t}$, for $t$ near $t_0$, such that  
	\begin{equation}\lb{cLthetder}
	\dot{\lambda}_j(t_0)=- \langle\dot \Theta_{t_0} \gaD u_j, \gaD u_j\rangle_{L^2(\partial\Omega)}, 1\leq j\leq m.
	\end{equation}
\end{theorem}
\begin{proof}
	We will employ Theorem \ref{prop5.1}. The proof consists of two steps. First, we derive \eqref{cLKrein} from \eqref{XYKrein}. We can use \eqref{XYKrein} because Hypothesis \ref{hyp1.3} is trivially satisfied. Second,  we derive 
	\eqref{cLRic} and \eqref{cLthetder} from \eqref{derRnew} and \eqref{5.10new}. To apply \eqref{derRnew} and \eqref{5.10new} we need to verify Hypotheses \ref{hyp1.3i2} and \ref{hyp1.3i3}. They are satisfied by Proposition \ref{prop5.2} given next; the proof of this proposition uses formula \eqref{cLKrein} proved in the first step.
	
To proceed, we choose $ \cH_+, \cD, A$ as in \eqref{aub11} and rewrite the Robin condition $\gamma_{{}_{N, {\cL}}}u=\iota ^*\Theta_t\iota\gaD u$ in the definition of $\cL_t$ as $\Phi\gamma_{{}_{N, {\cL}}}u=\Phi\iota^* \Theta_t \iota\gaD u$
	\begin{equation}
	X_t\Gamma_0u+Y_t\Gamma_1u=0, \text{ where we set }\, X_t:=\Phi \iota^*\Theta_t\iota, Y_t:=I. 
	\end{equation}
It is worth noting that $X_t$ just defined is self-adjoint in $H^{1/2}(\partial\Omega)$ since for $\phi,\psi\in H^{1/2}(\partial\Omega)$ one has
\begin{align}
\langle \Phi \iota^*\Theta_t\iota \phi,\psi\rangle_{1/2}&=
\overline{\langle \psi,\Phi \iota^*\Theta_t\iota \phi,\psi\rangle_{1/2}}=
\overline{\langle \psi,\iota^*\Theta_t\iota \phi,\psi\rangle_{-1/2}}
\\&=\overline{\langle \iota\psi, \Theta_t\iota \phi\rangle_{L^2(\partial\Omega)}}
=\langle  \iota \phi,\Theta_t\iota\psi\rangle_{L^2(\partial\Omega)}\\&=\langle   \phi,\iota^*\Theta_t\iota\psi\rangle_{-1/2}=\langle   \phi,\Phi\iota^*\Theta_t\iota\psi\rangle_{1/2}.
\end{align}
Continuity of $\Theta_t$ with respect to $t$ and Theorem \ref{prop1.8new} with $\cA_t:=\cL_t$, $V_t:=0$, $\tr_t:=[\gaD, -\Phi \gamma_{N,\cL}]^\top$ yield that the map $t\mapsto R_t(\zeta):=(\cL_t-\zeta)^{-1}$ is well defined for $t$ near $t_0$.
Next, with $W$ defined in \eqref{defw}, we observe that  $R_t(\zeta)u\in\dom(\cA_t)$ yields
	\begin{equation}
	\big(W(X_t, I)\big) \tr  R_t(\zeta)u = -\Gamma_0 R_t(\zeta)u=-\gaD R_t(\zeta)u \text{ for all $u\in L^2(\Omega)$}.
	\end{equation}
This can be checked directly or by noting that $\phi=\big(W(X_t, I)\big) \tr    R_t(\zeta)u$ is the unique vector satisfying the relations
$\Gamma_0R_t(\zeta)u=-\phi$, $\Gamma_1R_t(\zeta)u=X_t\phi$, cf.\ \eqref{phij}. 
	This observation together with \eqref{XYKrein} yield \eqref{cLKrein}. 
We can now involve Proposition \ref{prop5.2} given next and verify 
Hypotheses \ref{hyp1.3i2} and \ref{hyp1.3i3} in the present setting. 
Thus, Theorem \ref{prop5.1} applies and therefore 
 \eqref{cLRic} and \eqref{cLthetder} follow from  \eqref{derRnew} and \eqref{5.10new} with $\phi_j=-\Gamma_0 u_j$.
\end{proof}
\begin{remark} It is worth comparing Theorems \ref{theorem5.4}  and \ref{theorem6.2} for the case $\cL=-\Delta$ where both theorems apply. The major difference is in the type of trace operators utilized in each theorem. In Theorem  \ref{theorem5.4} we use $\tr=[-\tau_{{}_N}, \Phi \gd]^\top$ which is defined on the entire space $\cH_+=\dom(-\Delta_{max})$ and is surjective, while in Theorem \ref{theorem6.2} we have $\tr=[\gaD, -\Phi \gamma_{{}_{N, \cL}}]^\top$ which is defined only on a dense subset $\cD=\cD^1(\Omega)$ of $\cH_+=\cD^0(\Omega)$. We note that the latter trace operator is local while the former is not. In addition, these trace maps do not match even on smooth functions on $\Omega$. Another major technical difference is that Hypotheses \ref{hyp1.3i2} and  \ref{hyp1.3i3} are automatically satisfied in one case but not in the other. 
\hfill$\Diamond$  \end{remark}
 
\begin{proposition}\lb{prop5.2} Under assumptions of Theorem \ref{theorem6.2} one has
	\begin{align}
	&\|(\cL_t-\bfi)^{-1}\|_{\cB(L^2(\Omega), \cD^1(\Omega))}=\cO(1),\ t\rightarrow t_0,\lb{5.7}\\
	&\big\|(\cL_t-\bfi)^{-1}-(\cL_{t_0}-\bfi)^{-1}\big\|_{\cB(L^2(\Omega), \cD^1(\Omega))}=o(1),\ t\rightarrow t_0, \lb{5.8}
	\end{align}
	for all $t_0\in[0,1]$. In other words, Hypotheses \ref{hyp1.3i2} and \ref{hyp1.3i3} hold for $\cA_t:=\cL_t$. 
\end{proposition}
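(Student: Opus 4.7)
The goal is to deduce \eqref{5.7} and \eqref{5.8} from a single uniform coercivity estimate for $\cL_t$ combined with the Krein formula \eqref{cLKrein} applied at $\zeta=\bfi$. I would begin by establishing that there exist constants $c,C>0$ such that
\[
c\|\nabla u\|_{L^2(\Omega)}^2\le C\bigl(\|\cL u\|_{L^2(\Omega)}^2+\|u\|_{L^2(\Omega)}^2\bigr)\qquad\text{for all } u\in\dom(\cL_t) \text{ and all } t \text{ near }t_0.
\]
To get this, I would integrate by parts in $\langle\cL u,u\rangle_{L^2(\Omega)}$ and use the Robin condition $\Phi\gamma_{{}_{N,\cL}}u=\Theta_t\gaD u$ to rewrite the boundary term, obtaining
\[
\Re\langle\cL u,u\rangle_{L^2(\Omega)}=\int_\Omega\mathtt A\nabla u\cdot\overline{\nabla u}\,dx-\int_\Omega(\nabla\!\cdot\!\mathtt a)|u|^2\,dx+\int_\Omega\mathtt q|u|^2\,dx+\int_{\partial\Omega}\bigl(\mathtt a\!\cdot\!\nu-\Theta_t\bigr)|\gaD u|^2\,d\sigma.
\]
The ellipticity \eqref{unifel} bounds the first term below by $c\|\nabla u\|_{L^2}^2$, and the multiplicative trace inequality $\|\gaD u\|_{L^2(\partial\Omega)}^2\le\epsilon\|\nabla u\|_{L^2}^2+C_\epsilon\|u\|_{L^2}^2$ absorbs the boundary integral into the interior one as soon as $\epsilon$ is small compared to $\|\Theta_t\|_{L^\infty}+\|\mathtt a\!\cdot\!\nu\|_{L^\infty}$. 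Since $t\mapsto\Theta_t$ is $C^1$ into $L^\infty(\partial\Omega,\bbR)$, the quantity $\|\Theta_t\|_{L^\infty}$ is uniformly bounded near $t_0$, so $\epsilon$ and the final constant can be chosen uniformly. Combining this with $|\Re\langle\cL u,u\rangle|\le\|\cL u\|\|u\|$ yields the displayed coercivity. Applying it with $u=(\cL_t-\bfi)^{-1}f$, for which self-adjointness of $\cL_t$ gives $\|u\|_{L^2}\le\|f\|_{L^2}$ and hence $\|\cL u\|_{L^2}\le\|(\cL_t-\bfi)u\|_{L^2}+\|u\|_{L^2}\le 2\|f\|_{L^2}$, proves \eqref{5.7}.

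For \eqref{5.8}, I would set $u_t=(\cL_t-\bfi)^{-1}f$, $u_0=(\cL_{t_0}-\bfi)^{-1}f$, and $w=u_t-u_0\in\cD^1(\Omega)$. The Krein formula \eqref{cLKrein} at $\zeta=\bfi$, combined with the $\cB(L^2(\Omega),H^{1/2}(\partial\Omega))$-boundedness of $\gaD(\cL_s-\bfi)^{-1}$ for $s\in\{t,t_0\}$ (from \eqref{5.7} and $\gaD\in\cB(\cD^1(\Omega),H^{1/2}(\partial\Omega))$), gives
\[
\|w\|_{L^2(\Omega)}\le C\,\|\Theta_t-\Theta_{t_0}\|_{L^\infty(\partial\Omega)}\|f\|_{L^2(\Omega)},
\]
with $C$ uniform in $t$ near $t_0$. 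Since $\cL u_t-\bfi u_t=\cL u_0-\bfi u_0=f$ in $L^2(\Omega)$, we have the key identity $\cL w=\bfi w$, and hence $\|\cL w\|_{L^2}=\|w\|_{L^2}$ as well. Thus everything reduces to controlling $\|\nabla w\|_{L^2}$.

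This is the main technical step and the chief obstacle: since $w\notin\dom(\cL_t)\cup\dom(\cL_{t_0})$ in general, the coercivity above does not apply directly to $w$. Instead, I would recompute Green's identity for $w$ using the affine relation
\[
\Phi\gamma_{{}_{N,\cL}}w=\Theta_t\gaD u_t-\Theta_{t_0}\gaD u_0=\Theta_t\gaD w+(\Theta_t-\Theta_{t_0})\gaD u_0.
\]
Because $\Re\langle\cL w,w\rangle=\Re\langle\bfi w,w\rangle=0$, the same integration-by-parts computation used to derive the coercivity produces
\[
\int_\Omega\mathtt A|\nabla w|^2\,dx=\int_\Omega(\nabla\!\cdot\!\mathtt a)|w|^2\,dx-\int_\Omega\mathtt q|w|^2\,dx-\int_{\partial\Omega}(\mathtt a\!\cdot\!\nu-\Theta_t)|\gaD w|^2\,d\sigma-\Re\!\int_{\partial\Omega}(\Theta_t-\Theta_{t_0})\gaD u_0\,\overline{\gaD w}\,d\sigma.
\]
The trace inequality applied to $\|\gaD w\|_{L^2(\partial\Omega)}^2$, Young's inequality for the cross term, and the bound $\|\gaD u_0\|_{L^2(\partial\Omega)}\le C\|u_0\|_{H^1}\le C\|f\|_{L^2}$ coming from \eqref{5.7} at $t=t_0$ together allow me to absorb a small multiple of $\|\nabla w\|^2$ on the left and conclude
\[
\|\nabla w\|_{L^2}^2\le C\|w\|_{L^2}^2+C\|\Theta_t-\Theta_{t_0}\|_{L^\infty}^2\|f\|_{L^2}^2\le C\|\Theta_t-\Theta_{t_0}\|_{L^\infty}^2\|f\|_{L^2}^2.
\]
Summing the $L^2$, $H^1$ and $\cL$-norm estimates then gives $\|w\|_1\le C\|\Theta_t-\Theta_{t_0}\|_{L^\infty}\|f\|_{L^2}$, which proves \eqref{5.8} since $\|\Theta_t-\Theta_{t_0}\|_{L^\infty}\to 0$ as $t\to t_0$ by $C^1$-regularity of $t\mapsto\Theta_t$. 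The delicate point throughout is the uniformity of all constants in $t$ in a full neighborhood of $t_0$, which rests on the boundedness of $\|\Theta_t\|_{L^\infty}$ near $t_0$; one must also be careful interpreting the boundary pairings given that multiplication by an $L^\infty$ function does not a priori map $H^{1/2}(\partial\Omega)$ into itself.
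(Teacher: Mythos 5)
Your proof of \eqref{5.7} follows essentially the same route as the paper: a G\aa rding-type estimate $\|\nabla u\|_{L^2}^2\le c(\|\cL u\|^2_{L^2}+\|u\|_{L^2}^2)$ for $u\in\dom(\cL_t)$, obtained from the Dirichlet form and the trace interpolation inequality $\|\gaD u\|_{L^2(\partial\Omega)}^2\le\varepsilon\|\nabla u\|^2+\beta(\varepsilon)\|u\|^2$, with uniformity in $t$ coming from $\sup_{t}\|\Theta_t\|_{L^\infty}<\infty$ (you are a bit more explicit about the first-order terms $\mathtt a\cdot\nabla-\nabla\cdot\mathtt a$, which the paper suppresses). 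Your proof of \eqref{5.8}, however, takes a genuinely different route. The paper proves an intermediate $\cB(L^2,H^1)$ continuity by a duality argument: it uses the Krein formula against test functions $v\in(H^1(\Omega))^*$, extends $R(t_0)$ to an operator in $\cB(H^{-1},H^1)$ via the elliptic estimate in \cite[Theorem 4.11(i)]{Mc}, reads off the $H^1$ bound from the resulting bilinear estimate, and only at the very end invokes the identity $\cL(R_t-R_{t_0})u=\bfi(R_t-R_{t_0})u$ to pass from $H^1$ to $\cD^1$. You instead make the identity $\cL w=\bfi w$ (for $w=u_t-u_0$) central from the start, obtain $\|w\|_{L^2}\lesssim\|\Theta_t-\Theta_{t_0}\|_{L^\infty}\|f\|_{L^2}$ directly from the Krein formula, and then control $\|\nabla w\|_{L^2}$ by a second G\aa rding-type computation on $w$ itself, carefully exploiting the affine boundary relation $\Phi\gamma_{{}_{N,\cL}}w=\Theta_t\gaD w+(\Theta_t-\Theta_{t_0})\gaD u_0$ and absorbing the extra cross term via Young and trace inequalities. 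Your approach is more elementary and self-contained---it avoids invoking the $\cB(H^{-1},H^1)$ elliptic regularity theorem as a black box---at the cost of a somewhat longer computation; the paper's duality argument is shorter but outsources the hard analytic step. Both are valid, and the residual worry you flag (that $L^\infty$ multipliers need not preserve $H^{1/2}(\partial\Omega)$, so the Robin condition should really be read as an $L^2(\partial\Omega)$ equation) is a genuine subtlety, but one shared with the paper's own formulation rather than introduced by your argument.
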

\begin{proof} To prove \eqref{5.7} it is enough to show that there exists a constant $c>0$ such that  
	\begin{equation}
	\|u\|_{\cD^1(\Omega)}^2\leq c\|\cL u-\bfi u\|^2_{L^2(\Omega)}, u\in\dom(\cL_t),
	\end{equation}
	for all $t\in[0,1]$. By the definition of $\cD^1(\Omega)$-norm, see \eqref{1.15d}, we need to prove that
	\begin{equation}\lb{5.7n}
	\|\nabla u\|_{L^2(\Omega)}^2\leq c (\|\cL u\|^2_{L^2(\Omega)}+\| u\|^2_{L^2(\Omega)}), u\in\dom(\cL_t).
	\end{equation}
	To show this, we first notice that for $u\in\dom(\cL_t)$ one has
	\begin{equation}
	\langle \mathtt{A}\nabla u,\nabla u\rangle_{L^2(\Omega)}=\langle\cL u,u\rangle_{L^2(\Omega)}-\langle \mathtt{q} u, u\rangle_{L^2(\Omega)} -\langle \Theta_t \gaD u,\gaD u\rangle_{L^2(\partial\Omega)}.
	\end{equation}
	Using the Cauchy--Schwartz inequality and \eqref{unifel} we get
		\begin{equation}\lb{gradest}
	\| \nabla u\|_{L^2(\Omega)}^2\leq c( \|\cL u\|^2_{L^2(\Omega)}+\|u\|^2_{L^2(\Omega)}+\|\Theta_t\|_{L^{\infty}(\partial\Omega)} \|\gaD u\|_{L^2(\partial\Omega)}^2),
	\end{equation}
	for $c>0$ (which is $t-$ and $u-$independent). Let us recall from  \cite[Lemma 2.5]{GM08} the inequality 
	\begin{equation}
	\|\gaD u\|^2_{L^2(\Omega)}\leq \varepsilon \|\nabla u\|_{L^2(\Omega)}^2+\beta(\varepsilon)\|u\|^2_{L^2(\Omega)}, \text{ where $\varepsilon>0$ and $\beta(\varepsilon)\underset{\varepsilon\rightarrow 0}{=}\cO(\varepsilon^{-1})$}. 
	\end{equation} 
    Thus, continuing \eqref{gradest} we infer
	\begin{align}
	&\|\nabla u\|_{L^2(\Omega)}^2%\leq c\big( \|  \cL u\|^2_{L^2(\Omega)}+  \|u\|_{L^2(\Omega)}^2+\|\Theta_t\|_{L^{\infty}(\partial\Omega)}\|\gaD u\|^2_{L^2(\Omega)}\big)\\&\quad 
	\leq  c \Big(\|  \cL u\|^2_{L^2(\Omega)}+  \|u\|_{L^2(\Omega)}^2+\varepsilon\|\Theta_t\|_{L^{\infty}(\partial\Omega)}\|\nabla u\|^2_{L^2(\Omega)}\\
	&\hskip4.5cm+\beta(\varepsilon)\|\Theta_t\|_{L^{\infty}(\partial\Omega)}\|u\|_{L^2(\Omega)}\Big)
	\end{align}
	for some $c>0$.  Taking $\varepsilon>0$ sufficiently small %and for some $c>0$, we get 
	%\begin{align}
	%&\|\nabla u\|_{L^2(\Omega)}^2\leq  c(\|  \cL u\|^2_{L^2(\Omega)}+  \|u\|_{L^2(\Omega)}^2)+\frac12{\|\nabla u\|^2_{L^2(\Omega)}},
	%\end{align}
	%for some $ c>0$. This inequality
	 yields \eqref{5.7n} and thus \eqref{5.7}.
	
 	Starting the proof of \eqref{5.8}, we first show that
	\begin{equation}\lb{L2H1}
	\big\|(\cL_t-\bfi)^{-1}-(\cL_{t_0}-\bfi)^{-1}\big\|_{\cB(L^2(\Omega), H^1(\Omega))}=o(1),\ t\rightarrow t_0. 
	\end{equation}
	We denote $R(t):=(\cL_t-\bfi)^{-1}$ and
	 recall that we may use resolvent difference formula \eqref{cLKrein} already established in the first part of the proof of Theorem \ref{theorem6.2}. It yields
	\begin{align}\lb{thetagreen}
	&\langle R(t)u-R(t_0)u,v\rangle_{L^2(\Omega)}=\langle(\Theta_{t_0}-\Theta_{t})\gaD R(t)u, \gaD R(t_0)v\rangle_{L^2(\partial\Omega)}
	\end{align}
	 for all  $u, v\in L^2(\Omega)$.
	For $v\in(H^1(\Omega))^*=H^{-1}(\Omega)$ we view $w:=R(t_0)v\in H^1(\Omega)$ as the solution to the boundary value problem $(\cL-\bfi)w=v$, $ \gamma_{{}_{N,\cL}}w=\Theta_{t_0}\gaD w$. Using a well-known elliptic estimate  $\|w\|_{H^1(\Omega)}\le c\|v\|_{H^{-1}(\Omega)}$ from \cite[Theorem 4.11(i)]{Mc},	
	the operator $R(t_0)$ can be extended to an operator in $\cB((H^1(\Omega))^*, H^1(\Omega))$.
	So, \eqref{thetagreen} can be extended as follows,
	\begin{equation}
	_{H^1(\Omega))}\langle R(t)u-R(t_0)u,v\rangle_{(H^1(\Omega))^*}=\langle(\Theta_{t_0}-\Theta_{t})\gaD R(t)u, \gaD R(t_0)v\rangle_{L^2(\partial\Omega)},
	\end{equation}
	now for all $u\in L^2(\Omega)$ and  $v\in (H^1(\Omega))^*$. Hence, 
	\begin{align}
	| _{H^1(\Omega))}\langle R(t)u&-R(t_0)u,v\rangle_{(H^1(\Omega))^*}|\leq \|\Theta_{t_0}-\Theta_{t}\|_{L^{\infty}(\partial\Omega)}\|\gaD\|^2_{\cB(\cD^1(\Omega), H^{1/2}(\partial\Omega))}\\
	& \times \|R(t)\|_{\cB(L^2(\Omega), \cD^1(\Omega))}\|u\|_{L^2(\Omega)} \|R(t_0)\|_{\cB((H^1(\Omega))^*, H^1(\Omega))}\|v\|_{(H^1(\Omega))^*}. 
	\end{align}
	Since $\|R(t)\|_{\cB(L^2(\Omega), \cD^1(\Omega))}=\cO(1)$ by \eqref{5.7}, and $\|\Theta_{t_0}-\Theta_{t}\|_{L^{\infty}(\partial\Omega)}=o(1)$, $t\rightarrow t_0,$ the above inequality gives \eqref{L2H1}.
	 We now combine \eqref{L2H1} with the estimate
	\begin{align}
	\big\|(\cL_t-\bfi)^{-1}u&-(\cL_{t_0}-\bfi)^{-1}u\big\|_{\cD^1(\Omega)}^2=\big\|(\cL_t-\bfi)^{-1}u-(\cL_{t_0}-\bfi)^{-1}u\big\|_{H^1(\Omega)}^2\\
	&\qquad + \big\|\cL(\cL_t-\bfi)^{-1}u-\cL(\cL_{t_0}-\bfi)^{-1}u\big\|_{L^2(\Omega)}^2\\
	&\le2\big\|(\cL_t-\bfi)^{-1}u-(\cL_{t_0}-\bfi)^{-1}u\big\|_{H^1(\Omega)}^2, u\in L^2(\Omega),
	\end{align}
finishing the proof of \eqref{5.8}. \end{proof}

\begin{example}\label{FREX} Theorem \ref{theorem6.2} can be used in proving the celebrated  Friedlander Inequalities $\lambda_{D,k}\geq \lambda_{N, k+1}$, $k=1,2,\dots$, for the eigenvalues of the Dirichlet and Neumann Laplacians, see \cite{Fri91}, which was improved in \cite{Fil04} to state  that $\lambda_{D,k}>\lambda_{N, k+1}$, see also \cite{BRS18, FrLa10, GMjde09,Saf08} for further advances, detailed bibliography and a historical account of this beautiful subject. Also, we refer to Example \ref{rem:FrMorse} for connections to the Maslov index.
The proof of the Friedlander Inequalities consists of two major steps. First,  one proves that the counting functions of the 
 Dirichlet and Neumann boundary problems differ by a number of negative eigenvalues of the Dirichlet-to-Neumann operator, see \eqref{rem:FrMorse1} below. %This step works for general elliptic operators, see \cite{Fri91} and, e.g.,\cite[Theorem 3]{CJM2}.
 Second, one proves the existence of a nonnegative eigenvalue of the latter. %We do not know if this holds for general elliptic operators. However, the second step 
 The first step involves  a one-parameter family of Robin boundary value problems giving a homotopy of the Dirichlet to the Neumann boundary problem. 
The critical issue here is to show monotonicity of the eigenvalues of the Robin  problems
with respect to the parameter, and this is where the results of the current paper help. (In fact, monotonicity holds not merely for the Laplacian but for  general elliptic operators as described in Subsection \ref{SS5.1}).  
Indeed, formula \eqref{cLthetder} in Theorem \ref{theorem6.2}  with 
 $\cL=-\Delta$ and $\Theta_t=-\cot(\frac{\pi}{2}t)$  shows that the eigenvalues $\lambda=\lambda(t)$ of the Robin problem
	\begin{equation}\label{eq:RobEV}
		\begin{cases}
			\cL u=\lambda u \text{\ in\ }\ \Omega,\\
			\sin(\frac{\pi}{2}t)\gaN u+\cos(\frac{\pi}{2}t)\gaD u=0\text{\ on $\partial\Omega$ for $t\in[0,1]$,}
		\end{cases}
	\end{equation}
	are monotonically decreasing with respect to $t\in[0,1]$. We note that
	\begin{align*}
&\lambda_k(0)=\lambda_{D,k}\le\lambda_{D,k+1} =\lambda_{k+1}(0)\text{ and }\\
&\lambda_k(1)=\lambda_{N,k}\le\lambda_{N,k+1}=\lambda_{k+1}(1), k=1, 2, \ldots,
	\end{align*}
	are the Dirichlet and  Neumann eigenvalues. From this point on, the arguments given in \cite{Fri91} and \cite{Fil04} are as follows. Monotonicity in $t$ of the Robin eigenvalues $\lambda_k(t)$ just proved, and the standard inequalities $\lambda_{D,k}\ge\lambda_{N,k}$ show the strict inequalities $\lambda_{D,k}>\lambda_{N, k+1}$ provided we know the fact, cf.\ \cite[Lemma 1.3]{Fri91}, that for each $\lambda$ there is a $t\in[0,1]$ such that \eqref{eq:RobEV}  has a nontrivial solution. This fact is equivalent to the existence of a positive eigenvalue $\cot(\frac{\pi}{2}t)$ of the Dirichlet-to-Neumann operator when $\lambda\notin\Sp(-\Delta_D)$, and its proof  has been carried out in \cite{Fri91} and  \cite{Fil04} for the Laplacian using the minimax principle and infinitely many linearly independent explicit functions $e^{i\eta\cdot x}$, with $\eta\in\bbR^n$ such that $\|\eta\|^2_{\bbR^n}=\lambda$, that satisfy $-\Delta(e^{i\eta\cdot x})=\lambda e^{i\eta\cdot x}$.
\hfill$\Diamond$  \end{example}

\begin{example}\label{Rohleder}
	We will now derive from Theorem  \ref{theorem6.2} an elegant result in \cite[Theorem 3.2]{Rohl14} regarding monotonicity of Robin eigenvalues. Given $\Theta^{(\ell)}\in L^\infty(\Omega; \bbR)$,  $\ell=0,1$, we define the Robin operators $\cL^{(\ell)}u=\cL u$ such that \[\dom(\cL^{(\ell)})=\{u\in\cD^1(\Omega): \gamma_{{}_{N,\cL}}u=\Theta^{(\ell)}\gaD u\}\]  for the elliptic  differential expression in \eqref{1.15b}. We let $\lambda_1(\cL^{(\ell)})\le\lambda_2(\cL^{(\ell)})\le\dots$ denote the eigenvalues of
	$\cL^{(\ell)}$ counting multiplicities. Assume that $\Theta^{(0)}\le\Theta^{(1)}$. We will give a new proof of J. Rohleder's result stating that
	\begin{equation}\label{RohT}
	\text{if $\Theta^{(0)}<\Theta^{(1)}$ on a set of positive measure then $\lambda_k(\cL^{(0)})>\lambda_k(\cL^{(1)})$}
	\end{equation}
	for $k=1,2,\dots$. Denote $\Theta_t=\Theta^{(0)}+t(\Theta^{(1)}-\Theta^{(0)})$ for $t\in[0,1]$ and introduce operators $\cL_t$ as in Theorem \ref{theorem6.2} such that $\cL_0=\cL^{(0)}$ and $\cL_1=\cL^{(1)}$. Denoting by $\lambda_k(t):=\lambda_k(\cL_t)$ the eigenvalues of $\cL_t$ counting multiplicities and by $u_k$ the respective eigenfunctions, formula \eqref{cLthetder} implies
	\begin{equation}\lb{aub20}
	\frac{\bd\lambda_k(t)}{\bd t}=-\langle(\Theta^{(1)}-\Theta^{(0)})\gaD u_k,\gaD u_k\rangle_{L^2(\partial\Omega)}<0 \text{, $k=1,2,\dots$, $t\in[0,1]$}
	\end{equation}
	because $\Theta^{(0)}<\Theta^{(1)}$ on a set of positive measure, thus proving \eqref{RohT}. Let us elaborate on some additional consequences of monotonicity of eigenvalues.
As the eigenvalue curves $t\mapsto \lambda_k(t)$ are strictly monotone and continuous we obtain the following count for the eigenvalues, see Figure \ref{fig1},
	\begin{align}\lb{aub15}
	\begin{split}
		(\#\{k: \lambda_k(\cL^{(1)})<\lambda\})-(\#\{k: \lambda_k&(\cL^{(0)})<\lambda\})\\
		&=\sum_{t\in [0,1]}\dim\ker(\cL_t-\lambda).
	\end{split}
	\end{align}
	A weaker version of this counting formula 
	\begin{align}
	(\#\{k: \lambda_k(\cL^{(1)})<\lambda\})-(\#\{k: \lambda_k(\cL^{(0)})<\lambda\})\geq \dim\ker(\cL^{(0)}-\lambda),
	\end{align}
	was obtained by J. Rohleder \cite[(3.4)]{Rohl14} by variational methods. This is a key estimate in \cite{Rohl14} leading to \eqref{RohT} in the original proof. Now, \eqref{aub15} can be viewed as a prequel to Section \ref{section5.4}, where the left-hand side of \eqref{aub15} is treated as the spectral flow of the family $\{\cL_t\}_{t\in[0,1]}$ through $\lambda$ and the right-hand side is viewed as the Maslov index of a certain path of Lagrangian planes. The equality between the Maslov index and the spectral flow in a very general setting has been recently investigated in, for example, \cite{CJLS, CJM1, CJM2, LS1, LSS} and the vast literature cited therein.
	\begin{figure}
		\begin{picture}(100,110)(20,0)
		\put(78,12){\Tiny $\lambda$}
		\put(80,20){\line(0,1){60}}
		\put(10,20){\vector(0,1){95}}
		\put(10,20){\vector(1,0){95}}
		\put(100,12){\Tiny$ \lambda$}
		\put(15,105){\Tiny$ t$}
		\put(2,10){\Tiny$0$}
		\put(10,20){\line(1,0){70}}
		\put(10,80){\line(1,0){70}}
		\qbezier(20,80)(30,30)(65,20)
		\qbezier(80,50)(50,50)(40,80)
		\qbezier(80,70)(70,70)(60,80)
		\put(65,20){\circle*{4}}
		\put(50, 12){{\Tiny ${\ }_{\lambda_1(0)}$}}
		\put(80,50){\circle*{4}}
		\put(80,70){\circle*{4}}
		\put(20,80){\circle*{4}}
		\put(8,87){{\Tiny ${\ }_{\lambda_1(1)}$}}
		\put(40,80){\circle*{4}}
		\put(31,87){{\Tiny ${\ }_{\lambda_2(1)}$}}
		\put(60,80){\circle*{4}}
		\put(55,87){{\Tiny ${\ }_{\lambda_3(1)}$}}
		\put(0,80){{\tiny $1$}}
		\end{picture}
		%\hskip1cm \includegraphics[scale=0.3]{train.jpeg}
		\caption{Illustration of \eqref{aub20}, \eqref{aub15}}\lb{fig1}
	\end{figure} 
	
\hfill$\Diamond$  \end{example}

\subsection{Continuous dependence of solutions to heat equation on thermal conductivity}\label{ssHeatEq} In this section we apply our general results to give a new proof that solutions to the linear homogeneous heat equation depend continuously on a certain physically relevant parameter present in both the operator and the boundary condition. The assertions of this type  have a long and distinguished history, and have been resolved even for quite general Wentzell boundary conditions. We refer the reader to \cite{CFGGGOR,CFGGR} where one can also find further literature. We did not attempt to cover the case of Wentzell boundary conditions anywhere in this paper but remark parenthetically
that it is an interesting open area to develop a version of the asymptotic  perturbation theory for operators equipped with this type of dynamical boundary conditions. At the moment, as in \cite{MR2215623}, we consider the following heat equation,
\begin{equation}\lb{aub2}
\begin{cases}
&u_\mathtt{t}(\mathtt{t},x)=\kappa \rho(x) \Delta_x u (\mathtt{t},x), x\in\Omega,
\mathtt{t}\ge0,\\
&-\kappa \frac{\partial  u}{\partial n}= u,\text{\ on\ }\partial \Omega,
\end{cases}
\end{equation}
describing the temperature $u$ of a material in the region $\Omega\subset \bbR^3$ with thermal conductivity $\kappa$ immersed in a surrounding medium of zero temperature. Here, $1/\rho(x)$ is the product of the density of the material times its heat capacity.  The continuous dependence of the temperature $u$ on the thermal conductivity $\kappa$ with respect to $L^2(\Omega)$ norm follows from Theorem \ref{thm5.7} proved below, which is a version of Theorem \ref{theorem6.2}. To sketch the argument, we consider the self-adjoint operator  $\cL_{\kappa}:=-\kappa\Delta$, $\cL_{\kappa}:\dom(\cL_{\kappa})\subset L^2(\Omega)\rightarrow L^2(\Omega)$ with $\dom(\cL_{\kappa})=\{u\in\cD^1(\Omega): -\kappa \gaN u= \gaD u\}$. Then by Trotter--Kato Approximation Theorem \cite[Theorem III.4.8]{EnNa00}, the family of semigroups $\{e^{-\mathtt{t} \rho \cL_{\kappa}}\}_{\mathtt{t}\ge0}$ is strongly continuous in $\kappa $ uniformly for $\mathtt{t}$ from compact subsets whenever $\kappa\mapsto         (\rho \cL_{\kappa}-\zeta)^{-1}$ is continuous as a mapping from $(0,+\infty)$  to $\cB(L^2(\Omega))$ for some $\zeta\not\in \spec(\cL_{\kappa})$ (we note that $\rho \cL_{\kappa}$ is not necessarily self-adjoint). The next theorem gives a rigorous argument for the required continuity of the resolvent in a slightly more general form. (In the next theorem, to keep up with notation used in the rest of the paper, we denote the parameter with respect to which the continuity is established by $t$, not by $\kappa$; this is not to be confused with notation $\mathtt{t}$ for time used in \eqref{aub2}). 

\begin{theorem}\lb{thm5.7} 
	Let $\Omega\subset \bbR^d$ be a bounded open set with $ C^{\infty}$-smooth boundary $\partial\Omega$. We assume that $t\mapsto \alpha_t$, $t\mapsto \beta_t$ are mappings in $C([0,1], L^{\infty}(\partial\Omega; \bbR))$ such that $\alpha_t^2(x)+\beta_t^2(x)\not=0$ for  $x\in\partial\Omega$, $t\in[0,1]$, and  $t\mapsto\rho_t$ is a mapping in $C([0,1], C(\overline{\Omega};\bbR))$ such that $\inf\{\rho_t(x): t\in[0,1], x\in\overline{ \Omega}\}>0$. Recall the differential expression $\cL$ from \eqref{1.15b} and define the following operator acting in $L^2(\Omega)$,
	\begin{align}
	&\cL_{t,\rho}u:=\rho_t\cL u, u\in\dom(\cL_{t, \rho}), \\
	&\dom(\cL_{t,\rho}):=\{u\in \cD^1(\Omega): \alpha_t\gaD u+\beta_t \gamma_{{}_{N,\cL}} u=0\}.
	\end{align}
	Then the operator $\cL_{t,\rho}$ is sectorial and the mapping $t\mapsto (\cL_{t, \rho}-\zeta)^{-1}$ lies in $C([0,1], \cB(L^2(\Omega)))$ for all $\zeta\in\bbC\setminus\spec(\cL_{t, \rho})$.
\end{theorem}

\begin{proof} To prove that $\cL_{t,\rho}$ is sectorial we have to show the existence of such $\theta\in(0,\frac{\pi}{2})$ and $M=M(\theta)>0$ that \[\zeta\in\bbC\setminus\spec(\cL_{t, \rho})\,\text{ and }\, \|(\cL_{t,\rho}-\zeta)^{-1}\|_{\cB(L^2(\Omega))}\le M|\zeta|^{-1}\] provided $\zeta\neq0$ and $|\arg\zeta|\in(\theta,\pi]$.
	First, we introduce a self-adjoint operator $\cL_t$ acting in $L^2(\Omega)$ and defined by
	$\cL_{t}u:=\cL u$ for $u\in\dom(\cL_{t}):=\dom(\cL_{t, \rho})$
	so that $\cL_{t,\rho}=\rho_t\cL_t$.
	Since $\cL_{t}$ is bounded from below we may assume without loss of generality that $\cL_{t}\ge0$ and, given a $\theta\in(0,\frac{\pi}{2})$, use the estimate
	\begin{equation}\label{estrxi}
	\|(\cL_t-\xi)^{-1}\|_{\cB(L^2(\Omega))}\leq (|\xi|\sin\theta)^{-1}
	\text{ for all $\xi\in\bbC\setminus\{0\}$ such that $|\arg\xi|\in(\theta,\pi]$.}
	\end{equation}
	Indeed,  \eqref{estrxi} follows from the estimate \[\|(\cL_t-\xi)^{-1}\|_{\cB(L^2(\Omega))}\leq |\Im \xi|^{-1}\le(|\xi|\sin\theta)^{-1}\]
	provided $|\arg\xi|\in(\theta,\frac{\pi}{2}]$ and
	\[\|(\cL_t-\xi)^{-1}\|_{\cB(L^2(\Omega))}=\big(\dist(\xi,\spec(\cL_{t}))\big)^{-1}\le|\xi|^{-1}\le(|\xi|\sin\theta)^{-1}\] provided $|\arg\xi|\in(\frac{\pi}{2},\pi]$.
	
	Throughout the rest of this proof we take all $\inf$'s and $\sup$'s over $(t,x)\in[0,1]\times\overline{\Omega}$.  We pick $\theta\in(0,\frac{\pi}{2})$ such that
	\begin{equation}\label{chofth}
	(1-\sin^2\theta)\sup\rho_t(x)<\inf\rho_t(x)% 0\le 1-\frac{\inf\rho_t(x)}{\sup\rho_t(x)}<\sin^2\theta<1,
	\end{equation}
	and fix any $\zeta\in\bbC\setminus\{0\}$ such that $|\arg\zeta|\in(\theta,\pi]$.
	Using \eqref{chofth}, we can choose $\xi\in\bbC$
	such that $\arg\xi=\arg\zeta$ with $|\xi|$ that satisfies the inequality
	\begin{equation}\label{xizeta}
	(1-\sin^2\theta)\sup\rho_t(x)<|\zeta| |\xi|^{-1}<\inf\rho_t(x).
	\end{equation}
	Dividing this by $\rho_t(x)$ we infer
	\begin{equation}\label{xizeta2}
	\sup\big|(|\zeta| (|\xi|\rho_t(x))^{-1}-1)\big|\le\sin^2\theta.
	\end{equation}
	Since  $\xi\in\bbC\setminus\spec(\cL_{t})$ we have
	\begin{equation}\lb{aub3}
	\rho_t\cL_t-\zeta=\rho_t(\cL_t-\xi)\big(I-(\cL_t-\xi)^{-1}(\zeta\rho_t^{-1}-\xi)\big). \end{equation}
	Combining \eqref{estrxi} and \eqref{xizeta2}
	we infer
	\begin{align}
	\begin{split}
	\|(\cL_t-\xi)^{-1}(\zeta\rho_t^{-1}-\xi)\|_{\cB(L^2(\Omega))}&\leq (|\xi|\sin\theta)^{-1} \sup\big|e^{\bfi\arg\zeta}(|\zeta|\rho_t(x)^{-1}-|\xi|)\big|\\
	&\leq\sin\theta<1,
	\end{split}
	\end{align}
	which by \eqref{aub3} gives $\lambda\in\bbC\setminus \spec(\rho_t\cL_t)$ and, using  the second inequality in \eqref{xizeta}, the required resolvent estimate $\|(\cL_{t,\rho}-\zeta)^{-1}\|_{\cB(L^2(\Omega))}\le M|\zeta|^{-1}$. Thus, $\cL_{t,\rho}$ is sectorial.
	
	It is enough to prove continuity of the resolvent mapping at any $\zeta\in\bbR$ in the resolvent set of $\cL_{t,\rho}$. We note that if
	$\zeta\in\bbR\setminus\spec(\cL_{t,\rho})$ then $0\in\bbC\setminus\spec(\cL_t-\rho_t^{-1}\zeta)$ and the identity $(\rho_t\cL_t-\zeta)^{-1}=(\cL_t-\rho_t^{-1}\zeta)^{-1}\rho_t^{-1}$ holds. Since the map $t\mapsto\rho_t^{-1}$ is continuous, it remains to prove continuity of the map $t\mapsto(\cL_t-\rho_t^{-1}\zeta)^{-1}$, that is, of the resolvent of the operator $H_t=\cL_t-\rho_t^{-1}\zeta$ at zero. This follows from Theorem \ref{prop5.1} with $\cA_t=\cL_t$, $V_t=-\rho_t^{-1}\zeta$,
	$\tr:=(\gaD, \gamma_{{}_{N,\cL}})\in \cB (\cD^1(\Omega), \bndr)$ and
	\begin{align}
	Z_{t,s}:=[W(\alpha_{t}, \beta_{t})]^*( \alpha_{t}\beta_{s}-\beta_{t}\alpha_{s})[W(\alpha_{s}, \beta_{s})]\rightarrow 0, s\rightarrow t.
	\end{align}
	To justify the use of Theorem \ref{prop5.1}, we note that Hypothesis \ref{hyp1.3i2} in the theorem is satisfied, that is, $(\cL_t-\bfi)^{-1}=\cO(1)$ as $t\rightarrow s$ in $\cB(L^2(\Omega), \cD^1(\Omega))$. The proof of this assertion is similar to that of \eqref{5.7} (one imposes Robin boundary condition with $\Theta_t(x):=-\alpha_t(x)\beta_t^{-1}(x)$ on the portion of the boundary where $\beta_t^{-1}(x)\not=0$ and the Dirichlet condition elsewhere).
\end{proof}

\subsection{The Hadamard formula for star-shaped domains}\label{HFSSD} In this section we show how 
to use Theorem \ref{prop5.1}
 to derive the classical Hadamard formula for the  Schr\"odinger operators subject to the Dirichlet boundary condition on variable star-shaped domains. %This, in particular, justifies the presence of Hadamard's name in the title of the paper.

Let $\Omega\subset\bbR^n$ be a smooth star-shaped domain centered at zero and $\Omega_t=\{tx: x\in\Omega\}$ be its variation for $t\in(0,1]$. We consider a smooth ($N\times N$)-matrix potential $V=V(x)$ for $x\in\overline{\Omega}$ taking symmetric values. Suppose that $\mu\in\bbR$ is such that $\dim\ker(-\Delta_{D, \Omega}+V-\mu)=m\geq 1$, where $-\Delta_{D, \Omega}$ denotes the Dirichlet Laplacian acting in $L^2(\Omega)$. We claim that there exists a choice of orthonormal eigenfunctions $\{u_j\}_{j=1}^m\subset (-\Delta_{D,\Omega}+V-\mu)$ and a labeling of the eigenvalues $\{\mu_j(t)\}_{j=1}^m$ of $-\Delta_{D, \Omega_t}+V\upharpoonright_{\Omega_t}$, for $t$ near $1$, such that $\mu_j(1)=\mu$ for each $j$, and that the following classical Rayleigh--Hadamard--Rellich formula holds,
cf.\ \cite[Chapter 5]{Henry},
\begin{equation}\label{clHF}
\dot \mu_j(1)=-\int_{\partial\Omega}(\nu\cdot x)(\nu\cdot\nabla u_j)^2 {\rm d} x, 1\leq j\leq m.
\end{equation}
Rescaling $\Omega\ni t\mapsto tx\in \Omega_t$
of the operator $\big(-\Delta_{D,\Omega_t}+V\big)\big|_{\Omega_t}$  back to $\Omega$ yields a one-parameter family of self-adjoint operators  $H_t=-\Delta_{D,\Omega}+t^2V(tx)$, $t\in(0,1]$  acting in the fixed space $L^2(\Omega)$. This family of operators fits the framework of Theorem \ref{prop5.1} with 
$\cA_t\equiv-\Delta_{\Omega}$, $V_t(x)=t^2V(tx)$, $\tr_t=[\gaD, -t^{-1}\Phi\gaN]^\top$, cf \eqref{1.60}, $t_0=1$, $\lambda(t_0)=\mu$ and $Q_t$ given by the $t$-independent projection onto the Dirichlet subspace   $\{(0,g): g\in H^{1/2}(\partial\Omega)\}$ for all $t$. All assumptions of Theorem \ref{prop5.1} are clearly satisfied in the present setting. By the theorem there exists a choice of orthonormal eigenfunctions $\{u_j\}_{j=1}^m\subset \ker (-\Delta_{D,\Omega}+V-\mu)$ and a labeling of the eigenvalues $\{\lambda_j(t)\}_{j=1}^m$ of $H_t$, for $t$ near $1$, such that
\begin{align}
\begin{split}
\label{l1}
\dot\lambda_j(1)&=\left\langle \frac{{\rm d} (t^2V(tx))}{{\rm d} t}\Big|_{t=1}u_j,u_j\right\rangle_{L^2(\Omega)}\\
&=2\langle Vu_j,u_j\rangle_{L^2(\Omega)}+\langle(\nabla V\cdot x)u_j,u_j \rangle_{L^2(\Omega)}, 1\leq j\leq m.
\end{split}
\end{align} 
By the same rescaling as above, the eigenvalues $\lambda_j(t)$ uniquely determine the eigenvalues $\mu_j(t)$ for $t$ near $1$, and one has $\lambda_j(t)=t^2\mu_j(t)$. Our next objective is to use this identity together with \eqref{l1} to derive \eqref{clHF}.

We pause to consider the case of the Laplace operator with no potential. If $V\equiv 0$ then the proof is essentially completed as $H_t$ does not depend on $t$ and $0=\dot\lambda_j(1)=2\mu_j(1)+\dot\mu_j(1)$. This yields \eqref{clHF} by the celebrated Rellich formula \cite{Rell} expressing the eigenvalues $\lambda_j(1)=\mu_j(1)$ of the Dirichlet Laplacian via the Neumann boundary values of the respective eigenfunctions (this formula in turn easily follows from the Pokhozaev--Rellich identity, see, e.g., \cite[p.201]{Bandle}, \cite[p.237]{Kes}, and formula \eqref{l4} below).

Returning to the general case of nonzero potential, to derive \eqref{clHF} from \eqref{l1} we will follow the strategy of \cite[Lemma 5.5]{CJLS}. Let us fix $j$ and denote, for brevity, $u:=u_j$ and $\lambda(t):=\lambda_j(t)$, $\mu(t)=\mu_j(t)$.  First, integration by parts for $\Omega\subseteq\bbR^n$ yields
\begin{equation}\label{l3}
\langle(\nabla V\cdot x)u,u \rangle_{L^2(\Omega)}
=-\langle Vu ,2(\nabla u\cdot x)+nu\rangle_{L^2(\Omega)}
\text{ and } \langle u,\nabla 
u\cdot x\rangle_{L^2(\Omega)}=-{n}/{2}.
\end{equation}
Using $-\Delta u+Vu=\lambda(1)u$ and replacing $Vu$ by $\Delta u+\lambda(1)u$ in \eqref{l1} and \eqref{l3}, 
a short calculation gives
\begin{equation}\label{l2}
\dot{\mu}(1)=\dot\lambda(1)-2\lambda(1)=(2-n)\langle\Delta u,u\rangle_{L^2(\Omega)}-2\langle\Delta u,\nabla u\cdot x\rangle_{L^2(\Omega)}.
\end{equation}
The standard Rellich's identity, see, e.g., \cite[p.201]{Bandle},  yields
\begin{align}\label{l4}
\langle\Delta u,\nabla u\cdot x\rangle_{L^2(\Omega)}
&=\int_{\partial\Omega}\big((\nu\cdot\nabla u)(x\cdot\nabla u)-\frac12(x\cdot\nu)\|\nabla u\|^2\big){\rm d}x\\
&\hskip4cm+\frac{n-2}{2}\int_\Omega\|\nabla u\|^2{\rm d}x.\nonumber
\end{align}
Since $u$ satisfies the Dirichlet condition, $\partial\Omega$ is a level curve, and thus $\nabla u$ and $\nu$ are parallel, that is, $\nabla u=(\nu\cdot \nabla u)\nu$. Using all this in \eqref{l2} yields \eqref{clHF} because
\[\dot\mu(1)=\int_{\partial\Omega}\big(-2(\nu\cdot\nabla u)(x\cdot\nabla u)
+(x\cdot\nu)\|\nabla u\|^2\big){\rm d}x=
-\int_{\partial\Omega}(\nu\cdot\nabla u)^2(\nu\cdot x){\rm d} x.\]

\subsection{Maslov crossing form for elliptic operators}\lb{section5.4} In this section  we continue the discussion began in Section \ref{mastriples} on the relation between the Maslov crossing form and the slopes of the eigenvalue curves bifurcating from a multiple eigenvalue of the unperturbed elliptic operator. Here,
we assume the setting of Theorem \ref{theorem6.2} and obtain a version of formula \eqref{formdr} for the Robin-type elliptic operators $\cL_t$, see Proposition \ref{HadmardSimple5} below. 
For $\lambda\in\bbR$ we let
\begin{align}
\begin{split}
&{\cK}_{\lambda}:=\tr\Big(\Big\{u\in H^1(\Omega):\sum_{j,k=1}^{n}\langle \mathtt{a}_{jk}\partial_ku,\partial_j \varphi\rangle_{L^2(\Omega)}+\sum_{j=1}^{n}\langle \mathtt{a}_j\partial_ju,\varphi\rangle_{L^2(\Omega)}\\
&\quad+\sum_{j=1}^{n}\langle u,\mathtt{a}_j\partial_j\varphi\rangle_{L^2(\Omega)}+\langle vu-\lambda u,\varphi\rangle_{L^2(\Omega)}=0,\ \varphi\in H^1_0(\Omega)\Big\}\Big), \no
\end{split}
\end{align}
where the trace  operator $\tr=[\Gamma_0,\Gamma_1]^\top$is as in \eqref{aub11}. This is a ``weak'' version of the set $\bbK_{\lambda,t}$ from Section \ref{mastriples}. The mapping $\lambda\mapsto \cK_{\lambda}$ is in $C^1(\bbR, \Lambda(H^{1/2}(\partial\Omega)\times H^{1/2}(\partial\Omega)))$ by \cite[Proposition 3.5]{CJM1}.  

Let $t\mapsto \cG_t:=\{(f, -\Theta_t f): f\in H^{1/2}(\partial\Omega)\}$, then for $t_0\in[0,1]$ there is an interval $\cI\subset [0,1]$ centered at $t_0$ and a family of operators $t\mapsto \cM_{t}, t\in\cI$, which is in $C^1\big(\cI,\cB(\cG_{t_0},\cG_{t_0}^\bot)\big)$ with  $\cM_{t_0}=0$ and 
\begin{equation}
\cG_t=\big\{\bq+\cM_{t}\bq\ \big|\, \bq\in\cG_{t_0}\big\}, t\in\cI,
\end{equation}
see,  e.g., \cite[Lemma 3.8]{CJLS}. In other words, $\cG_t$ can be written locally as the graph of the operator $\cM_t$, which is a replacement of $\cM_{\lambda,t}$ from Section \ref{mastriples}. We say that $(\lambda,t_0)$ is a conjugate point if $\cK_{\lambda}\cap \cG_{t_0}\not=\{0\}$ or, equivalently,  if $\ker(\cL_{t_0}-\lambda)\not=\{0\}$. 

We recall $\lambda(t_0)\in\spec_{\rm disc}(\cL_{t_0})$ from Theorem \ref{theorem6.2} and let $\lambda:=\lambda(t_0)$. Then $(\lambda, t_0)$ is a conjugate point at which the Maslov crossing form $\mathfrak{m}_{t_0}$ for the path $t\mapsto \cK_{\lambda}\oplus\cG_{t}$ relative to the diagonal subspace $\mathfrak{D}=\{\bp=(p,p): p\in H^{1/2}(\partial\Omega)\times H^{1/2}(\partial\Omega))\}$ 
is defined by the formula
\begin{equation}\label{dfnMcfnew}
\mathfrak{m}_{t_0}(\bq,\bp):=\frac{\bd}{\bd t}\big|_{t=t_0}\widehat{\omega} (\bq,\cM_{t}\bp)=\widehat{\omega} (\bq,\dot{\cM}_{t_0}\bp),\ \bp,\bq\in (\cK_{\lambda}\oplus\cG_{t_0})\cap\mathfrak{D},
\end{equation}
where $\widehat{\omega} ={\omega} \oplus(-{\omega} )$ and 
$\displaystyle{\dot{\cM}_{t_0}=\frac{\bd}{\bd t}\cM_{t}\big|_{t=t_0}}$. We stress that the pair of Lagrangian subspaces  $\big(\cK_{\lambda}, \cG_{t_0}\big)$ is Fredholm since $\lambda=\lambda(t_0)\not\in \spec_{\rm{ess}}(\cL_{t_0})$, see \cite[Theorem 3.2]{LS1}. Hence, $\dim\big((\cK_{\lambda}\oplus\cG_{t_0})\cap\mathfrak{D}\big)<\infty$ and $\mathfrak{m}_{t_0}$ is a finite dimensional bilinear form. In fact, 
the pair of Lagrangian subspaces $\big(\cK_{\lambda}, \cG_{t}\big)$ is Fredholm for $t$ near $t_0$ due to continuity of the path of the resolvent operators $t\mapsto (\cL_t-\bfi)^{-1}$. 

%%%%%%%%%%%%%%%%%%%%%%%%%%%%%%%%%%%%%%%%%%

\begin{proposition}\label{HadmardSimple5} Let $\lambda(t_0)$, $\{\lambda_j(t)\}_{j=1}^m$ and $\{u_j\}_{j=1}^m$ be as in Theorem \ref{theorem6.2}, and denote $\bq_j:=(\tr u_j, \tr u_j)$. Then $\bq_j\in (\cK_{\lambda(t_0)}\oplus\cG_{t_0})\cap\mathfrak{D}$ and
	\begin{equation}\label{formdr5}
	\dot\lambda_j(t_0)=\mathfrak{m}_{t_0}(\bq_j,\bq_j),\ 1\leq j\leq m,
	\end{equation}
	where $\mathfrak{m}_{t_0}$ is the Maslov crossing form introduced in \eqref{dfnMcfnew}.
\end{proposition}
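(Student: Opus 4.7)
The key simplification here compared with Theorem \ref{HadmardSimple} is that the weak-solution space $\cK_\lambda$ does not depend on $t$, since the differential expression $\cL$ is $t$-independent; only the boundary-condition plane $\cG_t=\{(f,-\Theta_t f):f\in H^{1/2}(\partial\Omega)\}$ moves with $t$. I would therefore follow the strategy of Theorem \ref{HadmardSimple} but with the deformation concentrated entirely in the $\cG_t$-component, allowing a much more explicit calculation.

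First I would verify that $\bq_j\in(\cK_{\lambda(t_0)}\oplus\cG_{t_0})\cap\mathfrak{D}$. Since $u_j\in\dom(\cL_{t_0})$, the Robin condition $\Phi\gamma_{{}_{N,\cL}}u_j=\Theta_{t_0}\gaD u_j$ gives $\tr u_j=(\gaD u_j,-\Theta_{t_0}\gaD u_j)\in\cG_{t_0}$. Testing $\cL u_j=\lambda(t_0)u_j$ against $\varphi\in H^1_0(\Omega)$ and integrating by parts yields the weak identity defining $\cK_{\lambda(t_0)}$, hence $\tr u_j\in\cK_{\lambda(t_0)}\cap\cG_{t_0}$ and thus $\bq_j=(\tr u_j,\tr u_j)\in(\cK_{\lambda(t_0)}\oplus\cG_{t_0})\cap\mathfrak{D}$.

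Next, because $\cK_\lambda$ does not move, the graph parametrization of the moving Lagrangian $\cK_\lambda\oplus\cG_t$ over $\cK_\lambda\oplus\cG_{t_0}$ acts trivially on the first summand, so for $\bp=(p,p)\in(\cK_\lambda\oplus\cG_{t_0})\cap\mathfrak{D}$ with $p\in\cK_\lambda\cap\cG_{t_0}$ the relevant derivative takes the form $\dot{\cM}_{t_0}\bp=(0,\dot{\cM}_{t_0}p)$, where on the right $\cM_t:\cG_{t_0}\to\cG_{t_0}^\perp$ is the parametrization from the statement. Hence, using $\widehat\omega=\omega\oplus(-\omega)$, I get $\mathfrak{m}_{t_0}(\bq_j,\bq_j)=\widehat\omega(\bq_j,(0,\dot{\cM}_{t_0}\tr u_j))=-\omega(\tr u_j,\dot{\cM}_{t_0}\tr u_j)$. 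To evaluate the right-hand side I would introduce the natural smooth path $\eta_t:=(\gaD u_j,-\Theta_t\gaD u_j)\in\cG_t$ with $\eta_{t_0}=\tr u_j$ and decompose $\eta_t=\bq_t+\cM_t\bq_t$, $\bq_t\in\cG_{t_0}$, $\bq_{t_0}=\tr u_j$. Differentiation gives $\dot\eta_{t_0}=\dot\bq_{t_0}+\dot{\cM}_{t_0}\tr u_j$ with $\dot\bq_{t_0}\in\cG_{t_0}$, and since $\cG_{t_0}$ is Lagrangian, $\omega(\tr u_j,\dot\bq_{t_0})=0$, yielding $\omega(\tr u_j,\dot{\cM}_{t_0}\tr u_j)=\omega(\tr u_j,\dot\eta_{t_0})$. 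With $\dot\eta_{t_0}=(0,-\dot\Theta_{t_0}\gaD u_j)$, the definition of $\omega$ and the natural identification of the $\mathfrak{H}$-pairing against a multiplication by the $L^\infty$ function $\dot\Theta_{t_0}$ with $L^2(\partial\Omega)$-integration produce $\omega(\tr u_j,\dot\eta_{t_0})=\langle\dot\Theta_{t_0}\gaD u_j,\gaD u_j\rangle_{L^2(\partial\Omega)}$. Combining gives $\mathfrak{m}_{t_0}(\bq_j,\bq_j)=-\langle\dot\Theta_{t_0}\gaD u_j,\gaD u_j\rangle_{L^2(\partial\Omega)}$, which by \eqref{cLthetder} in Theorem \ref{theorem6.2} equals $\dot\lambda_j(t_0)$.

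The main obstacle is the two-layer graph bookkeeping: the operator $\cM_t$ as stated parametrizes only $\cG_t$ over $\cG_{t_0}$, while the Maslov form is built for the larger path $\cK_\lambda\oplus\cG_t$, so one must consistently use the extension of $\cM_t$ by zero on the (fixed) $\cK_\lambda$-summand, and then justify that replacing $\dot{\cM}_{t_0}\tr u_j$ by $\dot\eta_{t_0}$ for the \emph{specific} path $\eta_t$ produces the correct value of $\omega(\tr u_j,\cdot)$ by virtue of $\cG_{t_0}$ being Lagrangian. Once this, together with the standard identification of the $\mathfrak{H}=H^{1/2}(\partial\Omega)$ pairing with the $L^2(\partial\Omega)$ pairing against multiplication operators, is handled, the remaining computation is routine.
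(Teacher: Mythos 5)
Your proposal is correct and reaches the same conclusion, but it takes a recognizably different route through the middle step. The paper writes $g_t=\tr u_j+\cM_t\tr u_j$, differentiates the identity $g_t=Q_tg_t$ to extract $\dot g_{t_0}=\dot Q_{t_0}\tr u_j+Q_{t_0}\dot g_{t_0}$, uses Lagrangianity of $\cG_{t_0}$ to kill the tangential piece, and arrives at $\mathfrak{m}_{t_0}(\bq_j,\bq_j)=\omega(\dot Q_{t_0}\tr u_j,\tr u_j)$, which is then evaluated by citing the abstract identity \eqref{3.40new} with $X_t=\Theta_t$, $Y_t=I$, $\phi_j=-\gaD u_j$. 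You instead build the explicit curve $\eta_t=(\gaD u_j,-\Theta_t\gaD u_j)\in\cG_t$ through $\tr u_j$, decompose $\eta_t=\bq_t+\cM_t\bq_t$, and use Lagrangianity in the form $\omega(\tr u_j,\dot\bq_{t_0})=0$ to replace $\dot{\cM}_{t_0}\tr u_j$ by $\dot\eta_{t_0}=(0,-\dot\Theta_{t_0}\gaD u_j)$ inside the crossing form. This bypasses differentiation of the orthogonal projections $Q_t$ and the abstract formula \eqref{3.40new} entirely, replacing them with one direct plug-in into the definition of $\omega$; it is more concrete and self-contained for this particular Robin family, at the price of being special to the graph form $\cG_t=\{(f,-\Theta_tf)\}$, whereas the paper's route flows straight out of the general machinery already proved in Theorems \ref{thm2.2} and \ref{prop5.1}. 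Your observation that the extension of $\cM_t$ by zero on the fixed $\cK_\lambda$ factor is the graph operator for $\cK_\lambda\oplus\cG_t$ over $\cK_\lambda\oplus\cG_{t_0}$ is exactly the implicit bookkeeping in the paper's use of \eqref{Dfngt}, and the justification you give for replacing $\dot{\cM}_{t_0}\tr u_j$ by $\dot\eta_{t_0}$ (namely that $\dot\bq_{t_0}\in\cG_{t_0}$ and $\cG_{t_0}$ is Lagrangian) is sound. The only caveat, which you flag yourself, is the identification of the $\mathfrak{H}=H^{1/2}(\partial\Omega)$ pairing against multiplication by $\dot\Theta_{t_0}$ with the $L^2(\partial\Omega)$ pairing; that same identification is implicitly used in the paper's derivation of \eqref{cLthetder} from \eqref{5.10new}, so your proof and the paper's stand on the same ground there.
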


\begin{proof}
	The inclusion $\bq_j\in (\cK_{\lambda(t_0)}\oplus\cG_{t_0})\cap\mathfrak{D}$  holds since $u_j$ is an eigenfunction of $\cL_{t_0}$ corresponding to the eigenvalue $\lambda(t_0)$. For a fixed $j$
we abbreviate $\bq:=\bq_j=\tr u_j$ and introduce $g_t\in H^{1/2}(\partial\Omega)\times H^{1/2}(\partial\Omega)$ as in \eqref{Dfngt}
but with $\cM_{\lambda,t}$ replaced by $\cM_t$. In particular, $g_{t_0}=\tr u_j$ because $\cM_{t_0}=0$.
Since $g_t=Q_tg_t$ where $Q_t$ is the orthogonal projection onto $\cG_t$, we have \[\dot{g}_{{t_0}}=\dot{Q}_{{t_0}}g_{t_0}+Q_{{t_0}}\dot{g}_{{t_0}}=\dot{Q}_{{t_0}}\tr u_j+Q_{{t_0}}\dot{g}_{{t_0}}.\] This and  that $\ran(Q_{{t_0}})$ is Lagrangian yields, as in \eqref{2.54},
	\begin{equation}
	\omega(\tr u_j,\,\dot{g}_{{t_0}})=\omega(\tr u_j,\dot{Q}_{{t_0}}\tr_{t_0}u_j).
	\end{equation}
As in \eqref{aub34},  by definition of $\mathfrak{m}_{t_0}$ this implies
	 	\begin{equation}
	\mathfrak{m}_{{t_0}}(\bq_j,\bq_j)=-\omega(\tr u_j,\,\dot{g}_{{t_0}})=-\omega(\tr u_j,\dot{Q}_{{t_0}}\tr u_j)=\omega(\dot{Q}_{{t_0}}\tr u_j, \tr u_j).
	\end{equation}
	By formula \eqref{cLthetder} in Theorem \ref{theorem6.2} we have  
$\dot{\lambda}_j(t_0)=-\langle\dot{\Theta}_{t_0}\gaD u_j,\gaD u_j\rangle_{L^2(\partial\Omega)}$. Thus,	
it remains to show that \[\omega(\dot{Q}_{t_0}\tr u_j, \tr u_j )=-\langle\dot{\Theta}_{t_0} \gaD u_j, \gaD u_j\rangle_{L^2(\partial\Omega)}.\]
	The latter assertion follows from \eqref{3.40new} with $\phi_j=-\gaD u_j$ and $X_t=\Theta_t$, $Y_t=I$ as \[\cG_t=\gr(-\Theta_t)=\ker([X_t, Y_t])\] with this choice of $X_t$ and $Y_t$.
\end{proof}

\begin{remark}\label{rem:HadmardSimple5}
As discussed in Remark \ref{masindexspflow},  formula \eqref{formdr5}
relating the derivative of the eigenvalues of the elliptic operators $\cL_t$ with respect to the parameter $t$ and the value of the (Maslov) crossing form for the flow $t\mapsto\cK_{\lambda(t)}\oplus\cG_t$ of  Lagrangian planes could be viewed as an infinitesimal version of the fundamental relation between the spectral flow and the Maslov index. Indeed, as in Remark \ref{masindexspflow},  formula \eqref{formdr5} implies relation \eqref{MISF} with $H_t$ replaced by $\cL_t$ and $\Upsilon_{\lambda,t}$ replaced by $\cK_{\lambda(t)}\oplus\cG_t$.\hfill$\Diamond$  \end{remark}

\begin{example}\label{rem:FrMorse} We will now briefly  return to the Robin eigenvalue problem \eqref{eq:RobEV} related to the Friedlander Inequalities but at once for the general elliptic operator $\cL$  described in Subsection \ref{SS5.1}. 
We recall that for $\lambda\notin\spec(\cL_D)$ the Dirichlet-to-Neumann operator $M_{D,N}(\lambda)$ is defined by $f\mapsto-\gaN u$ (in the relevant papers \cite{CJM2,Fri91}, $M_{D,N}$ is defined by $f\mapsto\gaN u$) where $u$ is the solution to $\cL u=\lambda u$, $\gaD u=f$. It is easy to see that \eqref{eq:RobEV} has a nontrivial solution if and only if $\mu=\cot(\frac{\pi}{2}t)$ is an eigenvalue of  $M_{DN}(\lambda)$. Combining Remarks \ref{masindexspflow}, \ref{rem:HadmardSimple5}
and Example \ref{FREX} with Proposition \ref{HadmardSimple5} can be used to show the following formula relating the spectral counting functions 
of the Dirichlet and Neumann realizations $\cL_D$ and $\cL_N$ and the Dirichlet-to-Neuman map $M_{D,N}(0)$, 
\begin{equation}\label{rem:FrMorse1}\begin{split}
\#\{\lambda\in\Sp(\cL_N): \lambda<0\}&-\#\{\lambda\in\Sp(\cL_D): \lambda<0\}\\&=\#\{\mu\in\Sp(M_{D,N}(0)): \mu\geq0\},
\end{split}
\end{equation} 
see \cite{Fri91} and,
specifically,  \cite[Theorem 3]{CJM2} and the literature therein (in \cite{CJM2,Fri91} the RHS of \eqref{rem:FrMorse1} is given by the number of {\it negative} eigenvalues of $M_{D, N}(\lambda)$, this is due to sign discrepancy in the definition of $M_{D, N}(\lambda)$).
We omit details and just mention that the monotonicity of the eigenvalue curves $\lambda_k(t)$, $k=1,2,\dots$, established in Example \ref{FREX} and formula \eqref{formdr5} show that the Maslov crossing form is sign definite at each conjugate point on the vertical line through $\lambda$ 
when $t$ changes from $0$ to $1$ (Figure \ref{fig1} serves as a schematic illustration of this assertion). By a standard calculation, see, e.g.,  Step 1 in the proof of \cite[Theorem 3.3]{LS1}, the Maslov crossing form is also sign definite at each conjugate point on the horizontal lines through $t=0$ and $t=1$ when $\lambda$ is changing.
These two properties are sometimes referred to as the monotonicity of the Maslov index. Thus, cf.\ Remark \ref{masindexspflow}, the spectral flow through zero given by the LHS of \eqref{rem:FrMorse1} is equal to the Maslov index along the vertical line through $\lambda$ which, in turn, is equal to the RHS.
\hfill$\Diamond$  \end{example}

\section{Symplectic resolvent difference formulas for dual pairs}\label{sec:dualpairs}

In this section we give a  generalization of the resolvent difference formula \eqref{5.14} to the case of boundary triplets for an adjoint pair $A, \tA$, see, e.g., \cite{AGW, BGW, BMNW} and the literature cited therein. The theory of adjoint pairs goes back to \cite{LySt, MR0052655}, see also \cite{A12, BEHRNDT20181808, MHMNW, MM02}. It allows one to describe non-selfadjoint extensions for an adjoint pair of densely defined closed (but not neceserely symmetric) operators. A typical example of the adjoint pair, see, e.g., \cite{BGW, BMNW}, is furnished by a non-symmetric elliptic second order partial differential operator and its formal adjoint; this example is discussed in detail in the end of this section.

We follow \cite{BMNW} to recall the definition of the adjoint pair and its boundary triplet. Let $A,\tA$ be closed densely defined operators on a Hilbert space $\cH$ forming an adjoint pair, that is, we assume that $\tA\subseteq A^*$ and $A\subseteq(\tA)^*$. We denote by $\cH_+$, respectively, $\wti{\cH}_+$ the domain $\dom(A^*)$, respectively, $\dom((\tA)^*)$ equipped with the graph-scalar product and graph norm for $A^*$, respectively, $(\tA)^*$, cf.\ Section \ref{KreinFormulas}. Let $\mathfrak{H}$ and $\mathfrak{K}$ be some ``boundary'' Hilbert spaces and 
\begin{equation}\label{Gammas}
\Gamma_0:\wti{\cH}_+\to\mathfrak{H},\quad \Gamma_1:\wti{\cH}_+\to\mathfrak{K}, \quad \wti{\Gamma}_0: {\cH}_+\to\mathfrak{K}, \quad \wti{\Gamma}_1: {\cH}_+\to\mathfrak{H}
\end{equation}
be some ``boundary trace operators". The collection $\{\mathfrak{H},\mathfrak{K}, \Gamma_0, \Gamma_1,\wti{\Gamma}_0, \wti{\Gamma}_1\}$ is called a
{\em boundary triplet for the adjoint pair $A,\tA$} when the following hypothesis is satisfied.

\begin{hypothesis} Suppose that $A,\tA$ is an adjoint pair of densely defined closed operators such that $\tA\subseteq A^*$ and $A\subseteq(\tA)^*$. Consider linear operators, called the {\em trace operators},
	\begin{equation}\label{APeq1}
	\tr:=[\Gamma_0,\Gamma_1]^\top:\wti{\cH}_+\to\mathfrak{H}\times\mathfrak{K},\, %\dom(\tr)=\wti{\cH}_+, \, 
	\wti{\tr}:=[\wti{\Gamma}_0,\wti{\Gamma}_1]^\top:\cH_+\to\mathfrak{K}\times\mathfrak{H}.\, %\dom(\tr)=\cH_+.
	\end{equation}
	Assume that the operators $\tr$ and $\wti{\tr}$ are surjective and satisfy
	\begin{equation}\label{asgf}
	\langle(\tA)^* u, v\rangle_\cH-\langle u,A^*v\rangle_\cH=
	\langle\Gamma_1u,\wti{\Gamma}_0v\rangle_\mathfrak{K}
	-\langle\Gamma_0u,\wti{\Gamma}_1v\rangle_\mathfrak{H},
	\end{equation}
for all $u\in\wti{\cH}_+$ and $v\in\cH_+$.
\end{hypothesis}

The existence of a boundary triplet for every adjoint pair $A,\tA$ was proved in \cite{LySt}, where, in addition, it was shown that
\begin{equation}
\dom(A)=\dom((\tA)^*)\cap\ker\Gamma_0\cap\ker\Gamma_1,\,
\dom(\tA)=\dom(A^*)\cap\ker\wti{\Gamma}_0\cap\ker\wti{\Gamma}_1.
\end{equation} 
It is well-known that the operators $T, \wti T$ are bounded, cf. \cite{MM02}, \cite[Lemma 14.13]{Schm}.

The following resolvent difference formula is a direct generalization of Theorem \ref{thm1.7}. It gives the difference of the resolvent operators of any two (not necessarily sel-adjoint) extensions of the operator $A$ that are parts of $(\tA)^*$. 

\begin{theorem}\label{APthm1.7} Let
	$\{\mathfrak{H},\mathfrak{K}, \Gamma_0, \Gamma_1,\wti{\Gamma}_0, \wti{\Gamma}_1\}$ be a boundary triplet for an adjoint pair $A,\tA$, and let $\cA_j$ for $j=1,2$ be any two closed extensions of $A$ acting in $\cH$ and satisfying $A\subseteq\cA_j\subseteq(\tA)^*$. Suppose that $\zeta\in\bbC\setminus(\spec(\cA_1)\cup\spec(\cA_2))$ and denote $R_j(\zeta):=(\cA_j-\zeta)^{-1}$ for $j=1,2$. Then one has
	\begin{align}\label{AP5.14}
	R_2(\zeta)-R_1(\zeta)&=\big(\wti{\Gamma}_0R_2^*(\zeta)\big)^*\Gamma_1R_1(\zeta)-
	\big(\wti{\Gamma}_1R_2^*(\zeta)\big)^*\Gamma_0R_1(\zeta),\\
	\label{AP5.14a}
	R_2(\zeta)-R_1(\zeta)&=\big(\wti{\tr}R_2^*(\zeta)\big)^*Q_2JQ_1\big(\tr R_1(\zeta)\big),
	\end{align}
	where $R_2^*(\zeta)=((\cA_2)^*-\overline{\zeta})^{-1}$, 
	the operator $\wti{\tr}R_2^*(\zeta)=\big(\wti{\Gamma}_0R_2^*(\zeta),\wti{\Gamma}_1R_2^*(\zeta)\big)$ is considered as an operator in $\cB(\cH,\mathfrak{K}\times\mathfrak{H})$ and the adjoint operators in \eqref{AP5.14}, \eqref{AP5.14a} are defined correspondingly, $Q_1$, respectively, $Q_2$ denotes the orthogonal projection onto $\overline{\tr(\dom(\cA_1))}$ in the space $\mathfrak{H}\times\mathfrak{K}$, respectively, onto $\overline{\wti{\tr}(\dom((\cA_2)^*))}$ in the space $\mathfrak{K}\times\mathfrak{H}$, and the operator $J$ maps a pair $(f, g)$ from $\mathfrak{H}\times\mathfrak{K}$ into the pair
	$(g,-f)$ from $\mathfrak{K}\times\mathfrak{H}$.
\end{theorem}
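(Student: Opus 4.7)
My plan is to adapt the argument used for Theorem \ref{thm1.7} to the non-symmetric setting, replacing the use of self-adjointness by a careful bookkeeping of which operator ($A^{*}$ or $(\tA)^{*}$) acts on which side, and which trace operator ($\tr$ or $\wti{\tr}$) applies. First I will record the mapping properties that make the adjoints in \eqref{AP5.14}--\eqref{AP5.14a} meaningful: arguing as in Proposition \ref{prop1.3}, both $R_{1}(\zeta)\in\cB(\cH,\wti{\cH}_{+})$ and $R_{2}^{*}(\zeta)\in\cB(\cH,\cH_{+})$ (since $\cA_{1}\subseteq(\tA)^{*}$ is closed in the graph norm of $(\tA)^{*}$, and $\cA_{2}^{*}\subseteq A^{*}$ is closed in the graph norm of $A^{*}$). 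Consequently $\tr R_{1}(\zeta)\in\cB(\cH,\mathfrak{H}\times\mathfrak{K})$ and $\wti{\tr}R_{2}^{*}(\zeta)\in\cB(\cH,\mathfrak{K}\times\mathfrak{H})$ by boundedness of the trace maps, so their adjoints are bounded operators into $\cH$.

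Next, for arbitrary $u,v\in\cH$ I will compute $\langle R_{2}(\zeta)u-R_{1}(\zeta)u,v\rangle_{\cH}$ exactly as in the proof of Theorem \ref{thm1.7}, but replacing $(\cA_{2}-\overline{\zeta})R_{2}(\overline{\zeta})v$ with $(\cA_{2}^{*}-\overline{\zeta})R_{2}^{*}(\zeta)v$ to land a factor of $v$. Writing
\begin{align*}
\langle R_{2}(\zeta)u-R_{1}(\zeta)u,v\rangle_{\cH}
&=\langle(\cA_{2}-\zeta)R_{2}(\zeta)u,R_{2}^{*}(\zeta)v\rangle_{\cH}\\
&\quad -\langle R_{1}(\zeta)u,(\cA_{2}^{*}-\overline{\zeta})R_{2}^{*}(\zeta)v\rangle_{\cH},
\end{align*}
the first term reduces to $\langle u,R_{2}^{*}(\zeta)v\rangle_{\cH}$ since $(\cA_{2}-\zeta)R_{2}(\zeta)=I$. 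For the second term, I will use the key inclusions $\dom(\cA_{1})\subseteq\dom((\tA)^{*})$ and $\dom(\cA_{2}^{*})\subseteq\dom(A^{*})$, which follow from $A\subseteq\cA_{1}\subseteq(\tA)^{*}$ and $\tA\subseteq\cA_{2}^{*}\subseteq A^{*}$ respectively. This allows me to replace $\cA_{2}^{*}R_{2}^{*}(\zeta)v$ with $A^{*}R_{2}^{*}(\zeta)v$ and $\cA_{1}R_{1}(\zeta)u$ with $(\tA)^{*}R_{1}(\zeta)u$, and then invoke the Green identity \eqref{asgf} with $u\leftrightarrow R_{1}(\zeta)u$ and $v\leftrightarrow R_{2}^{*}(\zeta)v$. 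Cancelling the $\langle u,R_{2}^{*}(\zeta)v\rangle_{\cH}$ terms leaves exactly the boundary pairing
\[\langle\Gamma_{1}R_{1}(\zeta)u,\wti{\Gamma}_{0}R_{2}^{*}(\zeta)v\rangle_{\mathfrak{K}}-\langle\Gamma_{0}R_{1}(\zeta)u,\wti{\Gamma}_{1}R_{2}^{*}(\zeta)v\rangle_{\mathfrak{H}},\]
which is precisely the right-hand side of \eqref{AP5.14} tested against $v$.

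Formula \eqref{AP5.14a} will then follow by projections: since $\ran(R_{1}(\zeta))=\dom(\cA_{1})$ one has $\tr R_{1}(\zeta)u\in\overline{\tr(\dom(\cA_{1}))}$, so $Q_{1}\tr R_{1}(\zeta)=\tr R_{1}(\zeta)$; analogously $Q_{2}\wti{\tr}R_{2}^{*}(\zeta)=\wti{\tr}R_{2}^{*}(\zeta)$. Writing the boundary pairing from \eqref{AP5.14} as $\langle J\tr R_{1}(\zeta)u,\wti{\tr}R_{2}^{*}(\zeta)v\rangle$ (where $J$ sends $(f,g)\in\mathfrak{H}\times\mathfrak{K}$ to $(g,-f)\in\mathfrak{K}\times\mathfrak{H}$) and inserting $Q_{1}$ on the left and $Q_{2}$ on the right yields \eqref{AP5.14a} after taking $Q_{2}^{*}=Q_{2}$.

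The step I expect to be the main source of subtlety is verifying that the adjoints in \eqref{AP5.14}--\eqref{AP5.14a} really are bounded operators on $\cH$, i.e.\ that the compositions $\tr R_{1}(\zeta)$ and $\wti{\tr}R_{2}^{*}(\zeta)$ are bounded from $\cH$ into the respective boundary product spaces. The analogous fact for the self-adjoint case in Proposition \ref{prop1.3} used the graph norm together with self-adjointness; here I will use the closedness of $\cA_{1}$ and $\cA_{2}^{*}$ as restrictions of $(\tA)^{*}$ and $A^{*}$, respectively, so that the inverse mapping theorem guarantees $R_{1}(\zeta)\in\cB(\cH,\wti{\cH}_{+})$ and $R_{2}^{*}(\zeta)\in\cB(\cH,\cH_{+})$, after which the boundedness of $\tr$ on $\wti{\cH}_{+}$ and of $\wti{\tr}$ on $\cH_{+}$ (noted after \eqref{asgf}) closes the loop. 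Everything else is an essentially mechanical re-run of the self-adjoint proof with ``$(\tA)^{*}$ on the $u$-side, $A^{*}$ on the $v$-side'' book-keeping.
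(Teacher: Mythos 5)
Your proposal is correct and takes essentially the same route as the paper's proof: you introduce the factor $R_{2}^{*}(\zeta)v$, use the inclusions $\cA_{1}\subseteq(\tA)^{*}$ and $\cA_{2}^{*}\subseteq A^{*}$ to replace $\cA_{1}$, $\cA_{2}^{*}$ by $(\tA)^{*}$, $A^{*}$, invoke the abstract Green identity \eqref{asgf}, and observe that the $\langle u,R_{2}^{*}(\zeta)v\rangle_{\cH}$ terms cancel; the boundedness of $\tr R_{1}(\zeta)$ and $\wti{\tr}R_{2}^{*}(\zeta)$ is justified exactly as in the paper via the graph-norm estimate \eqref{1.13}, and \eqref{AP5.14a} follows from \eqref{AP5.14} by inserting the projections $Q_{1}$, $Q_{2}$.
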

\begin{proof} The inclusion $A\subseteq\cA_j\subseteq(\tA)^*$  yields
	$\tA\subseteq(\cA_j)^*\subseteq A^*$ for $j=1,2$ \cite[Section III.5.5]{K80}. The operator $R_2^*(\zeta)\in\cB(\cH)$ is also  bounded from $\cH$ onto $\dom((\cA_2)^*)\subseteq\cH_+=\dom(A^*)$, cf.\ \eqref{1.13}. Thus, the operator $\wti{\tr}R_2^*(\zeta)$ is well defined, and, analogously, the operator 
	$\tr R_1(\zeta)$ is well defined. Moreover, for all $u,v\in\cH$ one has
	\begin{equation}\label{domaa}
	(A^*-\overline{\zeta})R_2^*(\zeta)v=(\cA_2-\zeta)^*R_2^*(\zeta)v=v, \,
	\big((\tA)^*-\zeta\big)R_1(\zeta)u=(\cA_1-\zeta)R_1(\zeta)u=u.\end{equation}
	We also have $Q_2 \wti{\tr}R_2^*(\zeta)=\wti{\tr}R_2^*(\zeta)$ and $Q_1 \tr R_1(\zeta)=\tr R_1(\zeta)$ by the definition of the orthogonal projections $Q_2$ and $Q_1$. Thus, \eqref{AP5.14a} is just a reformulation of \eqref{AP5.14}. For the proof of \eqref{AP5.14}, we use \eqref{asgf} and \eqref{domaa} to write
	\begin{align*}
	\langle\big(R_2(\zeta)&-R_1(\zeta)\big)u,v\rangle_{\cH}=
	\langle R_2(\zeta)u-R_1(\zeta)u,(\cA_2-\overline{\zeta})^*R_2^*(\zeta)v\rangle_{\cH}\\
	&=\langle (\cA_2-\zeta)R_2(\zeta)u,R_2^*(\zeta)v\rangle_{\cH}-
	\langle R_1(\zeta)u,(A^*-\overline{\zeta})R_2^*(\zeta)v\rangle_{\cH}\\
	&=\langle u,R_2^*(\zeta)v\rangle_{\cH}-
	\langle \big((\tA)^*-\zeta\big)R_1(\zeta)u,R_2^*(\zeta)v\rangle_{\cH}\\
	&\qquad+\langle\Gamma_1R_1(\zeta)u,\wti{\Gamma}_0R_2^*(\zeta)v\rangle_\mathfrak{K}-\langle\Gamma_0R_1(\zeta)u,\wti{\Gamma}_1R_2^*(\zeta)v\rangle_\mathfrak{H}\\
	&=\langle\big(\wti{\Gamma}_0R_2^*(\zeta)\big)^*\Gamma_1R_1(\zeta)u,v\rangle_\cH
	-\langle\big(\wti{\Gamma}_1R_2^*(\zeta)\big)^*\Gamma_0R_1(\zeta)u,v\rangle_\cH,
	\end{align*} 
	for all $u,v\in\cH$, yielding \eqref{AP5.14}.
\end{proof}
In particular, for $j=1,2$,  given an operator 
$\Psi_j\in\cB(\mathfrak{H},\mathfrak{K})$ (not necessarily self-adjoint), we consider in $\cH$ the extension $\cA_j$ of $A$ satisfying $A\subseteq\cA_j\subseteq(\tA)^*$ and defined by the formulas
\[\cA_ju=(\tA)^*u\, \text{ for }\, u\in\dom(\cA_j):=\{u\in\wti{\cH}_+: \Gamma_1u=\Psi_j\Gamma_0u\}, \quad j=1,2.\]
\begin{corollary}\label{cor6.3} Under assumptions in Theorem \ref{APthm1.7} one has
	\begin{equation}
	R_2(\zeta)-R_1(\zeta)=\big(\wti{\Gamma}_0R_2^*(\zeta)\big)^*\big(\Psi_1-\Psi_2\big)\Gamma_0R_1(\zeta).
	\end{equation}
\end{corollary}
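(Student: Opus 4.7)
The plan is to deduce Corollary \ref{cor6.3} directly from the general Krein-type formula \eqref{AP5.14} in Theorem \ref{APthm1.7} by recasting the boundary traces of the resolvents in terms of the operators $\Psi_1$ and $\Psi_2$. First, since $R_1(\zeta)$ maps $\cH$ boundedly into $\dom(\cA_1)$, every vector of the form $u=R_1(\zeta)h$ satisfies the defining boundary condition of $\cA_1$, hence
\begin{equation}
\Gamma_1 R_1(\zeta)=\Psi_1\,\Gamma_0 R_1(\zeta) \quad\text{in } \cB(\cH,\mathfrak{K}).
\end{equation}

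The main step, and the only mildly nontrivial one, will be to compute the boundary condition describing $\dom((\cA_2)^*)$, i.e.\ to show that
\begin{equation}\label{adjBC}
\dom((\cA_2)^*)=\{v\in\cH_+:\wti{\Gamma}_1 v=\Psi_2^*\,\wti{\Gamma}_0 v\}.
\end{equation}
I will obtain the inclusion ``$\subseteq$'' from the Green-type identity \eqref{asgf}: for $u\in\dom(\cA_2)$ and $v\in\dom((\cA_2)^*)$ one has $\langle(\tA)^*u,v\rangle_\cH-\langle u,A^*v\rangle_\cH=0$, hence
\begin{equation}
\langle\Psi_2\Gamma_0 u,\wti{\Gamma}_0 v\rangle_\mathfrak{K}-\langle\Gamma_0 u,\wti{\Gamma}_1 v\rangle_\mathfrak{H}=\langle\Gamma_0 u,\Psi_2^*\wti{\Gamma}_0 v-\wti{\Gamma}_1 v\rangle_\mathfrak{H}=0.
\end{equation}
The surjectivity of $[\Gamma_0,\Gamma_1]^\top$ onto $\mathfrak{H}\times\mathfrak{K}$ in \eqref{APeq1} implies that for any $f\in\mathfrak{H}$ there is $u\in\wti{\cH}_+$ with $\Gamma_0 u=f$, $\Gamma_1 u=\Psi_2 f$, so that $u\in\dom(\cA_2)$; therefore $\Gamma_0(\dom(\cA_2))=\mathfrak{H}$ and the above equality forces $\wti{\Gamma}_1 v=\Psi_2^*\wti{\Gamma}_0 v$. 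The reverse inclusion follows by a direct computation using \eqref{asgf} in the opposite direction. With \eqref{adjBC} at hand, the relation $R_2^*(\zeta)h\in\dom((\cA_2)^*)$ gives
\begin{equation}
\wti{\Gamma}_1 R_2^*(\zeta)=\Psi_2^*\,\wti{\Gamma}_0 R_2^*(\zeta)\quad\text{in } \cB(\cH,\mathfrak{H}),
\end{equation}
and consequently $\bigl(\wti{\Gamma}_1 R_2^*(\zeta)\bigr)^*=\bigl(\wti{\Gamma}_0 R_2^*(\zeta)\bigr)^*\Psi_2$.

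Plugging the two boundary identities into \eqref{AP5.14} finishes the proof:
\begin{align}
R_2(\zeta)-R_1(\zeta)&=\bigl(\wti{\Gamma}_0 R_2^*(\zeta)\bigr)^*\Gamma_1 R_1(\zeta)-\bigl(\wti{\Gamma}_1 R_2^*(\zeta)\bigr)^*\Gamma_0 R_1(\zeta)\\
&=\bigl(\wti{\Gamma}_0 R_2^*(\zeta)\bigr)^*\Psi_1\Gamma_0 R_1(\zeta)-\bigl(\wti{\Gamma}_0 R_2^*(\zeta)\bigr)^*\Psi_2\Gamma_0 R_1(\zeta)\\
&=\bigl(\wti{\Gamma}_0 R_2^*(\zeta)\bigr)^*(\Psi_1-\Psi_2)\Gamma_0 R_1(\zeta).
\end{align}
The principal technical point is the identification \eqref{adjBC} of the adjoint boundary condition; once this is established, everything else is algebraic. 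A slightly delicate issue I expect to keep an eye on is that $\Psi_j\in\cB(\mathfrak{H},\mathfrak{K})$ need not be self-adjoint, so one must be careful to distinguish $\Psi_2$ from $\Psi_2^*$ and track the placement of adjoints when passing between $\cA_2$ and $(\cA_2)^*$.
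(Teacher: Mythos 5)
Your proposal is correct and follows the same strategy as the paper's proof: identify $\dom((\cA_2)^*)=\{v\in\cH_+:\wti{\Gamma}_1 v=\Psi_2^*\wti{\Gamma}_0 v\}$ via the Green identity \eqref{asgf} and the surjectivity of $\Gamma_0$, then substitute the two resulting boundary relations $\Gamma_1R_1(\zeta)=\Psi_1\Gamma_0R_1(\zeta)$ and $\wti{\Gamma}_1R_2^*(\zeta)=\Psi_2^*\wti{\Gamma}_0R_2^*(\zeta)$ into \eqref{AP5.14}. The only minor difference is that you spell out why $\Gamma_0(\dom(\cA_2))=\mathfrak{H}$ by explicitly constructing $u$ with prescribed $\Gamma_0 u$ and $\Gamma_1 u$ from the surjectivity of $\tr$, whereas the paper simply invokes surjectivity of $\Gamma_0$; the content is the same.
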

\begin{proof}\sel{Let us provide an elementary proof for the sake of completeness.}
	We claim that $\tA\subset(\cA_j)^*\subset A^*$ satisfies
	$\dom((\cA_j)^*)=\{v\in \cH_+: \wti{\Gamma}_1v=(\Psi_j)^*\wti{\Gamma}_0v\}$. Indeed, to see the inclusion ``$\subseteq$'' we note that for $u\in\dom(\cA_j)$ and $v\in\dom((\cA_j)^*)$ by \eqref{asgf} 
	\begin{align*}
	0&=\langle\cA_ju,v\rangle_\cH-\langle u, \cA_j^*v\rangle_\cH=\langle(\tA)^* u, v\rangle_\cH-\langle u,A^*v\rangle_\cH\\&=
	\langle\Gamma_1u,\wti{\Gamma}_0v\rangle_\mathfrak{K}
	-\langle\Gamma_0u,\wti{\Gamma}_1v\rangle_\mathfrak{H}
	=\langle\Psi_j\Gamma_0u,\wti{\Gamma}_0v\rangle_\mathfrak{K}
	-\langle\Gamma_0u,\wti{\Gamma}_1v\rangle_\mathfrak{H}
	\end{align*}
	yielding $\wti{\Gamma}_1v=(\Psi_j)^*\wti{\Gamma}_0v$ as $\Gamma_0$ is surjective, while the opposite inclusion follows by running this chain of equalities backwards, thus proving the claim. Next, we note that $\Gamma_1R_1(\zeta)=\Psi_1\Gamma_0R_1(\zeta)$ and 
	$\wti{\Gamma}_1R_2^*(\zeta)=(\Psi_2)^*\wti{\Gamma}_0R_2^*(\zeta)$ since the resolvents map $\cH$ into  the domains of respective operators.
	Now \eqref{AP5.14} yelds
	\begin{align*}
	R_2(\zeta)-R_1(\zeta)&=\big(\wti{\Gamma}_0R_2^*(\zeta)\big)^*\Gamma_1R_1(\zeta)-
	\big(\wti{\Gamma}_1R_2^*(\zeta)\big)^*\Gamma_0R_1(\zeta)\\&=\big(\wti{\Gamma}_0R_2^*(\zeta)\big)^*\Psi_1\Gamma_0R_1(\zeta)-
	\big((\Psi_2)^*\wti{\Gamma}_0R_2^*(\zeta)\big)^*\Gamma_0R_1(\zeta)\\&=
	\big(\wti{\Gamma}_0R_2^*(\zeta)\big)^*\big(\Psi_1-\Psi_2\big)\Gamma_0R_1(\zeta),
	\end{align*}
	as required.
\end{proof}

\sel{\begin{remark}\lb{rem6.4}
We note that both the Weyl function and the $\gamma-$field for an adjoint pair was originally introduced and studied in \cite{MM97}, where the Krein type formula written in terms of these objects was derived for the first time. 
\end{remark}}
 We conclude this section with an example of the abstract boundary triplet for the adjoint pair formed by an elliptic second order partial differential operator and its formal adjoint, cf.\, \cite{Behrndt_2020, BL, BM, BGW, BMNW}. In particular, resolvent difference formulas in Theorem \ref{APthm1.7} and Corollary \ref{cor6.3} hold for these operators. The discussion below regarding the boundary traces is based on the paper \cite{Gr}, see also \cite{ BMNW}, dealing with general elliptic operators on smooth domains and is related to the material in Appendix \ref{appA} taken from
\cite{GM08,GM10} where the case of the Laplacian on Lipschitz domains was considered. \sel{As we noted in the preamble of Section \ref{ssLapLip} the work of M.I. Vi\v{s}ik \cite{MR0052655, Vi} and  G. Grubb \cite{Gr}  are foundational in the theory of self-adjoint extensions of elliptic differential operators. Our next goal is to employ their construction of dual pair to illustrate abstract setting discussed above. 
}

We assume throughout that $\Omega\subset\bbR^n$ is a bounded domain with $C^\infty$-boundary, and consider the following
mutually formally adjoint differential expressions, cf.\ \eqref{1.15b},
\begin{align}
\begin{split}
\cL:&=-\sum_{j,k=1}^{n} \partial_j \mathtt{a}_{jk}\partial_k + \sum_{j=1}^{n}\big(\mathtt{a}_j\partial_j -\partial_j {\mathtt{b}_j}\big)+\mathtt{q}\\
&\hspace{3cm}= -\div(\mathtt{A}\nabla)+ \mathtt{a}\cdot \nabla -\nabla \cdot \mathtt{b} +\mathtt{q},\lb{1.15b-new}\\
\end{split}
\end{align}

\begin{align}
\begin{split}
\wti{\cL}:&=-\sum_{j,k=1}^{n} \partial_k\overline{\mathtt{a}}_{jk}\partial_j + \sum_{j=1}^{n}\big(\overline{\mathtt{b}}_j\partial_j -\partial_j \overline{\mathtt{a}}_j\big)+\overline{\mathtt{q}}\\
&\hspace{3cm}= -\div(\mathtt{A}^*\nabla)+ \mathtt{b}^*\cdot \nabla -\nabla \cdot \mathtt{a}^* +\overline{\mathtt{q}},\lb{1.15c-new}
\end{split}
\end{align}
where the bar stands for complex conjugation, with coefficients $\mathtt{A}=\{\mathtt{a}_{ij}\}_{1\leq i, j\leq n}$, $\mathtt{a}:=\{\mathtt{a}_j\}_{1\leq j\leq n}$,
$\mathtt{b}:=\{\mathtt{b}_j\}_{1\leq j\leq n}$ and $\mathtt{q}$ satisfying
$\mathtt{a}_{jk}, \mathtt{a}_j, \mathtt{b}_j, \mathtt{q}\in C^{\infty}(\overline{\Omega};\bbC)$, $1\leq j,k\leq n$. In addition,  we assume the ellipticity condition: For $c>0$ we have
\begin{align}
& \Re\Big(\sum\limits_{j,k=1}^n\mathtt{a}_{jk}(x)\xi_k\overline{\xi_j}\Big) \geq c \sum\limits_{j=1}^n|\xi_j|^2, x\in\overline{\Omega}, \xi=\{\xi_j\}_{j=1}^n\in\C^n.
\end{align}

As in \eqref{1.15c}, we associate with $\cL$ from
\eqref{1.15b-new} the space $\cD^s(\Omega)$ equipped with the $\cL$-graph norm \eqref{1.15d}, and with $\wti{\cL}$ from
\eqref{1.15c-new}
the space 
$\wti{\cD}^{s}(\Omega):=\{u\in H^s(\Omega): \wti{\cL} u\in L^2(\Omega)\}$
equipped with the  $\wti{\cL}$-graph norm
$\|u\|_{\wti{s}}:=\big(\|u\|_{H^s({\Omega} )}^2+\|\wti{\cL} u\|_{L^2({\Omega })}^2\big)^{1/2}$, $s\geq 0$, 
where $\wti{\cL}u$ should be understood in the sense of distributions. 
We introduce operators acting in $L^2(\Omega)$ by 
$\cL_0u=\cL u$ and $\wti{\cL}_0=\wti{\cL}u$ for $u\in\dom(\cL_0)=\dom(\wti{\cL}_0):=C^\infty_0(\Omega)$, the space of functions compactly supported in $\Omega$, and recall from \cite[Section 11.3]{Schm} that these operators are closable. We will denote by $\cL_{min}$ the closure of $\cL_0$ and by $\wti{\cL}_{min}$ the closure of $\wti{\cL}_0$.
Also, we will consider the maximal operators, $\cL_{max}u:=\cL u$ for $u\in\dom(\cL_{max}):=\cD^0(\Omega)$ and $\wti{\cL}_{max}u:=\wti{\cL} u$ for $u\in\dom(\wti{\cL}_{max}):=\wti{\cD}^0(\Omega)$. It is known, see, e.g., 
\cite[Proposition 1.14]{Schm}, that
\begin{equation}\label{incladj}
\cL_{min}\subset(\wti{\cL}_{min})^*=\cL_{max},
\quad \wti{\cL}_{min}\subset(\cL_{min})^*=\wti{\cL}_{max}.
\end{equation}
Thus, $A=\cL_{min}$ and $\wti{A}=\wti{\cL}_{min}$ is an adjoint pair of operators. We will now construct the boundary traces so that the Green identity \eqref{asgf} holds.

 It will be convenient to use the notation \[\gamma_{{}_{D,\cL}}=\gamma_{{}_{D,\wti{\cL}}}:=\gaD\in\cB(H^1(\Omega), H^{1/2}(\partial\Omega))\] for the usual Dirichlet trace. As in Subsection \ref{SS5.1}, we will associate with $\cL$ from \eqref{1.15b-new} the first order trace operator $\gamma_{{}_{N, \cL}} \in\cB(\cD^1(\Omega), H^{-1/2}(\partial\Omega))$ 
 which is a unique extension of the co-normal derivative $\gamma_{{}_{N, \cL}}\in\cB(H^2(\Omega), H^{1/2}(\Omega))$ defined in \eqref{dfncnd}. Analogously, associated with $\wti{\cL}$ from \eqref{1.15c-new} is the first order trace operator $\gamma_{{}_{N, \wti{\cL}}} \in\cB(\wti{\cD}^1(\Omega), H^{-1/2}(\partial\Omega))$ 
 which is a unique extension of the co-normal derivative $\gamma_{{}_{N, \wti{\cL}}}\in\cB(H^2(\Omega), H^{1/2}(\Omega))$ defined by the formula
\begin{align}
&{\gamma_{{}_{N,\wti{\cL}}}}u:=\sum_{j,k=1}^{n} \overline{\mathtt{a}}_{kj}\nu_j\gamma_{{}_{D,\wti{\cL}}}(\partial_k u)+\sum_{j=1}^{n}{\overline{\mathtt{b}}_j}\nu_j \gamma_{{}_{D,\wti{\cL}}} u,\quad u\in H^2(\Omega).
\end{align}
 Then the following Green identity,
\begin{align}
\langle\cL u, v\rangle_{L^2(\Omega )}
&- \langle u,\wti{\cL} v\rangle_{L^2(\Omega )} \lb{1.2.6}\\&={\langle\gamma_{{}_{D,\cL}} u, \gamma_{{}_{N, \wti{\cL}}} v  \rangle_{{-1/2}}}-\overline{\langle\gamma_{{}_{D,\wti{\cL}}} v, \gamma_{{}_{N, {\cL}}} u  \rangle_{{-1/2}}},\no
\end{align}
holds for all $u\in\cD^1(\Omega)$ and $v\in\wti{\cD}^1(\Omega)$. In order to rewrite this identity in a form compatible with \eqref{asgf} we will need to take four more steps. 

First, we will extend further the usual Dirichlet and weak Neumann trace operators \begin{align*}
 &\gamma_{{}_{D,\cL}} =\gamma_{{}_{D,\wti{\cL}}}\in\cB(H^1(\Omega), H^{1/2}(\partial\Omega)),\\
&\gamma_{{}_{N, \cL}} \in\cB(\cD^1(\Omega), H^{-1/2}(\partial\Omega)),\, \gamma_{{}_{N, \wti{\cL}}} \in\cB(\wti{\cD}^1(\Omega), H^{-1/2}(\partial\Omega))\end{align*}
defined on the spaces $H^1(\Omega)$, $\cD^1(\Omega)$, $\wti{\cD}^1(\Omega)$, respectively, to $\cD^0(\Omega)$ and $\wti{\cD}^0(\Omega)$, the domains of the maximal operators $\cL_{max}$ and $\wti{\cL}_{max}$, and obtain the bounded and surjective traces
\begin{align*}
&\widehat{\gamma}_{{}_{D,\cL}}\in\cB(\cD^0(\Omega),H^{-1/2}(\partial\Omega)),\,
\widehat{\gamma}_{{}_{D,\wti{\cL}}}\in\cB(\wti{\cD}^0(\Omega), H^{-1/2}(\partial\Omega)),\\
& \widehat{\gamma}_{{}_{N,{\cL}}}\in\cB(\cD^0(\Omega), H^{-3/2}(\partial\Omega)),\,\widehat{\gamma}_{{}_{N,\wti{\cL}}}\in\cB(\wti{\cD}^0(\Omega), H^{-3/2}(\partial\Omega)),\end{align*}
cf.\ Lemmas \ref{wd} and \ref{A.5},
that agree with the usual Dirichlet and Neumann trace operators on their respective domains. The existence of such extensions for smooth domains and general elliptic operators is given in \cite[Sections II.1, III.1]{Gr}, and for Lipschitz domains and the Laplacian is given in \cite{GM10}.

Armed with the trace operators defined on the domains $\cD^0(\Omega)$ and $\wti{\cD}^0(\Omega)$ of the maximal operators, we proceed, following \cite[Section III.1]{Gr}, with the second step that involves the Dirichlet-to-Neumann operators, $M_{D,N}$ and $\wti{M}_{D,N}$, associated with the operators $\cL$ from \eqref{1.15b-new} and $\wti{\cL}$ from \eqref{1.15c-new}, respectively. We define  $M_{D,N}$ next; one deals with $\wti{M}_{D,N}$ analogously. By adding to $\mathtt{q}$ a constant, if needed, we may and will assume that zero is not in the spectrum of the Dirichlet realizations of $\cL$ as defined in \cite[Section I]{Gr} or \cite[Section 11.3]{Schm}. Thus, for each $f\in H^{-1/2}(\partial\Omega)$ there is a unique solution $u=u_f\in\cD^0(\Omega)$ to the
boundary value problem $\cL u=0$, $\widehat{\gamma}_{{}_{D,\cL}}u=f$. We may now define $M_{D,N}f:=-\widehat{\gamma}_{{}_{N,{\cL}}}u_f$ as an operator acting from $H^{-1/2}(\partial\Omega)$ to $H^{-3/2}(\partial\Omega)$.

 Our third step is to introduce yet another Neumann trace,
 $\tau_{{}_{N,\cL}}$, defined by the formula
 $\tau_{{}_{N,\cL}}u:=\widehat{\gamma}_{{}_{N,{\cL}}}u+M_{D,N}\widehat{\gamma}_{{}_{D,\cL}}u$ for $u\in\cD^0(\Omega)$. The following remarkable property of $\tau_{{}_{N,\cL}}$ is a consequence of elliptic regularity of solutions to the Dirichlet problem, see \cite[Theorem III.1.2]{Gr} and \cite[Theorem 12.1]{GM10} (or Lemma  \ref{nn} below): Although both distributions $\widehat{\gamma}_{{}_{N,{\cL}}}u$ and $M_{D,N}\widehat{\gamma}_{{}_{D,\cL}}u$ belong to $H^{-3/2}(\partial\Omega)$, we claim that the sum $\tau_{{}_{N,\cL}}u$ of these two distributions is, in fact, a function from $H^{1/2}(\partial\Omega)$. Indeed, given a $u\in\cD^0(\Omega)$ and letting $f=\widehat{\gamma}_{{}_{D,\cL}}u$ we observe that 
 $\tau_{{}_{N,\cL}}u=\widehat{\gamma}_{{}_{N,{\cL}}}(u-u_f)$ with $u_f$ as in the definition of $M_{D,N}$ in step two above. But then 
 $\widehat{\gamma}_{{}_{D,\cL}}(u-u_f)=0$, which shows that $u-u_f$ is in the domain of the Dririclet realization of $\cL$. By elliptic regularity then
 $u-u_f\in H^2(\Omega)$, see, e.g., \cite[Theorem I.3.1]{Gr}, and thus $\tau_{{}_{N,\cL}}u=\widehat{\gamma}_{{}_{N,{\cL}}}(u-u_f)={\gamma}_{{}_{N,{\cL}}}(u-u_f)\in H^{1/2}(\partial\Omega)$ as claimed. Analogously, we define
  $\tau_{{}_{N,\wti{\cL}}}u:=\widehat{\gamma}_{{}_{N,\wti{\cL}}}u+\wti{M}_{D,N}\widehat{\gamma}_{{}_{D,\wti{\cL}}}u$ for $u\in\wti{\cD}^0(\Omega)$.
 Moreover, the operators $\tau_{{}_{N,\cL}}\in\cB(\cD^0(\Omega), H^{1/2}(\partial\Omega))$ and $\tau_{{}_{N,\wti{\cL}}}\in\cB(\wti{\cD}^0(\Omega), H^{1/2}(\partial\Omega))$ are surjective, and the following Green identity,
 \begin{align}\label{GF3}
	(\cL u,v)_{L^{2}(\Om)}&-(u,\wti{\cL} v)_{L^{2}(\Om)}\\
	&=
	\overline{\langle \tau_{{}_{N,\wti{\cL}}} v, \widehat{\gamma}_{{}_{{D,\cL}}} u\rangle_{{-1/2}}}-{{\langle \tau_{{}_{N, {\cL}}}
	u, \widehat{\gamma}_{{}_{D,\wti{\cL}}} v \rangle_{{-1/2}}}},\no
\end{align}
  holds for all $u\in\cD^0(\Omega)$, $v\in\wti{\cD}^0(\Omega)$ by \cite[Theorem III.1.2]{Gr}, see also \cite[Theorem 7.4]{BMNW} and \cite[Theorem 12.1]{GM10} or Lemma \ref{nn} below.

Finally, in the last step we use the Riesz isomorphism $\Phi: H^{-1/2}(\partial\Omega)\rightarrow H^{1/2}(\partial\Omega)$ defined in \eqref{aub27}. We are ready to define the boundary triplet in question:  the function spaces are given by  
\begin{equation}
\cH_+=\dom((\cL_{min})^*)=\wti{\cD}^0(\Omega),
\wti{\cH}_+=\dom((\wti{\cL}_{min})^*)={\cD}^0(\Omega), 
\mathfrak{H}=\mathfrak{K}=H^{1/2}(\partial\Omega)
\end{equation}
and the trace operators are given by 
\begin{equation}
\Gamma_0=-\Phi\widehat{\gamma}_{{}_{D,\cL}}, \Gamma_1=\tau_{{}_{N,\cL}}, \wti{\Gamma}_0=-\Phi\widehat{\gamma}_{{}_{D,\wti{\cL}}}, \wti{\Gamma}_1=\tau_{{}_{N,\wti{\cL}}}.
\end{equation}
Now \eqref{GF3} shows that this collection is indeed a boundary triplet for $A=\cL_{min}$ and $\wti{A}=\wti{\cL}_{min}$ as \eqref{asgf} readily holds.

%\section{Further discussions}

%%%%%%%%%%%%%%%%%%%%%%%%%
\appendix

\section{Lagrangian planes and self-adjoint extensions}\label{AppA}

In this appendix we elaborate on the assumption of the second part of Theorem \ref{thm1.7} -- that the image of the domain of a self-adjoint extension is a Lagrangian plane.  It is well known that self-adjoint extensions of $A$ can be parameterized by Lagrangian planes, 
see, e.g., \cite[Theorem 3.1.6]{GG}, \cite{Harmer}, \cite{Pa}, \cite[Proposition 14.7]{Schm}.  Such parameterization depends on the choice of the trace operator  $\tr$ and the ``boundary" space $\mathfrak{H}$, see, e.g., \cite[Proposition 2.4]{BL} and \cite[Chapter 3]{GG}. Theorems \ref{LLSA}  and \ref{SALL} and Corollary \ref{LLSASALL}
below give yet another variant of the parameterization. 

To formulate these results we will need some elementary preliminaries.
Let $\cG$ be a subspace in $\bndra$ and $\tr^{-1}(\cG):=\{u\in\cD: \tr u\in\cG\}$ denote the preimage of $\cG$. 
Consider the linear operator $\cA:=A^*\big|_{\tr^{-1}(\cG)}$ acting in $\cH$ and given by
		\begin{equation}\label{dfncA}
		\cA u:=A^* u,\   u\in \dom(\cA):={\tr^{-1}(\cG)}.
		\end{equation}
Since $\cA$ is a part of the closed operator $A^*$, the operator $\cA$ is closable. We denote by $\overline{\cA}$ the closure of $\cA$, that is, we let \begin{align*}
&\dom(\overline{\cA})=\big\{u\in\cH:
\exists\, \{u_n\}_{n\in\bbZ}\subset\dom(\cA)\text{ such that
	$u_n\to u$ in $\cH$ }\\
&\hspace{4cm}\text{ and $\{\cA u_n\}$ converges to some $w\in\cH$}\big\}\\
&\overline{\cA}u:=w, u\in\dom(\overline{\cA}).
\end{align*}
 In particular, we have
  \begin{equation}\label{domAbar}
\overline{\cA}=A^*\big|_{\dom(\overline{\cA})}\text{ where
$\dom(\overline{\cA})=\overline{{\tr^{-1}(\cG)}}^{\cH_+}=\overline{\dom(\cA)}^{\cH_+}$}.\end{equation}
Using the general definition of the adjoint operator we record the following,
\begin{equation}\label{defadj}
\begin{split}
\dom\cA^*&=\{u\in\cH: \exists\, w\in\cH \text{ such that
$\langle w,v\rangle_\cH-\langle u,\cA v\rangle_\cH=0$
for all $v\in\dom\cA$\}},\\
\cA^*u&:=w \text{ for $u\in\dom\cA^*$}.
\end{split}
\end{equation}
Since $\dom(A)\subset\dom(\cA)$, due to $\dom(A)=\ker\tr\subset\tr^{-1}(\cG)$, from Proposition \ref{remark2.2} (1), taking $v\in\dom(A)$ in \eqref{defadj} shows that $u\in\dom(A^*)$ and $w=A^*u=\cA^*u$. Thus, using 
$\cA=A^*\big|_{\dom(\cA)}=A^*\big|_{\tr^{-1}(\cG)}$, we get
\begin{equation}\label{defAadj}
\cA^*=A^*\big|_{\dom(\cA^*)},\,
\dom(\cA^*)=\{u\in\cH_+: \langle A^*u,v\rangle_\cH-\langle u,A^*v\rangle_\cH=0 \, \forall\,
v\in\tr^{-1}(\cG)\}.\end{equation}

We are ready to present a result saying that pre-images 
of Lagrangian planes in $\bndra$ under the trace map give domains of self-adjoint extensions of $A$.

\begin{theorem}\label{LLSA} Assume Hypothesis \ref{hyp3.6} and that 
 $\cG\in\Lambda(\bndra)$ is a Lagrangian subspace in $\bndra$ such that 
\begin{align}\label{AA}
\text{$\cG\cap\tr(\cD)=\tr\big(\tr^{-1}(\cG)\big)$ is $(\bndra)$-dense in $\cG$}.
\end{align} 
Then the operator $\cA=A^*\big|_{\tr^{-1}(\cG)}$ defined in \eqref{dfncA} is essentially self-adjoint, that is, $\overline{\cA}=\cA^*$, if and only if
\begin{align}\label{AA1}
&\text{$\dom(\cA^*)\cap\cD$ is $(\cH_+)$-dense in $\dom(\cA^*)$}.
\end{align}
\end{theorem}
\begin{proof} Assume \eqref{AA1} .
We derive $\overline{\cA}=\cA^*$ in three steps. First,
we show $\dom(\cA)\subseteq\dom(\cA^*)$. If $u\in\dom(\cA)=\tr^{-1}(\cG)$ then for any $v\in\tr^{-1}(\cG)=\dom(\cA)$ the Green identity \eqref{3.61new}  gives
\begin{equation}\label{eqzero1}
\langle A^*u,v\rangle_\cH-\langle u,A^*v\rangle_\cH=\omega(\tr u,\tr v)=0\end{equation}
because both $\tr u$  and $\tr v$ are in $\cG$ and $\cG\subseteq\cG^\circ$ as $\cG$ is isotropic by the assumption. Now \eqref{defAadj} and \eqref{eqzero1} yield $u\in\dom(\cA^*)$ as required. Second, we show that $\dom(\cA^*)\cap\cD\subseteq\dom(\cA)$. If $u\in\dom(\cA^*)\cap\cD$ then for any $v\in\tr^{-1}(\cG)$ we have
\begin{equation}\label{eqzero2}
\omega(\tr u,\tr v)=\langle A^*u,v\rangle_\cH-\langle u,A^*v\rangle_\cH=0\end{equation}
because $u\in\dom(\cA^*)$ and $v\in\dom(\cA)$, see \eqref{defAadj}. We now claim that $\omega(\tr u,g)=0$ for any $g\in\cG$. Indeed, we use \eqref{AA} to approximate $g\in\cG$ by a sequence $g_n\in\cG\cap\tr(\cD)$. For each $n$ choose $v\in\tr^{-1}(\cG)$ such that $g_n=\tr v$. By \eqref{eqzero2} then $\omega(\tr u,g)=\lim_{n\to\infty}\omega(\tr u, g_n)=0$, thus proving the claim. Therefore, $\tr u\in\cG^\circ\subseteq\cG$ as $\cG$ is maximally isotropic by the assumption, and then $u\in\tr^{-1}(\cG)=\dom(\cA)$ as required.
Third, taking $(\cH_+)$-closures in the inclusions 
\[\dom(\cA^*)\cap\cD\subseteq\dom(\cA)\subseteq\dom(\cA^*)\] just proved and using \eqref{AA1} yield $\overline{\dom(\cA)}=\dom(\cA^*)$ and therefore $\overline{\cA}=\cA^*$, see \eqref{domAbar}.

Conversely, assume that $\overline{\cA}=\cA^*$. To show \eqref{AA1} we need to prove that $\dom(\overline{\cA})\cap\cD$ is dense in $\dom(\overline{\cA})$. By \eqref{domAbar} we know that $\dom(\overline{\cA})=\overline{\tr^{-1}(\cG)}$ and thus it remains to show that $\overline{\overline{\tr^{-1}(\cG)}\cap\cD}=\overline{\tr^{-1}(\cG)}$. The inclusion ``$\subseteq$'' follows from $\overline{\tr^{-1}(\cG)}\cap\cD\subseteq\overline{\tr^{-1}(\cG)}$. To prove ``$\supseteq$'', we take $u\in\overline{\tr^{-1}(\cG)}$ and a sequence $u_n\in\tr^{-1}(\cG)$ approximating $u$. Since $\tr^{-1}(\cG)\subseteq\cD$ we have $u_n\in\overline{\tr^{-1}(\cG)}\cap\cD$ and thus $u\in\overline{\overline{\tr^{-1}(\cG)}\cap\cD}$ as required.
\end{proof}

Next, we present a result saying that the traces of the domains of self-adjoint extensions of $A$ form Lagrangian planes in $\bndra$.
\begin{theorem}\label{SALL} Assume Hypothesis \ref{hyp3.6} and that there exists 
a self-adjoint restriction $\cA$ of $A^*$ on a subspace $\dom(\cA)\subset\cH_+$ such that 
\begin{equation}\label{BB}
\text{$\dom(\cA)\cap\cD$ is $(\cH_+)$-dense in $\dom(\cA)$}.
\end{equation}
 Then the $(\bndra)$-closure of the subspace $\cG$ defined by $\cG:=\tr(\dom(\cA)\cap\cD)$ is Lagrangian, that is,
 $\overline{\cG}=\cG^\circ$, if and only if
 \begin{align}\label{BB1}
 &\text{$\cG^\circ\cap\tr(\cD)$ is $(\bndra)$-dense in $\cG^\circ$}.
 \end{align}
\end{theorem}
\begin{proof}  Assume \eqref{BB1}. We derive $\overline{\cG}=\cG^\circ$ in three steps. First, we show that $\cG\subseteq\cG^\circ$. If $f\in\cG=\tr(\dom(\cA)\cap\cD)$ then $f=\tr u$ for some $u\in\dom(\cA)\cap\cD$. Since $\cA\subseteq\cA^*$ by the assumption, we conclude that $u\in\dom(\cA^*)$. Pick any $g\in\cG$ and let $v\in\dom(\cA)\cap\cD$ be such that $g=\tr v$. Then 
\begin{equation}\label{eqzero3}
\omega(f,g)=\omega(\tr u,\tr v)=\langle A^*u,v\rangle_\cH-\langle u,A^*v\rangle_\cH=0\end{equation}
because $u\in\dom(\cA^*)$ and $v\in\dom(\cA)$, see \eqref{defAadj}. But \eqref{eqzero3} yields $f\in\cG^\circ$, as required. Second, we show that $\cG^\circ\cap\tr(\cD)\subseteq\cG$. If $f\in\cG^\circ\cap\tr(\cD)$ then $f=\tr u$ for some $u\in\cD$ and $\omega(f,g)=0$ for all $g\in\cG$. In particular, if $v\in\dom(\cA)\cap\cD$ and $g=\tr v\in\cG$
 then
 \begin{equation}\label{eqzero4}
\langle A^*u,v\rangle_\cH-\langle u,A^*v\rangle_\cH=\omega(\tr u,\tr v)=0.\end{equation}
 Due to 
\eqref{BB} we conclude from \eqref{eqzero4} that $\langle A^*u,v\rangle_\cH-\langle u,A^*v\rangle_\cH=0$ for all $v\in\dom(\cA)$. Thus $u\in\dom(\cA^*)$ by \eqref{defAadj}.
Since $\cA^*\subseteq\cA$ by the assumption,  we have $u\in\dom(\cA)$ and so $f=\tr u\in\tr(\dom(\cA)\cap\cD)=\cG$ as required. Third, taking $(\bndra)$-closures in the inclusions
\[\cG^\circ\cap\tr(\cD)\subseteq\cG\subseteq\cG^\circ\]
just proved and using \eqref{BB1} yield $\overline{\cG}=\cG^\circ$.
 
 Conversely, assume that $\overline{\cG}=\cG^\circ$. To show \eqref{BB1} we need to prove that $\overline{\cG}\cap\tr(\cD)$ is dense in $\overline{\cG}$, that is, that
 $\overline{\overline{\cG}\cap\tr(\cD)}=\overline{\cG}$.
 Since $\cG\subseteq\tr(\cD)$, this follows analogously to the last part of the proof of Theorem \ref{LLSA}.
 \end{proof}

We note that conditions \eqref{AA}, \eqref{AA1}, \eqref{BB}, \eqref{BB1} automatically hold for all classes of PDE, ODE and quantum graphs operators and all examples that we know; these conditions trivially hold provided $\cD=\cH_+$ and $\tr(\cD)=\bndra$, that is, when $(\mathfrak{H},\Gamma_0,\Gamma_1)$ is an abstract  boundary triplet, see Section \ref{abt}. 
We recall Remark \ref{rem:exist} regarding the existence of self-adjoint extensions of $A$ under Hypothesis \ref{hyp3.6}.

	\begin{remark}\label{rem:A3}
		The density assumptions $\overline{\dom(A)}=\cH, \overline{\ran(\tr)}=\bndra$ introduced in Hypothesis \ref{hyp3.6} are absolutely critical for Theorems \ref{LLSA} and \ref{SALL} to hold. Indeed, \cite[Example 6.6]{DHMS2006} gives a scenario in which dropping the above mentioned density assumptions facilitates a Lagrangian plane in $\bndra$ whose preimage is equal to $\dom(A)$, which is evidently not a domain of self-adjoint extension of $A$. 
	\end{remark}

Assuming Hypothesis \ref{hyp3.6}, for the sake of brevity,
 in the sequel  we will use the following terminology.
 \begin{definition}\label{defASSOC}
 {\em (i)}\, Given a subspace $\cG$ in $\bndra$, we  call 
 $\cA=A^*\big|_{\tr^{-1}(\cG)}$ defined in \eqref{dfncA} the {\em operator  associated with $\cG$}.
 
  {\em (ii)}\, Given an operator $\cA$, we call $\cG=\tr\big(\dom(\cA)\cap\cD\big)$ the {\em subspace associated with $\cA$}.
  
   {\em (iii)}\, We say that a Lagrangian subspace $\cG\in\Lambda(\bndra)$ is {\em $(\tr, \cD)$-aligned} or, when there is no confusion, simply {\em aligned} if \eqref{AA} holds and the adjoint to the associated with $\cG$ operator $\cA$ satisfies \eqref{AA1}.
   
  {\em (iv)}\, We say that a self-adjoint restriction $\cA$ of $A^*$  is {\em $(\tr, \cD)$-aligned} or, when there is no confusion, simply {\em aligned} if 
  \eqref{BB} holds and the annihilator of  the associated with $\cA$ subspace $\cG$ satisfies \eqref{BB1}.
 \end{definition}
 
 Definition \ref{defASSOC} yields the following short rephrasing of Theorems \ref{LLSA} and \ref{SALL}. 
 
 \begin{corollary}\label{LLSASALL}
  If $\cG$  is an aligned Lagrangian subspace
  then the operator $\cA$ associated with $\cG$ 
  is essentially self-adjoint and its closure $\overline{\cA}$ is aligned; in particular, the closure of the subspace associated with $\overline{\cA}$ is equal to $\cG$. 
  
 Conversely, if $\cA$ is an aligned self-adjoint restriction of $A^*$  then
the closure $\overline{\cG}$ of the subspace $\cG$ associated with $\cA$ is an aligned Lagrangian subspace; in particular, the closure of the operator associated with $\overline{\cG}$ is equal to $\cA$.
 \end{corollary}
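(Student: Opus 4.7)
The plan is to derive Corollary \ref{LLSASALL} as a streamlined rephrasing of Theorems \ref{LLSA} and \ref{SALL}, with careful bookkeeping of the associated subspaces and operators under the two relevant closure operations ($\cH_+$-closure on the operator side and $\bndra$-closure on the boundary side).

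For the forward implication, assume $\cG$ is an aligned Lagrangian subspace. By Definition \ref{defASSOC}(iii), $\cG$ satisfies \eqref{AA} and the associated operator $\cA=A^*\big|_{\tr^{-1}(\cG)}$ satisfies \eqref{AA1}, so Theorem \ref{LLSA} immediately gives $\overline{\cA}=\cA^*$. To check that $\overline{\cA}$ is aligned, I verify \eqref{BB} and \eqref{BB1} for $\overline{\cA}$. From \eqref{domAbar} one has $\dom(\overline{\cA})=\overline{\tr^{-1}(\cG)}^{\cH_+}$, and since $\tr^{-1}(\cG)\subset\cD$ is $(\cH_+)$-dense in $\dom(\overline{\cA})$, condition \eqref{BB} for $\overline{\cA}$ follows. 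For the associated subspace $\cG':=\tr(\dom(\overline{\cA})\cap\cD)$, the inclusion $\tr^{-1}(\cG)\subseteq\dom(\overline{\cA})\cap\cD$ gives $\cG\cap\tr(\cD)\subseteq\cG'$, while the second step of the proof of Theorem \ref{LLSA} (applicable because $\overline{\cA}=\cA^*$) yields the reverse containment $\cG'\subseteq\cG$; combining these with \eqref{AA} shows $\overline{\cG'}=\cG$. Continuity of $\omega$ then gives $(\cG')^\circ=(\overline{\cG'})^\circ=\cG^\circ=\cG$, so \eqref{BB1} for $\overline{\cA}$ reduces to \eqref{AA} for $\cG$. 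This proves that $\overline{\cA}$ is aligned and that the closure of the subspace associated with $\overline{\cA}$ equals $\cG$.

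For the converse, let $\cA$ be an aligned self-adjoint restriction of $A^*$ and put $\cG:=\tr(\dom(\cA)\cap\cD)$. Theorem \ref{SALL} yields $\overline{\cG}=\cG^\circ$, hence $\overline{\cG}\in\Lambda(\bndra)$. Condition \eqref{AA} for $\overline{\cG}$ is immediate from $\cG\subseteq\overline{\cG}\cap\tr(\cD)$ together with the density of $\cG$ in $\overline{\cG}$. The decisive step is to show that the operator $\cA':=A^*\big|_{\tr^{-1}(\overline{\cG})}$ associated with $\overline{\cG}$ satisfies $\overline{\cA'}=\cA$. Since $\overline{\cG}$ is isotropic, the Green identity \eqref{3.61new} forces $\cA'$, and hence $\overline{\cA'}$, to be symmetric; on the other hand the chain $\dom(\cA)\cap\cD\subseteq\tr^{-1}(\cG)\subseteq\tr^{-1}(\overline{\cG})=\dom(\cA')$ combined with \eqref{BB} gives $\cA\subseteq\overline{\cA'}$. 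Maximality of the self-adjoint operator $\cA$ then forces equality $\cA=\overline{\cA'}$. Consequently $\dom(\cA'^*)=\dom(\overline{\cA'})=\dom(\cA)$, and \eqref{BB} for $\cA$ now reads exactly as \eqref{AA1} for $\overline{\cG}$, so $\overline{\cG}$ is aligned.

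The main obstacle I anticipate is not a computational one but a structural one: one must simultaneously track closures in the two distinct topologies, and one must avoid the circular temptation of invoking Theorem \ref{LLSA} for $\overline{\cG}$ in the converse direction, since the alignment of $\overline{\cG}$ is precisely what is being proved. The detour through the symmetry of $\cA'$ and the maximality of the self-adjoint $\cA$ sidesteps this circularity and is the essential mechanism linking the two theorems into a genuine equivalence.
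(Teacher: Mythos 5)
Your proof is correct, and in two places it takes a genuinely different and cleaner route than the paper's argument. In the forward direction, to establish the crucial equality $\overline{\cG'}=\cG$ for $\cG'=\tr(\dom(\overline{\cA})\cap\cD)$, the paper re-proves from scratch that $\cG'$ is isotropic (via a limiting argument with approximating sequences in $\tr^{-1}(\cG)$) and then deduces $\cG'\subseteq\cG^\circ=\cG$; you instead observe that the inclusion $\dom(\cA^*)\cap\cD\subseteq\tr^{-1}(\cG)$ was already proved as the second step of Theorem \ref{LLSA} (it only uses \eqref{AA} and the Lagrangian property, both available here), and, since $\dom(\overline{\cA})=\dom(\cA^*)$, this gives $\cG'\subseteq\cG$ immediately with no new estimate. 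In the converse direction, the paper verifies the domain identity \eqref{EQDOM} directly, with the nontrivial inclusion "$\supseteq$" again proved by a sequence argument through the Green identity; you bypass this by noting that isotropy of $\overline{\cG}$ forces $\overline{\cA'}$ to be symmetric, that \eqref{BB} together with closedness of $\cA$ gives $\cA\subseteq\overline{\cA'}$, and that self-adjoint operators are maximal among symmetric ones, so $\cA=\overline{\cA'}$. Your approach buys brevity by leaning on two structural facts (reusability of Theorem \ref{LLSA}'s intermediate step, maximality of self-adjoint operators); the paper's approach is more self-contained and keeps the verification entirely at the level of domains and closures, which is what a reader tracking the definitions \eqref{AA}--\eqref{BB1} for the first time would most want to see spelled out.
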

 \begin{proof} Let $\cG$ be an aligned Lagrangian plane. Then \eqref{AA} and \eqref{AA1} hold and imply $\overline{\cA}=\cA^*$ by Theorem \ref{LLSA}. Let us consider the subspace $\cG'=\tr(\dom(\overline{\cA})\cap\cD)$ associated with the self-adjoint operator $\overline{\cA}$. To show that $\overline{\cA}$ is aligned we will have to prove that (a) $\dom(\overline{\cA})\cap\cD$ is dense in $\dom(\overline{\cA})$ and that (b) $\cG'^\circ\cap\tr(\cD)$ is dense in $\cG'^\circ$.   
 Assertion (a) follows from \eqref{AA1} since $\dom(\overline{\cA})=\dom(\cA^*)$. By Theorem \ref{SALL} applied to the operator $\overline{\cA}$ assertion (b) is equivalent to the fact that $\overline{\cG'}$ is Lagrangian. Thus it remains to show that $\overline{\cG'}=\cG$. To begin the proof of the latter equality we first recall from 
 \eqref{domAbar} that $\dom(\overline{\cA})=\overline{\tr^{-1}(\cG)}$. Since $\overline{\tr^{-1}(\cG)}\cap\cD\supseteq\tr^{-1}(\cD)$ we infer
 \[\cG'=\tr(\dom(\overline{\cA})\cap\cD)=\tr\big(\overline{\tr^{-1}(\cG)}\cap\cD\big)\supseteq
 \tr\big(\tr^{-1}(\cD)\big)=\cG\cap\tr(\cD),\]
 where the last equality is checked directly. Taking closure and using \eqref{AA1} yields $\overline{\cG'}\supseteq\cG$.
 It remains to show that $\overline{\cG'}\subseteq\cG$.
 We claim that $\cG'$ is isotropic, that is, $\cG'\subseteq\cG'^\circ$. To show this, we take any $f,g\in\cG'$ so that $f=\tr u$, $g=\tr v$ for  some $u,v\in
 \overline{\tr^{-1}(\cG)}\cap\cD$ and pick sequences $u_n,v_n\in\tr^{-1}(\cG)$ such that $u_n\to u$ and $v_n\to v$ as $n\to\infty$. Since $\tr u_n,\tr v_n\in\cG$ and $\cG\subseteq\cG^\circ$ by the assumption, we conclude that $\omega(f,g)=\omega(\tr u,\tr v)=\lim_{n\to\infty}\omega(\tr u_n,\tr v_n)=0$ as claimed. It follows from $\overline{\cG'}\supseteq\cG$ and the claim that $\cG\subseteq\overline{\cG'}\subseteq\cG'^\circ$ and therefore
 that $\cG'\subseteq\cG^\circ=\cG$ as $\cG$ is Lagrangian. 
 Hence, $\overline{\cG'}\subseteq\cG$ as required.
 
 To begin the proof of the second part of the corollary, let $\cA=\cA^*$ be an aligned restriction of $A^*$ and denote $\cG=\tr(\dom(\cA)\cap\cD)$. Then 
 \eqref{BB} and \eqref{BB1} hold and imply that $\overline{\cG}$ is Lagrangian by Theorem \ref{SALL}. To check that $\overline{\cG}$ is aligned we need to show that (a) $\overline{\cG}\cap\cD$ is dense in $\overline{\cG}$ and that (b) $\dom(\cA_{\tr^{-1}(\overline{\cG})}^*)\cap\cD$ is dense in $\dom(\cA_{\tr^{-1}(\overline{\cG})}^*)$. Since $\overline{\cG}=(\overline{\cG})^\circ=\cG^\circ$, assertion (a) is the same as  \eqref{BB1} and therefore holds. By Theorem \ref{LLSA} for $\overline{\cG}$ assertion (b) is equivalent to the fact that 
 the closure of the operator $\cA_{\tr^{-1}(\overline{\cG})}$ associated with $\overline{\cG}$ is self-adjoint. So, to complete the proof it suffices to show that the closure of $\cA_{\tr^{-1}(\overline{\cG})}$ is equal to $\cA$ or that $\dom(\cA)=\dom(\overline{\cA_{\tr^{-1}(\overline{\cG})}})$.
 In other words, see \eqref{domAbar}, we want to check the equality 
 \begin{equation}\label{EQDOM}
 \dom(\cA)=
 \overline{\tr^{-1}(\overline{\tr(\dom(\cA)\cap\cD)})}.
 \end{equation} The inclusion ``$\subseteq$'' in \eqref{EQDOM} follows from \eqref{BB} by taking closure in 
 \[ \dom(\cA)\cap\cD= \tr^{-1}(\tr(\dom(\cA)\cap\cD))\subseteq\tr^{-1}(\overline{\tr(\dom(\cA)\cap\cD)}).\]
Thus, it remains to prove the inclusion ``$\supseteq$'' in \eqref{EQDOM}. Take a $u$ from the RHS of \eqref{EQDOM} and select a sequence $u_n\in\tr^{-1}(\overline{\tr(\dom(\cA)\cap\cD)})$ such that $u_n\to u$ in $\cH_+$ as $n\to\infty$.
Since $\tr u_n\in\overline{\cG}\subseteq\cG^\circ$ as $\overline{\cG}$ is Lagrangian, $\omega(\tr u_n,g)=0$ for any $g\in\cG=\tr(\dom(\cA)\cap\cD)$. In particular, for all $v\in\dom(\cA)\cap\cD$ we have
\[\langle A^*u_n, v\rangle_\cH-\langle u_n, A^*v\rangle_\cH
=\omega(\tr u_n,\tr v)=0.\]
Using \eqref{BB} we then conclude that $\langle A^*u_n, v\rangle_\cH-\langle u_n, A^*v\rangle_\cH=0
$ for all $v\in\dom(\cA)$. 
This shows that $u_n\in\dom(\cA^*)$ and therefore $u=\lim_{n\to\infty}u_n\in\dom(\cA^*)=\dom(\cA)$ thus completing the proof of 
the inclusion ``$\supseteq$'' in \eqref{EQDOM}.
 \end{proof}
A particularly transparent and widely studied scenario of aligned Lagrangian subspaces and self-adjoint operators is discussed in Section \ref{abt}, see, in particular, Remark \ref{rem4.2}.

\section{The Krein--Naimark resolvent formula revisited}\label{Sec4.1} In this appendix, we revisit the classical Krein--Naimark \eqref{1.23} and Krein \eqref{fdi-krf} formulas for the difference of resolvents of two self-adjoint extensions of an abstract symmetric operator, see, e.g, \cite[Section 14.6]{Schm}. As we demonstrate in the proof of Proposition \ref{prop:KrNaF} the Krein--Naimark formula \eqref{1.23} can be naturally derived from formula \eqref{5.14} in Theorem \ref{thm1.7} by specializing it to the case of  boundary triplets.  Conversely,  in Remark \ref{Rem4.4} we show how to derive \eqref{5.14} from \eqref{1.23}. The poof of Krein's resolvent formula for the case of finite deficiency indices is given in Proposition \ref{prop:KrFormFDI}. 

Let $(\mathfrak{H}, \Gamma_0, \Gamma_1)$ be a boundary triplet as described in Definition \ref{cond}. Following common convention we define  one of the two self-adjoint extensions of $A$ in the Krein-Naimark formula  by 
\begin{equation}\lb{1.35a}
\cA_0:=A^*\upharpoonright_{\ker(\Gamma_0)},
\end{equation}
and subtract from its resolvent the resolvent of yet another, arbitrary, self-adjoint extension. 

First, we recall some known facts, see, e.g., \cite[Section 14]{Schm}. Since \[\dom(A^*)=\dom(A_0)\dot{+}\ker(A^*-\zeta) \text{ for } \zeta\in\bbC\setminus\bbR,\] the map
$\Gamma_0\upharpoonright_{\ker(A^*-\zeta)}: \ker(A^*-\zeta)\to\mathfrak{H}$ is bijective
and thus we define $\gamma(\zeta):=(\Gamma_0\upharpoonright_{\ker(A^*-\zeta)})^{-1}$ and notice that $\gamma(\zeta)\in\cB(\mathfrak H, \cH)$ and $\Gamma_0\gamma(\zeta)h=h$ for any $h\in\mathfrak{H}$. In particular, $\gamma(\zeta)$ is injective.  We will use the well-known Derkach-Malamud Lemma saying that $\gamma^*(\overline{\zeta})=\Gamma_1(\cA_0-\zeta)^{-1}$, see
\cite[Lemma 1]{DM} or \cite[Proposition 14.14(i)]{Schm}.  The operator-valued function $\gamma(\cdot)$ can be extended analytically to $\bbC\setminus\spec (\cA_0)$ giving rise to the abstract Weyl function $M(\zeta):=\Gamma_1\gamma(\zeta)$, $\zeta\in \bbC\setminus\spec(\cA_0)$. 

Next, let $\cA$ be an arbitrary self-adjoint extension of $A$, and let 
$\cG\in\Lambda(\mathfrak{H}\times\mathfrak{H})$ be the Lagrangian subspace such that $\cG=\tr(\dom(\cA))$, cf.\ Theorems \ref{LLSA}, \ref{SALL} and Remark \ref{rem4.2}. We will treat $\cG$ as a linear relation, 
see, e.g., \cite[Section 14.1]{Schm}.
Slightly abusing notation we do not distinguish between the operator $M(\zeta)$ and its graph, in particular, we write $\cG-M(\zeta):=\cG-\gr(M(\zeta))$ and treat both terms in the right-hand side as linear relations. The linear relation $\cG-M(\zeta)$ is called {\em invertible} whenever 
\begin{align}\label{inj}
\ker(\cG-M(\zeta))&:=\{f\in\mathfrak{H}: (f,0)\in(\cG-M(\zeta))\}=\{0\},\text{ and }\\
\ran(\cG-M(\zeta))&:=\{g\in\mathfrak H:\exists f\in\mathfrak{H}\  {s.t.}\  (f,g)\in(\cG-M(\zeta)) \}=\mathfrak H.\label{surj}\end{align}
In this case there exists an operator in $\cB(\mathfrak{H})$ whose graph is given by 
\begin{equation}
\{(g,f) \in \bndra: (f,g)\in(\cG-M(\zeta)) \};
\end{equation}
this operator is denoted by   $(\cG-M(\zeta))^{-1}$.
\begin{proposition}\label{prop:KrNaF} Let  $(\mathfrak{H}, \Gamma_0, \Gamma_1)$ be a boundary triplet for the symmetric operator $A$, see Definition \ref{cond}, let $\cA_0$ be the self-adjoint extension of $A$ from \eqref{1.35a}, let $\cA$ be an arbitrary self-adjoint extension of $A$ and $\cG=\tr(\dom(\cA))$.
	Then $\cG-M(\zeta)$ is invertible and
	\begin{equation}\label{1.23}
	(\cA-\zeta)^{-1}=(\cA_0-\zeta)^{-1}+\gamma(\zeta)(\cG-M(\zeta))^{-1}\gamma^*(\overline{\zeta})
	\text{ for $\zeta\not\in\Sp(\cA_0)\cup \Sp(\cA)$. }
	\end{equation}
\end{proposition}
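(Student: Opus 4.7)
The plan is to specialize the symplectic Krein formula \eqref{5.14aJ} of Theorem \ref{thm1.7} to the pair $(\cA_1,\cA_2)=(\cA_0,\cA)$ and then convert the resulting expression into the classical Krein--Naimark form involving $\gamma(\zeta)$ and $M(\zeta)$. Because $\dom(\cA_0)\subseteq\ker(\Gamma_0)$, the term $(\Gamma_1 R(\overline{\zeta}))^*\Gamma_0 R_0(\zeta)$ in \eqref{5.14} vanishes, leaving
\[
R(\zeta)-R_0(\zeta)=(\Gamma_0 R(\overline{\zeta}))^*\,\Gamma_1 R_0(\zeta),
\]
with $R(\zeta)=(\cA-\zeta)^{-1}$ and $R_0(\zeta)=(\cA_0-\zeta)^{-1}$. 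The Derkach--Malamud identity $\Gamma_1 R_0(\zeta)=\gamma^*(\overline{\zeta})$ disposes of the second factor, so everything reduces to identifying $(\Gamma_0 R(\overline{\zeta}))^*$ with $\gamma(\zeta)(\cG-M(\zeta))^{-1}$.

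To pin down $\Gamma_0 R(\overline{\zeta})$ I would fix $f\in\cH$, set $u=R(\overline{\zeta})f$, and apply the direct sum decomposition $\dom(A^*)=\dom(\cA_0)\dot{+}\ker(A^*-\overline{\zeta})$ to write $u=R_0(\overline{\zeta})f+\gamma(\overline{\zeta})h$, where $h=\Gamma_0 u$ by the normalization $\Gamma_0\gamma(\overline{\zeta})=I_{\mathfrak H}$. A second application of $\Gamma_1$ yields $\Gamma_1 u=\gamma^*(\zeta)f+M(\overline{\zeta})h$, so the membership $\tr u\in\cG$ is equivalent to the linear-relation equation $(h,\gamma^*(\zeta)f)\in\cG-M(\overline{\zeta})$. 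Granting the invertibility established below, this gives $\Gamma_0 R(\overline{\zeta})=(\cG-M(\overline{\zeta}))^{-1}\gamma^*(\zeta)$; taking Hilbert-space adjoints together with the Nevanlinna symmetry $M(\zeta)^*=M(\overline{\zeta})$ and the self-adjointness of $\cG$ as a linear relation (which is equivalent to its being Lagrangian) then delivers $(\Gamma_0 R(\overline{\zeta}))^*=\gamma(\zeta)(\cG-M(\zeta))^{-1}$, and substitution produces \eqref{1.23}.

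Invertibility of $\cG-M(\zeta)$ I would verify separately. Injectivity is immediate: if $(h,M(\zeta)h)\in\cG$ then $w:=\gamma(\zeta)h\in\dom(\cA)$ and $(A^*-\zeta)w=0$, so $w=0$ by $\zeta\notin\Sp(\cA)$, and therefore $h=\Gamma_0 w=0$. For surjectivity, given $k\in\mathfrak H$, surjectivity of the boundary map lets me pick $v\in\dom(A^*)$ with $\Gamma_0 v=0$ and $\Gamma_1 v=k$, so $v\in\dom(\cA_0)$; then $u:=R(\zeta)(A^*-\zeta)v\in\dom(\cA)$ satisfies $(A^*-\zeta)(v-u)=0$, whence $v-u=\gamma(\zeta)\Gamma_0(v-u)=-\gamma(\zeta)\Gamma_0 u$. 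Comparing the $\Gamma_1$-traces of both sides and setting $h:=\Gamma_0 u$ yields $(h,k+M(\zeta)h)\in\cG$, i.e., $(h,k)\in\cG-M(\zeta)$.

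The main obstacle I anticipate is the adjoint identity $\bigl((\cG-M(\overline{\zeta}))^{-1}\bigr)^*=(\cG-M(\zeta))^{-1}$ used in the second paragraph, because $\cG$ is a linear relation rather than the graph of an operator, so one cannot simply dualize as for bounded operators. The correct framework is the theory of adjoints of linear relations, in which Lagrangianness of $\cG$ is precisely the property needed to conclude $\cG^*=\cG$ and therefore $(\cG-M(\overline{\zeta}))^*=\cG-M(\zeta)$. As a cleaner, self-contained fallback one could avoid this altogether by \emph{defining} $T(\zeta)$ as the right-hand side of \eqref{1.23} and verifying directly that $(A^*-\zeta)T(\zeta)f=f$ and $\tr(T(\zeta)f)\in\cG$ for every $f\in\cH$, which forces $T(\zeta)=R(\zeta)$; however, the derivation through \eqref{5.14} has the virtue of explicitly exhibiting the connection between the classical Krein--Naimark formula and the symplectic version that is the focus of the paper, and is therefore the route I would take.
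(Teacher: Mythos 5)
Your proof is correct, and it ultimately rests on the same two ingredients as the paper's: the symplectic Krein formula \eqref{5.14} and the Derkach--Malamud identity $\gamma^*(\overline{\zeta})=\Gamma_1 R_0(\zeta)$. The route is, however, genuinely different in one respect, and it is worth spelling out. You choose the ordering $(\cA_1,\cA_2)=(\cA_0,\cA)$ in \eqref{5.14}, which lands you on $R(\zeta)-R_0(\zeta)=(\Gamma_0 R(\overline{\zeta}))^*\,\gamma^*(\overline{\zeta})$, so that the factor you must identify sits \emph{under an adjoint}. You then compute $\Gamma_0 R(\overline{\zeta})$ directly via the decomposition $u=R_0(\overline{\zeta})f+\gamma(\overline{\zeta})\Gamma_0 u$ and dualize, which requires the linear-relation identity $\bigl((\cG-M(\overline{\zeta}))^{-1}\bigr)^*=(\cG-M(\zeta))^{-1}$. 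You correctly flag this as the delicate point; it does hold, because $(T^{-1})^*=(T^*)^{-1}$ and $(\cG-B)^*=\cG^*-B^*$ for any linear relation $T$, any relation $\cG$, and any bounded $B$, so Lagrangianness of $\cG$ (i.e.\ $\cG^*=\cG$) together with $M(\overline{\zeta})^*=M(\zeta)$ finishes it. The paper sidesteps this step entirely by taking the opposite ordering $(\cA_1,\cA_2)=(\cA,\cA_0)$ in \eqref{5.14}, which gives $R_0(\zeta)-R(\zeta)=-\gamma(\zeta)\Gamma_0 R(\zeta)$ with $\gamma(\zeta)$ already outside any adjoint; one then only needs $\Gamma_0 R(\zeta)=(\cG-M(\zeta))^{-1}\gamma^*(\overline{\zeta})$, which the paper reads off from the graph description $\cG-M(\zeta)=\{(\Gamma_0 R(\zeta)u,\gamma^*(\overline{\zeta})u):u\in\cH\}$ built from the same intermediate identity you use. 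The paper's choice is marginally cleaner since it avoids the relation-adjoint calculus; your choice makes the duality structure between the two boundary operators more visible. Your injectivity and surjectivity verifications are essentially the paper's arguments in a more concrete form: injectivity is the same observation that $\gamma(\zeta)$ carries $\ker(\cG-M(\zeta))$ into $\ker(\cA-\zeta)=\{0\}$, and surjectivity in both cases rests on surjectivity of $\tr$, the only difference being that you construct the preimage via $R(\zeta)$ whereas the paper uses $R_0(\zeta)$ together with the graph representation.
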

\begin{proof}
	We denote $R_0(\zeta):=(\cA_0-\zeta)^{-1}$ and 
	$R(\zeta)=(\cA-\zeta)^{-1}$. Since $\Gamma_0 R_0(\overline{\zeta})=0$ by \eqref{1.35a}, the resolvent difference formula from Theorem \ref{thm1.7} and the Derkach-Malamud Lemma above yield
	\begin{equation*}
	\label{1.25new}
	R_{0}(\zeta)- R(\zeta)= (\Gamma_0R_0(\overline{\zeta}))^*\Gamma_1 R(\zeta)-(\Gamma_1R_0(\overline{\zeta}))^*\Gamma_0 R(\zeta)=-\gamma(\zeta)\Gamma_0 R(\zeta).
	\end{equation*}
	It remains to prove \eqref{inj}, \eqref{surj}, and that 
	\begin{equation}\label{g0R}
	\Gamma_0R(\zeta)=(\cG-M(\zeta))^{-1}\gamma^*(\overline{\zeta}). 
	\end{equation}
	The main identity needed for the proofs is that
	\begin{equation}\label{1.32}
	\gamma^*(\overline{\zeta})u=\Gamma_1R_0(\zeta)u= \Gamma_1R(\zeta)u-M(\zeta) \Gamma_0 R(\zeta)u \,
	\text{ for all  $u\in\cH$}.
	\end{equation}
	To justify the second equality in \eqref{1.32}, we use  $(A^*-\zeta)\gamma(\zeta)=0$
	and $\Gamma_0(I_\cH-\gamma(\zeta)\Gamma_0)=0$, yielding
	$\ran(I_\cH-\gamma(\zeta)\Gamma_0)\subset\dom(\cA_0)$, and write
	\begin{align*}
	\Gamma_1R_0(\zeta)&=\Gamma_1R_0(\zeta)(\cA-\zeta)R(\zeta)=\Gamma_1R_0(\zeta)(A^*-\zeta)R(\zeta)\\
	&=\Gamma_1R_0(\zeta)(A^*-\zeta)(I_\cH-\gamma(\zeta)\Gamma_0)R(\zeta)\\
	&=\Gamma_1R_0(\zeta)(\cA_0-\zeta)(I_\cH-\gamma(\zeta)\Gamma_0)R(\zeta)\\
	&=\Gamma_1(I_\cH-\gamma(\zeta)\Gamma_0)R(\zeta)=
	\Gamma_1R(\zeta)-M(\zeta) \Gamma_0 R(\zeta),
	\end{align*}
	thus proving \eqref{1.32}.
	Since $R(\zeta)$ is a bijection of $\cH$ onto $\dom(\cA)$, we have $\cG=\{(\Gamma_0R(\zeta)u, \Gamma_1R(\zeta)u): u\in\cH\}$. This and \eqref{1.32} yield
	\begin{align}\label{linre}
	\cG-M(\zeta)&=\{(f,g-M(\zeta)f): (f,g)\in\cG\}\\&=\big\{\big(\Gamma_0R(\zeta)u, \Gamma_1R(\zeta)u-M(\zeta) \Gamma_0 R(\zeta)u\big):u\in\cH\big\}\\
	&=\big\{\big(\Gamma_0R(\zeta)u, \gamma^*(\overline{\zeta})u\big):u\in\cH\big\}.\nonumber
	\end{align}
	Since $\tr$ is surjective, \eqref{surj} follows from \eqref{linre}. Indeed, for any $g\in\mathfrak{H}$ there is some $v\in\dom(A^*)$ such that $\Gamma_0v=0$ and $\Gamma_1v=g$. Since $v\in\dom(\cA_0)$, there is some $u\in\cH$ such that $v=R_0(\zeta)u$ and so $g=\Gamma_1R_0(\zeta)u\in\ran(\cG-M(\zeta))$ by \eqref{linre} and \eqref{1.32}. To begin the proof of \eqref{inj}, we first notice that
	$\gamma(\zeta)\ker(\cG-M(\zeta))\subset\dom(\cA)$. Indeed,
	by \eqref{linre} and \eqref{1.32} we have $\ker(\cG-M(\zeta))=\big\{\Gamma_0R(\zeta)u: \Gamma_1R(\zeta)u=M(\zeta)\Gamma_0R(\zeta)u, u\in\cH\big\}$ and thus
	\begin{align}
	\tr&\gamma(\zeta)\ker(\cG-M(\zeta))%=\{(\Gamma_0\gamma(\zeta)x,\Gamma_1\gamma(\zeta)x):x\in\ker(\cG-M(\zeta))\}\\&
	\\&=\{\big(\Gamma_0\gamma(\zeta)\Gamma_0R(\zeta)u,
	\Gamma_1\gamma(\zeta)\Gamma_0R(\zeta)u\big): \Gamma_1R(\zeta)u=M(\zeta)\Gamma_0R(\zeta)u, u\in\cH\}
	\\&=\{\big(\Gamma_0R(\zeta)u,
	M(\zeta)\Gamma_0R(\zeta)u\big): \Gamma_1R(\zeta)u=M(\zeta)\Gamma_0R(\zeta)u, u\in\cH\}\\
	%\\&\subseteq\{\big(\Gamma_0R(\zeta), \Gamma_1R(\zeta)f\big): f\in\cH\big\}\cap {\rm graph } M(\zeta)  
	&=\cG\cap {\rm graph } (M(\zeta)).
	\end{align}
	Therefore, $(\cA-\zeta)\gamma(\zeta)\ker(\cG-M(\zeta))=
	(A^*-\zeta)\gamma(\zeta)\ker(\cG-M(\zeta))=\{0\}$ yields
	the inclusion $\gamma(\zeta)\ker(\cG-M(\zeta))\subset\ker(\cA-\zeta)=\{0\}$
	and thus $\ker(\cG-M(\zeta))=\{0\}$ because $\gamma(\zeta)$ is injective, thus finishing the proof of \eqref{inj}.
	Finally, using \eqref{linre} again,
	\begin{align*} 
	{\rm graph } (\cG-M(\zeta))^{-1}&=\big\{ (g,f)\in\mathfrak{H}\times\mathfrak{H}: (f,g)\in(\cG-M(\zeta))\big\}
	\\&=\big\{\big(\gamma^*(\overline{\zeta})u,\Gamma_0R(\zeta)u\big): u\in\cH \big\}
	\end{align*}
	yielding $(\cG-M(\zeta))^{-1}\gamma^*(\overline{\zeta})
	=\Gamma_0R(\zeta)$,
	as required to finish the proof of \eqref{g0R} and thus \eqref{1.23}.
\end{proof}
\begin{remark}\label{Rem4.4}
	In the course of proof of the Krein-Naimark formula \eqref{1.23} we established relation \eqref{g0R}. Using this relation we now show how to derive formula \eqref{5.14} in Theorem \ref{thm1.7} from formula \eqref{1.23}, cf.\ the proofs of Theorem 2 and Corollary 4 in \cite{DM}. For any two self-adjoint extensions $\cA_1$ and $\cA_2$ and the extension $\cA_0$ given by \eqref{1.35a} we denote
	$R_j(\zeta)=(\cA_j-\zeta)^{-1}$ for any $\zeta$ which is not in the spectrum of $\cA_j$, $j=0,1,2$. Applying \eqref{1.23} and using \eqref{g0R} for $\cA_1$ and $\cA_2$ yields
	\begin{equation}\label{firstres}
	R_1(\zeta)=R_0(\zeta)+\gamma(\zeta)\Gamma_0R_1(\zeta),\quad
	R_2(\zeta)=R_0(\zeta)+\gamma(\zeta)\Gamma_0R_2(\zeta).
	\end{equation}
	Multiplying \eqref{firstres} by $\Gamma_1$ and using formulas $\gamma^*(\overline{\zeta})=\Gamma_1R_0(\zeta)$ and $M(\zeta)=\Gamma_1\gamma(\zeta)$ gives
	\[\Gamma_1R_1(\zeta)=\gamma^*(\overline{\zeta})+M(\zeta)\Gamma_0 R_1(\zeta), \quad 
	\Gamma_1R_2(\overline{\zeta})=
	\gamma^*({\zeta})+M(\overline{\zeta})\Gamma_0 R_2(\overline{\zeta}).\]
	Plugging this in the RHS of formula \eqref{5.14} and using
	the property  $M^*(\zeta)=M(\overline{\zeta})$ of the Weyl function, see, e.g., \cite[Proposition 14.15(ii)]{Schm}, yileds
	\begin{align*}
	\big(&\Gamma_0 R_2(\overline{\zeta})\big)^*\Gamma_1 R_1({\zeta})-\big(\Gamma_1 R_2(\overline{\zeta})\big)^*\Gamma_0 R_1({\zeta})\\&=\big(\Gamma_0 R_2(\overline{\zeta})\big)^*\big(\gamma^*(\overline{\zeta})+M(\zeta)\Gamma_0 R_1(\zeta)\big)
	-\big(\gamma^*({\zeta})+M(\overline{\zeta})\Gamma_0 R_2(\overline{\zeta})\big)^*\Gamma_0 R_1({\zeta})\\
	&=\big(\gamma(\overline{\zeta})\Gamma_0 R_2(\overline{\zeta})\big)^*-\big(\gamma(\zeta)\Gamma_0R_1(\zeta)\big)+\big(\Gamma_0 R_2(\overline{\zeta})\big)^*\big(M(\zeta)-M^*(\overline{\zeta})\big)\Gamma_0 R_1({\zeta})\\&=\big(R_2(\overline{\zeta})-R_0(\overline{\zeta})\big)^*-\big(R_1(\zeta)-R_0(\zeta)\big)=
	R_2(\zeta)-R_1(\zeta),
	\end{align*}
	where, to pass to the last line, we used \eqref{firstres} again.
	This proves \eqref{5.14} as required.
\hfill$\Diamond$  \end{remark}

We will conclude this section by deriving from formula \eqref{5.14aJ} in Theorem \ref{thm1.7} yet another classical Krein's resolvent formula
\eqref{fdi-krf}  valid under the temporary assumption that the equal deficiency indices of $A$ are finite and for which we refer to the classical text 
\cite[Section VIII.106]{AhkGlazman} and a very nice newer exposition in \cite[Appendix A]{ClarkGesNickZinch}.

\begin{proposition}\label{prop:KrFormFDI} Let  $(\mathfrak{H}, \Gamma_0, \Gamma_1)$ be a boundary triplet for the symmetric operator $A$ with equal and finite deficiency indices, let $\cA_1$ and $\cA_2$ be any two self-adjoint extensions of $A$, let $(m,m)$ denote the deficiency indices of the operator  $A_0=A^*\big|_{\dom(A_0)}$  defined by the equality $\dom(A_0):=\dom(\cA_1)\cap\dom(\cA_2)$, and let $\{u_k(\zeta)\}_{1\le k\le m}$ be any basis in the subspace $\ker(A_0^*-\zeta)$.  Then there exists a unique non-singular matrix $\cP(\zeta)=\big(p_{lj}(\zeta)\big)_{1\le l,j\le m}$, cf. \eqref{for1}, such that the resolvents of the operators $\cA_1$ and $\cA_2$ for each $u\in\cH$ satisfy 
\begin{equation}\label{fdi-krf} 
\big(R_1(\zeta)-R_2(\zeta)\big)u=\sum_{l,j}p_{lj}(\zeta)\langle  u, u_j(\overline{\zeta})\rangle_{{}_\cH} u_l(\zeta)\,
\end{equation}
for all $\zeta\in\bbC\setminus(\spec(\cA_1)\cup\spec(\cA_2))$.
\end{proposition}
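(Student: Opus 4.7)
The strategy is to exploit the fact that $R_1(\zeta)-R_2(\zeta)$ is a finite rank operator whose range and co-range are both controlled by the $m$-dimensional deficiency subspaces of $A_0$. This can be viewed as the finite-dimensional specialization of the Krein-type formula \eqref{5.14aJ} applied to $A_0$ with a boundary triplet having boundary space $\bbC^m$, but I will establish it directly from first principles using the structure of $A_0$.

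First I would record that since $\dom(A_0)=\dom(\cA_1)\cap\dom(\cA_2)$, both $\cA_j$ are (self-adjoint) extensions of $A_0$, so $\cA_j\subseteq A_0^*$ for $j=1,2$. For any $u\in\cH$ the vector $R_j(\zeta)u\in\dom(\cA_j)\subseteq\dom(A_0^*)$, and
\[
(A_0^*-\zeta)R_j(\zeta)u=(\cA_j-\zeta)R_j(\zeta)u=u;
\]
subtracting yields $\ran(R_1(\zeta)-R_2(\zeta))\subseteq\ker(A_0^*-\zeta)=\mathrm{span}\{u_1(\zeta),\dots,u_m(\zeta)\}$. The same argument applied to $\bar\zeta$, combined with $(R_1(\zeta)-R_2(\zeta))^*=R_1(\bar\zeta)-R_2(\bar\zeta)$, shows that $R_1(\zeta)-R_2(\zeta)$ vanishes on $\bigl(\ker(A_0^*-\bar\zeta)\bigr)^\perp$. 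Writing
\[
\bigl(R_1(\zeta)-R_2(\zeta)\bigr)u=\sum_{l=1}^m c_l(u)\,u_l(\zeta),
\]
each coefficient $c_l$ is a continuous linear functional on $\cH$ that annihilates $\bigl(\ker(A_0^*-\bar\zeta)\bigr)^\perp$, so Riesz representation provides $h_l\in\ker(A_0^*-\bar\zeta)$ with $c_l(u)=\langle u,h_l\rangle_{\cH}$. Expanding $h_l=\sum_j\overline{p_{lj}(\zeta)}\,u_j(\bar\zeta)$ in the given basis gives formula \eqref{fdi-krf}. Uniqueness of $\cP(\zeta)=(p_{lj}(\zeta))$ will follow from the linear independence of $\{u_l(\zeta)\}$ and the invertibility of the Gram matrix $G(\bar\zeta)=\bigl(\langle u_j(\bar\zeta),u_k(\bar\zeta)\rangle_{\cH}\bigr)_{1\le j,k\le m}$.

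The main obstacle will be non-singularity of $\cP(\zeta)$, which I intend to prove by contradiction. If $\cP(\zeta)a=0$ for some nonzero $a=(a_j)\in\bbC^m$, the invertibility of $G(\bar\zeta)$ allows me to pick $u\in\ker(A_0^*-\bar\zeta)$ with $\langle u,u_j(\bar\zeta)\rangle_{\cH}=a_j$, and formula \eqref{fdi-krf} then forces $R_1(\zeta)u=R_2(\zeta)u$. The common value $w$ lies in $\dom(\cA_1)\cap\dom(\cA_2)=\dom(A_0)$, so $u=(A_0-\zeta)w\in\ran(A_0-\zeta)$. The subtle point is to identify $\ran(A_0-\zeta)$ with $\bigl(\ker(A_0^*-\bar\zeta)\bigr)^\perp$, which requires closedness of the range. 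Because $A_0\subseteq\cA_1$ and $\zeta\notin\Sp(\cA_1)$, the restriction $A_0-\zeta$ is injective with bounded inverse given by $R_1(\zeta)\bigl|_{\ran(A_0-\zeta)}$; since $A_0-\zeta$ is closed, $\ran(A_0-\zeta)$ is closed and therefore equals $\bigl(\ker(A_0^*-\bar\zeta)\bigr)^\perp$. Thus $u\in\ker(A_0^*-\bar\zeta)\cap\bigl(\ker(A_0^*-\bar\zeta)\bigr)^\perp=\{0\}$, and applying $G(\bar\zeta)$ to the coordinate vector of $u$ recovers $a=0$, contradicting the assumption.
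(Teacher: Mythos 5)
Your proof is correct, but it takes a genuinely different route than the paper. The paper derives Proposition~\ref{prop:KrFormFDI} as an illustration of the symplectic Krein formula \eqref{5.14aJ}: it sets $R(\zeta)=(\tr R_2(\overline{\zeta}))^*J(\tr R_1(\zeta))$, and identifies $\ker(R(\zeta))=\ran(A_0-\zeta)$ and $\ran(R(\zeta))\subseteq\ker(A_0^*-\zeta)$ by exploiting the isotropic and co-isotropic properties of the Lagrangian planes $\cG_1=\tr(\dom\cA_1)$ and $\cG_2=\tr(\dom\cA_2)$ (pairing $\tr R_1(\zeta)u$ against $\tr R_2(\overline\zeta)v$ under $\omega$); closedness of $\ran(A_0-\zeta)$ and hence non-singularity then drop out for free since kernels of bounded operators are closed. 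You bypass the boundary triplet and Lagrangian machinery entirely: the range inclusion comes from $\cA_j\subseteq A_0^*$ and $(A_0^*-\zeta)R_j(\zeta)=I$, the kernel inclusion from $(R_1(\zeta)-R_2(\zeta))^*=R_1(\overline\zeta)-R_2(\overline\zeta)$ together with $\ker T=(\ran T^*)^\perp$, the matrix representation from Riesz and a Gram-matrix expansion, and closedness of $\ran(A_0-\zeta)$ from the boundedness of $R_1(\zeta)\big|_{\ran(A_0-\zeta)}$ plus closedness of $A_0$. Your argument is more elementary and does not actually use the boundary triplet hypothesis at all (it applies to any two self-adjoint extensions of a symmetric operator with equal finite deficiency indices), which is a genuine simplification; what it loses is precisely the point of the paper's Appendix~\ref{Sec4.1}, namely exhibiting the finite-index Krein formula as a direct corollary of the new projection-based formula \eqref{5.14aJ}.
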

\begin{proof} We temporarily denote by $R(\zeta)$ the RHS of  \eqref{5.14aJ} in Theorem \ref{thm1.7}, that is, we set $R(\zeta)=(\tr R_2(\overline{\zeta}))^*J (\tr R_1(\zeta))$. Thus, our objective is to prove that $R(\zeta)$ is equal to the RHS of \eqref{fdi-krf}. 
First, we will use the fact that the subspaces
 $\cG_1=\tr(\dom(\cA_1))$ and $\cG_2=\tr(\dom(\cA_2))$ are Lagrangian in $\mathfrak{H}\times\mathfrak{H}$, cf.\ Remark \ref{rem4.2}, and prove the following elementary assertions:
 \begin{equation}\label{assrts}
 {\rm (i)}\,\ \ker(R(\zeta))=\ran(A_0-\zeta);\quad {\rm (ii)}\,\
 \ran(R(\zeta))=\ker(A_0^*-\zeta).
 \end{equation} 
 To begin the proof we notice that for any $u,v\in\cH$ by \eqref{5.3} one has
 \begin{equation}\label{Romega}\begin{split}
 \langle R(\zeta)u,v\rangle_{{}_\cH}&= \langle  (\tr R_2(\overline{\zeta}))^*J (\tr R_1(\zeta))u,v\rangle_{{}_\cH}\\
 &=\langle  J (\tr R_1(\zeta))u, \tr R_2(\overline{\zeta})v\rangle_{{}_{\mathfrak{H}\times\mathfrak{H}}}=\omega\big(\tr R_1(\zeta)u, \tr R_2(\overline{\zeta})v \big).
  \end{split}\end{equation}
If $u=(A_0-\zeta)w$ for some $w\in\dom(A_0)=\dom(\cA_1)\cap\dom(\cA_2)$ then \[R_1(\zeta)u=R_1(\zeta)(A_0-\zeta)w=R_1(\zeta)(\cA_1-\zeta)w=w\]
because $w\in\dom(\cA_1)$ and thus $\tr R_1(\zeta)u=\tr w\in\cG_2$ because  $w\in\dom(\cA_2)$. Since $\cG_2$ is isotropic, \eqref{Romega} yields $R(\zeta)u=0$ and thus $\overline{\ran(A_0-z)}\subseteq\ker(R(\zeta))$. On the other hand, if $R(\zeta)u=0$ then $\tr R_1(\zeta)u\in\cG_2$ by \eqref{Romega} since $\cG_2=\tr(\dom(\cA_2))$ is co-isotropic. Then $w:=R_1(\zeta)u\in\dom(\cA_1)\cap\dom(\cA_2)$ and thus $u=(A_0-\zeta)w$ yielding $\ker(R(\zeta))\subseteq\ran(A_0-z)$. This proves (i) in \eqref{assrts}. In particular, $\ran(A_0-\zeta)$ is closed and thus $\cH=\ran(A_0-\zeta)\oplus\ker(A_0^*-\overline{\zeta})$ where the sum is orthogonal. To show that $R(\zeta)u\in\ker(A_0^*-\zeta)=\big(\ran(A_0-\overline{\zeta})\big)^\perp$ for each $u\in\cH$, we pick any vector $v=(A_0-\overline{\zeta})w\in\ran(A_0-\overline{\zeta})$ with some $w\in\dom(A_0):=\dom(\cA_1)\cap\dom(\cA_2)$. As above,
 \[R_2(\overline{\zeta})v=R_2(\overline{\zeta})(A_0-\overline{\zeta})w=R_2(\overline{\zeta})(\cA_2-\overline{\zeta})w=w\]
because $w\in\dom(\cA_2)$ and thus $\tr R_2(\overline{\zeta})v=\tr w\in\cG_1$ because  $w\in\dom(\cA_1)$. Since $\cG_1$ is isotropic, \eqref{Romega} yields $\langle R(\zeta)u, v\rangle_{{}_\cH}=0$ and so
the inclusion $\ran(R(\zeta))\subseteq\ker(A_0^*-\zeta)$ in assertion (ii) of \eqref{assrts} does hold. What we have proved so far shows that the finite dimensional operator 
\begin{equation}\label{Risom}
R(\zeta)\big|_{\ker(A_0^*-\overline{\zeta})}:
\ker(A_0^*-\overline{\zeta})\to \ker(A_0^*-{\zeta})
 \text{ is an isomorphism}
 \end{equation}
 as it is injective by 
assertion (i) of \eqref{assrts} and $\ker(A_0^*-\overline{\zeta})\cap\ran(A_0-\zeta)=\{0\}$. This implies assertion (ii)
and finishes the proof of  \eqref{assrts}.

The rest easily follows by representing the isomorphism in \eqref{Risom} as an $(m\times m)$ matrix ${\mathbf r}(\zeta)$ using the bases $(u_k(\overline{\zeta}))_{1\le k\le m}$ and $(u_k(\zeta))_{1\le k\le m}$ in $\ker(A_0^*-\overline{\zeta})$ and $\ker(A_0^*-\zeta)$, respectively. Indeed, let ${\mathbf r}(\zeta)=\big(r_{lk}(\zeta)\big)_{1\le l,k\le m}$ be chosen such that $R(\zeta)u_k(\overline{\zeta})=\sum_{l=1}^mr_{lk}(\zeta)u_l(\zeta)$, and let
$G(\zeta)=\big(\langle u_k(\zeta), u_l(\zeta)\rangle_{{}_\cH}\big)_{1\le k,l\le m}$ denote the Gramm matrix so that $G(\zeta)^\top=\overline{G(z)}$ while its inverse will be written as $G(\zeta)^{-1}=\big(G^{-1}_{kj}(\zeta)\big)_{1\le k,j\le m}$. If $u=\sum_{k=1}^mc_ku_k(\overline{\zeta})\in\ker(A_0^*-\overline{\zeta})$ then $\overline{G(\overline{\zeta})}(c_k)_{1\le k\le m}=\big(\langle u, u_j(\overline{\zeta})\rangle_{{}_\cH}\big)_{1\le j\le m}$ as vectors  in $\bbC^m$ and therefore
\begin{equation}\begin{split}
R(\zeta)u&=\sum_{k=1}^mc_kR(\zeta)u_k(\overline{\zeta})=\sum_{k=1}^m\big(\sum_{j=1}^m\overline{G^{-1}_{kj}(\overline{\zeta})}\langle u, u_j(\overline{\zeta})\rangle_{{}_\cH}\big)\big(\sum_{l=1}^mr_{lk}(\zeta)u_l(\zeta)\big)\\
&=\sum_{l,j}\big(\sum_{k=1}^mr_{lk}(\zeta)\overline{G^{-1}_{kj}(\overline{\zeta})}\big)\langle u, u_j(\overline{\zeta})\rangle_{{}_\cH}u_l(\zeta).
\end{split}\end{equation}
We now define ${\mathcal P}(\zeta)=(p_{lj}(\zeta))_{1\le l,j\le m}$ by the formula 
\begin{equation}\lb{for1}
{\mathcal P}(\zeta):={\mathbf r}(\zeta)\overline{G(\overline{\zeta})^{-1}}
\end{equation}
 and obtain equation 
\eqref{fdi-krf}  for 
$u\in\ker(A_0^*-\overline{\zeta})$. By $\cH=\ran(A_0-\zeta)\oplus\ker(A_0^*-\overline{\zeta})$ and assertion (i) in \eqref{assrts}
it also holds for all $u\in\cH$.
\end{proof}

\section{Dirichlet and Neumann trace operators}\label{appA}

In this appendix we recall  definitions and some facts about
various types of Dirichlet and Neumann trace operators which are discussed in detail in \cite{GM08}, \cite{GM10}.
\begin{hypothesis}\lb{6.1L}
	Let $n\in\mathbb{N},n\geq2$, and  $\Om\subset\mathbb{R}^n$ be a bounded domain with $C^{1,r}$, $r>1/2$, boundary.
\end{hypothesis}
First, we define the strong trace operators.
Let us introduce the boundary trace operator $\gaD^0$ (the Dirichlet
trace) by
\begin{equation}\lb{2.4}
\gaD^0\colon C^0(\ol{\Om})\to C^0(\dOm), \quad
\gaD  ^0 u = u|_\dOm .
\end{equation}
By the standard trace theorem, see, e.g., \cite[Proposition 4.4.5]{T11}, there exists a bounded, surjective Dirichlet
trace operator 
\begin{equation}
\gaD  \colon H^{s}(\Om)\to H^{s-1/2}(\dOm) \hookrightarrow
L^2(\dOm), \quad 1/2<s<3/2.
\lb{6.1}
\end{equation}
%Furthermore, the map has a bounded right inverse, i.e., given any $f\in H^{s-1/2}(\dOm)$ there exists %$u\in H^{s}(\Om)$ 
%such that $\gaD   u=f$ and $\|u\|_{H^{s}(\Om)}\leq C\|f\|_{H^{s-1/2}(\dOm)}$.

Next, retaining Hypothesis \ref{6.1L}, we introduce the Neumann trace operator $\gaN$ by
\begin{equation}\lb{Nstrong}
\gaN=\nu\cdot\gaD\nabla  \colon H^{s+1}(\Om)\to L^2(\dOm), \quad 1/2<s<3/2,
\end{equation}
where $\nu$ denotes the outward pointing normal unit vector to $\dOm$.
Furthermore, one can extend $\gaN$ to the weak Neumann trace operator still denoted by $\gaN$ such that
%\begin{equation}\lb{weaknt}
\[\gaN:\{u\in H^{1}(\Om)\,|\,\Delta u\in L^2(\Om)\}\to H^{-1/2}(\dOm).\]
%\end{equation}

\begin{lemma}[\cite{GM10}, Lemma 6.3]
	Assume Hypothesis \ref{6.1L}. Then the Neumann trace operator $\gaN$ considered in the context
	\begin{equation}\lb{gaN0}
	\gaN:H^2(\Om)\cap H^1_0(\Om)\to H^{1/2}(\dOm),
	\end{equation}
	is well-defined, linear, bounded, onto, and with a linear bounded right-inverse. 
	In addition, the null space of $\gaN$ in \eqref{gaN0}
	is $H^2_0(\Om)$, the closure of $C^{\infty}_0(\Om)$ in $H^2(\Om)$.
\end{lemma}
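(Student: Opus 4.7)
The plan is to establish the four claims of the lemma---well-definedness and boundedness, surjectivity, existence of a bounded linear right-inverse, and the identification of the kernel with $H^2_0(\Om)$---in separate steps, using the standard Dirichlet trace \eqref{6.1} together with the $C^{1,r}$-smoothness of $\partial\Omega$.

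First, I would verify \emph{well-definedness and boundedness}. For $u\in H^2(\Omega)$ each partial derivative $\partial_j u$ lies in $H^1(\Omega)$, so by \eqref{6.1} with $s=1$ we have $\gaD(\partial_j u)\in H^{1/2}(\partial\Omega)$ with $\|\gaD(\partial_j u)\|_{H^{1/2}(\partial\Omega)}\le c\,\|u\|_{H^2(\Omega)}$. Under Hypothesis \ref{6.1L} the components $\nu_j$ of the outward unit normal lie in $C^{0,r}(\partial\Omega)$ with $r>1/2$, and pointwise multiplication by such H\"older functions preserves $H^{1/2}(\partial\Omega)$ (a standard multiplier property on the $(n-1)$-dimensional Lipschitz manifold $\partial\Omega$). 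Summing over $j$ then gives $\gaN u=\sum_{j}\nu_j\gaD(\partial_j u)\in H^{1/2}(\partial\Omega)$ with continuous dependence on $u$, which establishes linearity and boundedness of \eqref{gaN0}.

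Next, for \emph{surjectivity and construction of a bounded right-inverse}, I would produce a lifting explicitly. Fix a tubular neighborhood of $\partial\Omega$ parametrized by $(s,\tau)$, where $s\in\partial\Omega$ and $\tau\in[0,\tau_{0})$ measures the inward distance to $\partial\Omega$. Given $g\in H^{1/2}(\partial\Omega)$, set
\[
u(s,\tau):=-\tau\,g(s)\,\chi(\tau)
\]
for a smooth cutoff $\chi$ with $\chi\equiv 1$ near $\tau=0$ and $\chi\equiv 0$ near $\tau_{0}$, extend by zero outside the tubular neighborhood, and patch with a partition of unity subordinate to a $C^{1,r}$ boundary atlas. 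Verifying in local charts that $u\in H^{2}(\Omega)$, one checks $\gaD u=0$ (so $u\in H^1_0(\Omega)$) and, by direct differentiation in normal coordinates, $\partial_\nu u|_{\partial\Omega}=g$, hence $\gaN u=g$. The map $g\mapsto u$ is linear in $g$, and boundedness from $H^{1/2}(\partial\Omega)$ into $H^2(\Omega)\cap H^1_0(\Omega)$ follows from the explicit formula combined with the trace lifting on each chart.

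Finally, I would establish the \emph{kernel characterization}. The inclusion $H^2_0(\Omega)\subseteq\ker\gaN\cap(H^2(\Omega)\cap H^1_0(\Omega))$ is immediate from the density of $C^{\infty}_0(\Omega)$ in $H^2_0(\Omega)$ together with the continuity of $\gaN$ just proved. For the reverse inclusion, let $u\in H^2(\Omega)\cap H^1_0(\Omega)$ satisfy $\gaN u=0$. Since $\gaD u=0$ on $\partial\Omega$, all tangential components of $\gaD\nabla u$ vanish; combined with the vanishing of the normal component $\gaN u=\nu\cdot\gaD\nabla u$, this forces $\gaD(\partial_j u)=0$ in $H^{1/2}(\partial\Omega)$ for every $j$. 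Therefore the zero-extension $\tilde u$ of $u$ to $\bbR^{n}$ belongs to $H^{2}(\bbR^{n})$, since matching first-order traces from the interior (which all vanish) and from the exterior (trivially zero) is precisely the condition for the weak gradient and Hessian to extend by zero without introducing boundary distributions. A standard argument combining translation of $\tilde u$ in an inward direction with mollification then yields a sequence in $C^{\infty}_{0}(\Omega)$ converging to $u$ in $H^{2}(\Omega)$, placing $u$ in $H^2_0(\Omega)$.

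The main obstacle is the last step. The delicate part is justifying that vanishing of $\gaD u$ and $\gaN u$ indeed forces all of $\gaD(\partial_j u)$ to vanish on a merely $C^{1,r}$ boundary (requiring a careful tangential/normal decomposition and the multiplier property used above), and that the translate-mollify approximation of the zero-extension can be carried out at the $H^{2}$ level rather than only in $H^{1}$. Surjectivity is constructive and the other two items are essentially applications of the trace theorem, but the kernel identification couples trace theory with a nontrivial extension and approximation argument.
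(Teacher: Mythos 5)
The paper does not prove this statement itself; it cites it directly from [GM10, Lemma~6.3] and supplies no internal argument. So there is no in-paper proof to compare against, and your attempt stands or falls on its own.

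Your boundedness argument and your kernel characterization are essentially sound: the observation that $r>1/2$ is exactly what makes $\nu_j$ a multiplier on $H^{1/2}(\partial\Omega)$ is the correct use of Hypothesis~\ref{6.1L} (indeed Remark~\ref{rem:LipD} shows that on merely Lipschitz boundaries one must pass to the space $N^{1/2}(\partial\Omega)=\{f\in L^2(\partial\Omega):\nu_j f\in H^{1/2}(\partial\Omega)\}$, confirming that the multiplier property is the real issue), and the tangential/normal decomposition $\gaD(\nabla u)=(\gaN u)\,\nu+\nabla_{\rm tan}(\gaD u)$ is the right way to conclude $\gaD(\partial_j u)=0$ and then pass to $H^2_0(\Omega)$ via zero-extension, translation and mollification.

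The surjectivity step, however, has a genuine gap. The explicit lift $u(s,\tau)=-\tau\,g(s)\,\chi(\tau)$ with $g\in H^{1/2}(\partial\Omega)$ does not lie in $H^2(\Omega)$. To see why, look at the tangential second derivatives: $\partial_s^2 u=-\tau\,(\partial_s^2 g)\,\chi(\tau)$, and $\partial_s^2 g$ is a distribution of order $H^{-3/2}(\partial\Omega)$; the weight $\tau$ in the normal direction is not enough to make $\tau\,\partial_s^2 g$ square-integrable over a tubular neighborhood. Your construction only produces $u\in H^{3/2}(\Omega)$, which is a loss of half a derivative from what is needed. The standard fix is a Poisson-type lift: in boundary-adapted coordinates set (say, via tangential Fourier transform $\xi'\mapsto\widehat g(\xi')$) something like $\widehat u(\xi',\tau)=-\tau\,e^{-(1+|\xi'|)\tau}\,\widehat g(\xi')\chi(\tau)$, so that the $|\xi'|$-dependent exponential decay supplies the extra normal-direction smoothing that compensates for the tangential roughness of $g$; one then patches over a boundary atlas. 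With this replacement (or by invoking the full trace/extension theorem for $(\gaD,\gaN):H^2(\Omega)\to H^{3/2}(\partial\Omega)\times H^{1/2}(\partial\Omega)$ with bounded right inverse, as in McLean or Lions--Magenes), the surjectivity and the existence of a bounded right inverse follow, and the rest of your argument goes through.
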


\begin{lemma}[\cite{GM10}, Corollary 6.6]\lb{wd}
	Assume Hypothesis \ref{6.1L}. Then there exists a unique linear bounded operator 
	\[
	\gd:\{u\in L^{2}(\Om)\,|\,\Delta u\in L^{2}(\Om)\}\to H^{-1/2}(\dOm),
	\]
	which is compatible with the Dirichlet
	trace introduced in \eqref{6.1}. This extension of the Dirichlet trace operator has dense range and allows for the following integration by 
	parts formula,
	\begin{equation}
	\lnoh \gaN w, \gd u\rnohs= (\Delta w,u)_{L^{2}(\Om)}-(w,\Delta u)_{L^{2}(\Om)}, 
	\end{equation}
	valid for every $u\in L^{2}(\Om)$ with $\Delta u\in L^{2}(\Om)$ and every $w\in H^{2}(\Om)\cap H^{1}_0(\Om)$.
\end{lemma}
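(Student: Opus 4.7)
The plan is to construct $\gd$ by duality, using the Green identity together with the bounded right-invertibility of $\gaN\colon H^2(\Om)\cap H^1_0(\Om)\to H^{1/2}(\dOm)$ established in Lemma \ref{A.5}. Concretely, for each $\phi\in H^{1/2}(\dOm)$ I will fix a lifting $w_\phi\in H^2(\Om)\cap H^1_0(\Om)$ with $\gaN w_\phi=\phi$ satisfying $\|w_\phi\|_{H^2(\Om)}\le C\|\phi\|_{H^{1/2}(\dOm)}$ (such a $w_\phi$ exists by the bounded right-inverse supplied by Lemma \ref{A.5}). For any $u$ in the maximal domain $\cX:=\{u\in L^2(\Om):\Delta u\in L^2(\Om)\}$, equipped with the graph norm, I propose to define $\gd u\in H^{-1/2}(\dOm)$ as the antilinear functional on $H^{1/2}(\dOm)$ sending $\phi$ to $(\Delta w_\phi,u)_{L^2(\Om)}-(w_\phi,\Delta u)_{L^2(\Om)}$. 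The integration-by-parts formula of the lemma is then literally the defining identity.

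The first and most delicate step will be to show well-definedness, i.e.\ that the proposed functional does not depend on the choice of $w_\phi$. This reduces to checking
\begin{equation}
(\Delta w,u)_{L^2(\Om)}-(w,\Delta u)_{L^2(\Om)}=0\quad\text{whenever } w\in H^2(\Om)\cap H^1_0(\Om),\ \gaN w=0. \notag
\end{equation}
By the last assertion of Lemma \ref{A.5}, the kernel of $\gaN$ on $H^2(\Om)\cap H^1_0(\Om)$ equals $H^2_0(\Om)$, which is by definition the $H^2$-closure of $C^\infty_0(\Om)$. For $w\in C^\infty_0(\Om)$ the claim is classical integration by parts with no boundary contributions. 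To pass to the limit I will note that both $u$ and $\Delta u$ lie in $L^2(\Om)$, and that $w_n\to w$ in $H^2(\Om)$ implies $w_n\to w$ and $\Delta w_n\to\Delta w$ in $L^2(\Om)$, whence both $L^2$-pairings pass to the limit. The boundedness estimate
\begin{equation}
\big|(\Delta w_\phi,u)_{L^2}-(w_\phi,\Delta u)_{L^2}\big|\le \|w_\phi\|_{H^2(\Om)}\big(\|u\|_{L^2(\Om)}+\|\Delta u\|_{L^2(\Om)}\big)\notag
\end{equation}
combined with the norm bound on the lifting then shows $\gd\in\cB(\cX,H^{-1/2}(\dOm))$.

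Next, I will check compatibility of $\gd$ with the strong Dirichlet trace $\gaD$ of \eqref{6.1}. For $u\in H^s(\Om)$, $1/2<s<3/2$, with $\Delta u\in L^2(\Om)$, a standard density result (e.g.\ \cite[Theorem 3.2]{Gr}) provides smooth approximants $u_k\in C^\infty(\overline\Om)$ converging to $u$ in the graph norm of $\cX$ and in $H^s(\Om)$. For each $u_k$ classical Green's identity pairs with $w_\phi$ to give exactly the defining formula of $\gd$, and the right-hand side equals $\langle\phi,\gaD u_k\rangle_{-1/2}$; passing to the limit using continuity of $\gd$ and $\gaD$ yields $\gd u=\gaD u$. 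This compatibility, together with density of $H^s(\Om)\cap\{\Delta u\in L^2\}$ in $\cX$, forces any two bounded extensions to coincide, giving uniqueness. Density of the range follows immediately, since $\ran(\gd)\supset\gaD(H^1(\Om))=H^{1/2}(\dOm)$ is dense in $H^{-1/2}(\dOm)$.

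The main obstacle is the well-definedness step, which rests entirely on the identification $\ker\big(\gaN|_{H^2\cap H^1_0}\big)=H^2_0(\Om)$ from Lemma \ref{A.5} and on the density of $C^\infty_0(\Om)$ in $H^2_0(\Om)$; once these are in hand, every remaining assertion reduces to bookkeeping with bounded operators and continuity of the $L^2$ pairing.
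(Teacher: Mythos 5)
The paper does not prove this lemma; it is stated in Appendix \ref{appA} and attributed to [GM10, Corollary 6.6] without argument. Your duality construction --- define $\gd u$ as the functional $\phi\mapsto(\Delta w_\phi,u)_{L^2}-(w_\phi,\Delta u)_{L^2}$ via a bounded right inverse of $\gaN\colon H^2(\Om)\cap H^1_0(\Om)\to H^{1/2}(\dOm)$, prove well-definedness from $\ker\big(\gaN|_{H^2\cap H^1_0}\big)=H^2_0(\Om)$ together with density of $C^\infty_0(\Om)$ in $H^2_0(\Om)$, and then get boundedness, compatibility with $\gaD$, uniqueness, and density of range by continuity --- is the standard argument, and each step is mathematically sound.

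Two bookkeeping corrections are needed, though. First, the facts you invoke about $\gaN$ (the bounded right inverse for \eqref{gaN0} and the identification of its null space with $H^2_0(\Om)$) are supplied by the unlabeled lemma preceding Lemma \ref{wd} in Appendix \ref{appA}, the one citing [GM10, Lemma 6.3]; Lemma \ref{A.5}, which you cite for both of these, is instead about the extended Neumann trace $\gn$ mapping the maximal domain into $H^{-3/2}(\dOm)$ and provides neither ingredient, so a reader following your references would not find what you need. Second, with the paper's conventions ($L^2$ inner products linear in the \emph{first} slot, and $H^{-1/2}(\dOm)=(H^{1/2}(\dOm))^*$ declared to be the space of bounded \emph{linear} functionals, cf.\ the Notation paragraph of the introduction and \eqref{aub27}), the map $\phi\mapsto(\Delta w_\phi,u)_{L^2}-(w_\phi,\Delta u)_{L^2}$ is linear in $\phi$, not antilinear, so $\gd u$ should be introduced as a bounded linear functional on $H^{1/2}(\dOm)$.
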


\begin{lemma}[\cite{GM10}, Corollary 6.11]\lb{A.5}
	Assume Hypothesis \ref{6.1L}. Then there exists a unique linear bounded operator 
	\begin{equation}
	\gn:\{u\in L^{2}(\Om)\,|\,\Delta u\in L^{2}(\Om)\}\to H^{-3/2}(\dOm),
	\end{equation}
	which is compatable with the Neumann
	trace, introduced in \eqref{Nstrong}. This extension of the Neumann trace operator has dense range and allows for the following integration by parts formula,
	\begin{equation}
	{}_{H^{3/2}(\dOm)}\langle \gaD w, \gn u\rangle_{H^{-3/2}(\dOm)}= (w,\Delta u)_{L^{2}(\Om)}-(\Delta w,u)_{L^{2}(\Om)}, 
	\end{equation}
	valid for every $u\in L^{2}(\Om)$ with $\Delta u\in L^{2}(\Om)$ and every $w\in H^{2}(\Om)$ with $\gaN w=0$.
\end{lemma}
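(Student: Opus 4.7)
\textbf{Proof proposal for Lemma \ref{A.5}.}
My plan is to construct $\hgaN$ by duality, using the classical Green identity as the defining relation. For $u\in L^2(\Omega)$ with $\Delta u\in L^2(\Omega)$, I would define a linear functional on $H^{3/2}(\dOm)$ by the rule
\begin{equation}\no
\phi\longmapsto (w_\phi,\Delta u)_{L^2(\Omega)}-(\Delta w_\phi,u)_{L^2(\Omega)},
\end{equation}
where $w_\phi\in H^2(\Omega)$ is any representative satisfying $\gaD w_\phi=\phi$ and $\gaN w_\phi=0$, and then declare $\hgaN u\in H^{-3/2}(\dOm)$ to be this functional. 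The idea is that the Green identity
$(w,\Delta u)_{L^2}-(\Delta w,u)_{L^2}={}_{H^{3/2}}\langle \gaD w,\gn u\rangle_{H^{-3/2}}$ must hold, and that the RHS is pinned down by testing against enough boundary data.

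First I would establish the key technical ingredient: the joint trace
$(\gaD,\gaN)\colon H^2(\Omega)\to H^{3/2}(\dOm)\times H^{1/2}(\dOm)$
is surjective with a bounded right-inverse. On a $C^{1,r}$ domain, $r>1/2$, this is the standard Lions--Magenes joint trace theorem, obtained by partition of unity, boundary flattening, and an explicit extension from the half-space (where one can write $w(x',x_n)=\chi(x_n)[\phi(x')+x_n g(x')]$ for a cutoff $\chi$). In particular, for each $\phi\in H^{3/2}(\dOm)$ one can choose $w_\phi\in H^2(\Omega)$ with $\gaD w_\phi=\phi$, $\gaN w_\phi=0$, and $\|w_\phi\|_{H^2(\Omega)}\le C\|\phi\|_{H^{3/2}(\dOm)}$. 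I expect this to be the main obstacle, since it requires genuine boundary regularity of $\Omega$ and must be invoked in the exact form ``prescribe $\gaD$, kill $\gaN$.''

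Next I would verify well-definedness: if $w_1,w_2$ both satisfy $\gaD w_i=\phi$ and $\gaN w_i=0$, then $w_1-w_2\in H^2(\Omega)\cap H^1_0(\Omega)$ and $\gaN(w_1-w_2)=0$, so by the already-quoted Lemma 6.3 of \cite{GM10} we have $w_1-w_2\in H^2_0(\Omega)$. Approximating $w_1-w_2$ in $H^2$ by $C_0^\infty(\Omega)$ functions and using the classical integration-by-parts on test functions, the difference $(w_1-w_2,\Delta u)_{L^2}-(\Delta(w_1-w_2),u)_{L^2}$ vanishes. Boundedness of $\hgaN u$ in $H^{-3/2}(\dOm)$ with constant $\le C(\|u\|_{L^2}+\|\Delta u\|_{L^2})$ is then immediate from the bound on $\|w_\phi\|_{H^2}$. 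Compatibility with the strong Neumann trace \eqref{Nstrong} comes from inserting $u\in H^2(\Omega)$ into the classical Green identity with $w_\phi$: the boundary term reduces to $(\gaN u,\phi)_{L^2(\dOm)}$ because $\gaN w_\phi=0$, and pairing with all $\phi\in H^{3/2}(\dOm)$ identifies $\hgaN u$ with $\gaN u$. Uniqueness of the extension follows because the stated integration-by-parts formula determines $\hgaN u$ on a set of test data ($\gaD w$ for $w\in H^2$ with $\gaN w=0$) that by the surjectivity above exhausts $H^{3/2}(\dOm)$.

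Finally, for dense range I would observe that $\hgaN$ restricted to $H^2(\Omega)\subset \{u\in L^2:\Delta u\in L^2\}$ agrees with the strong $\gaN$ and already hits $H^{1/2}(\dOm)$, which is dense in $H^{-3/2}(\dOm)$; so $\ran(\hgaN)$ is dense. This completes the plan; the only nontrivial input beyond elementary functional analysis is the joint-trace surjectivity, and the entire argument proceeds analogously to (but slightly simpler than) the construction of $\gd$ carried out in Lemma \ref{wd}.
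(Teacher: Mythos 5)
The paper does not prove Lemma \ref{A.5}; it imports it verbatim from \cite{GM10}, Corollary 6.11, and your duality construction reconstructs essentially the same argument used there. Your proposal is sound except for the explicit half-space extension $w(x',x_n)=\chi(x_n)[\phi(x')+x_n g(x')]$: for $\phi$ merely in $H^{3/2}(\dOm)$, two tangential derivatives of $\phi$ leave $L^2$, so this $w$ does not lie in $H^2(\Om)$ and cannot furnish the bounded right inverse you need. The standard fix is a Fourier or Poisson-type lift, but more economically (and using only what the paper already imports) you can bootstrap the required right inverse of $\gaD$ on $\{w\in H^2(\Om):\gaN w=0\}$ from the quoted Lemma 6.3 of \cite{GM10}: pick any $u_1\in H^2(\Om)$ with $\gaD u_1=\phi$ by the ordinary Dirichlet trace theorem, then invoke Lemma 6.3 to choose $u_2\in H^2(\Om)\cap H^1_0(\Om)$ with $\gaN u_2=\gaN u_1$ and $\|u_2\|_{H^2}\le C\|\gaN u_1\|_{H^{1/2}}$, and set $w_\phi=u_1-u_2$; then $\gaD w_\phi=\phi$, $\gaN w_\phi=0$, and $\|w_\phi\|_{H^2(\Om)}\le C\|\phi\|_{H^{3/2}(\dOm)}$. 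With this replacement, the rest of your argument (well-definedness via $H^2_0$, boundedness, compatibility, uniqueness, and density via $\gaN(H^2)\supseteq H^{1/2}$) is correct.
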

Next, we introduce the Dirichet-to-Neumann map $M_{D,N}$ associated with $-\Delta$ on $\Om$ as 
\begin{equation}
M_{D,N}:
H^{-1/2}(\dOm)\to H^{-3/2}(\dOm):
g\mapsto-\gn(u_D),  
\end{equation}
where $u_D$ is the unique solution of the boundary value problem
\begin{equation}
-\Delta u=0\,\,\hbox{in}\,\,\Om,\,\,\,\,u\in L^{2}(\Om),\,\,\gd u=g\,\,\hbox{in}\,\,\dOm.
\end{equation}

\begin{lemma}[\cite{GM10}, Theorem 12.1]\lb{nn}
	Assume Hypothesis \ref{6.1L}. Then the map
	\begin{align}
	\tN:\{u\in L^2(\Omega)| \Delta u\in L^2(\Omega)\}\rightarrow H^{1/2}(\partial\Omega),\,%\no,\\
	\tN u:=\gn u+ M_{D,N}(\gd u),
	\end{align}
	is bounded when the space 
	$\{u\in L^2(\Omega)| \Delta u\in L^2(\Omega)\}=\dom(-\Delta_{\max})$ is equipped with the natural graph norm 
	$(\|u\|_{L^{2}(\Om)}^2+\|\Delta u\|_{L^{2}(\Om)}^2)^{1/2}$. Moreover, this operator is onto. In fact,
	\begin{equation}\lb{onto}
	\tN(H^2(\Om)\cap H^1_0(\Om))=H^{1/2}(\partial\Omega).
	\end{equation}
	Also, the null space of the map $\tN$ is given by
	\begin{equation}\lb{kernel}
	\ker(\tN)=H_0^2(\Omega)\dot{+}\{u\in L^2(\Om),\,\, -\Delta u=0 \}.
	\end{equation}
	Finally, the following Green formula holds for every $u,v\in\dom(-\Delta_{\max})$,
	\begin{align}\lb{Green}
	&(-\Delta u,v)_{L^{2}(\Om)}-(u,-\Delta v)_{L^{2}(\Om)}\no\\
	&=-{\lnoh \tN u, \gd v \rnohs}+\overline{\lnoh \tN v, \gd u \rnohs}.
	\end{align}
\end{lemma}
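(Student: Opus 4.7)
The plan is to reduce everything to the well-understood restriction of $\tN$ to $H^2(\Omega)\cap H^1_0(\Omega)$, where $\tN$ coincides with the strong Neumann trace $\gaN$ of Lemma 6.3 from \cite{GM10} (quoted just before the statement). The key structural tool is the orthogonal decomposition, for every $u\in\dom(-\Delta_{\max})$,
\begin{equation}
u=u_0+u_D,\qquad u_0\in H^2(\Omega)\cap H^1_0(\Omega),\quad u_D\in\ker(-\Delta_{\max}),
\end{equation}
where $u_D$ is the unique solution of $-\Delta u_D=0$ in $\Omega$ with $\gd u_D=\gd u$ (this is the well-posedness of the weak Dirichlet problem used to define $M_{D,N}$) and $u_0\in H^2(\Omega)\cap H^1_0(\Omega)$ is the solution of $-\Delta u_0=-\Delta u$ with zero Dirichlet data. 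By construction $\gn u_D=-M_{D,N}(\gd u)$, so
\begin{equation}
\tN u=\gn u+M_{D,N}(\gd u)=\gn u_0=\gaN u_0\in H^{1/2}(\partial\Omega),
\end{equation}
and the $H^{1/2}(\partial\Omega)$-norm of $\tN u$ is controlled by $\|u_0\|_{H^2(\Omega)}$, which, by elliptic regularity for the Dirichlet Laplacian on $C^{1,r}$ domains, is bounded by $\|\Delta u\|_{L^2(\Omega)}$. This yields the boundedness statement.

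For the surjectivity, I would simply observe that given any $f\in H^{1/2}(\partial\Omega)$, Lemma 6.3 of \cite{GM10} produces $u_0\in H^2(\Omega)\cap H^1_0(\Omega)$ with $\gaN u_0=f$; since $\gd u_0=0$, the function $u_0$ automatically satisfies $\tN u_0=\gn u_0=f$. This proves both onto-ness of $\tN$ on its full domain and the refined identity $\tN(H^2(\Omega)\cap H^1_0(\Omega))=H^{1/2}(\partial\Omega)$. For the kernel, the computation $\tN u=\gn u_0$ shows that $\tN u=0$ iff $\gn u_0=\gaN u_0=0$; by the kernel statement of Lemma 6.3 this is equivalent to $u_0\in H^2_0(\Omega)$. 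Combining with the decomposition $u=u_0+u_D$ and noting that $H^2_0(\Omega)\cap\ker(-\Delta_{\max})=\{0\}$, I obtain
\begin{equation}
\ker(\tN)=H^2_0(\Omega)\,\dot{+}\,\{u\in L^2(\Omega):-\Delta u=0\}.
\end{equation}

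Finally, the Green identity will be established by combining the two integration by parts formulas already in place: Lemma \ref{wd} applied with $w\in H^2(\Omega)\cap H^1_0(\Omega)$ pairs $\gaN w$ with $\gd u$, and Lemma \ref{A.5} applied with $w\in H^2(\Omega)$ satisfying $\gaN w=0$ pairs $\gaD w$ with $\gn u$. Writing $u=u_0+u_D$ and $v=v_0+v_D$ as above, the four cross terms reduce (using $\gd u_0=\gd v_0=0$, harmonicity of $u_D, v_D$, and the self-adjointness $M_{D,N}^*=M_{D,N}$ on $H^{\pm1/2}(\partial\Omega)$) to the desired expression with $\tN=\gn+M_{D,N}\gd$. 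The main technical obstacle is handling $M_{D,N}$ as a duality between $H^{-1/2}$ and $H^{1/2}$: I would verify its symmetry by applying Lemma \ref{wd} to two harmonic functions $u_D,v_D$, which forces $\langle M_{D,N}\gd u_D,\gd v_D\rangle=\langle\gd u_D,M_{D,N}\gd v_D\rangle$, so that the $M_{D,N}$-contributions cancel exactly when the two sides of the Green formula are subtracted. Once this symmetry is in hand, the Green identity for general $u,v\in\dom(-\Delta_{\max})$ follows by a direct bookkeeping of the decomposition, and no density argument is needed.
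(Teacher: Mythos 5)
The statement is quoted in the paper as \cite[Theorem 12.1]{GM10} and is not reproved there, so there is no in-paper proof to compare against. Your blind attempt is essentially the standard argument, and the structural idea — decompose $u=u_0+u_D$ with $u_0\in H^2(\Omega)\cap H^1_0(\Omega)$ solving $-\Delta u_0=-\Delta u$, $\gd u_0=0$, and $u_D$ the harmonic extension of $\gd u$, so that $\tN u=\gn u_0=\gaN u_0$ — is the right one; it correctly reduces boundedness, surjectivity, \eqref{onto} and \eqref{kernel} to the corresponding statements about $\gaN$ on $H^2(\Omega)\cap H^1_0(\Omega)$ in Lemma 6.3.

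However, your handling of the Green formula has a genuine soft spot: you propose to "verify the symmetry of $M_{D,N}$ by applying Lemma \ref{wd} to two harmonic functions $u_D, v_D$," but Lemma \ref{wd} requires its argument $w$ to lie in $H^2(\Omega)\cap H^1_0(\Omega)$, and the only such harmonic function is zero — so that step, as written, is vacuous. A related issue is that you cannot split the pairing $\lnoh \tN u,\gd v\rnohs$ into a $\gn u$ part and an $M_{D,N}\gd u$ part: each of those lies in $H^{-3/2}(\partial\Omega)$, and neither can be paired with $\gd v\in H^{-1/2}(\partial\Omega)$; only the sum $\tN u$ lands in $H^{1/2}(\partial\Omega)$. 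The fix is that you already have the tool you need and should not re-expand $\tN$: since $\tN u=\gaN u_0$ and $\Delta u=\Delta u_0$, $\Delta u_D=0$, $\gd v=\gd v_D$, the left side of \eqref{Green} equals $(-\Delta u_0,v_D)_{L^2}-(u_D,-\Delta v_0)_{L^2}$ after the $(u_0,v_0)$ terms cancel by self-adjointness of $-\Delta_D$, and two applications of Lemma \ref{wd} — with $(w,u)=(u_0,v_D)$ and $(w,u)=(v_0,u_D)$ — immediately yield $-\lnoh\gaN u_0,\gd v_D\rnohs+\overline{\lnoh\gaN v_0,\gd u_D\rnohs}$, which is exactly the right side. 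Neither Lemma \ref{A.5} nor any symmetry of $M_{D,N}$ is needed. With that correction, the proof is complete.
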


\bibliography{mybib}
\bibliographystyle{siam}

\end{document}